\titleformat{\chapter}{\normalfont\huge}{\thechapter.}{20pt}{\huge\it}
\renewcommand{\paragraph}{\subsection}
\theoremstyle{plain}
\newtheorem{theorem}{Theorem}[subsection]   
\newtheorem{corollary}[theorem]{Corollary}
\newtheorem{lemma}[theorem]{Lemma}
\newtheorem{proposition}[theorem]{Proposition}
\theoremstyle{definition}
\newtheorem{definition}[theorem]{Definition}
\newtheorem{example}[theorem]{Example}
\theoremstyle{remark}
\newtheorem{remark}[theorem]{Remark}
\titleformat*{\subsection}{\itshape\mdseries}
\newcommand{\mr}{\mathrm}
\newcommand{\Aa}{\mr{A}_{\infty}}
\newcommand{\C}{\mathbf C}
\newcommand{\brak}{[\![\hbar]\!]}
\newcommand{\cbrak}{(\!(\hbar)\!)}
\newcommand{\2}{\vspace{2mm}}
\newcommand{\1}{\vspace{1mm}}
\newcommand*{\shom}{\mathscr{H}\text{\kern -3pt {\it{om}}}\,}
\newcommand*{\ext}{\mathscr{E}\text{\kern -1.5pt {\it{xt}}}\,}
\newcommand{\xdashrightarrow}[2][]{\ext@arrow 0359\rightarrowfill@@{#1}{#2}}
\newcommand{\xdashleftarrow}[2][]{\ext@arrow 3095\leftarrowfill@@{#1}{#2}}
\newcommand{\xdashleftrightarrow}[2][]{\ext@arrow 3359\leftrightarrowfill@@{#1}{#2}}
\def\rightarrowfill@@{\arrowfill@@\relax\relbar\rightarrow}
\def\leftarrowfill@@{\arrowfill@@\leftarrow\relbar\relax}
\def\leftrightarrowfill@@{\arrowfill@@\leftarrow\relbar\rightarrow}
\def\arrowfill@@#1#2#3#4{%
  $\m@th\thickmuskip0mu\medmuskip\thickmuskip\thinmuskip\thickmuskip
   \relax#4#1
   \xleaders\hbox{$#4#2$}\hfill
   #3$%
}
\title{\vspace{-1cm}Formality of differential graded algebras and complex Lagrangian submanifolds}
\author{Borislav Mladenov}
\date{} 
\begin{document}
\maketitle
\nocite{*}
\begin{abstract}
 Let $i: \mr{L} \xhookrightarrow{} \mr{X}$ be a compact K\"{a}hler Lagrangian in a holomorphic symplectic variety $\mr{X}/\C$. We use deformation quantisation to show that the endomorphism differential graded algebra $\mr{RHom}\big(i_*\mr{K}_\mr{L}^{1/2},i_*\mr{K}_\mr{L}^{1/2}\big)$ is formal. We prove a generalisation to pairs of Lagrangians, along with auxiliary results on the behaviour of formality in families of $\Aa$-modules.
\end{abstract}

\paragraph{Introduction.}

Let $i : \mr{L}\xhookrightarrow{} \mr{X}$ be a smooth compact Kähler Lagrangian in a holomorphic symplectic (not necessarily proper!) variety over $\C$. Recall that, for any $\mathscr{L} \in \mr{Pic}(\mr{L})$, there exists a local-to-global $\mr{Ext}$ spectral sequence: 
\begin{equation*}\mr{E}_{2}^{p,q}=\mr{H}^{p}(\mr{L},\Omega^{q}_\mr{L}) \Rightarrow \mr{Ext}^{p+q}(i_{*}\mathscr{L},i_{*}\mathscr{L}).\end{equation*}Therefore the second page gives de Rham cohomology of $\mr{L}$, independent of $\mathscr{L}$, but the differentials depend on $\mathscr{L}$. In \cite{bm1} we show that they vanish if $\mathscr{L}$ is a rational power of $\mr{K}_{\mr{L}}$ such as $\mathscr{O}_\mr{L}$.
\2
\begin{theorem}\label{thmpaper1}
 Let $i : \rm L \xhookrightarrow{} X$ be a smooth complex Lagrangian in a compact hyperkähler variety $\mr{X}/\C$, and let $\mathscr{L}$ be any (existing) rational power of the canonical bundle of $\mr{L}$. Then the local-to-global $\mr{Ext}$ spectral sequence  $$\mr{E}_{2}^{p,q}= \mr{H}^{p}(\mr{L},\Omega^{q}_\mr{L}) \Rightarrow \mathrm{Ext}^{p+q}(i_{*}\mathscr{L},i_{*}\mathscr{L})$$ degenerates (multiplicatively) on the second page.
\end{theorem}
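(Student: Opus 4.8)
\emph{Reduction to a dimension count.} The plan is to reduce the statement to, and then prove, the inequality $\dim_{\C}\mr{Ext}^{n}(i_{*}\mathscr{L},i_{*}\mathscr{L})\ge\sum_{p+q=n}h^{p,q}(\mr L)$ for every $n$. Indeed, by the Hochschild--Kostant--Rosenberg theorem the sheaves $\ext^{q}(i_{*}\mathscr{L},i_{*}\mathscr{L})$ are the exterior powers $\textstyle\bigwedge^{q}\mathscr{N}_{\mr L/\mr X}$ of the normal bundle, identified with $\Omega^{q}_{\mr L}$ through the symplectic form, independently of $\mathscr{L}$; this is the stated $\mr E_{2}$-page, and convergence of the local-to-global spectral sequence forces $\dim_{\C}\mr{Ext}^{n}(i_{*}\mathscr{L},i_{*}\mathscr{L})=\sum_{p+q=n}\dim_{\C}\mr E_{\infty}^{p,q}\le\sum_{p+q=n}h^{p,q}(\mr L)$, with equality in all degrees precisely when every differential vanishes. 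Since the sequence is multiplicative --- it is the hypercohomology spectral sequence of the sheaf of differential graded algebras $R\shom(i_{*}\mathscr{L},i_{*}\mathscr{L})$ --- that vanishing is exactly the asserted multiplicative degeneration at $\mr E_{2}$; so it is enough to prove the reverse inequality.

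\emph{Quantisation.} A neighbourhood $\mathcal U$ of $\mr L$ in the holomorphic symplectic $\mr X$ carries a deformation quantisation algebroid $\mathscr{W}$ over $\C\brak$ whose classical limit is $(\mathscr{O}_{\mathcal U},\{\,,\,\})$. I would lift $i_{*}\mathscr{L}$ to a \emph{simple} $\mathscr{W}$-module $\mathcal M$, flat over $\C\brak$, with $\mathcal M\otimes^{\mathbb L}_{\C\brak}\C\cong i_{*}\mathscr{L}$: for $\mathscr{L}=\mr K_{\mr L}^{1/2}$ this is the metaplectically corrected module supported along $\mr L$, while for a general rational power $\mathscr{L}=\mr K_{\mr L}^{\la}$ one first replaces $\mathscr{W}$ by the twist whose period is shifted by $(\la-\tfrac12)\hbar\,c_{1}(\mr K_{\mr L})$ --- a class in $\hbar\,\mr H^{2}(\mathcal U,\C)\brak=\hbar\,\mr H^{2}(\mr L,\C)\brak$, in particular trivial modulo $\hbar$ --- so as to kill the obstruction to such an $\mathcal M$. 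Put $\mathcal A^{\bullet}:=\mr{RHom}_{\mathscr{W}}(\mathcal M,\mathcal M)$. Because $\mathcal M$ is supported on the compact $\mr L$, the finiteness theorem for good deformation quantisation modules makes $\mathcal A^{\bullet}$ a perfect complex of $\C\brak$-modules; and because $\mathscr{W}$ and $\mathcal M$ are flat deformations, $\mathcal A^{\bullet}\otimes^{\mathbb L}_{\C\brak}\C\simeq\mr{RHom}(i_{*}\mathscr{L},i_{*}\mathscr{L})$, with cohomology $\mr{Ext}^{\bullet}(i_{*}\mathscr{L},i_{*}\mathscr{L})$.

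\emph{Inverting $\hbar$ and concluding.} Over $\mathscr{W}[\hbar^{-1}]$ the module $\mathcal M[\hbar^{-1}]$ is simple along $\mr L$, so its endomorphism sheaf is the constant sheaf $\C\cbrak$ on $\mr L$ and its higher $\ext$-sheaves vanish: locally the pair is modelled on $(T^{*}\mr L,\mr L)$ with $R\shom(\mathscr{O}_{\mr L},\mathscr{O}_{\mr L})$ over $\mathscr{D}_{\mr L}\cbrak$ quasi-isomorphic to the de Rham complex $(\Omega^{\bullet}_{\mr L}\cbrak,\mr d)$, which resolves $\C\cbrak$ by the holomorphic Poincar\'e lemma. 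Hence $\mathcal A^{\bullet}[\hbar^{-1}]\simeq R\Gamma(\mr L,\C\cbrak)=\mr H^{\bullet}(\mr L,\C)\otimes_{\C}\C\cbrak$, and since $\mr L$ is compact K\"ahler, $\dim_{\C\cbrak}\mr H^{n}(\mathcal A^{\bullet}[\hbar^{-1}])=\dim_{\C}\mr H^{n}(\mr L,\C)=\sum_{p+q=n}h^{p,q}(\mr L)$ by Hodge theory. Finally $\C\brak$ is a discrete valuation ring and $\mathcal A^{\bullet}$ is perfect over it, so the $\mr H^{n}(\mathcal A^{\bullet})$ are finitely generated and the long exact sequence of $0\to\mathcal A^{\bullet}\xrightarrow{\hbar}\mathcal A^{\bullet}\to\mathcal A^{\bullet}/\hbar\to 0$ gives
\begin{align*}
\dim_{\C}\mr{Ext}^{n}(i_{*}\mathscr{L},i_{*}\mathscr{L})=\dim_{\C}\mr H^{n}(\mathcal A^{\bullet}/\hbar) &\ge \mr{rank}_{\C\brak}\mr H^{n}(\mathcal A^{\bullet}) \\
&= \dim_{\C\cbrak}\mr H^{n}(\mathcal A^{\bullet}[\hbar^{-1}]) = \textstyle\sum_{p+q=n}h^{p,q}(\mr L).
\end{align*}
Together with the reduction this forces equality, hence multiplicative degeneration at $\mr E_{2}$ --- and, incidentally, that every $\mr H^{n}(\mathcal A^{\bullet})$ is $\hbar$-torsion-free.

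\emph{The main obstacle.} Everything after the quantisation step is formal: homological algebra over the discrete valuation ring $\C\brak$, the microlocal fact that a simple module has endomorphism complex the constant sheaf $\C\cbrak$, and Hodge theory on the compact K\"ahler manifold $\mr L$. The real content lies in producing the simple $\mathscr{W}$-module $\mathcal M$ lifting $i_{*}\mathscr{L}$ for a general rational power $\mr K_{\mr L}^{\la}$ --- checking that the needed twist of $\mathscr{W}$ over a neighbourhood of $\mr L$ exists and leaves the classical limit $i_{*}\mathscr{L}$ unchanged. I would also stress that the argument is not circular: the quasi-isomorphism with the de Rham complex holds only locally on $\mr L$, and the non-trivial gluing of $R\shom(\mathcal M,\mathcal M)$ as a sheaf on $\mr L$ is precisely what allows the local-to-global spectral sequence to have non-zero differentials in general; what deformation quantisation contributes is that, after inverting $\hbar$, this gluing trivialises.
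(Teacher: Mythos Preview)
Your argument is essentially the paper's own proof of the special case $\mathscr{L}=\mr K_{\mr L}^{1/2}$ (Theorem~\ref{ksrmk}): dimension-count reduction, simple $\hat{\mathscr W}_{\mr X}(0)$-module quantising $\mr K_{\mr L}^{1/2}$, perfection of $\mr{RHom}_{\mathscr A_{\mr X}}(\mathscr D^0_{\mr L},\mathscr D^0_{\mr L})$ over $\C\brak$, the identification with $\C\cbrak_{\mr L}$ after localising, and semicontinuity. For that case your write-up is correct and matches the paper.

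However, the full Theorem~\ref{thmpaper1} (arbitrary rational power $\mr K_{\mr L}^{\lambda}$, $\mr X$ compact hyperk\"ahler) is \emph{not} proved in this paper; it is quoted from \cite{bm1}, and the method there is described as a Deligne-style Lefschetz-operator argument within algebraic geometry, not deformation quantisation. So your proposal is a genuinely different route. The comparison: the Lefschetz approach exploits the K\"ahler package on $\mr X$ and works uniformly for all rational powers, whereas the DQ approach drops the compactness/K\"ahler hypothesis on $\mr X$ but, as you correctly isolate, hinges entirely on producing a simple module with central fibre $i_*\mr K_{\mr L}^{\lambda}$.

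That existence step is a real gap, not just a routine check. For the canonical algebroid $\hat{\mathscr W}_{\mr X}(0)$ the remark after Theorem~\ref{existsimplehol} (and \cite{MR3437831}) says a line bundle $\mathscr L$ on $\mr L$ quantises precisely when $\mr K_{\mr L}\otimes\mathscr L^{-2}$ admits a flat connection; for $\mathscr L=\mr K_{\mr L}^{\lambda}$ this means $(1-2\lambda)c_1(\mr K_{\mr L})=0$ in $\mr H^2(\mr L,\C)$, which fails in general unless $\lambda=\tfrac12$. Your proposed fix---twist the algebroid by $\exp\bigl((\lambda-\tfrac12)\hbar\,c_1(\mr K_{\mr L})\bigr)$ on a tubular neighbourhood---is the right idea, but you would need to (i) verify that the D'Agnolo--Schapira construction goes through for this twisted algebroid with the shifted half-form, and (ii) check that Theorem~\ref{pervref} (or its local input, that $\mr R\shom$ of the localised simple module with itself is $\C\cbrak_{\mr L}$) still holds after twisting. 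Both are plausible---twisting by an invertible $\C\brak$-algebroid trivial modulo $\hbar$ should not disturb the local microlocal normal form---but neither is in the literature you have available here, and the paper deliberately restricts its DQ argument to $\lambda=\tfrac12$ for exactly this reason.
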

\2
 $\hspace*{3mm}$The proof, operating purely within algebraic geometry, is a Deligne-stlye argument based on the Lefschetz operator induced by the K\"{a}hler form. Here we give a new proof in the special case $\mathscr{L} = \mr{K}_\mr{L}^{1/2}$ using DQ modules. This doesn't require $\mr{X}$ compact Kähler and, in fact, the proof shows we don't even need $\mr{L}$ Kähler - it is enough it be smooth, compact with Hodge-to-de Rham spectral sequence degenerating on $\mr{E}_1$.\\
 $\hspace*{3mm}$Recall that a differential graded algebra $\mr{A}$ is formal if $\mr{A}\simeq \mr{HA}$ as differential graded algebras. By Kadeishvili's theorem the cohomology of a differential graded algebra is naturally an $\Aa$-algebra. The $\Aa$-structure on $\mr{HA}$ measures failure of formality for $\mr{A}$. Kähler formality of Deligne et al. (see \cite{Deligne1975}) says that $\Omega^{*}(\mr{L},\C)$ is a formal differential graded algebra, hence $\mr{H}^{*}(\mr{L}/\C)$ is formal as an $\Aa$-algebra. In light of $\mr{H}^{k}(\mr{L}/\C) = \oplus_{p,q}\mr{H}^{p}(\mr{L},\Omega^{q}_\mr{L}) = \mr{Ext}^k(i_{*}\mathscr{L},i_{*}\mathscr{L})$, it is natural to ask whether the same goes for $\mr{Ext}(i_*\mathscr{L},i_*\mathscr{L})$, i.e. if the differential graded algebra $\mr{RHom}(i_*\mathscr{L},i_*\mathscr{L})$ is formal. We confirm this in the case where $\mathscr{L}$ is a square root of the canonical bundle of $\mr{L}$.
 \2
 \begin{theorem}\label{thm01}
  Let $\mr{X}/\C$ be holomorphic symplectic and let $i:\mr{L \xhookrightarrow{} X}$ be a smooth compact Kähler Lagrangian submanifold whose canonical bundle admits a square root. Then the differential graded algebra $\mr{RHom}\big(i_{*}\mr{K}_\mr{L}^{1/2},i_{*}\mr{K}_\mr{L}^{1/2}\big)$ is formal, in fact, quasi-isomorphic to the de Rham algebra $\mr{H}(\mr{L}/\C)$.
 \end{theorem}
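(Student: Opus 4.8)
The plan is to exhibit the differential graded algebra $\mr{RHom}\big(i_{*}\mr{K}_\mr{L}^{1/2},i_{*}\mr{K}_\mr{L}^{1/2}\big)$ as the special fibre ($\hbar=0$) of a one-parameter family of differential graded algebras coming from a deformation quantisation of $\mr{X}$, and to transport formality from the generic fibre. Fix a deformation-quantisation algebroid $\mathcal{W}_\mr{X}$ on the holomorphic symplectic manifold $\mr{X}$ (these exist by work of Polesello and Schapira). Because $\mr{L}$ is Lagrangian and $\mr{K}_\mr{L}$ has a square root, $i_{*}\mr{K}_\mr{L}^{1/2}$ lifts to a coherent $\mathcal{W}_\mr{X}$-module $\mathcal{M}$ which is $\hbar$-torsion-free and $\hbar$-complete and satisfies $\mathcal{M}/\hbar\mathcal{M}\simeq i_{*}\mr{K}_\mr{L}^{1/2}$; the role of the half-canonical bundle is precisely to trivialise the obstruction to the existence of such a simple module supported on $\mr{L}$. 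Representing $\mr{R}\shom_{\mathcal{W}_\mr{X}}(\mathcal{M},\mathcal{M})$ by an honest sheaf of differential graded $\C\brak$-algebras and setting $\mr{E}:=\mr{R}\Gamma\big(\mr{X},\mr{R}\shom_{\mathcal{W}_\mr{X}}(\mathcal{M},\mathcal{M})\big)$, compactness of $\mr{L}$ (on which everything is supported) makes $\mr{E}$ a differential graded $\C\brak$-algebra whose cohomology is finitely generated over $\C\brak$ in each degree.

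Two inputs pin down $\mr{E}$. First, the base-change theorem for deformation-quantisation modules (Kashiwara--Schapira) yields a quasi-isomorphism of differential graded algebras $\mr{E}\otimes^{\mr{L}}_{\C\brak}\C\simeq\mr{RHom}_{\mathscr{O}_\mr{X}}\big(i_{*}\mr{K}_\mr{L}^{1/2},i_{*}\mr{K}_\mr{L}^{1/2}\big)$, so it is enough to show that $\mr{E}$, together with this identification of its $\hbar=0$ fibre, has formal special fibre. Second, locally near a point of $\mr{L}$ one models $(\mr{X},\mr{L})$ on $(\mr{T}^{*}\mr{U},\mr{U})$ with $\mathcal{M}$ the canonical module, resolves $\mathcal{M}$ by the Koszul complex over $\mathcal{W}_\mr{X}$ on the (mutually commuting) momentum generators, and reads off $\mr{R}\shom_{\mathcal{W}_\mr{X}}(\mathcal{M},\mathcal{M})\simeq(\Omega^{\bullet}_\mr{U}\brak,\hbar\,\mr{d})$ as a sheaf of differential graded algebras. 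Gluing these local models — and here the square root of $\mr{K}_\mr{L}$ enters once more, to rigidify the identifications — produces a global quasi-isomorphism of sheaves of differential graded $\C\brak$-algebras on $\mr{L}$,
\[
\mr{R}\shom_{\mathcal{W}_\mr{X}}(\mathcal{M},\mathcal{M})\ \simeq\ \big(\Omega^{\bullet}_\mr{L}\brak,\ \hbar\,\mr{d}\big),
\]
the ``$\hbar$-twisted holomorphic de Rham complex'' of $\mr{L}$.

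The generic fibre is then easy: inverting $\hbar$ and rescaling a $p$-form by $\hbar^{-p}$ is an isomorphism of sheaves of differential graded $\C\cbrak$-algebras $(\Omega^{\bullet}_\mr{L}\cbrak,\hbar\,\mr{d})\xrightarrow{\ \sim\ }(\Omega^{\bullet}_\mr{L}\cbrak,\mr{d})$, so by the holomorphic Poincaré lemma (and the fact that $\mr{R}\Gamma$ over the compact $\mr{L}$ commutes with inverting $\hbar$) one gets $\mr{E}\otimes_{\C\brak}\C\cbrak\simeq\mr{R}\Gamma(\mr{L},\underline{\C})\otimes_{\C}\C\cbrak$, which is formal and quasi-isomorphic to $\mr{H}(\mr{L}/\C)\otimes_{\C}\C\cbrak$ by Kähler formality of $\mr{L}$ (Deligne--Griffiths--Morgan--Sullivan). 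Next one checks $\mr{H}(\mr{E})$ is $\hbar$-torsion-free: reducing the twisted de Rham complex modulo $\hbar$ gives $(\Omega^{\bullet}_\mr{L},0)$, whose hypercohomology is $\bigoplus_{p,q}\mr{H}^{q}(\mr{L},\Omega^{p}_\mr{L})$, of total dimension $\dim_{\C}\mr{H}(\mr{L}/\C)$ by the Hodge decomposition; this bounds $\mr{rank}_{\C\brak}\mr{H}(\mr{E})$ from above, the generic fibre bounds it from below by the same number, so $\mr{H}(\mr{E})\cong\mr{H}(\mr{L}/\C)\brak$ is free — and, as a by-product, the local-to-global $\mr{Ext}$ spectral sequence degenerates, re-proving, for $\mathscr{L}=\mr{K}_\mr{L}^{1/2}$, the degeneration discussed above without the compact-hyperkähler hypothesis.

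The remaining step, and the principal obstacle, is to pass from formality of the generic fibre $\mr{E}\otimes_{\C\brak}\C\cbrak$ to formality of the special fibre $\mr{E}\otimes^{\mr{L}}_{\C\brak}\C$; this is not automatic, and it is exactly what the auxiliary results on formality in families of $\Aa$-modules are for. One passes to a minimal $\Aa$-model $(\mr{A},\{\mr{m}_{k}\})$ of $\mr{E}$ over $\C\brak$ with $\mr{A}=\mr{H}(\mr{L}/\C)\brak$ free; the successive obstructions to gauging away $\mr{m}_{3},\mr{m}_{4},\dots$ live in graded pieces of the Hochschild cohomology of $\mr{A}$ relative to $\C\brak$, which is $\hbar$-torsion-free because $\mr{A}$ is $\C\brak$-free; these classes vanish after inverting $\hbar$ since the generic fibre is formal, so by torsion-freeness they vanish over $\C\brak$, whence $\mr{E}$ is formal over $\C\brak$, and reducing the resulting $\Aa$-isomorphism modulo $\hbar$ (harmless since $\mr{A}$ is free) gives the theorem. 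The real work sits here — the convergence of the inductive gauging, and the completed tensor products that pervade the argument — which is the content of the ``families'' part of the paper. Note that the Kähler hypothesis on $\mr{L}$ is used only for its Hodge-theoretic consequences (the Hodge decomposition in the dimension count and the formality input on the generic fibre), in keeping with the earlier remark that smoothness, compactness and suitable degeneration of the Hodge-to-de Rham spectral sequence suffice.
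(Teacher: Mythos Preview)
Your overall architecture matches the paper's: deform $\mr{RHom}\big(i_*\mr{K}_\mr{L}^{1/2},i_*\mr{K}_\mr{L}^{1/2}\big)$ over $\C\brak$ using a simple DQ-module along $\mr{L}$, identify the generic fibre with de Rham cohomology, use the dimension count to get freeness of $\mr{H}(\mr{E})$ over $\C\brak$, and then propagate formality from the generic fibre via Kaledin--Lunts. But there is a genuine gap at the crucial step.

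You assert that the Hochschild cohomology of the minimal model $\mr{A}=\mr{H}(\mr{L}/\C)\brak$ is ``$\hbar$-torsion-free because $\mr{A}$ is $\C\brak$-free.'' This does not follow. Freeness of $\mr{A}$ makes the Hochschild \emph{cochains} $\mr{Hom}_{\C\brak}(\mr{A}^{\otimes p},\mr{A})$ free, but the cohomology of a complex of free $\C\brak$-modules can easily have torsion (e.g.\ $\C\brak\xrightarrow{\hbar}\C\brak$). The Kaledin--Lunts criterion requires precisely this torsion-freeness of $\mr{HH}$, so you cannot simply invoke it. The paper closes this gap with a two-sided semicontinuity argument: the DQ family gives $\dim_\C \mr{HH}^{p,q}\big(\mr{H}(\mr{L}/\C)\big)\le \dim_\C \mr{HH}^{p,q}\big(\mr{Ext}(i_*\mr{K}_\mr{L}^{1/2},i_*\mr{K}_\mr{L}^{1/2})\big)$, while degeneration of the local-to-global $\mr{Ext}$ spectral sequence gives a filtration on $\mr{Ext}(i_*\mr{K}_\mr{L}^{1/2},i_*\mr{K}_\mr{L}^{1/2})$ with associated graded $\mr{H}(\mr{L}/\C)$, and the completed Rees algebra of that filtration is a deformation running the other way, yielding the reverse inequality. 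Equality of dimensions at both fibres then forces $\mr{HH}^{p,q}$ of the family to be free over $\C\brak$ (by the same lemma you used for $\mr{H}(\mr{E})$), and only then does Kaledin--Lunts apply. Without this Rees step your argument is incomplete.

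A secondary point: you claim a \emph{global} quasi-isomorphism $\mr{R}\shom_{\mathcal{W}_\mr{X}}(\mathcal{M},\mathcal{M})\simeq(\Omega^\bullet_\mr{L}\brak,\hbar\,\mr{d})$ obtained by ``gluing local models,'' with the square root ``rigidifying the identifications.'' The paper does not make or need this claim; it works only with the $\hbar$-localised identification $\mr{R}\shom_{\mathscr{A}^{\mr{loc}}_\mr{X}}(\mathscr{D}_\mr{L},\mathscr{D}_\mr{L})\simeq \C\cbrak_\mr{L}$, which follows from the perverse-sheaf theorem of Kashiwara--Schapira, and checks multiplicativity by a local Koszul argument. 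Your gluing assertion over $\C\brak$ would need justification (the transition functions between local Darboux models need not preserve the twisted de Rham form before inverting $\hbar$). Finally, the relevant formality-in-families input here is the Kaledin--Lunts theorem for $\Aa$-\emph{algebras}; the paper's results on $\Aa$-\emph{modules} are used for the pair-of-Lagrangians theorem, not this one.
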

\2
 $\hspace*{3mm}$Consider two smooth Lagrangians $i:\mr{L}\xhookrightarrow{} \mr{X}$, $j: \mr{M}\xhookrightarrow{} \mr{X}$ such that $\mr{L}\cap\mr{M}$ is smooth. For any choice of line bundles $\mathscr{L} \in \mr{Pic}(\mr{L})$ and $\mathscr{M} \in \mr{Pic}(\mr{M})$, we have a local-to-global $\mr{Ext}$ spectral sequence
 \begin{equation*}\mathrm{E}_{2}^{p,q}=\mr{H}^{p}(\mr{L}\cap \mr{M},\Omega^{q-c}_{\mr{L}\cap \mr{M}} \otimes \mathscr{K})\Rightarrow \mathrm{Ext}^{p+q}(i_{*}\mathscr{L},j_{*}\mathscr{M}),\end{equation*} where $\mathscr{K} \coloneqq \mr{det \mathscr{N}_{L\cap M/M}\otimes \left.\mathscr{L}^{\vee}\right|_{\mr{L\cap M}}\otimes \left.\mathscr{M}\right|_{\mr{L\cap M}}}$.
 \2
\begin{theorem}\label{thm02}
 Let $\mr{X}/\C$ be a holomorphic symplectic variety. Suppose that $i : \rm L \xhookrightarrow{} X$, $j : \mr{M \xhookrightarrow{} X}$ are smooth Lagrangians with a compact Kähler intersection $\mr{L}\cap \mr{M}$ of codimension $c$ in $\mr{L}$. Assume that $\mr{K}_{\mr{L}}^{1/2}$ and $\mr{K}_\mr{M}^{1/2}$ exist and let $\mathscr{K}_\mr{or} \coloneqq \left(\mr{ \left.K_{L}^{1/2}\right|_{\mr{L}\cap\mr{M}}\otimes \left.K_{M}^{1/2}\right|_{\mr{L}\cap\mr{M}}}\right)^{\vee} \otimes \mr{K_{\mr{L}\cap\mr{M}}}$. Then the $\mr{Ext}$ local-to-global spectral sequence  \begin{equation*}\mathrm{E}_{2}^{p,q}=\mr{H}^{p}(\mr{L}\cap \mr{M},\Omega^{q-c}_{\mr{L}\cap \mr{M}} \otimes \mathscr{K}_{\mr{or}})\Rightarrow \mathrm{Ext}^{p+q}\big(i_{*}\mr{K_{L}^{1/2}},j_{*}\mr{K_{M}^{1/2}}\big)\end{equation*} degenerates on the second page. In particular, $$\mr{Ext}^{k}\big(i_{*}\mr{K_{L}^{1/2}},j_{*}\mr{K_{M}^{1/2}}\big) = \oplus_{p,q} \mr{H}^{p}(\mr{L}\cap \mr{M},\Omega^{q-c}_{\mr{L}\cap \mr{M}} \otimes \mathscr{K}_{\mr{or}}) = \mr{H}^{k-c}(\mr{L}\cap\mr{M},\mathfrak{K}_\mr{or}),$$ where $\mathfrak{K}_\mr{or}$ is the local system corresponding to the $2$-torsion line bundle $\mathscr{K}_\mr{or}$.
\end{theorem}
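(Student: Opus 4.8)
\textbf{Proof proposal for Theorem \ref{thm02}.}
The plan is to exhibit $\mr{RHom}_{\mathscr{O}_\mr{X}}\!\big(i_*\mr{K}_\mr{L}^{1/2},\,j_*\mr{K}_\mr{M}^{1/2}\big)$ as the classical limit of a Hom-complex between deformation-quantisation modules and then to play the special fibre off against the generic one. Endow $\mr{X}$ with its canonical DQ algebroid $\mathscr{W}_\mr{X}$ over $\C\brak$. The square roots $\mr{K}_\mr{L}^{1/2}$, $\mr{K}_\mr{M}^{1/2}$ determine simple holonomic DQ-modules $\mathscr{M}_\mr{L}$, $\mathscr{M}_\mr{M}$ along $\mr{L}$, $\mr{M}$ with classical limits $\mathscr{M}_\mr{L}/\hbar\mathscr{M}_\mr{L}\simeq i_*\mr{K}_\mr{L}^{1/2}$ and $\mathscr{M}_\mr{M}/\hbar\mathscr{M}_\mr{M}\simeq j_*\mr{K}_\mr{M}^{1/2}$ (D'Agnolo--Schapira, Kashiwara--Schapira). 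Since $\mr{L}\cap\mr{M}$ is compact, the finiteness theorem for DQ-modules makes $\E\coloneqq\mr{RHom}_{\mathscr{W}_\mr{X}}(\mathscr{M}_\mr{L},\mathscr{M}_\mr{M})$ a perfect complex of $\C\brak$-modules, and flat base change gives $\E\otimes^{\mr{L}}_{\C\brak}\C\simeq\mr{RHom}_{\mathscr{O}_\mr{X}}\!\big(i_*\mr{K}_\mr{L}^{1/2},j_*\mr{K}_\mr{M}^{1/2}\big)$. Writing $\mr{H}^n(\E)\cong\C\brak^{\,a_n}\oplus T_n$ with $T_n$ the $\hbar$-torsion submodule and $t_n\coloneqq\dim_\C T_n\otimes_{\C\brak}\C=\dim_\C\mr{Tor}_1^{\C\brak}(T_n,\C)$, the universal-coefficients sequences $0\to\mr{H}^n(\E)\otimes_{\C\brak}\C\to\mr{Ext}^n\to\mr{Tor}^{\C\brak}_1(\mr{H}^{n+1}(\E),\C)\to0$ over the DVR $\C\brak$ yield $\dim_\C\mr{Ext}^n\!\big(i_*\mr{K}_\mr{L}^{1/2},j_*\mr{K}_\mr{M}^{1/2}\big)=a_n+t_n+t_{n+1}$.

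Next I would compute the generic fibre $\E\otimes_{\C\brak}\C\cbrak$. Over $\C\cbrak$ a simple DQ-module supported on a smooth Lagrangian is étale-locally of rank one, and for two such along Lagrangians meeting cleanly along the smooth locus $\mr{L}\cap\mr{M}$, codimension $c$ in $\mr{L}$, a microlocal/local-model computation should identify $\E\otimes_{\C\brak}\C\cbrak\simeq\mr{R}\Gamma(\mr{L}\cap\mr{M},\mathfrak{K}_\mr{or})\otimes_\C\C\cbrak\,[-c]$. The twisting local system is forced to be the one recording the discrepancy between the chosen half-canonical bundles and $\mr{K}_{\mr{L}\cap\mr{M}}$: the symplectic form induces a perfect pairing $\mathscr{N}_{\mr{L}\cap\mr{M}/\mr{L}}\otimes\mathscr{N}_{\mr{L}\cap\mr{M}/\mr{M}}\to\mathscr{O}_{\mr{L}\cap\mr{M}}$, whence $\mr{K}_\mr{L}|_{\mr{L}\cap\mr{M}}\otimes\mr{K}_\mr{M}|_{\mr{L}\cap\mr{M}}\cong\mr{K}_{\mr{L}\cap\mr{M}}^{\otimes2}$ by adjunction, so $\mathscr{K}_\mr{or}^{\otimes2}\cong\mathscr{O}$, $\mathscr{K}_\mr{or}$ carries its canonical flat unitary structure, and $\mathfrak{K}_\mr{or}$ is its local system of flat sections. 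Hence $a_n=\dim_\C\mr{H}^{\,n-c}(\mr{L}\cap\mr{M},\mathfrak{K}_\mr{or})$.

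Finally I would run the numerology. As $\mr{L}\cap\mr{M}$ is compact Kähler and $\mathfrak{K}_\mr{or}$ unitary, Hodge theory with coefficients gives $\bigoplus_k\mr{H}^k(\mr{L}\cap\mr{M},\mathfrak{K}_\mr{or})\cong\bigoplus_{p,q}\mr{H}^p(\mr{L}\cap\mr{M},\Omega^q_{\mr{L}\cap\mr{M}}\otimes\mathscr{K}_\mr{or})$, so the total dimension of the $\mr{E}_2$-page of the local-to-global spectral sequence equals $\sum_n a_n$. Convergence forces $\sum_n\dim_\C\mr{Ext}^n\le\sum_{p,q}\dim_\C\mr{E}_2^{p,q}=\sum_n a_n$, while the first step gave $\sum_n\dim_\C\mr{Ext}^n=\sum_n a_n+2\sum_n t_n$. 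Therefore every $t_n$ vanishes, i.e.\ $\mr{H}^\bullet(\E)$ is free over $\C\brak$, the inequality is an equality, and the spectral sequence degenerates at the second page; the final identification $\mr{Ext}^k\cong\bigoplus_{p+q=k}\mr{H}^p(\mr{L}\cap\mr{M},\Omega^{q-c}_{\mr{L}\cap\mr{M}}\otimes\mathscr{K}_\mr{or})\cong\mr{H}^{k-c}(\mr{L}\cap\mr{M},\mathfrak{K}_\mr{or})$ then follows from degeneration together with the Hodge decomposition. The crux is the second step: making the microlocal identification of the generic fibre with the twisted de Rham cohomology of $\mr{L}\cap\mr{M}$ precise, and in particular pinning the local system down to exactly $\mathfrak{K}_\mr{or}$ — this needs the classification of simple DQ-modules along a Lagrangian over $\C\cbrak$ together with a model computation (reducing $\mr{M}$, near $\mr{L}\cap\mr{M}$, to a conormal bundle inside a cotangent bundle) and careful bookkeeping of the half-density twists built into the two quantisations.
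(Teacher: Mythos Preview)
Your proposal is correct and follows essentially the same route as the paper: quantise both Lagrangians via D'Agnolo--Schapira, use Kashiwara--Schapira perfectness of $\mr{RHom}_{\mathscr{A}_\mr{X}}(\mathscr{D}^0_\mr{L},\mathscr{D}^0_\mr{M})$ over $\C\brak$, identify the generic fibre with twisted de Rham cohomology of $\mr{L}\cap\mr{M}$, and then play semicontinuity against Hodge-to-de Rham degeneration for the unitary local system $\mathfrak{K}_\mr{or}$. The paper packages the semicontinuity step as a single inequality $\dim_\C\mr{Ext}^i\ge\dim_\C\mr{H}^{i-c}(\mathfrak{K}_\mr{or})$, whereas you extract the finer statement $t_n=0$ (freeness of $\mr{H}^\bullet(\E)$) directly from the numerology --- this is harmless, and in fact the paper proves exactly this freeness separately (as a lemma) when it moves on to the formality result. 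For the step you flag as the crux, the paper does not do the local-model computation by hand but invokes the identification $\mr{R}\shom_{\mathscr{A}^{\mr{loc}}_\mr{X}}(\mathscr{D}_\mr{L},\mathscr{D}_\mr{M})[n]\simeq\C\cbrak\otimes_\C\mathscr{P}_{\mr{L},\mr{M}}$ with the Brav--Bussi--Dupont--Joyce--Szendr\H{o}i perverse sheaf on the $\mr{d}$-critical locus $\mr{L}\cap\mr{M}$, together with Bussi's computation of the canonical bundle of that $\mr{d}$-critical locus; in the smooth-intersection case this specialises to $\mathfrak{K}_\mr{or}[\dim\,\mr{L}\cap\mr{M}]$, which is how the twist is pinned down.
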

\2
\begin{remark}\label{page2}
 As above, we remark that the proof shows it is enough to assume the intersection $\mr{L}\cap \mr{M}$ smooth, compact such that its Hodge-to-de Rham spectral sequence for the local system $\mathfrak{K}_\mr{or}$ degenerates on $\mr{E}_1$.
\end{remark}
$\hspace*{3mm}$Notice that $\mr{Ext}\big(i_{*}\mr{K_{L}^{1/2}},j_{*}\mr{K_{M}^{1/2}}\big)$ is a graded module over $\mr{Ext}\big(i_{*}\mr{K_{L}^{1/2}},i_{*}\mr{K_{L}^{1/2}}\big)$. A theorem of Deligne (see \cite{MR972343}, \cite{MR1179076}) asserts that $\Omega^{*}(\mr{L}\cap\mr{M},\mathfrak{K}_\mr{or})$ is a formal dg module over $\Omega^{*}(\mr{L}\cap\mr{M},\C)$. Let $\Omega_{\bar{\partial}}^{*}$ be the complex of $\bar{\partial}$-closed forms with differential $\partial$. The diagram
\[
\begin{tikzcd}[column sep=small,row sep=small]
\Omega^{*}(\mr{L},\C) \arrow[d] & \Omega_{\bar{\partial}}^{*}(\mr{L},\C)\arrow[l] \arrow[d] \arrow[r] 
 &  \mr{H}(\mr{L}/\C) \arrow[d] \\
\Omega^{*}(\mr{L}\cap\mr{M},\C) & \Omega_{\bar{\partial}}^{*}(\mr{L}\cap\mr{M},\C) \arrow[l]  \arrow[r]  
  & \mr{H}(\mr{L}\cap\mr{M}/\C)
\end{tikzcd}
\]
shows $\Omega^{*}(\mr{L}\cap\mr{M},\mathfrak{K}_\mr{or})$ is also formal over $\Omega^{*}(\mr{L},\C)$. Hence $\mr{H}(\mr{L}\cap\mr{M},\mathfrak{K}_\mr{or})$ is formal as an $\Aa$-module over $\mr{H}(\mr{L}/\C)$. We prove the same goes for the $\mr{Ext}$ modules:
\2
\begin{theorem}\label{thm03}
 Let $\mr{X}/\C$ be holomorphic symplectic. Suppose that $i : \rm L \xhookrightarrow{} X$, $j : \mr{M \xhookrightarrow{} X}$ are compact Kähler Lagrangians with a smooth intersection. Assume that their canonical bundles admit square roots. Then $\mr{RHom}\big(i_{*}\mr{K_{L}^{1/2}}, j_{*}\mr{K_{M}^{1/2}}\big)$ is a formal differential graded module over the (formal) differential graded algebra $\mr{RHom}\big(i_{*}\mr{K_{L}^{1/2}},i_{*}\mr{K_{L}^{1/2}}\big)$. Moreover, we have a quasi-isomorphism of pairs $$\big(\mr{RHom}\big(i_{*}\mr{K_{L}^{1/2}},i_{*}\mr{K_{L}^{1/2}}\big),\mr{RHom}\big(i_{*}\mr{K_{L}^{1/2}}, j_{*}\mr{K_{M}^{1/2}}\big)\big)\simeq \big(\mr{H}(\mr{L}/\C),\mr{H}^{*-c}(\mr{L}\cap\mr{M},\mathfrak{K}_\mr{or})\big),$$ where $c$ is the codimension of $\mr{L}\cap \mr{M}$ in $\mr{L}$.
\end{theorem}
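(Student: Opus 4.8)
$\hspace*{3mm}$The plan is to reproduce the strategy behind \Cref{thm01}: deform the pair of $\mr{RHom}$-complexes into the world of deformation-quantisation modules, where it becomes the \emph{topological} cochain pair of $\mr{L}$ and $\mr{L}\cap\mr{M}$ --- formal by the theorems of Deligne et al.\ invoked above --- and then transport that formality back across $\hbar\to 0$. Concretely, fix a DQ-algebroid $\mathcal{O}_{\X}^{\hbar}$ on $\X$ and lift $i_{*}\mr{K_{L}^{1/2}}$ and $j_{*}\mr{K_{M}^{1/2}}$ to simple holonomic DQ-modules $\mathcal{M}_{\mr{L}}$, $\mathcal{M}_{\mr{M}}$ along $\mr{L}$, $\mr{M}$; the square roots of the canonical bundles are precisely the data rigidifying these lifts, exactly as in the endomorphism case. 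Let $\mathcal{P}^{\hbar}$ denote the differential graded algebra--module pair $\big(\mr{RHom}_{\mathcal{O}_{\X}^{\hbar}}(\mathcal{M}_{\mr{L}},\mathcal{M}_{\mr{L}}),\,\mr{RHom}_{\mathcal{O}_{\X}^{\hbar}}(\mathcal{M}_{\mr{L}},\mathcal{M}_{\mr{M}})\big)$ over $\C\brak$. Since $\mr{L}$, $\mr{M}$ and $\mr{L}\cap\mr{M}$ are compact, $\hbar$-completeness and $\hbar$-flatness of holonomic DQ-modules yield a base-change quasi-isomorphism of pairs
\[
\mathcal{P}^{\hbar}\otimes^{\mathbb{L}}_{\C\brak}\C\ \simeq\ \big(\mr{RHom}(i_{*}\mr{K_{L}^{1/2}},i_{*}\mr{K_{L}^{1/2}}),\,\mr{RHom}(i_{*}\mr{K_{L}^{1/2}},j_{*}\mr{K_{M}^{1/2}})\big),
\]
so the pair we must analyse is the $\hbar=0$ fibre of $\mathcal{P}^{\hbar}$.

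$\hspace*{3mm}$Next I would compute the generic fibre $\mathcal{P}^{\hbar}\otimes_{\C\brak}\C\cbrak$. Applying the de Rham (Riemann--Hilbert) functor for DQ-modules --- the $\C\cbrak$-linear analogue of the de Rham functor on holonomic $\mathcal{D}$-modules --- the modules $\mathcal{M}_{\mr{L}}$, $\mathcal{M}_{\mr{M}}$ go to the constant sheaves $\C_{\mr{L}}$, $\C_{\mr{M}}$ up to their half-density twists, so $\mathcal{P}^{\hbar}\otimes_{\C\brak}\C\cbrak$ is identified with $\big(\mr{R}\Gamma(\mr{L};\C),\,\mr{R}\Gamma(\mr{L}\cap\mr{M};\mathfrak{K}_{\mr{or}})[-c]\big)\otimes_{\C}\C\cbrak$, the $2$-torsion local system $\mathfrak{K}_{\mr{or}}$ appearing exactly as the discrepancy between the two orientation twists --- this is the incarnation of \Cref{thm02} on the DQ side. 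Because $\mr{L}$ and $\mr{L}\cap\mr{M}$ are compact Kähler, the topological pair $\big(\Omega^{*}(\mr{L},\C),\,\Omega^{*-c}(\mr{L}\cap\mr{M},\mathfrak{K}_{\mr{or}})\big)$ is formal by the theorem of Deligne et al.\ recalled before the statement (the $dd^{c}$-diagram above; see \cite{Deligne1975}, \cite{MR972343}, \cite{MR1179076}), hence $\mathcal{P}^{\hbar}\otimes_{\C\brak}\C\cbrak$ is a formal differential graded algebra--module pair over the field $\C\cbrak$, quasi-isomorphic to $\big(\mr{H}(\mr{L}/\C),\,\mr{H}^{*-c}(\mr{L}\cap\mr{M},\mathfrak{K}_{\mr{or}})\big)\otimes_{\C}\C\cbrak$.

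$\hspace*{3mm}$It remains to descend to $\hbar=0$. By \Cref{thm01} and \Cref{thm02}, the $\C$-dimensions of $\mr{Ext}^{*}(i_{*}\mr{K_{L}^{1/2}},i_{*}\mr{K_{L}^{1/2}})$ and of $\mr{Ext}^{*}(i_{*}\mr{K_{L}^{1/2}},j_{*}\mr{K_{M}^{1/2}})$ coincide with the $\C\cbrak$-dimensions of the cohomology of $\mathcal{P}^{\hbar}\otimes_{\C\brak}\C\cbrak$ just computed; upper semicontinuity over the discrete valuation ring $\C\brak$ forces the cohomology of $\mathcal{P}^{\hbar}$ to be $\hbar$-torsion-free, hence free. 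Now the auxiliary results on formality in families of $\Aa$-modules apply: an $\hbar$-flat differential graded algebra--module pair over $\C\brak$ whose generic fibre over $\C\cbrak$ is formal has formal special fibre over $\C$ --- a minimal $\Aa$-model of $\mathcal{P}^{\hbar}$ can be built $\C\brak$-linearly once its cohomology is free, is again $\hbar$-flat, has higher operations that vanish after inverting $\hbar$ by the previous step and hence vanish outright, so the same holds for their reductions mod $\hbar$. Combined with the base-change identification, the classical pair $\big(\mr{RHom}(i_{*}\mr{K_{L}^{1/2}},i_{*}\mr{K_{L}^{1/2}}),\mr{RHom}(i_{*}\mr{K_{L}^{1/2}},j_{*}\mr{K_{M}^{1/2}})\big)$ is formal; \Cref{thm01} and \Cref{thm02} identify its cohomology as $\big(\mr{H}(\mr{L}/\C),\,\mr{H}^{*-c}(\mr{L}\cap\mr{M},\mathfrak{K}_{\mr{or}})\big)$, with the module structure transported along the chain of quasi-isomorphisms, which is the asserted quasi-isomorphism of pairs.

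$\hspace*{3mm}$\textbf{The main obstacle} is twofold. First, the generic-fibre computation requires a workable de Rham / Riemann--Hilbert correspondence for DQ-modules in this possibly non-proper, twisted setting, with careful bookkeeping of the half-density and orientation twists so that the coefficient local system comes out exactly as $\mathfrak{K}_{\mr{or}}$; this is the geometric heart of the argument. Second, the descent is delicate precisely because formality is \emph{not} a generically-open condition on a family of $\Aa$-algebras or modules without an $\hbar$-flatness input --- it is here that the degeneration statements \Cref{thm01} and \Cref{thm02} and the family-formality results are indispensable, and making the $\C\brak$-linear minimal-model argument fully rigorous (its existence, its $\hbar$-flatness, and the compatibility of homotopy transfer with specialisation in both directions) is the technical core.
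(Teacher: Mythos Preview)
Your overall strategy matches the paper's exactly: quantise to simple holonomic DQ-modules, identify the generic fibre of the resulting pair over $\C\brak$ with the topological pair $(\mr{R}\Gamma(\mr{L};\C),\mr{R}\Gamma(\mr{L}\cap\mr{M};\mathfrak{K}_\mr{or})[-c])$, invoke Deligne's formality for the latter, use degeneration to get freeness of cohomology over $\C\brak$, and descend. The geometric input (your first ``obstacle'') is indeed \cref{pervref}, and you have correctly located it.

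There is, however, a genuine gap in your descent step. You write that the minimal $\Aa$-model ``has higher operations that vanish after inverting $\hbar$ by the previous step and hence vanish outright''. This is not what generic formality gives you: formality of the generic fibre means the higher operations become \emph{Hochschild coboundaries} over $\C\cbrak$, not that they vanish. Whether a class that is a coboundary generically is a coboundary over $\C\brak$ is exactly a torsion question for the Hochschild cohomology $\mr{HH}^{n,1-n}\big(\mr{A},\mr{M}(2)\big)$ --- this is the hypothesis of \cref{modformality}, the module analogue of Kaledin's theorem, and it is \emph{not} automatic from freeness of $\mr{H}(\mathcal{P}^{\hbar})$. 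The paper verifies it by a two-sided semicontinuity trick: the DQ family deforms $\mr{Ext}\big(i_*\mr{K}_\mr{L}^{1/2},j_*\mr{K}_\mr{M}^{1/2}\big)$ to $\mr{H}^{*-c}(\mr{L}\cap\mr{M},\mathfrak{K}_\mr{or})$, while the Rees construction on the spectral-sequence filtration deforms in the \emph{opposite} direction; each gives one inequality on $\dim\mr{HH}^{p,q}$, forcing equality and hence freeness of $\mr{HH}^{p,q}$ over $\C\brak$ by \cref{freelemma}. Only then does \cref{modformality} apply. Your outline skips this entirely.

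A second, smaller gap: the final identification of the pair with $\big(\mr{H}(\mr{L}/\C),\mr{H}^{*-c}(\mr{L}\cap\mr{M},\mathfrak{K}_\mr{or})\big)$ is not just the degeneration statements \cref{thm01}, \cref{thm02} --- those give the underlying graded vector spaces, not the $\mr{H}(\mr{L}/\C)$-module structure. The paper closes this with a Gerstenhaber-type rigidity argument: the deformation $\mr{Ext}_{\mathscr{A}_\mr{X}}\big(\mathscr{D}^{0}_\mr{L},\mathscr{D}^{0}_\mr{M}\big)$ is generically constant and leaves $\dim\mr{HH}^{1,0}$ unchanged, hence must be trivial. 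You should flag this as a separate step.
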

\2
\begin{remark}A few observations regarding \cref{thm03}:
 \begin{enumerate}
  \item The proof shows that it is enough to assume $\mr{L}$ compact K\"{a}hler and $\mr{M}$ smooth such that $\mr{L}\cap\mr{M}$ is smooth.
  \item A variant for the dg module structure over $\mr{RHom}\big(j_*\mr{K}_\mr{M}^{1/2},j_*\mr{K}_\mr{M}^{1/2}\big)$ can be formulated, reversing the assumptions on $\mr{L}$ and $\mr{M}$ in the part $1.$ of the remark.
 \end{enumerate}
\end{remark}

\paragraph{Method of proof.}
The proof of \cref{thm01} involves two main ingredients. We first observe that the dg algebra $\mr{RHom}\big(i_{*}\mr{K_{L}^{1/2}}, i_{*}\mr{K_{L}^{1/2}}\big)$ can be deformed over $\C\brak$ to the de Rham complex - this involves results of Schapira et al. on deformation quantisation \cite{MR2331247}, \cite{10.2307/40068123}, hence the square root of the canonical bundle. The deformation is the de Rham complex $\Omega^{*}(\mr{L},\C)$ over the generic point, therefore it is generically formal by K\"{a}hler formality of \cite{Deligne1975}. Applying semicontinuity yields degeneration of the spectral sequence, proving the degeneration for $\mr{K}_\mr{L}^{1/2}$. The formality requires a second ingredient - it is a theorem of Kaledin that generically formal dg algebras are formal (under certain conditions which our family satisfies).\\
$\hspace*{3mm}$The proofs of \cref{thm02} and \cref{thm03} follow the same lines, however, to prove the formality, we need \cref{modformality} - the analogue to the result of Kaledin for $\Aa$-modules.\\
$\hspace*{3mm}$Kaledin in \cite{MR2372207} treats formality of dg algebras as triviality of the normal cone deformation. Our approach to formality of $\Aa$-modules is also motivated by the deformation to the normal cone as we now explain, although we phrase our results without reference to deformation theory, opting to use the language of $\mr{A}_n$-algebras and modules instead.\\ 
$\hspace*{3mm}$Let $\mr{A}$ be a graded algebra over $\mr{R}$. Consider a minimal $\Aa$-module $\mr{M}$ over $\mr{A}$. Assume $\mr{A,M}$ projective over $\mr{R}$. Let $\tilde{\mr A} = \mr{A}[h]$ be the trivial deformation of $\mr{A}$ over $\mr{R}[h]$. Consider the graded $\mr{R}[h]$-module $\mr{M}[h]$. Then $(m_2^{\mr M},m_{3}^{\mr M}h,m_4^{\mr M}h^2,\cdots)$ turns $\mr{M}[h]$ into an $\Aa$-module over $\mr{A}[h]$ since the defining relations $\eqref{defeq3}$ for $\Aa$-modules are homogeneous. Letting $\tilde{\mr{M}}$ be the so defined $\Aa$-module, observe that the general fibre is $\mr M$. We write $\mr{M}(2)$ for the minimal (formal) $\Aa$-module $(\mr{M},(m^{\mr{M}}_2,0,0,\cdots)$, so the central fibre of $\tilde{\mr M}$ is $\tilde{\mr{M}}/h = \mr{M}(2)$. In light of this, the following definition seems natural.
\2
\begin{definition}
 The $\Aa$-module $\tilde{\mr M}$ is the deformation of $\mr M$ to the normal cone.
\end{definition}
\1
$\hspace*{3mm}$One should think of $\tilde{\mr{M}}$ as an $\Aa$-deformation of $\mr{M}(2)$ to $\mr{M}$. Notice that the formality of the $\Aa$-module $\tilde{\mr{M}}$ is the same as triviality of $\tilde{\mr{M}}$ as a deformation, i.e. in either case we are asking for a quasi-isomorphism  $\tilde{\mr{M}} \simeq \mr{M}(2)[h]$. Reducing modulo $h-1$, we see that this implies that $\mr{M}$ is formal. The converse is also true. Hence formality of $\mr{M}$ is equivalent to triviality of the deformation $\tilde{\mr{M}}$, so we can use obstruction theory and cohomology, standard deformation theory tools, to find formality criteria for $\Aa$-modules.
\paragraph{\textbf{Context.}}Solomon and Verbitsky \cite{solomon_verbitsky_2019} study the Fukaya category of $\mr{I}$-holomorphic graded spin Lagrangians in a hyperkähler variety $(\mr{X},\mr{I,J,K},g)$, equipped with the symplectic form $\omega_\mr{J}=g(\mr{J}\cdot,\cdot)$. Recall that spin is needed to set up Floer theory and is equivalent to choosing square roots of the canonical bundles in the complex case. When $\mr{L}\cap\mr{M}$ is smooth, they show that the Floer coboundary operator $\mu_1$ on $\mr{CF}(\mr{L,M})$ coincides with the de Rham differential, hence $\mr{HF}(\mr{L},\mr{M})$ is the de Rham cohomology of $\mr{L}\cap \mr{M}$, up to Maslov index shifts and tensoring by $\Lambda$. Moreover, for $\mr{CF}(\mr{L,L})$, $\mu_2$ is the wedge product of differential forms up to sign, while $\mu_k=0$ for $k \ge 3$. Thus, in the compact case, they recover a formality result of Ivan Smith: the proof of Kähler formality as in \cite{Deligne1975} shows that the Floer $\Aa$-algebra $\mr{CF}(\mr{L},\mr{L})$ is formal. We note that their results might be thought of as mirror to ours.\\
$\hspace*{3mm}$More generally, Solomon and Verbitsky consider a collection of $\mr{I}$-holomorphic graded spin compact Lagrangians $\mathcal{L}$ and define a Fukaya $\Aa$-category $\widehat{\mathcal{A}}_\mathcal{L}$. In light of the formality of $\mr{CF}(\mr{L,L})$, it's natural to ask if $\widehat{\mathcal{A}}_\mathcal{L}$ is a formal $\Aa$-category. It is known to be intrinsically formal in the case of the $\mr{A}_k$-Milnor fibre by \cite{MR1831820} and \cite{MR3486414} shows formality of $\widehat{\mathcal{A}}_\mathcal{L}$ in the case of a transverse nilpotent slice of the adjoint quotient $\mathfrak{sl}_2k(\C) \to \C^{2k-1}$ and a distinguished (finite) collection of Lagrangians defined in \cite{MR2254624}. We answer a similar question in the setting of virtual de Rham cohomology - see \cref{bfconj} below.\\
$\hspace*{3mm}$The work of Kapustin \cite{2004IJGMM..01...49K} gives physics motivation that the Fukaya category of a hyperkähler variety $(\mr{X},\mr{I,J,K})$ with symplectic form $\omega_\mr{J}$ should be equivalent to the category of DQ modules on the holomorphic symplectic manifold $(\mr{X},\mr{I},\sigma_\mr{I} = \omega_\mr{J}+i\omega_\mr{K})$. Following this observation, Solomon and Verbitsky speculate that the category $\widehat{\mathcal{A}}_\mathcal{L}$ should be equivalent, in some sense, to the full subcategory $\mathcal{D}_\mathcal{L}$ of the category of DQ modules on $\mr{X}$ whose objects are simple holonomic modules $\mathscr{D}_\mr{L}$ along $\mr{L} \in \mathcal{L}$.\\
$\hspace*{3mm}$There is also \cite{MR2641169} where Behrend and Fantechi sketch the construction of a differential graded category $\mathcal{C}$ of Lagrangians in a holomorphic symplectic variety $\mr{X}$. This construction depends on their constructible virtual de Rham complex $\big(\mathscr{E} \coloneqq \ext\big(i_*\mr{K}^{1/2}_\mr{L},j_*\mr{K}^{1/2}_\mr{M}\big),\mr{d}\big)$. Locally $\mr{X}$ is the cotangent bundle of $\mr{M}$ and $\mr{L}$ is given by the graph of $\mr{d}f$ for some $f \in \Gamma(\mr{M},\mathscr{O}_{\mr{M}})$, hence $\mr{L}\cap \mr{M}$ is the critical locus $\mr{crit}f$. Then the sheaves $\mathscr{E}$ are the cohomology sheaves of $(\Omega_\mr{M},\mr{d}f\wedge)$. Since $\mr{d}f\wedge$ anticommutes with the de Rham differential $\mr{d}$, we see that $\mr{d}$ descends to a differential on $\mathscr{E}$, still denoted $\mr{d}$. The main result of \cite{MR2641169} is that these locally defined differentials glue. The cohomology of the morphism complexes $\mr{Hom}_{\mathcal{C}}(\mr{L},\mr{M})$ is given by the virtual de Rham cohomology $\mr{R}\Gamma(\mathscr{E},\mr{d})$ of the intersection $\mr{L}\cap \mr{M}$. There is a spectral sequence computing $\mr{R}\Gamma(\mathscr{E},\mr{d})$: \begin{equation}\label{hypercohss}\mr{E}_1^{p,q} = \mr{H}^{q}(\mr{X},\mathscr{E}^{p}) \Rightarrow \mr{R}^{p+q}\Gamma(\mathscr{E},\mr{d}).\end{equation} When $\mr{L}\cap \mr{M}$ is compact Kähler, it degenerates by Hodge theory, hence our results prove a corrected version of \cite[Conjecture~5.8]{MR2641169}:
\2
\begin{proposition}\label{bfconj}
 Let $\mr{X}/\C$ be a holomorphic symplectic variety. Suppose that $i : \rm L \xhookrightarrow{} X$ and $j : \mr{M \xhookrightarrow{} X}$ are smooth Lagrangians such that $\mr{K}_{\mr{L}}^{1/2}$ and $\mr{K}_\mr{M}^{1/2}$ exist and $\mr{L}\cap \mr{M}$ is smooth compact. Assuming that the spectral sequence \eqref{hypercohss} degenerates on $\mr{E}_1$, e.g. if $\mr{L}\cap\mr{M}$ is furthermore K\"{a}hler, we have $$\mr{R}^{k}\Gamma(\mathscr{E},\mr{d}) = \mr{Ext}^{k}\big(i_{*}\mr{K}_\mr{L}^{1/2},j_*\mr{K}_\mr{M}^{1/2}\big).$$
\end{proposition}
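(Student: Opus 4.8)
The plan is to identify both sides with the cohomology $\mr{H}^{k-c}(\mr{L}\cap\mr{M},\mathfrak{K}_\mr{or})$ of the $2$-torsion local system appearing in \cref{thm02}. The right-hand side needs nothing new: by the very setup of \cref{thm02}, the sheaves $\mathscr{E}^{p}=\ext^{p}(i_*\mr{K}_\mr{L}^{1/2},j_*\mr{K}_\mr{M}^{1/2})$ — which are the cohomology sheaves of $\mr{RHom}(i_*\mr{K}_\mr{L}^{1/2},j_*\mr{K}_\mr{M}^{1/2})$, hence the input of the local-to-global spectral sequence — are $\Omega^{p-c}_{\mr{L}\cap\mr{M}}\otimes\mathscr{K}_\mr{or}$, supported on $\mr{L}\cap\mr{M}$, and the abutment of that spectral sequence is $\mr{Ext}^{k}(i_*\mr{K}_\mr{L}^{1/2},j_*\mr{K}_\mr{M}^{1/2})=\mr{H}^{k-c}(\mr{L}\cap\mr{M},\mathfrak{K}_\mr{or})$ whenever the hypothesis of \cref{page2} holds. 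I will check below that this hypothesis coincides with the degeneration of \eqref{hypercohss} assumed here, because \eqref{hypercohss} turns out to be precisely the Hodge-to-de Rham spectral sequence of $\mathfrak{K}_\mr{or}$ on $\mr{L}\cap\mr{M}$.

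The content is therefore in identifying the Behrend-Fantechi complex $(\mathscr{E},\mr{d})$. I would work in a local model $\mr{X}=\mr{T}^*\mr{M}$, $\mr{L}=\Gamma_{\mr{d}f}$, $\mr{L}\cap\mr{M}=\mr{crit}f$: a Koszul resolution of $i_*\mathscr{O}_\mr{L}$ identifies $\mr{RHom}(i_*\mathscr{O}_\mr{L},j_*\mathscr{O}_\mr{M})$ with $(\Omega^{\bullet}_\mr{M},\mr{d}f\wedge)$, and since $\mr{crit}f$ is smooth one puts $f$ in Morse-Bott normal form and reads off $\mathcal{H}^{p}(\Omega^{\bullet}_\mr{M},\mr{d}f\wedge)=\Omega^{p-c}_{\mr{crit}f}\otimes(\text{determinant of the transverse directions})$, the twist globalising — after the $\mr{K}^{1/2}$ corrections, whose whole purpose is to fix exactly this sign — to $\mathscr{K}_\mr{or}$. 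The honest de Rham differential on $\Omega^{\bullet}_\mr{M}$ anticommutes with $\mr{d}f\wedge$, hence descends to the cohomology sheaves; on $\Omega^{\bullet-c}_{\mr{L}\cap\mr{M}}\otimes\mathscr{K}_\mr{or}$ it is the de Rham differential of $\mr{L}\cap\mr{M}$ coupled to a flat connection $\nabla$ on $\mathscr{K}_\mr{or}$. Because $\mathscr{K}_\mr{or}$ is an algebraic line bundle of order $2$ it carries a canonical flat structure with finite monodromy, and one checks $\nabla$ is that one — locally because there $\nabla$ is literally the untwisted de Rham differential. Consequently $(\mathscr{E},\mr{d})\simeq\Omega^{\bullet-c}_{\mr{L}\cap\mr{M}}(\mathfrak{K}_\mr{or})$, the holomorphic de Rham complex of the flat bundle shifted by $c$, and \eqref{hypercohss} is its Hodge-to-de Rham spectral sequence, as claimed.

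Granting this, the proof closes at once: the holomorphic de Rham theorem for a flat bundle gives $\mr{R}^{k}\Gamma(\mathscr{E},\mr{d})=\mr{R}^{k}\Gamma\big(\mr{L}\cap\mr{M},\Omega^{\bullet-c}_{\mr{L}\cap\mr{M}}(\mathfrak{K}_\mr{or})\big)=\mr{H}^{k-c}(\mr{L}\cap\mr{M},\mathfrak{K}_\mr{or})$, which matches $\mr{Ext}^{k}(i_*\mr{K}_\mr{L}^{1/2},j_*\mr{K}_\mr{M}^{1/2})$ by the first paragraph. One may alternatively bypass pinning down $\nabla$ and argue by dimensions: degeneration of \eqref{hypercohss} forces $\dim\mr{R}^{k}\Gamma(\mathscr{E},\mr{d})=\sum_{p+q=k}\dim\mr{H}^{q}(\mr{L}\cap\mr{M},\Omega^{p-c}_{\mr{L}\cap\mr{M}}\otimes\mathscr{K}_\mr{or})$, which equals $\dim\mr{Ext}^{k}(i_*\mr{K}_\mr{L}^{1/2},j_*\mr{K}_\mr{M}^{1/2})$ by the degeneration in \cref{thm02}, yielding a (non-canonical) isomorphism. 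The main obstacle is the middle step — matching Behrend-Fantechi's $\mr{d}$, defined by gluing local de Rham differentials, with the de Rham differential of the flat structure on $\mathscr{K}_\mr{or}$, and tracking the shift by $c$ and the orientation twist consistently with the normalisations of \cref{thm02}. This is bookkeeping rather than a deep difficulty, but it is exactly where the orientation and sign issues that force the presence of the $\mr{K}^{1/2}$'s have to be reconciled.
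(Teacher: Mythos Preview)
Your proposal is correct and follows the same line the paper indicates: the paper does not give a separate proof of this proposition but derives it from \cref{thm02} together with the assumed degeneration of \eqref{hypercohss}, exactly via the dimension count you give as your alternative argument --- both sides have the same graded dimension $\sum_{p+q=k}\dim\mr{H}^{q}(\mr{L}\cap\mr{M},\Omega^{p-c}_{\mr{L}\cap\mr{M}}\otimes\mathscr{K}_{\mr{or}})$. Your added identification of $(\mathscr{E},\mr{d})$ with the shifted twisted de Rham complex $\Omega^{\bullet-c}_{\mr{L}\cap\mr{M}}(\mathfrak{K}_{\mr{or}})$ is what the paper tacitly uses when it says \eqref{hypercohss} ``degenerates by Hodge theory'' and is needed to see that the degeneration hypothesis here matches that of \cref{page2}; the paper leaves this step implicit, so your discussion is a useful clarification rather than a different route.
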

\begin{remark}
This conjecture is the analogue of the formality of $\widehat{\mathcal{A}}_\mathcal{L}$ in the virtual setting.
\end{remark}

\paragraph{Plan of paper.}
In \cref{sec1} we start by reviewing general results on $\Aa$-algebras and conclude with the formality theorems of Kaledin \cite{MR2372207}. In the next paragraph we define $\Aa$-modules and recall standard results such as Kadeishvili's theorem on minimal models. Then we have a paragraph on (bi-graded) Hochschild cohomology of modules over graded algebras. It culminates in some results on base-change for Hochschild cohomology, mirroring statements in \cite{MR2578584}. After that, we develop obstruction theory for extending $\mr{A}_n$-modules to $\mr{A}_{n+1}$-modules as well as $\mr{A}_n$-morphisms to $\mr{A}_{n+1}$-morphisms between modules - this is motivated by similar ideas in \cite{MR1480721} for $\mr{L}_\infty$-algebras and \cite{lefvrehasegawa2003sur} dealing with $\Aa$-algebras. The last paragraph contains our results on formality of $\Aa$-modules.\\
$\hspace*{3mm}$In the next section, \cref{sec2}, we recall results of \cite{bbdjs} on perverse sheaves on $\mr{d}$-critical loci - a structure that exists on the intersection of two Lagrangians. Then we have a reminder on deformation quantisation modules, following \cite{MR2331247}, \cite{MR3012169}. In particular, we compare the compatibility of \cite{MR2331247} to the results by Ginzburg et al. in \cite{MR3437831}. We conclude by relating perverse sheaves on Lagrangian intersections and simple holonomic DQ modules.\\
$\hspace*{3mm}$In \cref{sec3} we start by applying deformation quantisation to prove the degeneration of the local-to-global $\mr{Ext}$ spectral sequence, for square roots of the canonical bundle, in both cases of single Lagrangian and a pair of cleanly intersecting Lagrangians. We conclude with our main results on formality.

\paragraph{Conventions.}
We work with $\Aa$-algebras and modules over a (unital) ring $\mr{R}$ and we shall assume that $\mr{R}$ is a commutative algebra over a field $k$ of characteristic $0$.

\paragraph{Acknowledgements.}
I would like to thank my supervisor Richard Thomas for suggesting the formality problems discussed here, countless helpful discussions, suggestions and corrections. Thanks to François Petit and Pierre Schapira for sharing their expertise in DQ modules and to Jake Solomon for useful comments. This work has been supported by EPSRC [EP/R513052/1], President's PhD Scholarship, Imperial College London. 

\section{\texorpdfstring{$\Aa$}--algebras}\label{sec1}

$\hspace*{3mm}$A good general reference for $\Aa$-algebras is \cite{lefvrehasegawa2003sur}. We recall $\Aa$-algebras over a ring $\mr{R}$, their bar construction, Kadeishvili's theorem on minimal models and formality results of Kaledin-Lunts. Then we review definitions and standard results on $\Aa$-modules. The final paragraphs of the section contain Hochschild cohomology, obstruction theory and our results on formality of $\Aa$-modules.
\paragraph{\texorpdfstring{$\Aa$}--algebras.}

\begin{definition}
 Let $n \in \mathbf{N} \cup \{\infty \}$. An $\mr A_n$-algebra is a graded $\mr{R}$-module $\mr{A}$ equipped with a family of $\mr{R}$-linear morphisms $$m_{i}: \mr{A}^{\otimes i} \to \mr{A}$$ of degree $2-i$ for $1 \le i \le n$ such that for all $m \le n$ we have
 \begin{equation}\label{defeq1} \tag{$*_m$}
 \sum_{j+k+l=m} (-1)^{jk+l}m_{j+1+l}(\mr{id}^{\otimes j}\otimes m_{k}\otimes \mr{id}^{\otimes l})=0.
 \end{equation}
\end{definition}
\begin{remark}
\begin{enumerate}
 \item If $\mr{m}_{i}=0$ for all $i\not=2$, then $\mr{A}$ is a graded algebra.
 \item If $m_i=0$ for all $i\not=1,2$, $\mr{A}$ is a differential graded algebra.
\end{enumerate}
\end{remark}
\2
\begin{example}
 Let $\mr{A}$ be an $\Aa$-algebra. Then the first relation is $$(*_1) \quad m_1m_1 =0,$$ i.e. $m_1$ is a differential. The second relation is $$(*_2) \quad m_1m_2 = m_2(m_1\otimes \mr{id} + \mr{id} \otimes m_1),$$ meaning that $m_1$ is a derivation for the multiplication $m_2$. The third equation shows $m_2$ is associative up to the homotopy $m_3$: $$(*_3) \quad m_2(m_2\otimes \mr{id} - \mr{id}\otimes m_2) = m_1m_3+m_3(m_1\otimes \mr{id}^{\otimes 2} + \mr{id}\otimes m_1 \otimes \mr{id} + \mr{id}^{\otimes 2}\otimes m_1).$$ In particular, any minimal $\mr{A}_n$-algebra is associative. The cohomology of an $\mr{A}_n$-algebra is a graded associative algebra.
\end{example}
\2
\begin{definition}
 Let $n \in \mathbf{N} \cup \{\infty \}$. Let $\mr{A}, \mr{B}$ be $\mr A_n$-algebras over $\mr{R}$. An $\mr{A}_n$-morphism $f:\mr{A} \to \mr{B}$ between $\mr A_n$-algebras is a family of $\mr{R}$-linear morphisms $$f_{i} : \mr{A}^{\otimes i} \to \mr{B}$$ of degree $1-i$ for $1 \le i \le n$ such that for all $m\le n$ we have \begin{equation}\label{defeq2}\tag{$**_m$}
 \sum_{j+k+l=m} (-1)^{jk+l}f_{j+1+l}(\mr{id}^{\otimes j}\otimes m_{k}\otimes \mr{id}^{\otimes l}) =  \sum_{i_{1}+\cdots +i_{r}=m}(-1)^sm_{r}(f_{i_1}\otimes \cdots \otimes f_{i_r}),                                                                                                                                                                                                                                                                                                                                                      \end{equation}
 where we set $$s = \sum_{2\le u\le r}\big((1-i_u)\sum_{1\le v \le u} i_v\big).$$
The composition of $f : \mr{A} \to \mr{B}$ and $g : \mr{B} \to \mr{C}$ is defined by $$(g \circ f)_n = \sum_r \sum_{i_1 +\cdots+i_r=n} (-1)^sg_{r}(f_{i_{1}}\otimes \cdots \otimes f_{i_{r}}).$$ 
\end{definition}
\begin{example}
 Let $f : \mr{A} \to \mr{B}$ be an $\Aa$-morphism. Then $$(**_1) \quad f_1m_1=m_1f_1,$$ that is, $f_1$ is a morphism of complexes, i.e. $\mr{A}_1$-morphisms are just morphisms of complexes. The second relation is $$(**_2) \quad f_1m_2 = m_2(f_1\otimes f_1) + m_1f_2+f_2(m_1\otimes \mr{id} +\mr{id}\otimes m_1),$$ measuring the compatibility of $f_1$ with the multiplications of $\mr{A}$ and $\mr{B}$. 
\end{example}
\2
\begin{remark}
We denote the category of $\mr A_n$-algebras and $\mr A_n$-morphisms by $\mr{Alg}_{n}$ and the category of $\Aa$-algebras by $\mr{Alg}_{\infty}$. The category of differential graded algebras is a non-full subcategory of $\mr{Alg}_{\infty}$.
\end{remark}
\2
\begin{definition}
 Let $n$ be a positive integer or $\infty$. 
 \begin{enumerate}
  \item  A morphism $f = (f_1,f_2,\cdots,f_n) : \mr{A} \to \mr{B}$ of $\mr A_n$-algebras is a quasi-isomorphism if $f_1$ is a quasi-isomorphism of the underlying complexes.
 \item $\mr{A}$ and $\mr{B}$ are said to be quasi-isomorphic if there exist $\mr{A}_n$-algebras $\mr{C}_1,\cdots \mr{C}_m$ and quasi-isomorphisms $\mr{A} \leftarrow \mr{C}_1 \rightarrow \cdots \leftarrow \mr{C}_m \rightarrow \mr{B}$.
 \end{enumerate}
\end{definition}
\2
\begin{definition}Let $n$ be a positive integer or $\infty$. 
\begin{enumerate}
 \item An $\mr{A}_n$-algebra $\mr{A}$ is called minimal if $m_1=0$.
 \item A minimal model for $\mr{A}$ is a minimal $\mr{A}_n$-algebra $\mr B$ together with a quasi-isomorphism $\mr{B} \to \mr{A}$.
 \end{enumerate}
 
\end{definition}
\2
\begin{theorem}(Kadeishvili \cite{MR580645})
 Let $\mr{A}$ be an $\Aa$-algebra over $\mr{R}$ such that $\mr{HA}$ is a projective $\mr{R}$-module. For any choice of a quasi-isomorphism $f_1 : \mr{HA} \to \mr{A}$ of complexes of $\mr{R}$-modules, there exists a minimal $\Aa$-structure on $\mr{HA}$, with $m^{\mr{HA}}_2$ being induced by $m_2$, and an $\Aa$-quasi-isomorphism $f : \mr{HA} \to \mr{A}$ lifting $f_1$.
\end{theorem}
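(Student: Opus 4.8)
The plan is to build the minimal structure $(m_2^{\mr{HA}}, m_3^{\mr{HA}}, \dots)$ on $\mr{HA}$ together with the higher components $f_2, f_3, \dots$ of the quasi-isomorphism simultaneously, by induction on the arity $n$, extending the given chain-level quasi-isomorphism $f_1$ to an $\mr{A}_n$-quasi-isomorphism at each stage.

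First I would record the one piece of homological input. Since $\mr{HA}$ is projective over $\mr{R}$, so is each tensor power $\mr{HA}^{\otimes n}$, hence $\mr{Hom}_{\mr{R}}(\mr{HA}^{\otimes n}, -)$ is an exact functor; regarding $\mr{A}$ as a complex with differential $m_1$ and $\mr{HA}^{\otimes n}$ as a complex with zero differential, it follows that the cohomology of $\big(\mr{Hom}_{\mr{R}}(\mr{HA}^{\otimes n}, \mr{A}),\, m_1 \circ (-)\big)$ is canonically $\mr{Hom}_{\mr{R}}(\mr{HA}^{\otimes n}, \mr{HA})$, with post-composition by $f_1$ realising this identification, and the same projectivity lets one lift exact cocycles to primitives. (Equivalently, one may promote $f_1$ to a contraction of $\mr{A}$ onto $\mr{HA}$ and invoke the homotopy transfer formulas.)

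For the inductive step, suppose $m_2^{\mr{HA}}, \dots, m_{n-1}^{\mr{HA}}$ and $f_1, \dots, f_{n-1}$ have been produced so that \eqref{defeq1} holds on $\mr{HA}$ and \eqref{defeq2} holds for $f$ in all arities $< n$, with $m_2^{\mr{HA}}$ the product induced by $m_2$ on cohomology. Because $m_1^{\mr{HA}} = 0$, the arity-$n$ instance of \eqref{defeq2} involves the unknowns $m_n^{\mr{HA}}$ and $f_n$ only through the terms $f_1 \circ m_n^{\mr{HA}}$ and $m_1 \circ f_n$; gathering everything else — all of it built from already-constructed data — into a single $\mr{R}$-linear map $P_n : \mr{HA}^{\otimes n} \to \mr{A}$ of degree $2 - n$, the relation to be solved becomes, up to the signs fixed in \eqref{defeq2},
$$ f_1 \circ m_n^{\mr{HA}} \;+\; m_1 \circ f_n \;=\; P_n . $$
I would then check that $P_n$ is a cocycle, $m_1 \circ P_n = 0$ — this follows from the relations \eqref{defeq1} for $\mr{A}$ in arities $\le n$ together with the inductive hypothesis — define $m_n^{\mr{HA}}$ to be the class of $P_n$ under $\mr{H}\,\mr{Hom}_{\mr{R}}(\mr{HA}^{\otimes n}, \mr{A}) \cong \mr{Hom}_{\mr{R}}(\mr{HA}^{\otimes n}, \mr{HA})$, so that $P_n - f_1 \circ m_n^{\mr{HA}}$ is an exact cocycle, and take $f_n$ to be a primitive of it. This yields \eqref{defeq2} in arity $n$; the quadratic relation \eqref{defeq1} at arity $n$ on $\mr{HA}$ — which by minimality does not itself involve $m_n^{\mr{HA}}$ — then comes for free, since composing it with $f_1$ and using the already-established instances of \eqref{defeq2} and the relations for $\mr{A}$ exhibits it as a coboundary, while $f_1 \circ (-)$ is injective on cohomology in the relevant degree. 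For $n = 2$ one has $P_2 = m_2 \circ (f_1 \otimes f_1)$, so $m_2^{\mr{HA}}$ is indeed the induced product; and $f = (f_1, f_2, \dots)$ is a quasi-isomorphism lifting $f_1$ by construction, completing the induction.

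The hard part will be the combinatorial bookkeeping: verifying the closedness $m_1 \circ P_n = 0$ and the automatic validity of \eqref{defeq1} at arity $n$, both of which require reorganising the defining sums of \eqref{defeq1}–\eqref{defeq2} and carefully tracking the Koszul signs $(-1)^{jk+l}$ and $(-1)^s$. The genuine homological content — the exactness of $\mr{Hom}_{\mr{R}}(\mr{HA}^{\otimes n}, -)$ and the resulting description of its cohomology — is precisely where the projectivity hypothesis on $\mr{HA}$ enters.
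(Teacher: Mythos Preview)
The paper does not prove this theorem; it is stated with attribution to Kadeishvili \cite{MR580645} and no argument is given, so there is no paper proof to compare against. Your proposal is the standard inductive construction of the minimal model and is correct in outline: the identification of the unknowns in $(**_n)$ as $f_1\circ m_n^{\mr{HA}}$ and $m_1\circ f_n$ (using $m_1^{\mr{HA}}=0$), the cocycle check on the residual term $P_n$, and the use of projectivity of $\mr{HA}^{\otimes n}$ to compute $\mr{H}\,\mr{Hom}_{\mr{R}}(\mr{HA}^{\otimes n},\mr{A})$ are exactly the ingredients of Kadeishvili's original argument.
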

\2
\begin{definition}
 Let $\mr{A}$ be an $\Aa$-algebra.
 \begin{enumerate}
  \item $\mr{A}$ is called $\mr{A}_n$-formal if it is $\mr{A}_n$-quasi-isomorphic to the $\mr{A}_n$-algebra $(\mr{HA},m^{\mr{HA}})$, where $m_2^{\mr{HA}}$ induced by $m_2$ and $m_i^{\mr{HA}}=0$ for $i \not= 2$.
  \item $\mr{A}$ is called formal if it is $\Aa$-quasi-isomorphic to the graded associative algebra $\mr{HA}$, viewed as an $\Aa$-algebra.
 \end{enumerate}
\end{definition}
\2
$\hspace*{3mm}$We have the following two important results due to Kaledin and Lunts.
\2
\begin{theorem}(Lunts \cite{MR2578584})
 Let $\mr{A}$ be a minimal $\Aa$-algebra over $\mr{R}$ which is projective as an $\mr{R}$-module. Then $\mr{A}$ is formal iff it is $\mr{A}_n$-formal for all $n$.
\end{theorem}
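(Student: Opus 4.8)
The forward implication is immediate: if $\mr{A}$ is formal, fix a chain of $\Aa$-quasi-isomorphisms linking $\mr{A}$ and $\mr{HA}$ and truncate every $\Aa$-algebra and every $\Aa$-morphism in it to its first $n$ components; the relations $(*_m),(**_m)$ for $m\le n$ involve only the truncated data and truncation commutes with composition, so one obtains a chain of $\mr{A}_n$-quasi-isomorphisms and $\mr{A}$ is $\mr{A}_n$-formal for every $n$.

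For the converse I would first reduce "$\mr{A}_n$-formal" to the cleaner statement that there is an $\mr{A}_n$-isomorphism $(\mr{HA},m_2)\to\mr{A}$ with first component the identity. This uses that $\mr{A}$ and $(\mr{HA},m_2)$ are both minimal, that $\mr{A}_n$-quasi-isomorphic minimal $\mr{A}_n$-algebras are $\mr{A}_n$-isomorphic (minimal models, whence projectivity of $\mr{HA}=\mr{A}$ enters), and that one may normalise the first component by precomposing with the strict automorphism $f_1^{-1}$ of $(\mr{HA},m_2)$, which respects $m_2$ and so is a legitimate $\mr{A}_n$-morphism. Since an $\Aa$-isomorphism $(\mr{HA},m_2)\to\mr{A}$ is exactly a compatible system of such truncations, with $j$-th component determined once $n\ge j$, it suffices to construct the truncations compatibly.

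The plan is then an induction producing minimal $\Aa$-algebras $\mr{A}=\mr{A}^{(2)},\mr{A}^{(3)},\dots$, each $\Aa$-isomorphic to $\mr{A}$, with $m_k^{\mr{A}^{(n)}}=0$ for $3\le k\le n$, together with $\Aa$-isomorphisms $\mr{A}^{(n)}\to\mr{A}^{(n+1)}$ equal to $(\mr{id},0,\dots,0)$ through degree $n-1$; then the structure constants of $\mr{A}^{(n)}$ and the composed isomorphisms stabilise in each degree, the limit is $(\mr{HA},m_2)$, and the limiting $\Aa$-isomorphism exhibits $\mr{A}$ as formal. The inductive step rests on one lemma: if $\mr{B}$ is minimal with $m_k^{\mr{B}}=0$ for $3\le k\le n$, then $(*_{n+2})$ forces $m_{n+1}^{\mr{B}}$ to be a Hochschild cocycle; its class in the relevant bidegree of $\mathrm{HH}(\mr{B},\mr{B})$ is the only obstruction to an $\Aa$-isomorphism onto some $\mr{B}'$ with $m_k^{\mr{B}'}=0$ for $3\le k\le n+1$ and identity through degree $n-1$ (solve $\mr{d}f_n=\pm m_{n+1}^{\mr{B}}$, then propagate the remaining components freely); and the class vanishes because $\mr{B}$, being $\Aa$-isomorphic to $\mr{A}$, is $\mr{A}_{n+1}$-formal. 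Applying this with $\mr{B}=\mr{A}^{(n)}$, and noting that $\mr{A}^{(n+1)}$ is again $\Aa$-isomorphic to $\mr{A}$, hence $\mr{A}_{n+2}$-formal, keeps the induction going.

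The one substantive point is the last clause: that $\mr{A}_{n+1}$-formality of a minimal $\mr{B}$ with $m_3=\dots=m_n=0$ genuinely forces $[m_{n+1}^{\mr{B}}]=0$. A collapsed $\mr{A}_{n+1}$-isomorphism $\phi\colon(\mr{HB},m_2)\to\mr{B}$ only gives $m_{n+1}^{\mr{B}}=\pm\,\mr{d}\phi_n-\sum_{2\le i\le n-1}\pm\,m_2(\phi_i\otimes\phi_{n+1-i})$, so one must show the quadratic error is itself a coboundary — equivalently, normalise $\phi$ to have $\phi_2=\dots=\phi_{n-1}=0$, exploiting that these are low-weight components while source and target carry trivial structure in those weights. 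Carrying out this normalisation, and the attendant Hochschild bookkeeping over $\mr{R}$ (projectivity of $\mr{A}$ keeping the cochain complexes and their base change under control), is the heart of the Kaledin--Lunts argument; I would follow Lunts \cite{MR2578584} — Kaledin \cite{MR2372207} instead routes the point through the $\mathbb{G}_m$-equivariant deformation to the normal cone — with the obstruction theory for $\mr{A}_n$-morphisms set up as in \cite{lefvrehasegawa2003sur}.
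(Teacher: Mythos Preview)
The paper does not prove this theorem; it is stated with a citation to Lunts \cite{MR2578584} and used as a black box. There is therefore no ``paper's own proof'' to compare against directly. That said, the paper \emph{does} prove the exact module analogue (the proposition immediately following \cref{modformality}), and your proposal matches that argument essentially line for line: the inductive tower $\mr{A}^{(2)},\mr{A}^{(3)},\dots$ with successively more vanishing higher products, the $\Aa$-isomorphisms of the shape $(\mr{id},0,\dots,0,f_n,0,\dots)$, the Hochschild obstruction class, and the infinite composition at the end are all present in the proof of \cref{modformality}.

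The one place your write-up diverges is the ``substantive point'' you flag at the end. You propose to kill the quadratic error by normalising $\phi$ to have $\phi_2=\dots=\phi_{n-1}=0$. The paper's module proof handles the same step more cheaply: once the source has $m_3=\dots=m_{n+1}=0$ and the target is $\mr{M}^{(n)}(2)$, the obstruction cochain $\mr{c}(\mr{id},g_2,\dots,g_n)$ is literally \emph{constant} as a function of $(g_2,\dots,g_n)$, because every term in its defining sum involves a higher product that vanishes. So there is nothing to normalise --- any $\mr{A}_{n+1}$-formality witness, however messy in low weights, already certifies $[\mr{c}(\mr{id},0,\dots,0)]=0$. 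This observation transports verbatim to the algebra case and would let you close the gap you identified without invoking the full Kaledin--Lunts machinery.
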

\1
Furthermore, Kaledin, in \cite{MR2372207}, shows that $\mr{A}_n$-formality is measured by a cohomology class, called the Kaledin class, which gives the next result.
\2
\begin{theorem}\label{formalityalg}(Kaledin-Lunts \cite{MR2372207},\cite{MR2578584})
 Let $\mr{R}$ be an integral domain with field of fractions $k(\eta)$. Consider a minimal $\Aa$-algebra $\mr{A}$ over $\mr{R}$ which is a finite projective $\mr{R}$-module. Assume that the Hochschild cohomology group with compact supports $\mr{HH}^{2}_{\mr{c}}(\mr{A}(2))$ is torsion-free. If $\mr{A}_\eta = k(\eta) \otimes_{\mr{R}} \mr{A}$ is formal, then $\mr{A}$ is formal. In particular, $\mr{A}_{\mathfrak{p}}$ is formal for all $\mathfrak{p} \in \mr{SpecR}$.
\end{theorem}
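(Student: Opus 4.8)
The plan is to deduce the statement from the two preceding results of Kaledin and Lunts, the only genuinely new input being a base-change argument controlled by the torsion-freeness hypothesis. By Lunts's theorem, $\mr{A}$ is formal if and only if it is $\mr{A}_n$-formal for every $n$, so it suffices to produce, for each $n$, an $\mr{A}_n$-quasi-isomorphism between $\mr{A}(2)$ and $\mr{A}$ at level $n$; I would build these by induction on $n$, the case $n=2$ being trivial since $\mr{A}$ is minimal and $m_2^{\mr{A}}$ is by definition the multiplication of $\mr{A}(2)$. For the inductive step I invoke Kaledin's analysis from \cite{MR2372207}: given an $\mr{A}_n$-formality datum, the obstruction to promoting it to an $\mr{A}_{n+1}$-formality datum is the Kaledin class $\mr{K}_n(\mr{A})$, which lives naturally in a single internal degree (depending on $n$) of the bigraded group $\mr{HH}^2(\mr{A}(2))$ — and therefore in the compactly-supported refinement $\mr{HH}^2_{\mr c}(\mr{A}(2))$, which in each fixed internal degree coincides with the ordinary one. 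Thus $\mr{A}$ is formal precisely when $\mr{K}_n(\mr{A})=0$ for all $n$, and the same statement applies verbatim over $k(\eta)$.

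The second step is functoriality of the Kaledin class under the flat base change $\mr{R}\to k(\eta)$. Because $\mr{A}$ is finite projective over $\mr{R}$, each Hochschild cochain module $\mr{Hom}_{\mr{R}}(\mr{A}^{\otimes m},\mr{A})$ commutes with $-\otimes_{\mr{R}}k(\eta)$, and the compact-support complex, being a direct sum over internal weights, also commutes with this tensor product; since $k(\eta)$ is flat over $\mr{R}$, this passes to cohomology, giving $\mr{HH}^2_{\mr c}(\mr{A}_\eta(2)) \cong k(\eta)\otimes_{\mr{R}}\mr{HH}^2_{\mr c}(\mr{A}(2))$. The cochain-level construction of the Kaledin class is compatible with base change, so under the natural map $\mr{K}_n(\mr{A})$ goes to $\mr{K}_n(\mr{A}_\eta)$. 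By hypothesis $\mr{A}_\eta$ is formal, so $\mr{K}_n(\mr{A}_\eta)=0$ for all $n$ by the first step applied over $k(\eta)$.

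Now the torsion-freeness enters. Since $\mr{HH}^2_{\mr c}(\mr{A}(2))$ is torsion-free over the domain $\mr{R}$, the localisation map $\mr{HH}^2_{\mr c}(\mr{A}(2)) \to \mr{HH}^2_{\mr c}(\mr{A}(2))\otimes_{\mr{R}}k(\eta) = \mr{HH}^2_{\mr c}(\mr{A}_\eta(2))$ is injective. As $\mr{K}_n(\mr{A})$ maps to $\mr{K}_n(\mr{A}_\eta)=0$, we conclude $\mr{K}_n(\mr{A})=0$ for every $n$; hence $\mr{A}$ is $\mr{A}_n$-formal for all $n$, and formal by Lunts. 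For the final assertion, apply the theorem to the domain $\mr{R}_{\mathfrak p}$: its fraction field is again $k(\eta)$, the module $\mr{HH}^2_{\mr c}(\mr{A}_{\mathfrak p}(2)) = \mr{HH}^2_{\mr c}(\mr{A}(2))\otimes_{\mr{R}}\mr{R}_{\mathfrak p}$ is torsion-free as a localisation of a torsion-free module, and $(\mr{A}_{\mathfrak p})_\eta = \mr{A}_\eta$ is formal; thus $\mr{A}_{\mathfrak p}$ is formal over $\mr{R}_{\mathfrak p}$. (Equivalently, a formal minimal $\Aa$-algebra is $\Aa$-isomorphic to $\mr{A}(2)$, an isomorphism preserved by any further base change.)

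The main obstacle — which I would import wholesale from \cite{MR2372207} and \cite{MR2578584} rather than reprove — is the existence and naturality of the Kaledin class: that the obstruction to passing from $\mr{A}_n$- to $\mr{A}_{n+1}$-formality really is a well-defined class in $\mr{HH}^2$ in the appropriate internal degree, and that this class is functorial under base change in a manner compatible with the compact-support version. Everything else (flat base change for the Hochschild cohomology of a finite projective algebra, injectivity of localisation for torsion-free modules over a domain, and the passage to $\mr{R}_{\mathfrak p}$) is routine.
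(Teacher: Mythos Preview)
The paper does not actually prove this theorem: it is stated with attribution to \cite{MR2372207} and \cite{MR2578584}, and the only argument offered is the single sentence preceding it, namely that Kaledin shows $\mr{A}_n$-formality is measured by a cohomology class (the Kaledin class), which lives in $\mr{HH}^2_{\mr c}(\mr A(2))$. Your proposal is a correct and faithful expansion of exactly this approach --- Kaledin class in $\mr{HH}^2_{\mr c}$, flat base change to $k(\eta)$ using finite projectivity of $\mr A$, vanishing at the generic point by hypothesis, and injectivity of localisation from torsion-freeness --- and you rightly flag that the substantive content (existence, well-definedness independent of choices, and naturality under base change of the Kaledin class) is imported from the cited references rather than reproved.

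It may be worth comparing with the paper's proof of the module analogue (\cref{modformality}), which is carried out in full and does \emph{not} invoke a Kaledin-type class directly. There the author works with the raw obstruction cocycles $\mr c(f_1,\dots,f_{n-1})$, which a priori depend on the choices $f_2,\dots,f_{n-1}$; the trick is to first transport $\mr M$ along partial isomorphisms to auxiliary modules $\mr M^{(n)}$ whose multiplications $m_3,\dots,m_{n+1}$ vanish, so that the relevant obstruction cocycle becomes visibly independent of choices and can then be killed by the torsion-freeness argument. This bypasses the need to define a canonical Kaledin class but achieves the same effect. Your outline for the algebra case is the ``packaged'' version of this, relying on Kaledin's theorem to supply the choice-independence that the module proof obtains by hand.
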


\paragraph{The bar construction.}

Let $\mr{A}$ be a graded $\mr{R}$-module endowed with morphisms $$m_{i}: \mr{A}^{\otimes i} \to \mr{A}.$$ For $i \ge 1$ we have a bijection 
\begin{align*}
\mr{Hom}(\mr{A}^{\otimes i},\mr{A}) &\to \mr{Hom}((\mr{A}[1])^{\otimes i},\mr{A}[1])\\
 m_i &\mapsto \mr{d}_i=(-1)^{i-1+\mr{deg}m_i}s\circ m_i\circ (s^{-1})^{\otimes i},                                                                                                                                                
\end{align*}
 where $s:\mr{A} \to \mr{A}[1]$ is the canonical degree $-1$ morphism. Remark that in our case $m_i$ are of degree $2-i$, so the corresponding $\mr{d}_i$ have degree $1$. The morphisms $\mr{d}_i$ define a unique morphism $$\overline{\mr{T}}(\mr{A}[1]) \to \mr{A}[1],$$ which by the universal property of the reduced tensor coalgebra corresponds to a unique degree $1$ coderivation $$\mr{d} : \overline{\mr{T}}(\mr{A}[1]) \to \overline{\mr{T}}(\mr{A}[1]).$$
 
\begin{lemma}
 The morphisms $m_i$ define an $\Aa$-algebra structure on $\mr{A}$ iff $\mr{d}$ is a differential, i.e. $\mr{d}^2=0$.
\end{lemma}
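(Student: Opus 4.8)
The plan is to exploit the universal property of the reduced tensor coalgebra $\overline{\mr{T}}(\mr{A}[1])$ twice: once, as in the construction preceding the lemma, to know that $\mr{d}$ is the \emph{unique} coderivation whose corestriction along the canonical projection $\overline{\mr{T}}(\mr{A}[1]) \to \mr{A}[1]$ is the family $(\mr{d}_i)_{i\ge 1}$; and once to detect the vanishing of the coderivation $\mr{d}^2$ by inspecting only its corestriction to $\mr{A}[1]$. Concretely, one first records the standard fact that a coderivation of $\overline{\mr{T}}(V)$ (with deconcatenation coproduct) is determined by its composite with the projection to $V$, and that conversely every family of maps $V^{\otimes i}\to V$ extends uniquely to a coderivation. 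Next one observes that $\mr{d}^2$ is again a coderivation: if $\Delta\,\mr{d} = (\mr{d}\otimes 1 + 1\otimes\mr{d})\Delta$ then, because $\mr{d}$ is odd, the two mixed terms in $\Delta\,\mr{d}^2$ cancel by the Koszul sign rule, leaving $\Delta\,\mr{d}^2 = (\mr{d}^2\otimes 1 + 1\otimes\mr{d}^2)\Delta$. Hence $\mr{d}^2 = 0$ if and only if its corestriction to $\mr{A}[1]$ vanishes, i.e. if and only if $\mr{d}^2$ vanishes on $(\mr{A}[1])^{\otimes m}$ for every $m\ge 1$.

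The core computation is then to evaluate this corestriction on a word of length $m$. Decomposing $\mr{d}$ into its coderivation components built from the $\mr{d}_k$, the component of $\mr{d}\circ\mr{d}$ landing in $\mr{A}[1]$ and starting from $(\mr{A}[1])^{\otimes m}$ is, up to the Koszul sign incurred by sliding $\mr{d}_k$ past the first $j$ suspended factors, the sum $\sum_{j+k+l=m}\mr{d}_{j+1+l}\circ(\mr{id}^{\otimes j}\otimes \mr{d}_k\otimes\mr{id}^{\otimes l})$. So $\mr{d}^2=0$ is equivalent to the vanishing of this expression for all $m$, i.e. to the ``desuspended'' form of the $\Aa$-relations for the family $(\mr{d}_i)$.

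Finally one transports everything back along the degree $-1$ suspension isomorphism $m_i \mapsto \mr{d}_i = (-1)^{i-1+\deg m_i}\,s\circ m_i\circ (s^{-1})^{\otimes i}$. The only genuine work — and the step I expect to be the main obstacle — is the sign bookkeeping: one must check that the Koszul signs produced by commuting the maps $s$ and $s^{-1}$ through tensor powers, combined with the prefactors $(-1)^{i-1+\deg m_i}$, convert the relations $\sum_{j+k+l=m}\pm\,\mr{d}_{j+1+l}(\mr{id}^{\otimes j}\otimes\mr{d}_k\otimes\mr{id}^{\otimes l})=0$ into exactly $(*_m)$, namely $\sum_{j+k+l=m}(-1)^{jk+l}m_{j+1+l}(\mr{id}^{\otimes j}\otimes m_k\otimes\mr{id}^{\otimes l})=0$, using only that $s$ has degree $-1$ and $m_i$ has degree $2-i$. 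This is a finite, if tedious, verification. Granting it, $\mr{d}^2=0$ holds if and only if $(*_m)$ holds for every $m$, which is precisely the statement that $(m_i)_{i\ge 1}$ equips $\mr{A}$ with an $\Aa$-algebra structure.
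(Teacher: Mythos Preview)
Your argument is correct and is the standard proof of this well-known fact. The paper does not actually supply a proof of this lemma; it is stated as a known result, with \cite{lefvrehasegawa2003sur} given earlier as a general reference for $\Aa$-algebras. So there is nothing to compare against beyond noting that your outline matches the usual approach found in that reference: identify coderivations of $\overline{\mr{T}}(\mr{A}[1])$ with their corestrictions to $\mr{A}[1]$, observe that $\mr{d}^2$ is again a (degree $2$) coderivation because $\mr{d}$ is odd, and then unwind the suspension to recover the relations $(*_m)$. The sign verification you flag as the main obstacle is indeed the only nontrivial step, and it goes through exactly as you describe using $\deg s = -1$ and $\deg m_i = 2-i$.
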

\2
\begin{definition}
 The bar construction of an $\Aa$-algebra $\mr{A}$ is the differential graded coalgebra $\mathcal{B}(\mr{A})\coloneqq(\overline{\mr{T}}(\mr{A}[1]),\mr d)$.
\end{definition}
\2
$\hspace*{3mm}$Let $\mr{A}, \mr{B}$ be graded objects. For $i\ge 1$ we have a bijection
\begin{align*}
\mr{Hom}(\mr{A}^{\otimes i},\mr{B}) &\to \mr{Hom}((\mr{A}[1])^{\otimes i},\mr{B}[1])\\
 f_i &\mapsto \mr{F}_i=(-1)^{i-1+\mr{deg}f_i}s_{\mr{B}}\circ f_i\circ (s_{\mr{A}}^{-1})^{\otimes i}.                                                                                                                                                
\end{align*}
If $f_i$ are of degree $1-i$, the maps $\mr{F}_i$ define a degree $0$ morphism of coalgebras $$\mr{F}:\mathcal{B}(\mr{A}) \to \mathcal{B}(\mr{B}).$$
\begin{lemma}
 Let $\mr{A}, \mr{B}$ be $\Aa$-algebras and suppose given $f_i \in \mr{Hom}(\mr{A}^{\otimes i},\mr{B})$ of degree $1-i$. The morphisms $f_i$ define an $\Aa$ morphism iff $F$ is compatible with the differentials, i.e. we have a bijection
 $$\mr{Hom}_{\mr{Alg}_{\infty}}(\mr{A},\mr{B}) \xrightarrow{\sim} \mr{Hom}(\mathcal{B}(\mr{A}),\mathcal{B}(\mr{B})).$$
\end{lemma}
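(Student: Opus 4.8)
The plan is to reduce the system $(**_m)$ to a single identity of maps with target the cogenerators $\mr{B}[1]$ of $\mathcal{B}(\mr{B})$, by exploiting the universal property of the reduced tensor coalgebra, and then to reconcile the signs. Write $\pi_\mr{A}:\mathcal{B}(\mr{A})\to\mr{A}[1]$, $\pi_\mr{B}:\mathcal{B}(\mr{B})\to\mr{B}[1]$ for the cogenerator projections and $\mr{d}^\mr{A},\mr{d}^\mr{B}$ for the two bar differentials; recall that $F$ is the unique coalgebra morphism whose composite with $\pi_\mr{B}$ has components $\mr{F}_i$, and that $\pi_\mr{A}\circ\mr{d}^\mr{A}$, $\pi_\mr{B}\circ\mr{d}^\mr{B}$ have components $\mr{d}^\mr{A}_i$, $\mr{d}^\mr{B}_i$.

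First I would verify that $F\circ\mr{d}^\mr{A}$ and $\mr{d}^\mr{B}\circ F$ are both $F$-coderivations, i.e. satisfy $\Delta_\mr{B}\circ D=(D\otimes F+F\otimes D)\circ\Delta_\mr{A}$: for $F\circ\mr{d}^\mr{A}$ apply $F\otimes F$ to the coderivation identity of $\mr{d}^\mr{A}$ and use that $F$ is a map of coalgebras; for $\mr{d}^\mr{B}\circ F$ precompose the coderivation identity of $\mr{d}^\mr{B}$ with $F$ and use the same. Since $\mathcal{B}(\mr{A})$ is conilpotent and $\mathcal{B}(\mr{B})=\overline{\mr{T}}(\mr{B}[1])$ is cofree, the universal property (in its version for coderivations along a fixed coalgebra morphism) shows that an $F$-coderivation $\mathcal{B}(\mr{A})\to\mathcal{B}(\mr{B})$ is determined by its composite with $\pi_\mr{B}$. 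Hence $F$ is a morphism of differential graded coalgebras, $F\circ\mr{d}^\mr{A}=\mr{d}^\mr{B}\circ F$, if and only if $\pi_\mr{B}\circ F\circ\mr{d}^\mr{A}=\pi_\mr{B}\circ\mr{d}^\mr{B}\circ F$.

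Next I would expand both sides on $(\mr{A}[1])^{\otimes m}$. On the left only the length-one output of $\mr{d}^\mr{A}$ survives $\pi_\mr{B}\circ F$, and $\mr{d}^\mr{A}$ on $(\mr{A}[1])^{\otimes m}$ equals $\sum_{j+k+l=m}\mr{id}^{\otimes j}\otimes\mr{d}^\mr{A}_k\otimes\mr{id}^{\otimes l}$, so the left side is $\sum_{j+k+l=m}\mr{F}_{j+1+l}(\mr{id}^{\otimes j}\otimes\mr{d}^\mr{A}_k\otimes\mr{id}^{\otimes l})$. On the right, $F$ carries the length-$m$ word to $\bigoplus_r\sum_{i_1+\cdots+i_r=m}\mr{F}_{i_1}\otimes\cdots\otimes\mr{F}_{i_r}$, and $\pi_\mr{B}\circ\mr{d}^\mr{B}$ retains only the total contraction $\mr{d}^\mr{B}_r$ of each length-$r$ block, so the right side is $\sum_r\sum_{i_1+\cdots+i_r=m}\mr{d}^\mr{B}_r(\mr{F}_{i_1}\otimes\cdots\otimes\mr{F}_{i_r})$.

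Finally I would undo the shift using $\mr{d}_i=(-1)^{i-1+\mr{deg}\,m_i}s\circ m_i\circ(s^{-1})^{\otimes i}$ and $\mr{F}_i=(-1)^{i-1+\mr{deg}\,f_i}s\circ f_i\circ(s^{-1})^{\otimes i}$; since $\mr{deg}\,m_i=2-i$ and $\mr{deg}\,f_i=1-i$ these are $\mr{d}_i=-s\circ m_i\circ(s^{-1})^{\otimes i}$ and $\mr{F}_i=s\circ f_i\circ(s^{-1})^{\otimes i}$. Inserting these and commuting the suspensions $s^{\pm1}$ past the tensor factors by the Koszul rule converts the identity on $(\mr{A}[1])^{\otimes m}$ into exactly $(**_m)$, the sign $(-1)^{jk+l}$ on the left and $(-1)^s$ with $s=\sum_{2\le u\le r}\big((1-i_u)\sum_{1\le v\le u}i_v\big)$ on the right being produced by the shuffle of suspension signs. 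Doing this for every $m$ yields the bijection $\mr{Hom}_{\mr{Alg}_{\infty}}(\mr{A},\mr{B})\xrightarrow{\sim}\mr{Hom}(\mathcal{B}(\mr{A}),\mathcal{B}(\mr{B}))$, since $(f_i)\leftrightarrow F$ is already a bijection before imposing any relations. The only laborious point is this last step, the Koszul sign bookkeeping matching the suspension signs to $(-1)^{jk+l}$ and $(-1)^s$; this is the main obstacle, and it is handled most painlessly by keeping everything on the shifted objects $\mr{A}[1],\mr{B}[1]$ — where the signs are automatic — and unshifting only at the very end.
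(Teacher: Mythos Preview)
The paper does not prove this lemma; it is stated without proof as a standard fact, with Lef\`evre-Hasegawa's thesis cited as the general reference for $\Aa$-algebras. Your argument is the standard one found there: reduce compatibility with differentials to an identity after projecting to the cogenerators $\mr{B}[1]$ via the universal property of the cofree conilpotent coalgebra (using that $F\circ\mr{d}^\mr{A}$ and $\mr{d}^\mr{B}\circ F$ are both $F$-coderivations), expand both sides on $(\mr{A}[1])^{\otimes m}$, and undo the suspensions with the Koszul rule to recover $(**_m)$. This is correct, and your remark that the sign bookkeeping is best done by working on the shifted objects and unshifting only at the end is exactly the practical point.
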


\paragraph{\texorpdfstring{$\Aa$}--modules.} 

\begin{definition}
 Let $n \in \mathbf{N} \cup \{\infty \}$ and let $\mr{A}$ be an $\mr A_n$-algebra over $\mr{R}$. An $\mr A_n$-module over $\mr{A}$ is a graded $\mr{R}$-module $\mr{M}$ together with a family of morphisms $$m^{\mr{M}}_i: \mr{M}\otimes \mr{A}^{\otimes i-1} \to \mr{M}$$ of degree $2-i$ for all $1 \le i \le n$ such that for all $1\le m  \le n$ 
 \begin{equation}\label{defeq3} \tag{$*'_m$}
  \sum_{j+k+l=m,j\ge 1} (-1)^{jk+l}m^{\mr{M}}_{j+1+l}(\mr{id}^{\otimes j}\otimes m_{k}\otimes \mr{id}^{\otimes l})+ \sum_{k+l=m} (-1)^{l}m^{\mr{M}}_{1+l}(m^{\mr{M}}_{k}\otimes \mr{id}^{\otimes l})=0.
 \end{equation}
\end{definition}
\begin{definition}
 Let $n \in \mathbf{N} \cup \{\infty \}$ and let $\mr{A}$ be an $\mr A_n$-algebra and suppose $\mr{M}, \mr{N}$ are $\mr A_n$-modules over $\mr{A}$. A morphism of $\mr A_n$-modules is a family of $\mr{R}$-linear morphisms $$f_i : \mr{M}\otimes \mr{A}^{\otimes i-1} \to \mr{N}$$ of degree $1-i$ for $1 \le i \le n$ such that for all $1 \le m \le n$ 
 \begin{equation*}\label{defeq4}\tag{$**'_m$}                                                                                                                                                   \sum_{j+k+l=m, k\ge1} (-1)^{jk+l}f_{j+1+l}(\mr{id}^{\otimes j}\otimes m_{k}\otimes \mr{id}^{\otimes l}) = \sum_{r+s=m, r\ge 1, s\ge 0}m_{s+1}(f_{r}\otimes \mr{id}^{\otimes s}).                                                                                                                                                                                       \end{equation*}
The composition of $f:\mr{L} \to \mr{M}$ and $g:\mr{M}\to \mr{N}$ is defined by $$(g \circ f)_n = \sum_{k+l=n} g_{l+1}(f_{k}\otimes 1^{\otimes l}).$$
\end{definition}
\begin{remark}
 Let $\mr{A}$ be an $\Aa$-algebra and $\mr{M}$ be an $\Aa$-module over $\mr{A}$. Then
 \begin{enumerate}
  \item $(\mr{M},m^{\mr{M}}_1)$ is a complex;
  \item If $f :\mr{M} \to \mr{N}$ is a morphism of $\Aa$-modules, $f_1$ is a morphism of complexes $$f_1:(\mr{M},m^{\mr{M}}_1)\to (\mr{N},m^{\mr{N}}_1).$$
 \end{enumerate}
\end{remark}
\2
\begin{example}
 If $\mr{A}$ is an $\Aa$-algebra, then the morphisms $m_i :\mr{A}^{\otimes i} \to \mr{A}$ define an $\Aa$-module structure on $\mr{A}$ over $\mr{A}$.
\end{example}
\2
\begin{remark}
 If $\mr{A}$ is a differential graded algebra regarded as $\Aa$-algebra, then any differential graded module over $\mr{A}$ is canonically $\Aa$-modules and the category of differential graded modules over $\mr{A}$ is a non-full subcategory of the category of $\Aa$-modules over $\mr{A}$.
\end{remark}
\2
\begin{definition}
Let $n$ be a positive integer or $\infty$. Let $\mr{A}$ be an $\mr{A}_n$-algebra and suppose $\mr{M}$ and $\mr{N}$ are $\mr{A}_n$-modules over $\mr{A}$. 
\begin{enumerate}
\item An $\mr{A}_n$-morphism $f = (f_1,f_2,\cdots,f_n):\mr{M} \to \mr{N}$ is a quasi-isomorphism if $f_1$ is a quasi-isomorphism of complexes.
\item $\mr{M}$ and $\mr{N}$ are said to be quasi-isomorphic if there exist $\mr{A}_n$-modules $\mr{M}_1,\cdots \mr{M}_m$ and quasi-isomorphisms $\mr{M} \leftarrow \mr{M}_1 \rightarrow \cdots \leftarrow \mr{M}_m \rightarrow \mr{N}$.
 \end{enumerate}
\end{definition}
\2
\begin{definition}
Let $n$ be a positive integer or $\infty$. Let $\mr{A}$ be an $\mr{A}_n$-algebra and consider an $\mr{A}_n$-module $\mr{M}$ over $\mr{A}$.
\begin{enumerate}
 \item $\mr{M}$ is called minimal if $m^{\mr{M}}_1=0$.
 \item A minimal model for $\mr{M}$ is a pair $(\mr{A'},\mr{M'})$, consisting of a minimal $\mr{A}_n$-algebra $\mr{A'}$ and a minimal $\mr{A}_n$-module $\mr{M'}$ over it, together with quasi-isomorphisms $f:\mr{A'} \to \mr{A}$ and $g : \mr{M'} \to f^{*}\mr{M}$, where $f^{*}\mr{M}$ is the restriction of $\mr{M}$ along $f$.
\end{enumerate}
\end{definition}
\2
\begin{remark}
 We shall say that $(f,g)$ is a morphism of pairs $(\mr{A'},\mr{M'}) \to (\mr{A},\mr{M})$.
\end{remark}
\2
\begin{theorem}(Kadeishvili, \cite{MR580645})
 Let $\mr{A}$ be an $\Aa$-algebra and consider an $\Aa$-module $\mr{M}$ over $\mr{A}$. Assume that $\mr{H}\mr{A}$ and $\mr{H}\mr{M}$ are projective $\mr R$-modules. Then, for any choice of quasi-isomorphisms $$f_1:\mr{HA}\to \mr{A}, g_1 :\mr{HM} \to \mr{M}$$ of complexes of $\mr{R}$-modules, inducing the identity in cohomology, there exists a minimal $\Aa$-module structure on  $\mr{HM}$ over the $\Aa$-algebra $\mr{HA}$, with $m_2^{\mr{HM}}$ induced by $m_2^{\mr{M}}$, such that there exists a quasi-isomorphism of pairs $$(f,g):(\mr{HA},\mr{HM}) \to (\mr{A},\mr{M}),$$ lifting $(f_1,g_1)$, i.e. a minimal model for $\mr{M}$. It is unique up to $\Aa$-isomorphism.
\end{theorem}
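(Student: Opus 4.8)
The plan is to reproduce Kadeishvili's inductive construction, building the minimal $\Aa$-module structure on $\mr{HM}$ and the quasi-isomorphism $g$ one arity at a time, using projectivity of $\mr{HA}$ and $\mr{HM}$ over $\mr{R}$ to split the relevant $\mr{Hom}$-complexes. First I would invoke the $\Aa$-algebra version of the theorem already recorded above to fix a minimal $\Aa$-structure $(m_i^{\mr{HA}})$ on $\mr{HA}$ with $m_2^{\mr{HA}}$ induced by $m_2$ and an $\Aa$-quasi-isomorphism $f=(f_i):\mr{HA}\to\mr{A}$ lifting $f_1$; this is the input on the algebra side. On the module side I would set $m_1^{\mr{HM}}=0$, keep $g_1$ as given, take $m_2^{\mr{HM}}:\mr{HM}\otimes\mr{HA}\to\mr{HM}$ to be the product induced by $m_2^{\mr{M}}$ on cohomology, and then run an induction: assuming $m_i^{\mr{HM}}$ and $g_i$ are defined for $i\le n$, making $\mr{HM}$ an $\mr{A}_n$-module over the $\mr{A}_n$-algebra $\mr{HA}$ and $g$ an $\mr{A}_n$-morphism over $f$ lifting $g_1$, I must produce $m_{n+1}^{\mr{HM}}$ and $g_{n+1}$. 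The basic bookkeeping tool is that, writing $\mr{P}_n\coloneqq\mr{HM}\otimes\mr{HA}^{\otimes n}$, a projective graded $\mr{R}$-module with zero differential, exactness of $\mr{Hom}_{\mr{R}}(\mr{P}_n,-)$ turns the quasi-isomorphism $g_1$ into an isomorphism $(g_1)_*:\mr{Hom}_{\mr{R}}(\mr{P}_n,\mr{HM})\xrightarrow{\ \sim\ }\mr{H}\big(\mr{Hom}_{\mr{R}}(\mr{P}_n,\mr{M})\big)$, the source carrying the zero differential.

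The key structural observation is that $m_{n+1}^{\mr{HM}}$ does not occur in the $\Aa$-module relation $\eqref{defeq3}$ for $m=n+1$: every would-be occurrence carries a factor $m_1^{\mr{HA}}=0$ or $m_1^{\mr{HM}}=0$. Hence $(*'_{n+1})$ reads $\Psi_{n+1}=0$ for an explicit $\Psi_{n+1}\in\mr{Hom}_{\mr{R}}(\mr{P}_n,\mr{HM})$ built from the data already chosen, and by the displayed isomorphism it is enough to show $g_1\circ\Psi_{n+1}$ is a coboundary in $\mr{Hom}_{\mr{R}}(\mr{P}_n,\mr{M})$. Likewise the morphism relation $\eqref{defeq4}$ for $m=n+1$ takes the form $g_1\circ m_{n+1}^{\mr{HM}}\pm m_1^{\mr{M}}\circ g_{n+1}=\Phi_{n+1}$ with $\Phi_{n+1}\in\mr{Hom}_{\mr{R}}(\mr{P}_n,\mr{M})$ an explicit combination of $g_{\le n}$, $m^{\mr{HM}}_{\le n}$, $m^{\mr{HA}}_{\le n+1}$ and $f_{\le n+1}$, and one shows $\Phi_{n+1}$ is an $m_1^{\mr{M}}$-cocycle. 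Granting these, I close the step: let $m_{n+1}^{\mr{HM}}$ be the unique $(g_1)_*$-preimage of $[\Phi_{n+1}]$, so that $\Phi_{n+1}-g_1m_{n+1}^{\mr{HM}}=m_1^{\mr{M}}\circ\gamma$ for some $\gamma$, and set $g_{n+1}=\pm\gamma$; then $(**'_{n+1})$ holds, and $(*'_{n+1})$ holds by the first fact, which is insensitive to the choice of $m_{n+1}^{\mr{HM}}$. Iterating yields a minimal $\Aa$-module $(\mr{HM},m^{\mr{HM}})$ over $(\mr{HA},m^{\mr{HA}})$ with $m_2^{\mr{HM}}$ induced by $m_2^{\mr{M}}$ and a morphism $g$ which is a quasi-isomorphism because $g_1$ is; $(f,g)$ is then the desired quasi-isomorphism of pairs lifting $(f_1,g_1)$.

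For uniqueness I would note that the underlying graded objects of any minimal model are forced — they are $\mr{HA}$ and $\mr{HM}$ — so it suffices to produce an $\Aa$-isomorphism of pairs between two such models lifting the identity in arity $1$; this is built by the same inductive scheme, and an $\Aa$-morphism whose arity-$1$ component is an isomorphism of zero-differential complexes is automatically invertible. I expect the hard part to be the two bookkeeping claims of the previous paragraph — that $g_1\circ\Psi_{n+1}$ is a coboundary and that $\Phi_{n+1}$ is a cocycle, as purely formal consequences of the lower-arity relations $(*'_{\le n})$, $(**'_{\le n})$ for $\mr{HM}$ together with the honest $\Aa$-module relations for $\mr{M}$. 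Carrying this out by hand is a sign-heavy verification; the clean way is to pass to the bar construction, where the module structures become square-zero codifferentials on $\mathcal{B}(\mr{HA})$- and $\mathcal{B}(\mr{A})$-comodules and $f,g$ become comodule morphisms, so that both claims fall out of "$\mr{d}^2=0$" and the compatibility of $g$ with the codifferentials upon extracting the arity-$(n+1)$ component. (Alternatively, when a contraction of $(\mr{M},m_1^{\mr{M}})$ onto $(\mr{HM},0)$ is available one could transfer the $\Aa$-module structure by the homological perturbation lemma, but over a general ring $\mr{R}$ the inductive argument above is preferable, as it uses projectivity of $\mr{HM}$ and $\mr{HA}$ only through the exactness of a single $\mr{Hom}$ functor.)
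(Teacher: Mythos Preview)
The paper does not actually prove this theorem: it is stated with attribution to Kadeishvili and no argument is supplied, so there is no in-paper proof to compare against. Your outline is the standard Kadeishvili inductive construction, and it is correct in shape: fix the minimal model on the algebra side first, then at each stage show the obstruction $\Phi_{n+1}$ to extending the morphism is an $m_1^{\mr{M}}$-cocycle, take $m_{n+1}^{\mr{HM}}$ to represent its class via the isomorphism $(g_1)_*$, and pick $g_{n+1}$ to kill the remainder. Your observation that $m_{n+1}^{\mr{HM}}$ is absent from $(*'_{n+1})$ by minimality is the right structural point, and your reduction of $\Psi_{n+1}=0$ to showing $g_1\circ\Psi_{n+1}$ is null-homologous is exactly how one uses projectivity of $\mr{HM}\otimes\mr{HA}^{\otimes n}$.

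One point deserves a sharper statement. You treat the vanishing of $\Psi_{n+1}$ and the cocycle property of $\Phi_{n+1}$ as two independent ``bookkeeping claims'', but in the actual verification they are linked: the way one shows $g_1\circ\Psi_{n+1}$ is a coboundary is by expanding it through the already-established morphism relations $(**'_{\le n})$ and the genuine $\Aa$-relations in $\mr{M}$, and this computation is essentially the same one that shows $m_1^{\mr{M}}\circ\Phi_{n+1}=0$. In bar-construction language both follow from extracting the arity-$(n+1)$ component of the identities $(d^{\mr{M}})^2=0$ and $G\circ d^{\mr{HM}}-d^{\mr{M}}\circ G$ being supported in arity $\ge n+1$, exactly as you say; just be aware that if you write it out by hand you will find the two verifications intertwined rather than parallel. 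With that caveat, and modulo the sign bookkeeping you flag, the argument goes through and matches the classical proof.
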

%
%

\paragraph{The bar construction for \texorpdfstring{$\Aa$}--modules.}

Let $\mr{A}$ and $\mr{M}$ be graded $\mr{R}$-modules. For $i \ge 1$ we have a bijection 
\begin{align*}
\mr{Hom}(\mr{M}\otimes \mr{A}^{\otimes i-1},\mr{M}) &\to \mr{Hom}(\mr{M}[1]\otimes (\mr{A}[1])^{\otimes i-1},\mr{M}[1])\\
 m^{\mr{M}}_i &\mapsto \mr{d}^{\mr{M}}_i=(-1)^{i-1+\mr{deg}m^{\mr{M}}_i}s\circ m_i\circ (s^{-1})^{\otimes i}.
 \end{align*}
$\hspace*{3mm}$Let $\mr{A}$ be an $\Aa$-algebra and let $(\mathcal{B}(\mr{A}))^{+}$ be its coaugmented bar construction. Then the $\mr{d}^{\mr{M}}_i$ define a unique comodule coderivation $$\mr{d}^{\mr{M}} : \mr{M}[1]\otimes (\mathcal{B}(\mr{A}))^{+} \to \mr{M}[1]\otimes (\mathcal{B}(\mr{A}))^{+}.$$

\begin{lemma}
 The morphisms $m^{\mr{M}}_i$ define an $\Aa$-module structure of $\mr{M}$ over $\mr{A}$ iff the coderivation $\mr{d}^{\mr{M}}$ is a differential.
\end{lemma}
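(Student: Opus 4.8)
The plan is to run the same cofree-coalgebra argument that proves the analogous statement for $\Aa$-algebras, with the cofree $\mathcal{B}(\mr{A})$-comodule $\mr{M}[1]\otimes(\mathcal{B}(\mr{A}))^{+}$ in place of $\mathcal{B}(\mr{A})$. First I would recall the two pieces of the universal property we need: a comodule coderivation of $\mr{M}[1]\otimes(\mathcal{B}(\mr{A}))^{+}$ lying over a fixed coderivation of $\mathcal{B}(\mr{A})$ is determined by its corestriction, that is by its composite with the projection $\pi:\mr{M}[1]\otimes(\mathcal{B}(\mr{A}))^{+}\to\mr{M}[1]$ onto the cogenerators; and a comodule endomorphism, i.e.\ a comodule coderivation lying over the zero coderivation, is likewise determined by $\pi$ applied to it. By construction the restriction of $\pi\circ\mr{d}^{\mr{M}}$ to $\mr{M}[1]\otimes(\mr{A}[1])^{\otimes i-1}$ is $\mr{d}^{\mr{M}}_i$, so $\mr{d}^{\mr{M}}$ is the unique comodule coderivation over the bar differential $\mr{d}$ of $\mathcal{B}(\mr{A})$ with these components.

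Next I would check that $(\mr{d}^{\mr{M}})^{2}$ is a comodule coderivation lying over $0$. Indeed $\mr{d}^{\mr{M}}$ has odd degree, so $(\mr{d}^{\mr{M}})^{2}=\tfrac12[\mr{d}^{\mr{M}},\mr{d}^{\mr{M}}]$, and the graded commutator of comodule coderivations over $\mr{d}$ is a comodule coderivation over $[\mr{d},\mr{d}]=2\mr{d}^{2}$, which vanishes because $\mr{A}$ is an honest $\Aa$-algebra and hence $\mathcal{B}(\mr{A})$ a genuine differential graded coalgebra. (This is the one place the $\Aa$-relations for $\mr{A}$ enter.) By the previous paragraph, $(\mr{d}^{\mr{M}})^{2}=0$ if and only if $\pi\circ(\mr{d}^{\mr{M}})^{2}=0$.

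It then remains to identify $\pi\circ(\mr{d}^{\mr{M}})^{2}$. When we corestrict $\mr{d}^{\mr{M}}\circ\mr{d}^{\mr{M}}$, the only surviving contributions are those whose outermost operation is some $\mr{d}^{\mr{M}}_{p}$ using all available tensor slots (otherwise a tail of $\mr{A}[1]$-factors survives $\pi$), while the innermost operation is either another $\mr{d}^{\mr{M}}_{k}$ on the prefix $\mr{M}[1]\otimes(\mr{A}[1])^{\otimes k-1}$, or one of the $\mr{d}_{k}$ of $\mathcal{B}(\mr{A})$ applied to a block of $\mr{A}[1]$-factors lying strictly to the right of $\mr{M}[1]$. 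On $\mr{M}[1]\otimes(\mr{A}[1])^{\otimes m-1}$, transporting back along the suspension bijections $m^{\mr{M}}_i\mapsto\mr{d}^{\mr{M}}_i$ and $m_k\mapsto\mr{d}_k$, the first family of terms becomes $\sum_{k+l=m}(-1)^{l}m^{\mr{M}}_{1+l}(m^{\mr{M}}_{k}\otimes\mr{id}^{\otimes l})$ and the second becomes $\sum_{j+k+l=m,\ j\ge1}(-1)^{jk+l}m^{\mr{M}}_{j+1+l}(\mr{id}^{\otimes j}\otimes m_{k}\otimes\mr{id}^{\otimes l})$ — precisely the left-hand side of $(*'_m)$. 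Hence $\pi\circ(\mr{d}^{\mr{M}})^{2}$ vanishes if and only if $(*'_m)$ holds for every $m$, i.e.\ if and only if the $m^{\mr{M}}_i$ define an $\Aa$-module structure.

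The main obstacle is the sign bookkeeping in the last step: one has to verify that the Koszul signs introduced by conjugating with the suspension $s$ (using $\deg m^{\mr{M}}_i = 2-i$ and $\deg m_k = 2-k$), together with the co-Leibniz signs for $\mathcal{B}(\mr{A})$, reproduce exactly the signs $(-1)^{l}$ and $(-1)^{jk+l}$ of $(*'_m)$. This is the module counterpart of the sign computation behind the $\Aa$-algebra version of the lemma, with $\mr{M}[1]$ occupying the leftmost slot; one may carry it out directly, or bypass it by viewing $\mr{M}$ as the square-zero ideal of the trivial extension $\mr{A}\oplus\mr{M}$ and reading off the module relations from the $\Aa$-algebra relations of $\mr{A}\oplus\mr{M}$.
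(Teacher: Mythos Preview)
The paper states this lemma without proof, treating it as a standard fact (just as it does for the $\Aa$-algebra version immediately before). Your argument is the correct and standard one: use that $\mr{M}[1]\otimes(\mathcal{B}(\mr{A}))^{+}$ is cofree as a $\mathcal{B}(\mr{A})$-comodule, so that a comodule coderivation over a fixed coalgebra coderivation is determined by its corestriction; observe that $(\mr{d}^{\mr{M}})^{2}$ is a comodule coderivation over $\mr{d}^{2}=0$, hence a comodule endomorphism; and then compute $\pi\circ(\mr{d}^{\mr{M}})^{2}$ and match it with the relations $(*'_m)$. Your identification of the two families of terms (inner $\mr{d}^{\mr{M}}_k$ on the prefix versus inner $\mr{d}_k$ on a block strictly to the right of $\mr{M}[1]$) is exactly right, and the trivial-extension trick you mention at the end is a legitimate way to avoid redoing the sign check by hand.
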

\2
$\hspace*{3mm}$Let $\mr{A},\mr{M},\mr{N}$ be graded $\mr{R}$-modules. For all $i \ge 1$ we have a bijection 
\begin{align*}
\mr{Hom}(\mr{M}\otimes \mr{A}^{\otimes i-1},\mr{N}) &\to \mr{Hom}(\mr{M}[1]\otimes(\mr{A}[1])^{\otimes i-1},\mr{N}[1])\\
 f_i &\mapsto \mr{F}_i=(-1)^{i-1+\mr{deg}f_i}s_{\mr{B}}\circ f_i\circ (s_{\mr{A}}^{-1})^{\otimes i}.                                                                                                                                                
\end{align*}
and, if $\mr{A}$ is an $\Aa$-algebra, the $\mr{F}_i$ induce a morphism of $(\mathcal{B}(\mr{A}))^{+}$-comodules $$\mr{F}:\mr{M}\otimes (\mathcal{B}(\mr{A}))^{+} \to \mr{N}\otimes (\mathcal{B}(\mr{A}))^{+}.$$

\begin{lemma}
 Let $\mr{A}$ be an $\Aa$-algebra and suppose given graded objects $\mr{M}, \mr{N}$. For all $i\ge1$ there is a bijection $$\mr{Hom}^{1-i+n}(\mr{M}\otimes \mr{A}^{\otimes i-1},\mr{N})\xrightarrow{\sim} \mr{Hom}^{n}_{\mathcal{B}(\mr{A})}(\mr{M}\otimes (\mathcal{B}(\mr{A}))^{+}, \mr{N}\otimes (\mathcal{B}(\mr{A}))^{+}).$$ Furthermore, if $\mr{M}$ and $\mr{N}$ are $\Aa$-modules, then we get an induced bijection between morphisms $\mr{M} \to \mr{N}$ of $\Aa$-modules and degree $0$ morphisms of differential graded $(\mathcal{B}(\mr{A}))^{+}$-comodules.
\end{lemma}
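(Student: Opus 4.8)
The plan is to derive both claims from the universal property of the cofree comodule, exactly mirroring the proofs of the two preceding lemmas for $\Aa$-algebras. First I would record that $\mr{M}\otimes(\mathcal{B}(\mr{A}))^{+}$, with coaction $\mr{id}_{\mr{M}}\otimes\Delta$, is the cofree $(\mathcal{B}(\mr{A}))^{+}$-comodule cogenerated by $\mr{M}$: here $(\mathcal{B}(\mr{A}))^{+}$ is the full tensor coalgebra $\mr{T}(\mr{A}[1])$ with its deconcatenation coproduct $\Delta$ and bar differential, and the claim is that postcomposition with the projection $\mr{pr}_{\mr{N}}:\mr{N}\otimes(\mathcal{B}(\mr{A}))^{+}\to\mr{N}$ onto the weight-$0$ summand induces a bijection between comodule morphisms $\mr{F}:\mr{M}\otimes(\mathcal{B}(\mr{A}))^{+}\to\mr{N}\otimes(\mathcal{B}(\mr{A}))^{+}$ and arbitrary graded maps $\varphi:\mr{M}\otimes(\mathcal{B}(\mr{A}))^{+}\to\mr{N}$, with $\mr{F}$ recovered from $\varphi$ as $(\varphi\otimes\mr{id})\circ(\mr{id}_{\mr{M}}\otimes\Delta)$. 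This is the comodule analogue of the cofreeness of $\mr{T}(\mr{A}[1])$ used implicitly in the bar construction for $\Aa$-algebras, and it is proved the same way.

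Granting this, the first assertion is pure bookkeeping. Decomposing $(\mathcal{B}(\mr{A}))^{+}=\bigoplus_{k\ge0}(\mr{A}[1])^{\otimes k}$ by weight, a graded map $\varphi:\mr{M}\otimes(\mathcal{B}(\mr{A}))^{+}\to\mr{N}$ is the same datum as a family of maps $\mr{F}_{i}$ out of the $\mr{M}[1]\otimes(\mr{A}[1])^{\otimes i-1}$ (up to a global shift on source and target), and the suspension bijection of the preceding lemma, read backwards, matches $(\mr{F}_{i})_{i\ge1}$ with a family $(f_{i})_{i\ge1}$, $f_{i}:\mr{M}\otimes\mr{A}^{\otimes i-1}\to\mr{N}$. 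Chasing the suspensions shows $\mr{F}$ is homogeneous of degree $n$ precisely when $\mr{deg}\,f_{i}=1-i+n$ for every $i$, giving the bijection $\prod_{i\ge1}\mr{Hom}^{1-i+n}(\mr{M}\otimes\mr{A}^{\otimes i-1},\mr{N})\xrightarrow{\sim}\mr{Hom}^{n}_{\mathcal{B}(\mr{A})}\big(\mr{M}\otimes(\mathcal{B}(\mr{A}))^{+},\mr{N}\otimes(\mathcal{B}(\mr{A}))^{+}\big)$, of which the display in the statement is the $i$-th coordinate. For the second assertion, take $n=0$ and let $\mr{M},\mr{N}$ be $\Aa$-modules with bar differentials $\mr{d}^{\mr{M}},\mr{d}^{\mr{N}}$. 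I would first check that $\mr{G}:=\mr{F}\circ\mr{d}^{\mr{M}}-\mr{d}^{\mr{N}}\circ\mr{F}$ is itself a morphism of $(\mathcal{B}(\mr{A}))^{+}$-comodules --- immediate from the coaction identities, using that $\mr{F}$ intertwines the coactions and that $\mr{d}^{\mr{M}},\mr{d}^{\mr{N}}$ are comodule coderivations over the bar differential of $\mathcal{B}(\mr{A})$, so that the terms involving that differential cancel. By cofreeness, $\mr{G}=0$ iff $\mr{pr}_{\mr{N}}\circ\mr{G}=0$, and expanding $\mr{pr}_{\mr{N}}\circ\mr{G}$ weight by weight and translating back through the definitions of $\mr{d}_k$, $\mr{d}^{\mr{M}}_i$, $\mr{d}^{\mr{N}}_i$ and $\mr{F}_i$ identifies its weight-$(m-1)$ component with the relation $(**'_m)$ for $(f_i)$. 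Hence $\mr{F}$ is a morphism of differential graded comodules iff $(**'_m)$ holds for all $m$, i.e. iff $(f_i)$ is an $\Aa$-module morphism, and the bijection of the first part restricts to the desired one.

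The one genuinely laborious point is the sign bookkeeping in the last step: one must verify that the Koszul signs $(-1)^{i-1+\mr{deg}}$ carried by the suspension isomorphisms, combined with the signs in the comodule-coderivation formulas for $\mr{d}^{\mr{M}}$ and $\mr{d}^{\mr{N}}$, reproduce exactly the signs $(-1)^{jk+l}$ in $(*'_m)$ and $(**'_m)$. This is routine but fiddly --- it is the module version of the sign checks already needed to recognise $\mr{d}$ as a codifferential and $\Aa$-algebra morphisms as $\mathcal{B}$-coalgebra maps --- and the signs are ultimately forced by naturality of the suspension functor, so I would carry it out once and then invoke it. Everything else is unwinding definitions.
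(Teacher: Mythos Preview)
The paper states this lemma without proof, treating it (like the two analogous lemmas for $\Aa$-algebras immediately before it) as a standard fact about the bar construction; the implicit reference is Lef\`evre-Hasegawa. Your approach via cofreeness of $\mr{M}\otimes(\mathcal{B}(\mr{A}))^{+}$ is exactly the standard argument one would find there, and it is correct. Your reading of the displayed bijection as the $i$-th component of a product over $i\ge 1$ is the right way to make sense of the statement as written, and your handling of the second part---observing that $\mr{F}\circ\mr{d}^{\mr{M}}-\mr{d}^{\mr{N}}\circ\mr{F}$ is again a comodule map, hence vanishes iff its projection does---is the clean way to reduce compatibility with differentials to the relations $(**'_m)$.
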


\paragraph{Differential graded pairs.}

\begin{definition}
 Let $\mr{A,B}$ be differential graded algebras over $\mr{R}$ and let $\mr{M}$ (resp. $\mr{N}$) be a differential graded module over $\mr{A}$ (resp. $\mr{B}$). 
 \begin{enumerate}
  \item  If $f:\mr{A}\to \mr{B}$ is a morphism of differential graded algebras and $g : \mr{M} \to f^{*}\mr{N}$ is a morphism of differential graded modules, where $f^{*}$ denotes restriction along $f$, we say that the pair $(f,g): (\mr{A,M}) \to (\mr{B,N})$ is a differential graded morphism of pairs.
  \item The pairs $(\mr{A,M})$ and $(\mr{B,N})$ are differential graded quasi-isomorphic, denoted $(\mr{A},\mr{M})\simeq(\mr{B},\mr{N})$, if there exist pairs $(\mr{A}_1,\mr{M}_1),\cdots,(\mr{A}_m,\mr{M}_m)$ and quasi-isomorphisms of pairs $$(\mr{A},\mr{M})\leftarrow (\mr{A}_1,\mr{M}_1) \rightarrow \cdots \leftarrow (\mr{A}_m,\mr{M}_m) \rightarrow (\mr{B},\mr{N}).$$
  \item If $\mr{A}$ is a formal differential graded algebra, we say that $\mr{M}$ is differential graded formal if the pairs $(\mr{A,M})$ and $(\mr{HA,HM)}$ are differential graded quasi-isomorphic.
 \end{enumerate}
 
\end{definition}

$\hspace*{3mm}$Let $\mr{A},\mr{B}$ be differential graded algebras over $\mr{R}$ such that $\mr{HA}$ and $\mr{HB}$ are projective $\mr{R}$-modules, equpped with their minimal $\Aa$-algebra structures. Given two differential graded modules $\mr{M}$ and $\mr{N}$ over $\mr{A}$ and $\mr{B}$, respectively, assume that $\mr{HM}$ and $\mr{HN}$ are projective over $\mr{R}$, so can be given minimal $\Aa$-module structures over $\mr{HA}$ and $\mr{HB}$. 
\2
\begin{proposition}\label{dgainf}
  The pairs $(\mr{A},\mr{M})$ and $(\mr{B},\mr{N})$ are differential graded quasi-isomorphic if and only if the pairs $(\mr{HA},\mr{HM})$ and $(\mr{HB,HN})$ are $\Aa$-quasi-isomorphic.
\end{proposition}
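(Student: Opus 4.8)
The plan is to reduce the statement to a bar-construction / Koszul duality translation, using the coaugmented bar comodule $\mr{M}\otimes(\mathcal{B}(\mr{A}))^{+}$ as the bridge between the differential graded world and the $\Aa$-world. First I would recall that by Kadeishvili's theorem for modules, a minimal $\Aa$-model $(\mr{HA},\mr{HM})$ of the pair $(\mr{A},\mr{M})$ exists and comes with a quasi-isomorphism of pairs $(\mr{HA},\mr{HM})\to(\mr{A},\mr{M})$; likewise for $(\mr{HB},\mr{HN})$. So one direction is almost immediate: if $(\mr{A},\mr{M})\simeq(\mr{B},\mr{N})$ as differential graded pairs via a zigzag, then applying Kadeishvili to each term of the zigzag (after observing that a differential graded morphism of pairs is in particular an $\Aa$-morphism of pairs, since differential graded algebras/modules sit inside $\Aa$-algebras/modules) produces a zigzag of $\Aa$-quasi-isomorphisms $(\mr{HA},\mr{HM})\to(\mr{A},\mr{M})\simeq\cdots\simeq(\mr{B},\mr{N})\leftarrow(\mr{HB},\mr{HN})$, hence $(\mr{HA},\mr{HM})$ and $(\mr{HB},\mr{HN})$ are $\Aa$-quasi-isomorphic.

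For the converse — the substantive direction — I would argue that an $\Aa$-quasi-isomorphism of pairs can be ``rectified'' to a differential graded zigzag. The cleanest route is via the bar construction: an $\Aa$-algebra $\mr{A}$ has a differential graded algebra $\Omega\mathcal{B}(\mr{A})$ (the cobar of its bar), with a natural $\Aa$-quasi-isomorphism $\Omega\mathcal{B}(\mr{A})\to\mr{A}$; and an $\Aa$-module $\mr{M}$ over $\mr{A}$ corresponds, via the comodule bar construction of the preceding paragraph, to a differential graded module over $\Omega\mathcal{B}(\mr{A})$, compatibly. An $\Aa$-quasi-isomorphism $(f,g)\colon(\mr{HA},\mr{HM})\to(\mr{HB},\mr{HN})$ then induces an honest morphism of differential graded comodule pairs $\big(\mathcal{B}(\mr{HA}),\mr{HM}\otimes(\mathcal{B}(\mr{HA}))^{+}\big)\to\big(\mathcal{B}(\mr{HB}),\mr{HN}\otimes(\mathcal{B}(\mr{HB}))^{+}\big)$ by the last displayed bijection, hence on cobar a strict differential graded morphism of pairs. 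Chaining
\[
(\mr{A},\mr{M})\leftarrow(\mr{HA},\mr{HM})\leftarrow\big(\Omega\mathcal{B}(\mr{HA}),\text{cobar module}\big)\rightarrow\big(\Omega\mathcal{B}(\mr{HB}),\text{cobar module}\big)\rightarrow(\mr{HB},\mr{HN})\rightarrow(\mr{B},\mr{N})
\]
gives the required differential graded zigzag, where the two outer arrows on each side are the Kadeishvili quasi-isomorphisms and the counits $\Omega\mathcal{B}\to(-)$, all of which are differential graded once one restricts to the differential graded representatives $\mr{A},\mr{B}$. A minor point to check is that every arrow in this chain is genuinely a morphism \emph{of pairs}, i.e. the module-level map covers the algebra-level map; this is built into the functoriality of the comodule bar construction and the naturality of the counit, so it is bookkeeping rather than a real difficulty.

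The main obstacle is the converse direction, and within it the only genuinely delicate point is that the comodule bar/cobar adjunction is compatible with the \emph{algebra} bar/cobar adjunction in the strong sense needed — namely that the counit $\Omega\mathcal{B}(\mr{A})\to\mr{A}$ and the module counit assemble into a morphism of pairs, and that this is a quasi-isomorphism on both components. The algebra statement is classical (Lefèvre-Hasegawa), and the module statement follows from a standard filtration/spectral-sequence argument on the tensor coalgebra, so I expect no surprises, but it is the step that carries the weight. Everything else is a formal consequence of the bijections between $\Aa$-data and (co)derivations of the bar construction recorded above, together with the already-cited Kadeishvili theorem for modules.
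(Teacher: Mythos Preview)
The paper states this proposition without proof, presumably treating it as standard and deferring implicitly to Lef\`evre-Hasegawa \cite{lefvrehasegawa2003sur}. Your approach via bar--cobar rectification is the standard one and is correct in outline; a few points need tightening.

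First, your displayed chain
\[
(\mr{A},\mr{M})\leftarrow(\mr{HA},\mr{HM})\leftarrow\big(\Omega\mathcal{B}(\mr{HA}),\,\cdot\,\big)\rightarrow\big(\Omega\mathcal{B}(\mr{HB}),\,\cdot\,\big)\rightarrow(\mr{HB},\mr{HN})\rightarrow(\mr{B},\mr{N})
\]
is not a differential graded zigzag as written: the nodes $(\mr{HA},\mr{HM})$ and $(\mr{HB},\mr{HN})$ carry genuine $\Aa$-structures (higher $m_i$ need not vanish), so the arrows touching them are only $\Aa$-morphisms. Your parenthetical ``once one restricts to the differential graded representatives $\mr{A},\mr{B}$'' gestures at the fix but does not carry it out. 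The clean repair is to apply $\Omega\mathcal{B}$ to the \emph{entire} $\Aa$-zigzag $(\mr{A},\mr{M})\leftarrow(\mr{HA},\mr{HM})\simeq(\mr{HB},\mr{HN})\rightarrow(\mr{B},\mr{N})$, obtaining a strict dg zigzag between $\Omega\mathcal{B}(\mr{A},\mr{M})$ and $\Omega\mathcal{B}(\mr{B},\mr{N})$, and then attach the counits $\Omega\mathcal{B}(\mr{A},\mr{M})\to(\mr{A},\mr{M})$ and $\Omega\mathcal{B}(\mr{B},\mr{N})\to(\mr{B},\mr{N})$ at the ends --- these \emph{are} dg morphisms of pairs precisely because $\mr{A}$ and $\mr{B}$ are dg.

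Second, you silently treat the hypothesised $\Aa$-quasi-isomorphism $(\mr{HA},\mr{HM})\simeq(\mr{HB},\mr{HN})$ as a single morphism $(f,g)$, whereas the paper's definition allows a zigzag. Either invoke the standard fact that $\Aa$-quasi-isomorphisms between minimal objects admit $\Aa$-quasi-inverses (so the zigzag straightens), or simply apply $\Omega\mathcal{B}$ termwise to the whole zigzag; both are routine but one of them must be said.

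With these two adjustments your argument goes through; the remaining verifications (that $\Omega\mathcal{B}$ is functorial on pairs, that the module-level counit covers the algebra-level one, and that the counits are quasi-isomorphisms) are indeed bookkeeping handled in \cite{lefvrehasegawa2003sur}.
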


\paragraph{Hochschild cohomology.}

Let $\mr A$ be a graded algebra. We are going to define Hochschild cohomology for a graded module $\mr M$ over $\mr A$.\\
$\hspace*{3mm}$Let $\mr{C}^{p,q}(\mr{A,M}) = \mr{Hom}^{q}(\mr{M}\otimes \mr{A}^{\otimes p}, \mr{M})$. The module structure of $\mr M$ over $\mr A$ is a graded morphism of degree $0$, denoted by $m_{2}^{\mr{M}} : \mr{M}\otimes \mr{A} \to \mr{M}$. We can endow the modules $\mr{C}^{p,q}(\mr{A,M})$ with a differential, called the Hochschild differential:
\begin{align*} 
&\mr{d}  : \mr{C}^{p,q}(\mr{A,M}) \to \mr{C}^{p+1,q}(\mr{A,M})\\
f  \mapsto \sum (-1)^{l}f(\mr{id}^{\otimes j}\otimes & m_{2}^{\mr A} \otimes \mr{id}^{\otimes l}) - m_{2}^{\mr M}(f \otimes \mr{id}) + (-1)^pf(m_{2}^{\mr M}\otimes \mr{id}^{p}).
\end{align*}
A calculation shows that $\mr{d}^2=0$, so we indeed have a differential, the associated complex is called the Hochschild complex.
\2
\begin{definition}
 The Hochschild cohomology $\mr{HH}^{p,q}(\mr{A},\mr{M})$ of a graded module $\mr M$ over a graded algebra $\mr A$ is the $p^{\text{th}}$ cohomology of the Hochschild complex
 $(\mr{C}^{*,q}(\mr{A,M}),\mr{d})$.
\end{definition}
\2
\begin{example}Suppose $f : \mr{M}[\epsilon] \to \mr{M}[\epsilon]$ is an $\mr{A}[\epsilon]$-automorphism lifting the identity. Then, writing $$f = f^{0} + f^{1}\epsilon,$$ we have by assumption $f^{0} = \mr{id}_{\mr{M}}$, and $\mr{A}$-linearity implies $$f^{1}(ma) = f^{1}(m)a$$ which is to say that $f^{1}$ is a $(0,0)$-cocycle. 
\end{example}
\2
\begin{definition}
 An infinitesimal $\Aa$-deformation of an $\mr{A}$-module $\mr{M}$ is an $\Aa$-module structure on $\mr{M}[\epsilon]$ over $\mr{A}[\epsilon]$ extending the $\mr{A}$-module structure on $\mr{M}$.
\end{definition}
\2
\begin{example}The $(1,0)$-cocycles are precisely the $\mr{A}[\epsilon]$-module structures on $\mr{M}[\epsilon]$, i.e. they correspond to infinitesimal deformations. Indeed, if $$m : \mr{M}[\epsilon]\otimes \mr{A}[\epsilon] \to \mr{M}[\epsilon]$$ is the multiplication, we decompose it as $$m = m^{0}+m^{1}\epsilon,$$ where $m^{0}$ is the $\mr{A}$-module multiplication on $\mr{M}$. As $m$ defines a module structure, we get $$m^{1}(m,a)a'+m^{1}(ma,a') = m^{1}(m,aa'),$$ i.e. $m^{1}$ is a $(1,0)$-cocycle.
Notice that there is a canonical $\mr{A}[\epsilon]$-module structure on $\mr{M}[\epsilon]$.
\end{example}
\2
\begin{definition}
We call an infinitesimal $\Aa$-deformation of $\mr{M}$ trivial if it is quasi-isomorphic to this canonical one. 
\end{definition}
\2
\begin{example}We note that $(1,0)$-coboundaries correspond to trivial deformations by a similar calculation, hence infinitesimal deformations of $\mr{M}$ are classified by $\mr{HH}^{1,0}(\mr{A},\mr{M})$.\\
$\hspace*{3mm}$More generally, assume we are given an infinitesimal $\Aa$-deformation of $\mr{M}$. Write $$m = m^{0}+m^{1}\epsilon,$$ where $m^0$ is the $\mr{A}$-module structure on $\mr{M}$ and $m^1 = (m^1_2,m^1_3,\cdots)$. Notice the relations $\eqref{defeq3}$ are homogeneous in $\epsilon$, so as $\epsilon^2=0$ we see that each $m^1_i$ is a cocycle in the Hochschild complex of $\mr{M}$ and, conversely, any collection of cocycles satisfies $\eqref{defeq3}$ for all $m$. Similarly, we see coboundaries correspond to trivial $\Aa$-deformations, hence $$\prod_{n\ge 1} \mr{HH}^{n,1-n}(\mr{A,M})$$ classifies infinitesimal $\Aa$-deformations.
\end{example}
\2
\begin{remark}
 Remark that here we assume the deformation parameter is in degree $0$ with respect to the internal grading of our objects. If we consider $derived$ deformations, i.e. allow the deformation parameter to have a non-zero degree with respect to the internal grading, then we get different cohomology group, e.g. if it is in degree $1$, the cohomology group classifying derived infinitesimal deformations becomes $\mr{HH}^{1,-1}(\mr{A},\mr{M})$.
\end{remark}
\2
\begin{remark}
 Given two graded modules $\mr{M}$ and $\mr{N}$ over $\mr{A}$, let $\mr{C}^{p,q}(\mr{A,M,N}) = \mr{Hom}^{q}(\mr{M}\otimes \mr{A}^{\otimes p}, \mr{N})$. It carries a Hochschild differential and we define the Hochschild cohomology of the pair $(\mr{M,N})$, denoted $\mr{HH}^{p,q}(\mr{A,M,N})$, to be the cohomology of the resulting complex.
\end{remark}
\2
\begin{proposition}\label{hochbasechange}
 Let $\mr{A}$ be a graded algebra over $\mr{R}$ and let $\mr{M}$ be a graded module over $\mr{A}$. Assume that $\mr{A}$ and $\mr{M}$ are finite projective over $\mr{R}$. Suppose that $\mr{R} \to \mr{Q}$ is a morphism of commutative rings. Write $\mr{A}_\mr{Q} \coloneqq \mr{A}\otimes_{\mr{R}} \mr{Q}$ and similarly for $\mr{M}_{\mr{Q}}$. Then we have
 \begin{enumerate}
  \item $\mr{C}^{p,q}(\mr{A_Q,M_Q}) = \mr{C}^{p,q}(\mr{A,M})\otimes_{\mr{R}}\mr{Q}$.
  \item Assuming $\mr{Q}$ flat over $\mr{R}$, $\mr{HH}^{p,q}(\mr{A_Q,M_Q}) = \mr{HH}^{p,q}(\mr{A,M})\otimes_{\mr{R}}\mr{Q}$.
 \end{enumerate}
\end{proposition}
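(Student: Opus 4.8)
The plan is to reduce both statements to two standard facts: that forming $\mr{Hom}$ out of a finitely generated projective module commutes with arbitrary base change, and that flat base change commutes with the cohomology of a complex. No part of this should present a genuine difficulty; the role of the finiteness/projectivity hypotheses is exactly to license the first fact, and the role of flatness in part~2 is exactly to license the second.

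For part~1, I would first observe that since $\mr{A}$ and $\mr{M}$ are finite projective over $\mr{R}$, so is $\mr{M}\otimes_\mr{R}\mr{A}^{\otimes p}$, being a tensor product of finitely generated projective $\mr{R}$-modules. For any finitely generated projective $\mr{R}$-module $\mr{P}$, any $\mr{R}$-module $\mr{N}$, and any ring map $\mr{R}\to\mr{Q}$, the canonical map
\[
\mr{Hom}_\mr{R}(\mr{P},\mr{N})\otimes_\mr{R}\mr{Q}\xrightarrow{\ \sim\ }\mr{Hom}_\mr{Q}(\mr{P}\otimes_\mr{R}\mr{Q},\,\mr{N}\otimes_\mr{R}\mr{Q})
\]
is an isomorphism (check it for $\mr{P}=\mr{R}^{\oplus n}$, then pass to a direct summand); note this needs no flatness of $\mr{Q}$. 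Applying it with $\mr{P}=\mr{M}\otimes_\mr{R}\mr{A}^{\otimes p}$ and $\mr{N}=\mr{M}$, and using the associativity and commutativity of $\otimes$ to identify $(\mr{M}\otimes_\mr{R}\mr{A}^{\otimes_\mr{R}p})\otimes_\mr{R}\mr{Q}$ with $\mr{M}_\mr{Q}\otimes_\mr{Q}\mr{A}_\mr{Q}^{\otimes_\mr{Q}p}$, gives a natural isomorphism $\mr{C}^{p,q}(\mr{A},\mr{M})\otimes_\mr{R}\mr{Q}\cong\mr{C}^{p,q}(\mr{A}_\mr{Q},\mr{M}_\mr{Q})$ in each bidegree. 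It then remains to check compatibility with the Hochschild differentials, which is immediate: the multiplication of $\mr{A}_\mr{Q}$ and the action of $\mr{A}_\mr{Q}$ on $\mr{M}_\mr{Q}$ are by definition $m_2^\mr{A}\otimes\mr{id}_\mr{Q}$ and $m_2^\mr{M}\otimes\mr{id}_\mr{Q}$, so under the above identification the differential on $\mr{C}^{*,q}(\mr{A}_\mr{Q},\mr{M}_\mr{Q})$ is exactly $\mr{d}\otimes\mr{id}_\mr{Q}$. Hence the Hochschild complex of $(\mr{A}_\mr{Q},\mr{M}_\mr{Q})$ is $\big(\mr{C}^{*,q}(\mr{A},\mr{M}),\mr{d}\big)\otimes_\mr{R}\mr{Q}$.

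For part~2, assume in addition that $\mr{Q}$ is flat over $\mr{R}$. Then $-\otimes_\mr{R}\mr{Q}$ is exact, so it commutes with taking cohomology of the complex $\big(\mr{C}^{*,q}(\mr{A},\mr{M}),\mr{d}\big)$, giving $\mr{H}^p\big(\mr{C}^{*,q}(\mr{A},\mr{M})\otimes_\mr{R}\mr{Q}\big)\cong\mr{H}^p\big(\mr{C}^{*,q}(\mr{A},\mr{M})\big)\otimes_\mr{R}\mr{Q}$. Combined with part~1 this yields $\mr{HH}^{p,q}(\mr{A}_\mr{Q},\mr{M}_\mr{Q})\cong\mr{HH}^{p,q}(\mr{A},\mr{M})\otimes_\mr{R}\mr{Q}$.

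The only points requiring attention, and the reason the hypotheses are stated as they are, will be: (i) the $\mr{Hom}$–base-change isomorphism in part~1 genuinely uses that $\mr{M}\otimes_\mr{R}\mr{A}^{\otimes p}$ is finitely generated projective (hence the assumption that $\mr{A}$ and $\mr{M}$ are finite projective over $\mr{R}$), but it does \emph{not} use flatness, so part~1 holds for an arbitrary ring map; and (ii) the passage to cohomology in part~2 really does need $\mr{Q}$ flat. Naturality of the isomorphism in part~1 means there is nothing to verify beyond bookkeeping of the signs in the Hochschild differential, which is automatic since base change is $\mr{R}$-linear and leaves every structure map unchanged up to $\otimes\mr{id}_\mr{Q}$. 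The same argument applied verbatim to $\mr{C}^{p,q}(\mr{A},\mr{M},\mr{N})=\mr{Hom}^q(\mr{M}\otimes\mr{A}^{\otimes p},\mr{N})$ also gives the analogous base-change statement for the Hochschild cohomology of a pair, which is what will be needed for the module formality results.
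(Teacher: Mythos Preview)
Your argument is correct and is exactly the standard one; the paper itself states this proposition without proof, treating it as routine (it remarks earlier that these base-change results mirror statements in \cite{MR2578584}). There is nothing to compare: your reduction to $\mr{Hom}$ out of a finite projective commuting with base change, together with exactness of flat base change for part~2, is precisely what is intended.
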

\2
\begin{proposition}\label{extraformalityres}
 Suppose that $\mr{R}$ is Noetherian. Consider a graded algebra $\mr{A}$ over $\mr{R}$ and a graded module $\mr{M}$ over $\mr{A}$. Assume that $\mr{A}$, $\mr{M}$ are finite projective and that for all $p,q \in \mathbf{Z}$ the $\mr{R}$-module $\mr{HH}^{p,q}(\mr{A},\mr{M})$ is projective. Then, for any morphism of commutative rings $\mr{R} \to \mr{Q}$, we have $$\mr{HH}^{p,q}(\mr{A_Q,M_Q}) = \mr{HH}^{p,q}(\mr{A,M}) \otimes_{\mr{R}}\mr{Q}.$$ 
\end{proposition}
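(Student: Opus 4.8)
The plan is to reduce the assertion, via \cref{hochbasechange}, to a base-change statement for the Hochschild complex regarded as a complex of $\mr{R}$-modules, and then to feed that complex into a Künneth-type spectral sequence whose higher terms vanish because the Hochschild cohomology modules are projective.

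First I would fix the internal degree $q$ and set $\mr{C}^{\bullet} \coloneqq (\mr{C}^{*,q}(\mr{A},\mr{M}),\mr{d})$, the Hochschild complex in that internal degree, concentrated in (cohomological) degrees $p \ge 0$. Since $\mr{A}$ and $\mr{M}$ are finite projective over $\mr{R}$, each $\mr{M}\otimes \mr{A}^{\otimes p}$ is finite projective, hence so is $\mr{C}^{p,q}(\mr{A},\mr{M}) = \mr{Hom}^{q}(\mr{M}\otimes \mr{A}^{\otimes p},\mr{M})$, being a graded direct summand of $(\mr{M}\otimes \mr{A}^{\otimes p})^{\vee}\otimes_{\mr{R}}\mr{M}$; as $\mr{R}$ is Noetherian, the cohomology $\mr{H}^{p}(\mr{C}^{\bullet}) = \mr{HH}^{p,q}(\mr{A},\mr{M})$ is finite over $\mr{R}$, so the hypothesis says precisely that it is finite projective. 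By part $1$ of \cref{hochbasechange} the Hochschild complexes are compatible with base change, $\mr{C}^{*,q}(\mr{A_Q},\mr{M_Q}) = \mr{C}^{\bullet}\otimes_{\mr{R}}\mr{Q}$, so the statement to prove reads $\mr{H}^{p}(\mr{C}^{\bullet}\otimes_{\mr{R}}\mr{Q}) = \mr{H}^{p}(\mr{C}^{\bullet})\otimes_{\mr{R}}\mr{Q}$ for every $p$ and every $\mr{R}\to\mr{Q}$.

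Next I would pick a projective resolution $\mr{P}_{\bullet}\to\mr{Q}$ of $\mr{Q}$ over $\mr{R}$ and run the two filtration spectral sequences of the double complex $\mr{C}^{\bullet}\otimes_{\mr{R}}\mr{P}_{\bullet}$. Both filtrations (by the $\mr{C}$-degree, respectively the $\mr{P}$-degree) are exhaustive and, because $\mr{C}^{\bullet}$ lives in degrees $\ge 0$ and $\mr{P}_{\bullet}$ in degrees $\ge 0$, bounded below in each total degree, so both spectral sequences converge. Filtering by the $\mr{C}$-degree: the rows are $\mr{C}^{p}\otimes_{\mr{R}}\mr{P}_{\bullet}$, which are exact off degree $0$ since $\mr{C}^{p}$ is flat, so that spectral sequence degenerates and the total complex computes $\mr{H}^{\bullet}(\mr{C}^{\bullet}\otimes_{\mr{R}}\mr{Q})$. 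Filtering by the $\mr{P}$-degree instead gives a spectral sequence with $\mr{E}_{2}$-page built from $\mr{Tor}^{\mr{R}}_{*}\big(\mr{H}^{*}(\mr{C}^{\bullet}),\mr{Q}\big) = \mr{Tor}^{\mr{R}}_{*}\big(\mr{HH}^{*,q}(\mr{A},\mr{M}),\mr{Q}\big)$, again converging to the cohomology of the total complex.

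Finally, since every $\mr{HH}^{p,q}(\mr{A},\mr{M})$ is projective, hence flat over $\mr{R}$, all the higher $\mr{Tor}$-terms vanish, the second spectral sequence collapses onto its edge, and we obtain $\mr{H}^{p}(\mr{C}^{\bullet}\otimes_{\mr{R}}\mr{Q}) = \mr{H}^{p}(\mr{C}^{\bullet})\otimes_{\mr{R}}\mr{Q}$; unwinding the reductions of the first step this is $\mr{HH}^{p,q}(\mr{A_Q},\mr{M_Q}) = \mr{HH}^{p,q}(\mr{A},\mr{M})\otimes_{\mr{R}}\mr{Q}$. The one point to be careful about is convergence: the Hochschild complex is unbounded in the $p$-direction, so one cannot simply invoke finiteness of the double complex, but one-sided boundedness ($p \ge 0$) makes both filtrations bounded below, which suffices. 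I would also note that the temptation to argue directly that the cycle and boundary modules of $\mr{C}^{\bullet}$ are projective should be resisted — the obvious induction for that is circular — and that the spectral sequence avoids the issue entirely.
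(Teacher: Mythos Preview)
The paper states this proposition without proof, so there is nothing to compare against directly. Your argument is correct and is the standard way to establish this kind of base-change statement: reduce via \cref{hochbasechange}(1) to a Künneth/universal-coefficient problem for the complex $\mr{C}^{\bullet} = \mr{C}^{*,q}(\mr{A},\mr{M})$ of finite projective $\mr{R}$-modules, then run the two spectral sequences of $\mr{C}^{\bullet}\otimes_{\mr{R}}\mr{P}_{\bullet}$ with $\mr{P}_{\bullet}\to\mr{Q}$ a projective resolution; flatness of the $\mr{C}^{p}$ collapses one, projectivity of the cohomology collapses the other, and one-sided boundedness handles convergence. Your caution about not trying to split the complex directly is also well placed: the natural induction showing that boundaries are projective runs \emph{downward} in $p$ and has no starting point for a complex bounded only below.
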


\paragraph{Obstruction theory.}In this paragraph we formulate obstruction theory for $\mr{A}_n$-modules. Our goal is to apply it to a problem where we are extending an $\mr{A}_n$-morphism $f$ and we only care for keeping $f_1$ fixed. In particular, a theory for obstructions where the last component is allowed to vary is good enough for us and that's what we develop here. There are more general versions where one allows the last $r$ components to vary for some $r < n$. Compared to the case where we keep all components fixed, our approach has the advantage that the corresponding obstructions are cohomology classes rather than equations in the space Hochschild cochains.\\ 
$\hspace*{3mm}$We show how the Hochschild cohomology defined in the previous section controls the obstructions to extending $\mr{A}_n$-modules to $\mr{A}_{n+1}$-modules by allowing the last multiplication to vary as well as $\mr{A}_n$-morphisms to $\mr{A}_{n+1}$-morphisms, varying the last component. We focus on modules, but analogous versions for algebras can also be formulated.
\2
\begin{proposition}
 Let $\mr{A}$ be a minimal $\mr{A}_n$-algebra. Let $\mr{M}$ be a graded $\mr{R}$-module and suppose $$m^{\mr{M}}_i : \mr{M}\otimes \mr{A}^{i-1} \to \mr{M}, \quad 2\le i \le n+1,$$ are graded morphisms of degree $2-i$. Assume that for $1\le i \le n$ the $m_i^{\mr{M}}$ define an $\mr{A}_{n}$-structure on $\mr M$. Then the subexpression of $(*'_{n+1})$ given by $$\sum_{j+1+l,k \not=1,2} (-1)^{jk+l}m_{j+1+l}(\mr{id}^{\otimes j}\otimes m_{k} \otimes \mr{id}^{\otimes l})$$ defines a cocycle in $(\mr{C}(\mr{A}(2),\mr{M}(2));\mr{d})$ which we denote by $\mr{c}(m_{3}^{\mr{M}},\cdots,m_{n}^{\mr M})$ and equation $(*'_{n+1})$ becomes $$\mr{c}(m_{3}^{\mr{M}},\cdots,m_{n-1}^{\mr M}) + \mr{d}(m_{n}^{\mr M})=0.$$
\end{proposition}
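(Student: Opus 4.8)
The plan is to obtain the asserted rewriting of $(*'_{n+1})$ by direct inspection, and then to prove that $\mr c$ is a Hochschild cocycle by the usual argument that the square of the bar (co)derivation commutes with it, read off in the lowest weight where that square can fail to vanish.

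\emph{Extracting $\mr d(m^{\mr M}_n)$ from $(*'_{n+1})$.} First I would write out both sums of $(*'_{n+1})$ and use minimality of $\mr A$ and of $\mr M$, i.e.\ $m^{\mr A}_1=m^{\mr M}_1=0$, to discard the vanishing summands. Two things happen. First, any summand that would contain $m^{\mr M}_{n+1}$ necessarily has an adjacent $m^{\mr A}_1$ or $m^{\mr M}_1$ and hence vanishes; thus $(*'_{n+1})$ is already a relation among $m^{\mr M}_2,\dots,m^{\mr M}_n$ and the fixed operations $m^{\mr A}_2,\dots,m^{\mr A}_n$, and in particular the extra datum $m^{\mr M}_{n+1}$ plays no role. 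Second, the summands linear in $m^{\mr M}_n$ are exactly those whose other factor is $m^{\mr A}_2$ or the module action $m^{\mr M}_2$: from the first sum the terms $\sum_{j+l=n-1}(-1)^{l}m^{\mr M}_n(\mr{id}^{\otimes j}\otimes m^{\mr A}_2\otimes\mr{id}^{\otimes l})$ (the sign $(-1)^{2j+l}$ collapsing to $(-1)^{l}$), and from the second sum the $l=1$ term $-m^{\mr M}_2(m^{\mr M}_n\otimes\mr{id})$ and the $k=2$ term $(-1)^{n-1}m^{\mr M}_n(m^{\mr M}_2\otimes\mr{id}^{\otimes (n-1)})$. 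Matching against the definition of the Hochschild differential, these three families are precisely $\mr d(m^{\mr M}_n)$, an element of $\mr C^{n,2-n}(\mr A(2),\mr M(2))$. Writing $\mr c$ for the sum of the remaining summands — which involve only $m^{\mr A}_{\le n}$ together with $m^{\mr M}_2,\dots,m^{\mr M}_{n-1}$, so the notation $\mr c=\mr c(m^{\mr M}_3,\dots,m^{\mr M}_{n-1})$ is justified — the relation $(*'_{n+1})$ becomes $\mr c+\mr d(m^{\mr M}_n)=0$.

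\emph{$\mr c$ is a cocycle.} Here I would pass to the bar picture. Truncating $\mathcal B(\mr A)$ at tensor length $n$ gives an honest differential graded coalgebra, since $\mr A$ is an $\mr A_n$-algebra, and $m^{\mr M}_2,\dots,m^{\mr M}_n$ assemble into a degree-$1$ comodule coderivation $\mr d^{\mr M}$ of $\mr M[1]\otimes(\mathcal B(\mr A))^{+}$ over it. Running the bookkeeping of the first paragraph on each relation $(*'_m)$ shows that the hypothesis ``$m^{\mr M}_{\le n}$ is an $\mr A_n$-module over $\mr A$'' is exactly the vanishing of the corestriction of $(\mr d^{\mr M})^2$ — its projection onto the cogenerators $\mr M[1]$ — on $\mr M[1]\otimes(\mr A[1])^{\otimes p}$ for all $p\le n-1$, while on $\mr M[1]\otimes(\mr A[1])^{\otimes n}$ this corestriction equals $\mr c+\mr d(m^{\mr M}_n)$. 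Now composition of maps is associative, so $\mr d^{\mr M}$ commutes with $(\mr d^{\mr M})^2$; as the lower corestrictions of $(\mr d^{\mr M})^2$ vanish, the component of this identity that lowers the number of $\mr A[1]$-tensor factors by $n+1$ involves $\mr d^{\mr M}$ only through its $m_2$-part, and on corestrictions that component is the Hochschild differential of $\mr c+\mr d(m^{\mr M}_n)$. Hence $\mr d\big(\mr c+\mr d(m^{\mr M}_n)\big)=0$, and $\mr d^2=0$ then yields $\mr d(\mr c)=0$. (Alternatively one may expand $\mr d(\mr c)$ by hand and cancel every summand using the relations $(*'_m)$ and $(*_m)$ for $m\le n$; the bar reformulation merely organises that calculation and its signs.)

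\emph{Main obstacle.} The conceptual content is modest, so the step that will cost the most care is the sign bookkeeping underlying the second paragraph: checking that the low-weight corestrictions of $(\mr d^{\mr M})^2$ really vanish under the $\mr A_n$-hypothesis, that the first non-vanishing one is $\mr c+\mr d(m^{\mr M}_n)$ with exactly the asserted signs, and that isolating the relevant weight of the commutator of $\mr d^{\mr M}$ with $(\mr d^{\mr M})^2$ reproduces the Hochschild differential rather than some sign- or degree-twisted variant. This is the $\Aa$-module analogue of the classical fact that the primary obstruction of a deformation problem is a cocycle, and the argument runs parallel to the $\Aa$-algebra case in \cite{lefvrehasegawa2003sur} and the $\mr{L}_\infty$-algebra case in \cite{MR1480721}. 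The point, to be exploited afterwards, is that $(*'_{n+1})$ now reads $\mr d(m^{\mr M}_n)=-\mr c$ with $\mr c$ a cocycle not involving $m^{\mr M}_n$, so that the $\mr A_n$-module structure extends to an $\mr A_{n+1}$-module structure (after suitably adjusting $m^{\mr M}_n$) if and only if the class $[\mr c]\in\mr{HH}^{n,2-n}(\mr A(2),\mr M(2))$ vanishes.
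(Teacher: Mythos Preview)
The paper states this proposition (and its companion on morphisms, \cref{obs}) without proof, referring only to the analogous arguments in \cite{MR1480721} and \cite{lefvrehasegawa2003sur}. Your proof is correct and is precisely the standard one those references contain: strip off the terms involving $m^{\mr M}_n$ from $(*'_{n+1})$ and recognise them as $\mr d(m^{\mr M}_n)$, then use the identity $[\mr d^{\mr M},(\mr d^{\mr M})^2]=0$ in the bar picture, together with the vanishing of the low-weight corestrictions of $(\mr d^{\mr M})^2$, to conclude that the remainder $\mr c$ is Hochschild-closed. Your identification of the bidegree $(n,2-n)$ and of the sign bookkeeping as the only genuine labour is accurate; there is nothing to add.
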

\2
\begin{proposition}\label{obs}
 Let $\mr{A}$ be a minimal $\Aa$-algebra. Let $\mr M$ and $\mr{N}$ be two minimal $\Aa$-modules over $\mr{A}$. Suppose given $$f_{i} : \mr{M}\otimes \mr{A}^{\otimes i-1} \to \mr{N}, \quad 1\le i\le n+1,$$ of degree $1-i$ such that the morphisms $f_{i}$, for $1\le i \le n$, define an $\mr{A}_{n}$-morphism. The subexpression of $(**'_{n+1})$$$\sum_{k\not=1,2} (-1)^{jk+l}f_{j+1+l}(\mr{id}^{\otimes j}\otimes m_{k} \otimes \mr{id}^{\otimes l}) - \sum_{s\not=0,1}m_{s+1}(f_{r}\otimes \mr{id}^{\otimes s})$$ defines a cocycle in $(\mr{C}(\mr{A}(2),\mr{M}(2),\mr{N}(2));\mr{d})$, denoted by $\mr{c}(f_{1},\cdots,f_{n-1})$. Then equation $(**'_{n+1})$ becomes $$\mr{c}(f_1,\cdots,f_{n-1}) + \mr{d}(f_{n})=0.$$
\end{proposition}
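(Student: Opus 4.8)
The plan is to run the argument of the preceding proposition with the single module $\mr{M}$ there replaced by the pair $(\mr{M},\mr{N})$ and the structure maps $m_i^{\mr{M}}$ replaced by the morphism components $f_i$. Two things must be checked: that $(**'_{n+1})$ is exactly the equation $\mr{c}(f_1,\dots,f_{n-1})+\mr{d}(f_n)=0$, and that the cochain $\mr{c}(f_1,\dots,f_{n-1})$ is a Hochschild cocycle.

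For the first point, sort the summands on each side of $(**'_{n+1})$ by the arity of the operation they insert. Those inserting a unary operation, namely $f_{n+1}(\mr{id}^{\otimes j}\otimes m_1\otimes\mr{id}^{\otimes l})$ on the left ($k=1$) and $m_1(f_{n+1})$ on the right ($s=0$), vanish because $\mr{A},\mr{M},\mr{N}$ are minimal. Those inserting a binary operation are $f_n(\mr{id}^{\otimes j}\otimes m_2^{\mr{A}}\otimes\mr{id}^{\otimes l})$ (over $j+l=n-1$, $j\ge1$) and $f_n(m_2^{\mr{M}}\otimes\mr{id}^{\otimes n-1})$ on the left, and $m_2^{\mr{N}}(f_n\otimes\mr{id})$ on the right; comparing signs with the formula for the Hochschild differential of $\mr{C}(\mr{A}(2),\mr{M}(2),\mr{N}(2))$ on $f_n\in\mr{C}^{n-1,1-n}$, these assemble to $\mr{d}(f_n)\in\mr{C}^{n,1-n}$. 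The rest — the summands inserting $m_k$ with $k\ge3$, be it $m_k^{\mr{A}}$, $m_k^{\mr{M}}$ or $m_k^{\mr{N}}$ — involve only $f_1,\dots,f_{n-1}$, since $j+1+l=n+2-k\le n-1$ on the left and $r=n+1-s\le n-1$ on the right, and by definition they form $\mr{c}(f_1,\dots,f_{n-1})$. So $(**'_{n+1})$ is $\mr{c}(f_1,\dots,f_{n-1})+\mr{d}(f_n)=0$, the second assertion.

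For the cocycle property I would pass to the bar picture. Let $\mr{d}^{\mr{M}}$ and $\mr{d}^{\mr{N}}$ be the comodule coderivations on $\mr{M}[1]\otimes(\mathcal{B}(\mr{A}))^{+}$ and $\mr{N}[1]\otimes(\mathcal{B}(\mr{A}))^{+}$ encoding the $\Aa$-module structures; they are built from the $m_i^{\mr{M}}$, resp. $m_i^{\mr{N}}$, together with the $m_i^{\mr{A}}$, and since $\mr{M},\mr{N}$ are honest $\Aa$-modules we have $(\mr{d}^{\mr{M}})^{2}=0=(\mr{d}^{\mr{N}})^{2}$. Let $\mr{F}$ be the $(\mathcal{B}(\mr{A}))^{+}$-comodule morphism assembled from $f_1,\dots,f_n$ (zero in higher components) and set $\Psi:=\mr{d}^{\mr{N}}\circ\mr{F}-\mr{F}\circ\mr{d}^{\mr{M}}$. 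On the graded space of such comodule morphisms the operator $\partial(\Phi)=\mr{d}^{\mr{N}}\circ\Phi-(-1)^{|\Phi|}\Phi\circ\mr{d}^{\mr{M}}$ squares to zero; since $\Psi=\partial(\mr{F})$ we get $\partial(\Psi)=0$. Filter all operators by the amount they lower tensor length along $(\mathcal{B}(\mr{A}))^{+}$. The relations $(**'_1),\dots,(**'_n)$ say precisely that the components $\Psi_q$ of $\Psi$ with $q\le n-1$ vanish, and by the previous paragraph (transported to the bar side) the next component $\Psi_n$ equals $\mr{c}(f_1,\dots,f_{n-1})+\mr{d}(f_n)$. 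In the length-lowering-$(n+1)$ part of the identity $\partial(\Psi)=0$, every term is a component of $\Psi$ composed with a summand of $\mr{d}^{\mr{M}}$ or $\mr{d}^{\mr{N}}$; the summand is not unary (minimality), if it has arity $\ge 3$ it meets some $\Psi_q$ with $q\le n-1$ and so vanishes, and if it is binary it meets $\Psi_n$. Hence the identity reduces to $\delta_2^{\mr{N}}\Psi_n\pm\Psi_n\delta_2^{\mr{M}}=0$ with $\delta_2^{\mr{M}},\delta_2^{\mr{N}}$ the binary parts of $\mr{d}^{\mr{M}},\mr{d}^{\mr{N}}$; as these induce the Hochschild differential on $\mr{C}(\mr{A}(2),\mr{M}(2),\mr{N}(2))$, this is $\mr{d}(\Psi_n)=0$, whence $\mr{d}\bigl(\mr{c}(f_1,\dots,f_{n-1})\bigr)=-\mr{d}^{2}(f_n)=0$.

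The step I expect to be fiddly is the sign bookkeeping: $\mr{d}^{\mr{M}}$ and $\mr{d}^{\mr{N}}$ interleave the module operations with the algebra operations, so one juggles two gradings at once — tensor length, and binary versus higher arity — and must reconcile the Koszul signs of three conventions ($\Aa$-algebras, $\Aa$-modules, morphisms of $\Aa$-modules) through the suspension isomorphisms $s$. I would route all signs through $s$ once so that the single identity $\partial^{2}=0$ does the accounting; alternatively, one can avoid the bar construction and verify $\mr{d}(\mr{c})=0$ by a direct expansion, cancelling terms against $(*_m)$ for $\mr{A}$, $(*'_m)$ for $\mr{M}$ and $\mr{N}$, and $(**'_m)$ for $m\le n$ — this is the same computation already made for the preceding proposition, with $\mr{M}$ there replaced by the pair $(\mr{M},\mr{N})$, and is routine though lengthy.
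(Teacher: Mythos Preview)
Your argument is correct. The paper does not actually supply a proof of this proposition: both obstruction-theory propositions are stated as standard facts, with only a pointer to the analogous $\mr{L}_\infty$ and $\Aa$-algebra literature. Your write-up therefore fills in what the paper leaves implicit, and does so along the expected lines: the decomposition of $(**'_{n+1})$ by arity is the only sensible way to read off $\mr{c}+\mr{d}(f_n)=0$, and routing the cocycle check through $\partial^2=0$ on the bar side is exactly the clean bookkeeping device one wants here (and is how the references the paper cites would do it). The alternative you mention at the end---a direct expansion cancelling against the $\Aa$-relations---is the same computation in components and would also work; the bar approach is simply less error-prone with the signs.
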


\paragraph{Formality of $\Aa$-modules.}
\begin{definition}
 Let $\mr{A}$ be an $\Aa$-algebra. Let $n \in \mathbf{N}\cup \{\infty\}$ and assume that $\mr{A}$ is $\mr{A}_n$-formal. We say that an $\Aa$-module $\mr M$ over $\mr A$ is $\mr{A}_n$-formal if there exists an $\mr{A}_n$-quasi-isomorphism of pairs $(\mr{HA},\mr{HM}) \to (\mr{A},\mr{M})$, where $\mr{HA}$ (resp. $\mr{HM}$) is the ordinary graded associative algebra (resp. module).
\end{definition}
\2
\begin{remark}\leavevmode
\begin{enumerate}
\item  In other words $\mr{M}$ is $\mr{A}_n$-formal if it admits a minimal $\mr{A}_n$-model with vanishing higher multiplications. 
\item Notice that our definition of $\mr{A}_n$-formality for a module $\mr{M}$ over $\mr{A}$ assumes that the $\Aa$-algebra $\mr{A}$ is $\mr{A}_n$-formal.
\item When $n =\infty$, we drop the $\Aa$ and simply say that $\mr{M}$ is formal.
\item By \cref{dgainf}, under the usual projectivity assumptions, a differential graded module $\mr{M}$ over a formal differential graded algebra $\mr{A}$ is differential graded formal iff its minimal model $\mr{HM}$ is formal as an $\Aa$-module over the (formal) $\Aa$-algebra $\mr{HA}$.
\end{enumerate}
\end{remark}
\2
$\hspace*{3mm}$Recall that if $\mr{A}$ is a minimal $\Aa$-algebra, then $\mr{A}(2)$ stands for the underlying graded associative algebra, so if $\mr{A}$ is just a graded associative algebra, then $\mr{A}=\mr{A}(2)$.\\
$\hspace*{3mm}$Similarly, let $\mr{M}$ be a minimal $\Aa$-module over a graded algebra $\mr{A}$. Then $\mr M(2)$ denotes the $\Aa$-module on the graded space $\mr M$ with structure morphisms given by $$m_2^{\mr M(2)}= m_{2}^{\mr M}  \text{ and } m_i^{\mr M(2)}=0 \text{ for } i \not=2,$$ i.e. $\mr{M}(2)$ is the underlying graded associative $\mr A$-module, considered as an $\Aa$-module - this is possible precisely because $\mr{M}$ is minimal and $\mr{A}$ is just a graded associative algebra, so has no higher multiplications, and the $\Aa$-relations for $\mr{M}(2)$ reduce to associativity. In this notation, $\mr{M}$ is a formal $\Aa$-module if it is quasi-isomorphic to $\mr{M}(2)$.
\2
\begin{remark}
 Let $\mr M$ be a minimal $\Aa$-module over a graded algebra $\mr A$. If $\mr M$ is formal, there exists a quasi-isomorphism $f : \mr{M} \to \mr{M(2)}$ lifting the identity on the underlying complexes.
\end{remark}
\2
\begin{proposition}\label{modformality}
 Let $\mr{R}$ be an integral domain with field of fractions $k(\eta)$. Let $\mr A$ be a graded $\mr{R}$-algebra and let $\mr{M}$ be a minimal $\Aa$-module over $\mr A$ such that both $\mr A$ and $\mr M$ are finite projective as $\mr{R}$-modules. Assume that $\mr{HH}^{n,1-n}(\mr{A},\mr{M}(2))$ is torsion-free for all $n$. If $\mr {M}_{\eta}= k(\eta)\otimes_{\mr{R}} \mr{M}$ is formal, then $\mr{M}$ is formal and there exists a quasi-isomorphism $f: \mr{M} \to \mr M(2) $ lifting the identity on the underlying complexes. In particular, the fibres $\mr{M}_{\mathfrak{p}}$ are formal for all $\mathfrak{p} \in \mr{Spec}(\mr{R})$.
\end{proposition}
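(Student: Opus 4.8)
\emph{Proof idea.}
The plan is to descend formality from the generic fibre by an obstruction-theoretic induction, in the spirit of Kaledin's proof of \cref{formalityalg}, using the obstruction theory of \cref{obs} and the base-change results of \cref{hochbasechange}. Since $\mr{M}$ is minimal and $\mr{A}$ is an honest graded algebra (only $m_2$, and every $m_1$ vanishes), it is enough to build an $\Aa$-quasi-isomorphism $f = (f_1,f_2,\ldots): \mr{M}\to\mr{M}(2)$ of $\Aa$-modules over $\mr{A}$ with $f_1 = \mr{id}_{\mr{M}}$: any such $f$ is automatically a quasi-isomorphism lifting the identity, so it proves both assertions, and reducing it modulo a prime $\mathfrak{p}$ gives formality of $\mr{M}_{\mathfrak{p}}$. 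With $f_1 = \mr{id}$ the relation $(**'_2)$ holds automatically. Suppose $f_1,\ldots,f_{n-1}$ have been constructed so that $(**'_m)$ holds for all $m\le n$; by \cref{obs} (taken with $\mr{N} = \mr{M}(2)$) the relation $(**'_{n+1})$ then reads $\mr{c}(f_1,\ldots,f_{n-1}) + \mr{d}(f_n) = 0$ with $\mr{c}(f_1,\ldots,f_{n-1})$ a Hochschild cocycle in $\mr{C}^{n,1-n}(\mr{A},\mr{M}(2))$, so a choice of $f_n$ continuing the induction exists precisely when the obstruction class $o_n \coloneqq [\mr{c}(f_1,\ldots,f_{n-1})]$ vanishes in $\mr{HH}^{n,1-n}(\mr{A},\mr{M}(2))$.

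To obtain $o_n = 0$ I would pass to the generic point. The cochain $\mr{c}(f_1,\ldots,f_{n-1})$ is given by a universal formula in the $f_i$ and the module operations $m_k^{\mr{M}}$, hence commutes with base change along $\mr{R}\to k(\eta)$; by \cref{hochbasechange}, applied to this flat morphism and to the finite projective module $\mr{M}(2)$, one has $\mr{HH}^{n,1-n}(\mr{A}_\eta,\mr{M}(2)_\eta) = \mr{HH}^{n,1-n}(\mr{A},\mr{M}(2))\otimes_{\mr{R}} k(\eta)$, and the image of $o_n$ there equals the obstruction to extending the base-changed partial morphism $(\mr{id},\bar f_2,\ldots,\bar f_{n-1})$, where $\bar f_i \coloneqq k(\eta)\otimes_{\mr{R}} f_i$, over $\mr{A}_\eta$. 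Granting for a moment that this base-changed partial morphism extends to a full $\Aa$-morphism $\mr{M}_\eta\to\mr{M}(2)_\eta$, the image of $o_n$ in $\mr{HH}^{n,1-n}(\mr{A},\mr{M}(2))\otimes_{\mr{R}}k(\eta)$ is zero, so $o_n$ is $\mr{R}$-torsion, hence $o_n = 0$ by the torsion-freeness hypothesis; choosing $f_n$ over $\mr{R}$ and iterating produces the desired $f$, and the statement about the fibres $\mr{M}_{\mathfrak{p}}$ follows by base change.

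It remains to see that over the generic point any partial trivialization extends to a full one. Formality of $\mr{M}_\eta$ furnishes, via the Remark immediately preceding the statement, a full $\Aa$-isomorphism $g:\mr{M}_\eta\to\mr{M}(2)_\eta$ with $g_1 = \mr{id}$. Given any $\mr{A}_{n-1}$-morphism $\phi = (\mr{id},\phi_2,\ldots,\phi_{n-1}):\mr{M}_\eta\to\mr{M}(2)_\eta$, let $g^{(n-1)}$ be the truncation of $g$; since $g_1 = \mr{id}$ is invertible, $g^{(n-1)}$ has an inverse $\mr{A}_{n-1}$-morphism, and $\Psi \coloneqq \phi\circ (g^{(n-1)})^{-1}$ is an $\mr{A}_{n-1}$-automorphism of $\mr{M}(2)_\eta$ with $\Psi_1 = \mr{id}$. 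But $\mr{M}(2)_\eta$ is a strict module over the graded algebra $\mr{A}_\eta$, and a direct inspection of $(**'_m)$ shows that an $\Aa$-endomorphism of such a module is nothing but a sequence of Hochschild cocycles, one in each degree, with no coupling between them; hence $\Psi$ extends --- say by zeros --- to a full $\Aa$-automorphism $\tilde{\Psi}$ of $\mr{M}(2)_\eta$, and then $\tilde{\Psi}\circ g$ is a full $\Aa$-morphism extending $\phi$. Taking $\phi = (\mr{id},\bar f_2,\ldots,\bar f_{n-1})$ closes the induction. I expect the crux to be exactly this last step --- showing that generic formality annihilates the generic obstruction attached to our particular integral partial trivialization, even though $o_n$ is not canonical but records the earlier choices $f_2,\ldots,f_{n-1}$; the unobstructedness of $\Aa$-endomorphisms of a strict module over a graded algebra is what makes it go through, and it plays the role that the canonicity of the Kaledin class and its flat base-change compatibility play in the algebra case \cref{formalityalg}.
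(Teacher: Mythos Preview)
Your approach differs from the paper's. The paper does not build $f:\mr{M}\to\mr{M}(2)$ directly; instead it constructs a tower of intermediate $\Aa$-modules $\mr{M}^{(2)},\mr{M}^{(3)},\ldots$ with $m_j^{(n)}=0$ for $3\le j\le n+1$, linked by $\Aa$-isomorphisms $\tilde f_n=(\mr{id},0,\ldots,0,f_n,0,\ldots):\mr{M}^{(n-1)}\to\mr{M}^{(n)}$, and takes the infinite composite. The point of the detour is that at stage $n$ the obstruction cocycle $\mr{c}(\mr{id},g_2,\ldots,g_n)$ for $\mr{M}^{(n)}\to\mr{M}^{(n)}(2)$ is \emph{constant} in $(g_2,\ldots,g_n)$ --- precisely because the relevant higher multiplications of $\mr{M}^{(n)}$ vanish --- so its generic vanishing is immediate from formality of $\mr{M}^{(n)}_\eta$, with no need to match up partial trivializations. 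Your direct construction avoids the intermediate modules entirely, at the price of having to argue that your \emph{particular} integral partial datum $(\mr{id},\bar f_2,\ldots,\bar f_{n-1})$ extends over the generic point; your factorization $\Psi=\phi\circ(g^{(n-1)})^{-1}$ together with the decoupling of $\Aa$-endomorphisms of a strict module is a clean way to achieve this, and arguably more transparent than the paper's tower.

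There is, however, one small gap in your last step. For strict source and target over a graded algebra, the relation $(**'_m)$ reduces to the single condition ``$\Psi_{m-1}$ is a Hochschild cocycle''. Thus an $\mr{A}_{n-1}$-endomorphism $\Psi$ of $\mr{M}(2)_\eta$ only forces $\Psi_1,\ldots,\Psi_{n-2}$ to be cocycles; nothing is said about $\Psi_{n-1}$, so ``extending by zeros'' to a full $\Aa$-morphism with the same first $n-1$ components is not yet justified. The fix is immediate: your induction hypothesis gives $(**'_m)$ for all $m\le n$, so $\phi$ (padded with $\phi_n=0$) is an honest $\mr{A}_n$-morphism; use the $\mr{A}_n$-truncation $g^{(n)}$ instead and set $\Psi=\phi\circ(g^{(n)})^{-1}$. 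This is an $\mr{A}_n$-endomorphism of $\mr{M}(2)_\eta$, hence $\Psi_{n-1}$ \emph{is} a cocycle and $(\Psi_1,\ldots,\Psi_{n-1},0,0,\ldots)$ is a full $\Aa$-automorphism. Since truncation commutes with composition and inversion, the first $n-1$ components are unchanged and the rest of your argument goes through verbatim.
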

\begin{proof}
 We are going to construct the quasi-isomorphism $f$ inductively. Since $\mr M$ and $\mr M(2)$ have the same underlying $\mr A_2$-modules, we set $f_1=\mr{id}$, and for any $f_2$, we have a quasi-isomorphism $(\mr{id},f_2) : \mr{M} \to \mr{M}(2)$ of $\mr{A}_2$-modules over $\mr{A}$. Then $[\mr{c}(\mr{id})]= [m_3]$ is the Massey product which vanishes generically since $\mr{M}_\eta$ is formal. Since $\mr{HH}^{2,-1}(\mr{A},\mr{M}(2))$ is torsion-free, it follows that $[\mr{c}(\mr{id})]$ vanishes everywhere. Hence, we can choose $f_2$ such that $$\mr{c}(\mr{id}) + \mr{d}(f_2)=0.$$ 
 $\hspace*{3mm}$There exists an $\Aa$-module structure $\mr{M}^{(2)} = (\mr{M},m^{(2)})$ on the graded $\mr{R}$-module $\mr{M}$ such that 
 \begin{equation*}
  m^{(2)}_2=m_{2}^{\mr{M}}, m^{(2)}_3=0 \text{ and }
  \tilde{f}_{2} \coloneqq (\mr{id},f_2,0,\cdots) : \mr{M} \to \mr{M}^{(2)}
 \end{equation*}
 is a quasi-isomorphism of $\Aa$-modules. Indeed, $\mr{M}^{(2)}$ can be constructed inductively as follows: we treat the multiplications $m^{(2)}$ of $\mr{M}^{(2)}$ as variables and require that $(\mr{id},f_2,0,\cdots)$ defines a quasi-isomorphism of $\Aa$-modules. The corresponding equations for $m^{(2)}$ can be uniquely solved as they are all of the form $$\sum_{\substack{j+k+l=n, k\ge2}} (-1)^{jk+l}f_{j+1+l}(\mr{id}^{\otimes j}\otimes m_{k}\otimes \mr{id}^{\otimes l}) = m^{(2)}_{n}+ m^{(2)}_{n-1}(f_{2}\otimes \mr{id}^{\otimes n-2}),$$ where $j+1+l= 1 \text{ or } 2$. For example, $n=2$ implies that $m^{(2)}_{2}=m_{2}$ and $n=3$ is just the equation $\mr{c}(\mr{id}) + \mr{d}(f_2)=m^{(2)}_3$, i.e. $m^{(2)}_3=0$.\\
$\hspace*{3mm}$Assume by induction that we have constructed $f_2,\cdots, f_n$ and $\mr{M}^{(2)},\cdots,\mr{M}^{(n)}$ with \begin{equation*}
 m_{2}^{(i)}=m_{2}^{\mr{M}}  \text{ and } m^{(i)}_{j}=0 \text{ for all } 3\le j \le i+1 
 \end{equation*} 
 such that, for all $2\le i \le n$, the maps
 \begin{equation*}
  \tilde{f}_{i} \coloneqq (\mr{id},0,\cdots,0,f_i,0,\cdots) : \mr{M}^{(i-1)} \to \mr{M}^{(i)}
 \end{equation*}
are quasi-isomorphisms of $\Aa$-modules.\\
$\hspace*{3mm}$In order to construct the pair $(f_{n+1},\mr{M}^{(n+1)})$, it suffices to show that there exists $f_{n+1}$ such that $$(\mr{id},0,\cdots,0,f_{n+1},0) :\mr{M}^{(n)} \to \mr{M}^{(n)}(2)$$ is an $\mr{A}_{n+2}$-morphism, i.e. $\mr{A}_{n+2}$-formality of $\mr{M}^{(n)}$. We know that $$(\mr{id},0,\cdots,0,f_{n+1}):\mr{M}^{(n)} \to \mr{M}^{(n)}(2)$$ is an $\mr{A}_{n+1}$-morphism for any $f_{n+1}$. The goal is to show $f_{n+1}$ can be chosen so that in fact any $f_{n+2}$ would give an $\mr{A}_{n+2}$-morphism $\mr{M}^{(n)} \to \mr{M}^{(n)}(2)$.\\
$\hspace*{3mm}$By \cref{obs} it suffices that the cohomology class $[\mr{c}(\mr{id},0,\cdots,0)]$ vanish. Notice that the function $$(g_2,\cdots,g_n) \mapsto \mr{c}(\mr{id},g_2,\cdots,g_n)$$ is constant precisely because the corresponding higher multiplications in $\mr{M}^{(n)}$ vanish, and the same applies for the localised version $\mr{M}^{(n)}_{\eta}$. Since $\mr M_{\eta}$, and hence also $\mr{M}^{(n)}_{\eta}$, is formal, we see that $$[\mr{c}(\mr{id},0,\cdots,0)]_{\eta}=0.$$ Furthermore, $\mr{HH}^{n+1,-n}(\mr{A},\mr{M}(2))$ is torsion-free, so we get that $[\mr{c}(\mr{id},0,\cdots,0)]=0,$ hence there exists $f_{n+1}$ such that $$(\mr{id},0,\cdots,0, f_{n+1},f_{n+2}):\mr{M}^{(n)} \to \mr{M}^{(n)}(2)$$ is an $\mr{A}_{n+2}$-morphism for any choice of $f_{n+2}.$\\
$\hspace*{3mm}$Then, as above, the standard construction gives an $\Aa$-module $\mr{M}^{(n+1)}$ such that
\begin{equation*}
 m_{2}^{(n+1)}=m_{2}^{\mr{M}}  \text{ and } m^{(n+1)}_{j}=0 \text{ for all } 3\le j \le n+2 
 \end{equation*} 
and the map
 $ \tilde{f}_{n+1}\coloneqq (\mr{id},0,\cdots,0,f_{n+1},0,\cdots) : \mr{M}^{(n)} \to \mr{M}^{(n+1)}$
is a quasi-isomorphism of $\Aa$-modules.\\
$\hspace*{3mm}$The infinite composition $$f \coloneqq \cdots \circ \tilde{f}_{n}\circ \cdots \circ \tilde{f}_{2} : \mr{M} \to \mr{M}(2)$$ defines the required quasi-isomorphism and we note that it is well-defined because composition with $\tilde{f}_{n}$ leaves the components in weights $i < n$ fixed.
\end{proof}
$\hspace*{3mm}$In fact the proof shows also that the following proposition holds.
\2
\begin{proposition}
 Let $\mr{A}$ be a graded algebra. An $\Aa$-module $\mr{M}$ over $\mr{A}$ is formal if and only if it is $\mr{A}_n$-formal for all $n \in \mathbf{N}$.
\end{proposition}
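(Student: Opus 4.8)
The plan is to run the inductive construction from the proof of \cref{modformality}, replacing the one step that used semicontinuity. The implication ``formal $\Rightarrow$ $\mr{A}_n$-formal for all $n$'' is immediate: from an $\Aa$-quasi-isomorphism of pairs $(\mr{HA},\mr{HM})\to(\mr{A},\mr{M})$ — or a zigzag of such — one obtains an $\mr{A}_n$-quasi-isomorphism of pairs by discarding all structure maps of index $>n$ in the algebras, modules and morphisms involved. For the converse, suppose $\mr{M}$ is $\mr{A}_n$-formal for every $n$. Replacing $\mr{M}$ by a Kadeishvili minimal model (both the hypothesis and the conclusion are quasi-isomorphism invariants) we may assume $\mr{M}$ minimal, so $\mr{HM}=\mr{M}$ and $(\mr{HA},\mr{HM})=(\mr{A},\mr{M}(2))$; we must produce an $\Aa$-quasi-isomorphism $f:\mr{M}\to\mr{M}(2)$. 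As in \cref{modformality}, one builds by induction maps $f_2,f_3,\dots$ and auxiliary minimal $\Aa$-modules $\mr{M}^{(1)}\coloneqq\mr{M},\mr{M}^{(2)},\mr{M}^{(3)},\dots$ with $m^{(i)}_2=m^{\mr{M}}_2$ and $m^{(i)}_j=0$ for $3\le j\le i+1$, together with $\Aa$-quasi-isomorphisms $\tilde f_i=(\mr{id},0,\dots,0,f_i,0,\dots):\mr{M}^{(i-1)}\to\mr{M}^{(i)}$; the sought $f$ is the composite $\cdots\circ\tilde f_n\circ\cdots\circ\tilde f_2$, well defined since composing with $\tilde f_i$ fixes the components of weight $<i$, and it satisfies $f_1=\mr{id}$.

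The passage from $(f_2,\dots,f_n;\mr{M}^{(2)},\dots,\mr{M}^{(n)})$ to $(f_{n+1},\mr{M}^{(n+1)})$ goes through verbatim as in \cref{modformality} once we know that the obstruction class $[\mr{c}(\mr{id},0,\dots,0)]\in\mr{HH}^{n+1,-n}(\mr{A},\mr{M}(2))$ — the obstruction, via \cref{obs}, to choosing $f_{n+1}$ so that $(\mr{id},0,\dots,0,f_{n+1}):\mr{M}^{(n)}\to\mr{M}^{(n)}(2)$ extends to an $\mr{A}_{n+2}$-morphism — vanishes. In \cref{modformality} this came from generic formality and torsion-freeness of Hochschild cohomology; here it comes from the hypothesis directly. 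The $\Aa$-quasi-isomorphisms $\tilde f_2,\dots,\tilde f_n$ truncate to $\mr{A}_{n+2}$-quasi-isomorphisms, so $\mr{M}^{(n)}$ is $\mr{A}_{n+2}$-quasi-isomorphic to $\mr{M}$; as $\mr{M}$ is $\mr{A}_{n+2}$-formal, so is $\mr{M}^{(n)}$, and since $\mr{M}^{(n)},\mr{M}^{(n)}(2)$ are minimal, invertibility of quasi-isomorphisms between minimal modules collapses the witnessing zigzag to a single $\mr{A}_{n+2}$-quasi-isomorphism $g:\mr{M}^{(n)}\to\mr{M}^{(n)}(2)$, whose first component $g_1$ is an isomorphism of graded $\mr{A}$-modules. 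Because $m^{(n)}_j=0$ for $3\le j\le n+1$ and $\mr{M}^{(n)}(2)$ has vanishing higher multiplications, the cocycle $\mr{c}(g_1,\dots,g_n)$ collapses to $\pm\,g_1\circ m^{(n)}_{n+2}$, independently of $g_2,\dots,g_n$, while $\mr{c}(\mr{id},0,\dots,0)=\pm\,m^{(n)}_{n+2}$; but the relation $(**'_{n+2})$ for the $\mr{A}_{n+2}$-morphism $g$ exhibits $g_1\circ m^{(n)}_{n+2}$ as a Hochschild coboundary, and since postcomposition by the module isomorphism $g_1$ is a chain isomorphism of the Hochschild complex, $[m^{(n)}_{n+2}]=0$ and hence $[\mr{c}(\mr{id},0,\dots,0)]=0$. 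Then \cref{obs} supplies $f_{n+1}$, the standard construction (treating the multiplications of $\mr{M}^{(n+1)}$ as unknowns constrained by $\tilde f_{n+1}$ being an $\Aa$-quasi-isomorphism, uniquely solvable) yields $\mr{M}^{(n+1)}$, and the induction proceeds; the base case $n=1$ is the instance $\mr{M}^{(1)}=\mr{M}$, whose obstruction $\pm[m^{\mr{M}}_3]$ is killed by $\mr{A}_3$-formality.

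The step I expect to require the most care is the identification of the obstruction class with (a module automorphism applied to) the leading higher multiplication $m^{(n)}_{n+2}$, independently of the lower components of the morphism, so that $\mr{A}_{n+2}$-formality of $\mr{M}^{(n)}$ is exactly what forces its vanishing — this being precisely the point where \cref{modformality} argued through generic formality instead. The other ingredients — truncating $\Aa$-data to $\mr{A}_{n+2}$-data, invertibility of quasi-isomorphisms between minimal $\mr{A}_{n+2}$-modules, the chain-map property of postcomposition by a module isomorphism on the Hochschild complex, and the signs in $(**'_{n+2})$ — are routine; note also that, since the argument builds $f$ with $f_1=\mr{id}$, it simultaneously yields a quasi-isomorphism $\mr{M}\to\mr{M}(2)$ lifting the identity on underlying complexes.
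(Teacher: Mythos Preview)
Your proposal is correct and follows essentially the same approach as the paper: the paper simply remarks that ``the proof shows also that the following proposition holds,'' meaning one reruns the inductive construction of \cref{modformality} verbatim, with the vanishing of the obstruction class $[\mr{c}(\mr{id},0,\dots,0)]$ now supplied by $\mr{A}_{n+2}$-formality of $\mr{M}^{(n)}$ rather than by generic formality plus torsion-freeness. Your explicit treatment of the case $g_1\neq\mr{id}$ --- computing $\mr{c}(g_1,\dots,g_n)=\pm\,g_1\circ m^{(n)}_{n+2}$ and using that postcomposition by the $\mr{A}$-module isomorphism $g_1$ is a chain isomorphism of the Hochschild complex --- is a useful clarification the paper leaves implicit (it relies on the remark preceding \cref{modformality} to normalise $g_1=\mr{id}$, which amounts to the same thing: postcompose $g$ with the strict $\mr{A}_n$-morphism $(g_1^{-1},0,\dots)$ of $\mr{M}(2)$, legitimate since $\mr{M}(2)$ has no higher multiplications).
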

\2
\begin{corollary}
 Suppose that $\mr{R}$ is Noetherian. Let $\mr{A}$ be a graded algebra over $\mr{R}$ and let $\mr{M}$ be a minimal $\Aa$-module over $\mr{A}$. Suppose that $\mr{A}$ and $\mr{M}$ are finite projective $\mr{R}$-modules and $\mr{HH}^{p,q}(\mr{A},\mr{M}(2))$ is projective for all $p,q \in \mathbf{Z}$. Then the set $$\mr{F}(\mr{M})=\{\mathfrak{p}\in \mr{Spec}(\mr{R}) \enskip | \enskip \text{the } \Aa \text{-module } \mr{M}_x \text{ is formal }\}$$ is closed under specialisation.
\end{corollary}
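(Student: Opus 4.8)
The plan is to fix a formal point $\mathfrak{p}\in\mr{F}(\mr{M})$, base change the whole picture along the quotient $\mr{R}\to\mr{R}/\mathfrak{p}$, and then invoke \cref{modformality} over the integral domain $\mr{R}/\mathfrak{p}$. Recall that $\mr{F}(\mr{M})$ being closed under specialisation means exactly that for every $\mathfrak{p}\in\mr{F}(\mr{M})$ and every prime $\mathfrak{q}\supseteq\mathfrak{p}$ one has $\mathfrak{q}\in\mr{F}(\mr{M})$, i.e. $V(\mathfrak{p})\subseteq\mr{F}(\mr{M})$. So the target reduces to: if the generic fibre of the family over $V(\mathfrak{p})$ is formal, all its fibres are.

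First I would set $\mr{R}'\coloneqq\mr{R}/\mathfrak{p}$, an integral domain whose fraction field $k(\eta')$ is the residue field $k(\mathfrak{p})$ of $\mr{R}$ at $\mathfrak{p}$, and put $\mr{A}'\coloneqq\mr{A}\otimes_{\mr{R}}\mr{R}'$ and $\mr{M}'\coloneqq\mr{M}\otimes_{\mr{R}}\mr{R}'$ with structure maps $m_i^{\mr{M}'}=m_i^{\mr{M}}\otimes\mr{id}$. Since finite projectivity is stable under base change and the relations $\eqref{defeq3}$ are $\mr{R}$-multilinear, $\mr{A}'$ is a graded $\mr{R}'$-algebra, $\mr{M}'$ is a minimal $\Aa$-module over $\mr{A}'$, both finite projective over $\mr{R}'$, and $\mr{M}'(2)=\mr{M}(2)\otimes_{\mr{R}}\mr{R}'$.

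Next I would verify the hypotheses of \cref{modformality} for $\mr{M}'$ over $\mr{R}'$. As $\mr{R}$ is Noetherian, $\mr{A}$ and the graded module $\mr{M}(2)$ are finite projective, and $\mr{HH}^{p,q}(\mr{A},\mr{M}(2))$ is projective for all $p,q$, so \cref{extraformalityres} applies to $\mr{R}\to\mr{R}'$ and gives $\mr{HH}^{n,1-n}(\mr{A}',\mr{M}'(2))=\mr{HH}^{n,1-n}(\mr{A},\mr{M}(2))\otimes_{\mr{R}}\mr{R}'$; being projective over the domain $\mr{R}'$, this module is torsion-free. Moreover the generic fibre of $\mr{M}'$ is $\mr{M}'\otimes_{\mr{R}'}k(\eta')=\mr{M}\otimes_{\mr{R}}k(\mathfrak{p})=\mr{M}_{\mathfrak{p}}$, which is formal because $\mathfrak{p}\in\mr{F}(\mr{M})$. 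Hence \cref{modformality} applies and yields that $\mr{M}'$ is formal and, in particular, that all fibres $\mr{M}'_{\mathfrak{q}'}$ are formal for $\mathfrak{q}'\in\mr{Spec}(\mr{R}')$.

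Finally I would pull this back to $\mr{R}$: the primes of $\mr{R}'=\mr{R}/\mathfrak{p}$ are exactly the $\mathfrak{q}\in\mr{Spec}(\mr{R})$ with $\mathfrak{q}\supseteq\mathfrak{p}$, and for such a $\mathfrak{q}$ the residue field of $\mr{R}'$ at $\mathfrak{q}/\mathfrak{p}$ coincides with $k(\mathfrak{q})$, so $\mr{M}'_{\mathfrak{q}/\mathfrak{p}}=\mr{M}\otimes_{\mr{R}}k(\mathfrak{q})=\mr{M}_{\mathfrak{q}}$ over the correspondingly base-changed algebra. Thus $\mr{M}_{\mathfrak{q}}$ is formal for every $\mathfrak{q}\supseteq\mathfrak{p}$, i.e. $V(\mathfrak{p})\subseteq\mr{F}(\mr{M})$, which is the claim. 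The only step that is not pure bookkeeping, and the one I expect to be the crux, is the middle one: preserving torsion-freeness of the relevant Hochschild groups under the non-flat map $\mr{R}\to\mr{R}/\mathfrak{p}$. This is precisely why projectivity (not merely torsion-freeness) of $\mr{HH}^{p,q}(\mr{A},\mr{M}(2))$ is assumed, so that \cref{extraformalityres} — rather than the flat base change of \cref{hochbasechange} — can be used; granted that, everything else is a routine manipulation of base change and spectra.
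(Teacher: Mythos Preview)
Your proof is correct and follows essentially the same route as the paper: base change along $\mr{R}\to\mr{R}/\mathfrak{p}$, invoke \cref{extraformalityres} (rather than the flat base-change \cref{hochbasechange}) to transport projectivity of the Hochschild groups, and then apply \cref{modformality} over the integral domain $\mr{R}/\mathfrak{p}$. The paper's argument is terser but identical in substance; your identification of the non-flat base-change step as the reason projectivity (and not mere torsion-freeness) is assumed is exactly the point.
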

\begin{proof}
 Assume $\mr{F}(\mr{M})$ is non-empty. Let $\mathfrak{p} \in \mr{F}(\mr{M})$ and consider its closure $\bar{\mathfrak{p}} = \mr{Spec}(\mr{Q}) \subset \mr{Spec}(\mr{R})$. The ring $\mr{Q}$ is an integral domain and the base-changes $\mr{A}_{\mr{Q}} = \mr{A}\otimes_{\mr{R}}\mr{Q}$ and $\mr{M}_{\mr{Q}} = \mr{M}\otimes_\mr{R} \mr{Q}$ are finite projective over $\mr{Q}$. Furthermore, by \cref{extraformalityres} we have $$\mr{HH}^{n,1-n}(\mr{A}_{\mr{Q}},\mr{M}(2)_{\mr{Q}}) = \mr{HH}^{n,1-n}(\mr{A},\mr{M}(2))\otimes_{\mr{R}}\mr{Q},$$ so $\mr{HH}^{n,1-n}(\mr{A}_{\mr{Q}},\mr{M}(2)_{\mr{Q}})$ is projective over $\mr{Q}$, in particular torsion-free. Hence $\mr{M}_\mr{Q}$ is formal by \cref{modformality}.
\end{proof}
\begin{proposition}
  Suppose that $\mr{R}$ is Noetherian and $\mr{I} \subset \mr{R}$ is an ideal such that $\cap_{l}\mr{I}^{l}=0$. Let $\mr{A}$ be a graded algebra over $\mr{R}$ and let $\mr{M}$ be a minimal $\Aa$-module over $\mr{A}$. Suppose that $\mr{A}$ and $\mr{M}$ are finite projective $\mr{R}$-modules and that $\mr{HH}^{p,q}(\mr{A},\mr{M}(2))$ is projective for all $p,q \in \mathbf{Z}$. If $\mr{M}/\mr{I}^{l}$ is a formal $\Aa$-module over the graded algebra $\mr{A}/\mr{I}^{l}$ for all $l \in \mathbf{N}$, then $\mr{M}$ is formal.
\end{proposition}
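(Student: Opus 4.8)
The plan is to run the inductive construction from the proof of \cref{modformality} verbatim, changing only the mechanism by which the successive obstruction classes are made to vanish: ``generic formality plus torsion-freeness of Hochschild cohomology'' is to be replaced by ``formality modulo every $\mr{I}^l$ plus projectivity of Hochschild cohomology''. Concretely, I would build inductively morphisms $f_i : \mr{M}\otimes \mr{A}^{\otimes i-1}\to\mr{M}$ of degree $1-i$ together with auxiliary minimal $\Aa$-modules $\mr{M}^{(i)}$ over $\mr{A}$ satisfying $m_2^{(i)}=m_2^{\mr{M}}$ and $m_j^{(i)}=0$ for $3\le j\le i+1$, and quasi-isomorphisms $\tilde{f}_i=(\mr{id},0,\dots,0,f_i,0,\dots):\mr{M}^{(i-1)}\to\mr{M}^{(i)}$ (with $\mr{M}^{(1)}\coloneqq\mr{M}$), exactly as there. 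Once the step from $n$ to $n+1$ is available, the infinite composition $f\coloneqq\cdots\circ\tilde{f}_n\circ\cdots\circ\tilde{f}_2:\mr{M}\to\mr{M}(2)$ is well defined (composition with $\tilde{f}_n$ fixes the components in weights $<n$) and is a quasi-isomorphism with $f_1=\mr{id}$, so $\mr{M}$ is formal. By \cref{obs} the obstruction to the $n$-th step is a single class $[\mr{c}(\mr{id},0,\dots,0)]\in\mr{HH}^{n+1,-n}(\mr{A},\mr{M}(2))$ (and for $n=0$ it is the Massey product $[m_3^{\mr{M}}]\in\mr{HH}^{2,-1}(\mr{A},\mr{M}(2))$), so the whole proof comes down to showing that each of these classes vanishes.

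The first key step is to reduce modulo $\mr{I}^l$. The reductions $f_i\bmod\mr{I}^l$ and $\mr{M}^{(i)}/\mr{I}^l$ form the corresponding partial datum for $\mr{M}/\mr{I}^l$ over $\mr{A}/\mr{I}^l$; since the $\tilde{f}_i$ have identity first component, so do their reductions, hence $\mr{M}^{(n)}/\mr{I}^l$ is $\Aa$-quasi-isomorphic to $\mr{M}/\mr{I}^l$, which is formal by hypothesis, and therefore $\mr{M}^{(n)}/\mr{I}^l$ is formal as well. Running the vanishing argument of \cref{modformality} over the graded algebra $\mr{A}/\mr{I}^l$ — the higher multiplications $m_3^{(n)},\dots,m_{n+1}^{(n)}$ of $\mr{M}^{(n)}/\mr{I}^l$ vanish, so $\mr{c}(\mr{id},g_2,\dots,g_n)$ is independent of the $g_i$, and a quasi-isomorphism $\mr{M}^{(n)}/\mr{I}^l\to\mr{M}(2)/\mr{I}^l$ lifting the identity exists and forces $[\mr{c}(\mr{id},0,\dots,0)]$ to be a coboundary — shows that the image of our obstruction class in $\mr{HH}^{n+1,-n}(\mr{A}/\mr{I}^l,\mr{M}(2)/\mr{I}^l)$ is zero, for every $l$. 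Here I would invoke \cref{extraformalityres} to identify $\mr{HH}^{n+1,-n}(\mr{A}/\mr{I}^l,\mr{M}(2)/\mr{I}^l)=\mr{HH}^{n+1,-n}(\mr{A},\mr{M}(2))\otimes_{\mr{R}}\mr{R}/\mr{I}^l$ compatibly with reduction; this is exactly the point where projectivity of all the $\mr{HH}^{p,q}(\mr{A},\mr{M}(2))$ enters, since $\mr{R}/\mr{I}^l$ is not flat over $\mr{R}$ and \cref{hochbasechange} by itself would not give the base change. The conclusion of this step is that $[\mr{c}(\mr{id},0,\dots,0)]\in\mr{I}^l\cdot\mr{HH}^{n+1,-n}(\mr{A},\mr{M}(2))$ for all $l$.

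The second key step is separatedness: $\mr{HH}^{n+1,-n}(\mr{A},\mr{M}(2))$ is projective over $\mr{R}$ by hypothesis, hence a direct summand of a free module $\mr{R}^{(\Lambda)}$, and since $\mr{I}^l\mr{R}^{(\Lambda)}=(\mr{I}^l)^{(\Lambda)}$ one gets $\bigcap_l\mr{I}^l\cdot\mr{HH}^{n+1,-n}(\mr{A},\mr{M}(2))\subseteq\bigcap_l(\mr{I}^l)^{(\Lambda)}=\big(\bigcap_l\mr{I}^l\big)^{(\Lambda)}=0$ using the hypothesis $\bigcap_l\mr{I}^l=0$. Hence the obstruction class is zero, the induction closes, and $\mr{M}$ is formal. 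I expect the one genuinely delicate point to be the base change along the non-flat map $\mr{R}\to\mr{R}/\mr{I}^l$, which is precisely what \cref{extraformalityres} together with the projectivity hypothesis on Hochschild cohomology is there to handle; the separatedness computation is elementary and everything else is transcribed from the proof of \cref{modformality}.
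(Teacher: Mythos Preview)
Your proposal is correct and follows essentially the same approach as the paper: reduce the induction from \cref{modformality} to the vanishing of the successive obstruction classes, use \cref{extraformalityres} to base-change Hochschild cohomology along $\mr{R}\to\mr{R}/\mr{I}^l$ (this is where projectivity of $\mr{HH}^{p,q}$ is needed), deduce that each obstruction lies in $\mr{I}^l\cdot\mr{HH}$ for all $l$, and conclude by separatedness. The paper phrases the last step via the exact sequence $0\to\cap_l\mr{I}^l\to\mr{R}\to\prod_l\mr{R}/\mr{I}^l$ tensored with the projective module $\mr{HH}$, while you embed $\mr{HH}$ in a free module; these are equivalent.
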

\begin{proof}
 Since $\mr{M}$ is projective over $\mr{R}$ and minimal, it is $\mr{A}_2$-formal. Then, as in the proof of \cref{modformality}, to show it is $\mr{A}_3$-formal, it is enough to prove that the Massey class $[m_3]$ vanishes in $\mr{HH}^{2,-1}(\mr{A,M(2)})$. Using \cref{extraformalityres} we have $$\mr{HH}^{p,q}(\mr{A/I}^l,\mr{M(2)/I}^l) =\mr{HH}^{p,q}(\mr{A},\mr{M(2)})\otimes_\mr{R}\mr{R}/\mr{I}^{l} \text{ for all } p,q \in \mathbf{Z}.$$ By assumption $[m_3]\otimes 1=0$ in $\mr{HH}^{2,-1}(\mr{A/I}^l,\mr{M(2)/I}^l)$ for all $l\ge 1$. There is an exact sequence $ 0 \to \cap_{l}\mr{I}^l \to \mr{R} \to \prod_{l} \mr{R}/\mr{I}^l$, since $\cap_l \mr{I}^l =0$ and $\mr{HH}^{p,q}(\mr{A},\mr{M(2)})$ is projective over $\mr{R}$, we conclude that $[m_3]=0$. By induction $\mr{M}$ is $\mr{A}_n$-formal for all $n\in \mathbf{N}$, done.
\end{proof}
\begin{proposition}
 Suppose that $\mr{R}$ is Noetherian with trivial Jacobson radical $\mathfrak{J}(\mr{R})$. Let $\mr{A}$ be a graded associative algebra over $\mr{R}$, and let $\mr{M}$ be a minimal $\Aa$-module over $\mr{A}$. Suppose that $\mr{A}$ and $\mr{M}$ are finite projective $\mr{R}$-modules and that $\mr{HH}^{p,q}(\mr{A},\mr{M}(2))$ is projective for all $p,q \in \mathbf{Z}$. If $\mr{M}_\mathfrak{m}$ is formal for all closed points $\mathfrak{m} \in \mr{Spec}(\mr{R})$, then $\mr{M}$ is formal. In particular, $\mr{M}_\mathfrak{p}$ is formal for all $\mathfrak{p} \in \mr{Spec}(\mr{R})$.
\end{proposition}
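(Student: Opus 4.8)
The plan is to carry out the inductive construction from the proof of \cref{modformality} essentially verbatim, with the ``generic vanishing plus torsion-freeness'' input replaced by a ``vanishing at every closed point plus projectivity plus $\mathfrak{J}(\mr{R}) = 0$'' input. Recall the shape of that construction: one builds a quasi-isomorphism $f : \mr{M} \to \mr{M}(2)$ with $f_1 = \mr{id}$ in stages, producing along the way maps $f_2,f_3,\cdots$ and minimal $\Aa$-modules $\mr{M}^{(2)},\mr{M}^{(3)},\cdots$ with $m^{(i)}_2 = m^{\mr{M}}_2$ and $m^{(i)}_j = 0$ for $3 \le j \le i+1$, together with quasi-isomorphisms $\tilde{f}_i = (\mr{id},0,\cdots,0,f_i,0,\cdots) : \mr{M}^{(i-1)} \to \mr{M}^{(i)}$; by \cref{obs} (and the observation that $\mr{c}(\mr{id},g_2,\cdots,g_n)$ does not depend on the $g_i$, since the relevant higher multiplications of $\mr{M}^{(n)}$ vanish) the sole obstruction to passing from stage $n$ to stage $n+1$ is the class $\omega_n = [\mr{c}(\mr{id},0,\cdots,0)] \in \mr{HH}^{n+1,-n}(\mr{A},\mr{M}(2))$, where we use that Hochschild cohomology of a module depends only on its underlying graded structure, so that it agrees for $\mr{M}^{(n)}(2) = \mr{M}(2)$. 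The base case is free: $\mr{M}$ is minimal, hence $\mr{A}_2$-formal. Granting $\omega_n = 0$ for all $n$, the composite $f = \cdots \circ \tilde{f}_n \circ \cdots \circ \tilde{f}_2$ gives formality of $\mr{M}$, and the ``in particular'' follows by base-changing $f$ along $\mr{R} \to k(\mathfrak{p})$ and noting that its first component $\mr{id}$ remains a quasi-isomorphism.

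So everything reduces to the vanishing of $\omega_n$. Fix a maximal ideal $\mathfrak{m} \subset \mr{R}$ and base change along $\mr{R} \to \mr{R}/\mathfrak{m}$. Since $\mr{R}$ is Noetherian, $\mr{A}$ and $\mr{M}$ are finite projective, and the $\mr{HH}^{p,q}(\mr{A},\mr{M}(2))$ are projective, \cref{extraformalityres} yields $\mr{HH}^{n+1,-n}(\mr{A}/\mathfrak{m},(\mr{M}/\mathfrak{m})(2)) = \mr{HH}^{n+1,-n}(\mr{A},\mr{M}(2)) \otimes_{\mr{R}} \mr{R}/\mathfrak{m}$, and under this identification the reduction of $\omega_n$ is the obstruction for the reduced partial construction over $\mr{R}/\mathfrak{m}$. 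Because every $\tilde{f}_i$ has first component $\mr{id}$, its reduction is again a quasi-isomorphism, so $\mr{M}^{(n)}/\mathfrak{m} \simeq \mr{M}/\mathfrak{m}$, which is formal by hypothesis, hence $\mr{A}_{n+2}$-formal; exactly as in the proof of \cref{modformality} this forces the reduced obstruction to vanish, i.e. $\omega_n \in \mathfrak{m}\cdot \mr{HH}^{n+1,-n}(\mr{A},\mr{M}(2))$. Now $\mr{HH}^{n+1,-n}(\mr{A},\mr{M}(2))$, being projective, is a direct summand of a free $\mr{R}$-module $\mr{F}$, and $\bigcap_{\mathfrak{m}}\mathfrak{m}\mr{F} = \mathfrak{J}(\mr{R})\mr{F} = 0$; since this holds for every maximal $\mathfrak{m}$, we conclude $\omega_n = 0$.

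The only delicate point — and the only content beyond bookkeeping — is that $\omega_n$ is genuinely a base-change-compatible cohomology class, independent of the intermediate choices, so that its reduction modulo $\mathfrak{m}$ really is the obstruction of the reduced data and is killed by formality of $\mr{M}/\mathfrak{m}$ (equivalently, by its $\mr{A}_m$-formality for all $m$, using the proposition stated just after \cref{modformality}). This is precisely what the proof of \cref{modformality} establishes; the present argument differs only in substituting the injection $\mr{R} \hookrightarrow \prod_{\mathfrak{m}}\mr{R}/\mathfrak{m}$, valid because $\mathfrak{J}(\mr{R}) = 0$, for the injection of $\mr{R}$ into its field of fractions used there.
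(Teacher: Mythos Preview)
Your proposal is correct and follows essentially the same approach as the paper: run the inductive construction of \cref{modformality}, and at each stage kill the obstruction class by showing it lies in $\mathfrak{m}\cdot\mr{HH}^{n+1,-n}(\mr{A},\mr{M}(2))$ for every maximal ideal (via \cref{extraformalityres} and formality of the closed fibres), then conclude from projectivity of Hochschild cohomology and the injection $\mr{R}\hookrightarrow\prod_{\mathfrak{m}}\mr{R}/\mathfrak{m}$ coming from $\mathfrak{J}(\mr{R})=0$. The paper's own proof is terse---it simply points to the preceding proposition and replaces the exact sequence $0\to\bigcap_l\mr{I}^l\to\mr{R}\to\prod_l\mr{R}/\mr{I}^l$ by $0\to\mathfrak{J}(\mr{R})\to\mr{R}\to\prod_{\mathfrak{m}}\mr{R}/\mathfrak{m}$---but your expanded version unpacks exactly this.
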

\begin{proof}
 As in the previous proposition, we prove that $\mr{M}$ is $\mr{A}_n$-formal for all $n\in \mathbf{N}$. The proof is exactly the same using the exact sequence $0\to\mathfrak{J}(\mr{R}) \to \mr{R}\to \prod_{\mathfrak{m}}\mr{R}/\mathfrak{m}$, \cref{extraformalityres} and projectivity of $\mr{HH}^{p,q}(\mr{A},\mr{M(2)})$.
\end{proof}

\section{Perverse sheaves and DQ modules}\label{sec2}

$\hspace*{3mm}$In this section we shall briefly review some results on DQ-modules and perverse sheaves on Lagrangian intersections we need for applications. This is mainly to fix notation, in particular we refer the reader to the original papers for proofs. The material on perverse sheaves on Lagrangian intersections and, more generally, on $\mr{d}$-critical loci is due to Joyce et al. and we refer to \cite{bbdjs}, \cite{MR3399099}. A good general reference for DQ-modules, which we also closely follow, is Kashiwara and Schapira \cite{MR3012169}. 

\paragraph{Perverse sheaves on $\mr{d}$-critical loci.} The local setting, we are interested in, is as follows: for a complex manifold $\mr{X}$ and a function $f$ on $\mr{X}$, we consider the intersection $\mr{crit}f \coloneqq \mr{X} \cap \Gamma_{\mr{d}f}$, where $\Gamma_{\mr{d}f} \subset \Omega_{\mr{X}}$ is the Lagrangian, given by the graph of $\mr{d}f \in \Gamma(\mr{X},\Omega_{\mr{X}})$. In particular, if $\mr{X}$ is the cotangent bundle of a complex manifold $\mr{M}$, then for any function $f$ on $\mr{M}$, we can consider the Lagrangian $\mr{L}= \Gamma_{\mr{d}f}$, so $\mr{crit}f=\mr{L}\cap \mr{M}$, i.e. in this case the intersection of the Lagrangians can be written as the critical locus of $f$. It turns out that this remains true locally in the general case where $\mr{X}$ is a holomorphic symplectic and $\mr{L}$, $\mr{M}$ are two Lagrangians. \\
$\hspace*{3mm}$On $\mr{crit}f \subset X$, we have a naturally defined perverse sheaf of vanishing cycles $\mathscr{P}_{\mr{X},f}$ which is the image of $\C_{\mr{X}}[\mr{dim}\,\mr{X}]$ under the vanishing cycles functor over the critical values of $f$.\\
$\hspace*{3mm}$We are going to consider the global versions of the constructions discussed above. Let $\mr{X}/\C$ be a scheme. Suppose we are given an embedding of $\mr{X}$ into a smooth scheme $\mr{S}$ with ideal sheaf $\mathscr{I}$, then we define the complex of derived $1$-jets $$\mathbf{J}^{1}_\mr{X} \coloneqq \mathscr{O}_{\mr{S}}/\mathscr{I}^{2} \to \Omega_{\mr{S}}|_{\mr{X}},$$ in degrees $-1$ and $0$. It can be shown that it is independent of the embedding and is naturally quasi-isomorphic to the cone of the de Rham differential $\mathscr{O}_\mr{X} \to \mathbf{L}_\mr{X}$, where $\mathbf{L}_\mr{X} \coloneqq \mathscr{I}/\mathscr{I}^{2} \to \Omega_{\mr{S}}|_\mr{X}$ is the truncated cotangent complex. We are interested in the sheaf $$\mathscr{S}_{\mr{X}} = \mathscr{H}^{-1}(\mathbf{J}^{1}_\mr{X}).$$ 
\begin{definition}(Joyce \cite{MR3399099})
 A structure of a $\mr{d}$-critical locus on $\mr{X}$ is a choice of $s \in \Gamma(\mr{X},\mathscr{S}_\mr{X})$ such that for any $x \in \mr{X}$ there exists an open $\mr{U}$, containing $x$, and a closed embedding of $\mr{U}$ into a smooth $\mr{S}$ together with a function $f$ on $\mr{S}$ such that $s|_{\mr{U}} = f$ in $\Gamma(\mr{U},\mathscr{O}_{\mr{S}}/\mathscr{I}^{2})$ and $\mr{U} = \mr{crit}f \subset \mr{S}$. The triple $(\mr{U},\mr{S},f)$ is a chart for the $\mr{d}$-critical locus $\mr{X}$.
\end{definition}
\2
$\hspace*{3mm}$Let $(\mr{X},s)$ be a $\mr{d}$-critical locus. Then, given a chart $(\mr{U},\mr{S},f)$, we can consider the canonical bundle $\mr{K}_\mr{S}|_{\mr{U}_\mr{red}}$ and ask whether we can glue these line bundles for a covering by critical charts. The answer is no, but we can glue their squares $\mr{K}_\mr{S}^{\otimes 2}|_{\mr{U}_\mr{red}}$ to get a line bundle on $\mr{X}_\mr{red}$. Suppose that $\mr{X}$ is of the form $\mr{crit}f$, then the obstruction is a $\pm1$-cocycle, hence we can glue the squares to get a line bundle on $\mr{X}_\mr{red}$.
\2
\begin{proposition}(Joyce \cite{MR3399099})
 Let $(\mr{X},s)$ be a $\mr{d}$-critical locus. There exists a unique line bundle $\mr{K}_{(\mr{X},s)}$ on $\mr{X}_\mr{red}$ such that for any chart $(\mr{U},\mr{S},f)$ we have an isomorphism $$\lambda_{(\mr{U},\mr{S},f)}:\mr{K}_{(\mr{X},s)}|_{\mr{U}_\mr{red}} \cong \mr{K}_{\mr{S}}^{\otimes 2}|_{\mr{U}_\mr{red}}$$ such that for any étale morphism $\varphi : (\mr{U},\mr{S},f) \to (\mr{V},\mr{T},g)$ of charts, i.e. $\varphi : \mr{S} \to \mr{T}$ is étale, $\varphi|_{\mr{U}} : \mr{U} \xhookrightarrow{} \mr{V}$ is the canonical inclusion and $f = g\circ \varphi$, we have $$\lambda_{(\mr{U},\mr{S},f)} = \mr{det}(\mr{d}\varphi)^{\otimes 2}|_{\mr{U}_\mr{red}}\circ \lambda_{(\mr{V},\mr{T},g)}|_{\mr{U}_{\mr{red}}}.$$
\end{proposition}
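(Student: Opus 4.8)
The plan is to construct $\mr{K}_{(\mr{X},s)}$ by descent from the local models $\mr{K}_\mr{S}^{\otimes 2}|_{\mr{U}_\mr{red}}$ attached to the critical charts of $(\mr{X},s)$: the real work is in producing \emph{canonical} comparison isomorphisms on overlaps and verifying the cocycle condition, after which the isomorphisms $\lambda_{(\mr{U},\mr{S},f)}$ and the uniqueness statement follow formally. First I would invoke the chart-comparison structure theory of \cite{MR3399099}: given two critical charts $(\mr{U}_1,\mr{S}_1,f_1)$ and $(\mr{U}_2,\mr{S}_2,f_2)$ and a point $x \in \mr{U}_1 \cap \mr{U}_2$, after shrinking there is a chain connecting them built out of two elementary moves — étale morphisms of charts $\varphi : (\mr{U},\mr{S},f) \to (\mr{V},\mr{T},g)$, and \emph{stabilisations} replacing $(\mr{S},f)$ by $(\mr{S}\times\mathbb{A}^n,\, f \boxplus q)$ for $q$ a nondegenerate quadratic form on $\mathbb{A}^n$, so that $\mr{crit}(f\boxplus q) = \mr{crit}f$. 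To an étale $\varphi$ I attach the isomorphism $\det(\mr{d}\varphi) : \mr{K}_\mr{T}|_\mr{U} \xrightarrow{\sim} \mr{K}_\mr{S}$, hence $\det(\mr{d}\varphi)^{\otimes 2}$ on squares; to a stabilisation I attach the composite of the adjunction isomorphism $\mr{K}_{\mr{S}\times\mathbb{A}^n}|_\mr{S} \cong \mr{K}_\mr{S} \otimes \Lambda^n(\mathbb{A}^n)^\vee$ with the canonical trivialisation of $(\Lambda^n(\mathbb{A}^n)^\vee)^{\otimes 2}$ furnished by the discriminant $\det q$. Composing along the chain produces $\lambda_{12} : \mr{K}_{\mr{S}_1}^{\otimes 2}|_{\mr{U}_{12,\mr{red}}} \xrightarrow{\sim} \mr{K}_{\mr{S}_2}^{\otimes 2}|_{\mr{U}_{12,\mr{red}}}$, where $\mr{U}_{12} := \mr{U}_1 \cap \mr{U}_2$. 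This is the single place where squaring is essential: the trivialisation attached to $\det q$ is only well defined up to sign, so the $\mr{K}_\mr{S}$ themselves do not glue, whereas their squares do.

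The hard part is canonicity: that $\lambda_{12}$ is independent of the chain, the stabilisations and the étale identifications used. This is again bookkeeping inside the structure theory of \cite{MR3399099} — any two chains connecting the charts near $x$ become equivalent after further stabilisations and étale refinements, and these act on canonical bundles only through determinants of derivatives and quadratic-form discriminants, hence act identically on the squares over the reduced locus. Granting it, on a triple overlap both $\lambda_{13}$ and $\lambda_{23} \circ \lambda_{12}$ are comparison isomorphisms of $\mr{K}_{\mr{S}_1}^{\otimes 2}$ with $\mr{K}_{\mr{S}_3}^{\otimes 2}$ and therefore coincide, so $\{\lambda_{ab}\}$ is a $1$-cocycle for a cover of $\mr{X}$ by critical charts $\{(\mr{U}_a,\mr{S}_a,f_a)\}$; gluing the $\mr{K}_{\mr{S}_a}^{\otimes 2}|_{\mr{U}_{a,\mr{red}}}$ along it gives a line bundle $\mr{K}_{(\mr{X},s)}$ on $\mr{X}_\mr{red}$ together with the tautological isomorphisms $\lambda_{(\mr{U}_a,\mr{S}_a,f_a)}$.

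It remains to handle an arbitrary chart and the two asserted properties. For a general chart $(\mr{U},\mr{S},f)$ I define $\lambda_{(\mr{U},\mr{S},f)}$ over $\mr{U} \cap \mr{U}_a$ by composing $\lambda_{(\mr{U}_a,\mr{S}_a,f_a)}$ with the comparison isomorphism between $(\mr{U},\mr{S},f)$ and $(\mr{U}_a,\mr{S}_a,f_a)$ of the first paragraph; canonicity forces agreement on overlaps, so these glue over $\mr{U}_\mr{red}$. The compatibility with an étale morphism of charts $\varphi : (\mr{U},\mr{S},f) \to (\mr{V},\mr{T},g)$ is immediate, since by construction the comparison attached to $\varphi$ is $\det(\mr{d}\varphi)^{\otimes 2}$, which yields exactly $\lambda_{(\mr{U},\mr{S},f)} = \det(\mr{d}\varphi)^{\otimes 2}|_{\mr{U}_\mr{red}} \circ \lambda_{(\mr{V},\mr{T},g)}|_{\mr{U}_\mr{red}}$. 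Finally, if $(\mr{K}',\{\lambda'_{(\mr{U},\mr{S},f)}\})$ is another datum of the same type, the isomorphisms $(\lambda'_{(\mr{U}_a,\mr{S}_a,f_a)})^{-1} \circ \lambda_{(\mr{U}_a,\mr{S}_a,f_a)}$ agree on overlaps — again by canonicity of the $\lambda_{ab}$ — and patch to an isomorphism $\mr{K}_{(\mr{X},s)} \xrightarrow{\sim} \mr{K}'$ intertwining the two families, proving uniqueness up to unique such isomorphism. The one substantial input is thus the chart-comparison theory of Joyce \cite{MR3399099} — existence and canonicity of the chains between critical charts — which I would quote rather than reprove; the conceptual point to record is that passing to squares kills the $\pm 1$ ambiguity in the quadratic-form discriminant, and this is exactly what turns the transition data into an honest cocycle.
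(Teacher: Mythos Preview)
The paper does not actually prove this proposition: it is stated with attribution to Joyce \cite{MR3399099}, preceded only by a one-line heuristic (``the obstruction is a $\pm1$-cocycle, hence we can glue the squares''), and the reader is explicitly referred to the original for proofs of all results in \cref{sec2}. So there is no paper proof to compare against.

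Your sketch is, however, a faithful outline of Joyce's own argument: cover by critical charts, produce comparison isomorphisms of the local $\mr{K}_\mr{S}^{\otimes 2}$ via the chart-comparison machinery (étale morphisms and stabilisation by nondegenerate quadratic forms), check canonicity to get a cocycle, and glue. One small imprecision worth flagging: the $\pm 1$ ambiguity does not arise because ``the trivialisation attached to $\det q$ is only well defined up to sign'' --- the Hessian determinant of a fixed $q$ is a perfectly well-defined trivialisation of $(\Lambda^n(\mathbb{A}^n)^\vee)^{\otimes 2}$. Rather, the ambiguity is in the \emph{choice of chain} comparing two charts: different local identifications of the stabilised models differ by automorphisms whose derivative has determinant $\pm 1$ on the reduced locus, and it is this sign that squaring kills. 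This does not affect the logic of your argument, but it is the precise mechanism Joyce isolates, and it is what the paper's ``$\pm 1$-cocycle'' remark is pointing at.
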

\begin{example}
 If $\mr{X}$ is smooth, then $\mr{K}_{(\mr{X},0)} = \mr{K}_{\mr{X}}^{\otimes 2}$. We get an extra $\mr{K}_\mr{X}$ factor because, as a derived scheme, the critical locus $\mr{crit}(0:\mr{X} \to \C)$ is the shifted cotangent bundle $\Omega_\mr{X}[1]$.
\end{example}

\2
\begin{theorem}\label{pervdcrit}(Brav et al. \cite{bbdjs})
 Let $(\mr{X},s)$ be a $\mr{d}$-critical locus. Assume that its canonical bundle $\mr{K}_{(\mr{X},s)}$ admits a square root, called orientation of $(\mr{X},s)$. Then there exists a perverse sheaf $\mathscr{P}_{(\mr{X},s)}$ on $\mr{X}$ such that if $(\mr{U},\mr{S},f)$ is a chart, then we have a natural isomorphism $$\mathscr{P}_{(\mr{X},s)}|_{\mr{U}} \simeq \mathscr{P}_{\mr{S},f}\otimes_{\C}\mathfrak{K}_{\mr{or}},$$ where $\mathfrak{K}_{\mr{or}}$ is the local system associated to $\mr{K}_{(\mr{X},s)}^{-1/2}|_{\mr{U_{red}}}\otimes \mr{K}_{\mr{S}}|_{\mr{U_{red}}}$.
\end{theorem}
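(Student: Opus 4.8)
The plan is to build $\mathscr{P}_{(\mr{X},s)}$ by gluing the local vanishing cycle sheaves, using that perverse sheaves on $\mr{X}$ form a stack for the complex-analytic topology. For each chart $(\mr{U},\mr{S},f)$ we have the perverse vanishing cycle sheaf $\mathscr{P}_{\mr{S},f}=\phi_f\big(\C_\mr{S}[\dim\mr{S}]\big)$, supported on $\mr{crit}f=\mr{U}$; the point is that these do not glue on the nose, the obstruction being a $2$-torsion class that the chosen square root of $\mr{K}_{(\mr{X},s)}$ is designed to kill.

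First I would set up the comparison of two overlapping charts. By the structure theory of $\mr{d}$-critical loci (\cite{MR3399099}, \cite{bbdjs}) one may stabilise locally: replace each chart by $\big(\mr{U},\mr{S}\times\C^{n},f\boxplus q\big)$ for a nondegenerate quadratic form $q$ so that the two stabilised charts become identified by an isomorphism $\Phi$ of ambient manifolds. The Thom--Sebastiani isomorphism $\phi_{f\boxplus q}\simeq\phi_f\boxtimes\phi_q$ reduces the comparison to the linear-algebra fact that $\phi$ of a nondegenerate quadratic form is one-dimensional; and since $\Phi$ enters only through its derivative, the resulting comparison isomorphism between $\mathscr{P}_{\mr{S}_i,f_i}$ and $\mathscr{P}_{\mr{S}_j,f_j}$ on the overlap is canonical only up to a rank-one twist governed by $\det(\mr{d}\Phi)$, which up to squares is the transition data of $\mr{K}_{\mr{S}_i}|_{\mr{U}_\mr{red}}\otimes\mr{K}_{\mr{S}_j}^{-1}|_{\mr{U}_\mr{red}}$ --- exactly the $2$-torsion class appearing in the statement.

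Next I would verify the cocycle condition. On triple overlaps the composite of three such comparison isomorphisms differs from the identity by a sign, coming from the associativity constraints for Thom--Sebastiani and for the quadratic stabilisations; this sign defines a $2$-cocycle whose class is the obstruction to gluing the $\mathscr{P}_{\mr{S}_i,f_i}$ themselves. Tensoring $\mathscr{P}_{\mr{S}_i,f_i}$ with the local system $\mathfrak{K}_{\mr{or},i}$ attached to $\mr{K}_{(\mr{X},s)}^{-1/2}|_{\mr{U}_\mr{red}}\otimes\mr{K}_{\mr{S}_i}|_{\mr{U}_\mr{red}}$ cancels it: the transition functions of the chosen root $\mr{K}_{(\mr{X},s)}^{1/2}$ square to those of $\mr{K}_{(\mr{X},s)}\cong\mr{K}_{\mr{S}_i}^{\otimes 2}$ over $\mr{U}_\mr{red}$, which is precisely what trivialises the offending cocycle. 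By the stack property of perverse sheaves the twisted sheaves $\mathscr{P}_{\mr{S}_i,f_i}\otimes\mathfrak{K}_{\mr{or},i}$ then glue to a perverse sheaf $\mathscr{P}_{(\mr{X},s)}$ on $\mr{X}$ with the asserted local description; naturality under the \'etale morphisms of charts follows from functoriality of $\phi$ and its compatibility with smooth pullback.

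The hardest part is the stabilisation step together with the cocycle computation: one must make the comparison isomorphisms canonical --- independent of the auxiliary coordinates realising the quadratic normal form, and of the chain of stabilisations connecting two charts --- and then pin down the resulting sign cocycle precisely enough to identify its class with the $2$-torsion of $\mr{K}_{(\mr{X},s)}$. This is exactly the point at which an orientation is needed, and it is the technical core of \cite{bbdjs}.
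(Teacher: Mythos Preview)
The paper does not prove this theorem: it is stated as a result of Brav--Bussi--Dupont--Joyce--Szendr\H{o}i and the reader is referred to \cite{bbdjs} for the proof (the section opens by saying the material is ``mainly to fix notation'' and that proofs are in the original papers). Your sketch is a faithful outline of the argument in \cite{bbdjs}: local vanishing-cycle sheaves on charts, comparison via stabilisation and Thom--Sebastiani, identification of the gluing obstruction as a $\mu_2$-cocycle trivialised by the orientation, and descent via the stack property of perverse sheaves. So there is nothing to compare against in this paper, and your proposal matches the cited source.
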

\2
\begin{proposition}(Bussi \cite{Bussi:2014psa})
 Let $\mr{X}$ be a holomorphic symplectic variety. Suppose given two Lagrangians $\mr{L}$ and $\mr{M}$ in $\mr{X}$. Then then intersection $\mr{L}\cap \mr{M}$ admits a structure of a $\mr{d}$-critical locus $(\mr{L}\cap \mr{M},s)$ with canonical bundle $\mr{K}_{(\mr{L}\cap\mr{M},s)}=\mr{K}_\mr{L}|_\mr{\mr{L}\cap\mr{M}_{red}}\otimes\mr{K}_\mr{M}|_{\mr{\mr{L}\cap\mr{M}_{red}}}$.
\end{proposition}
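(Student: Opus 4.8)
The plan is to build the $\mr{d}$-critical structure on $\mr{L}\cap\mr{M}$ from local generating-function models and then read off $\mr{K}_{(\mr{L}\cap\mr{M},s)}$ chart by chart. I would start with the local model. Fix $p\in\mr{L}\cap\mr{M}$ and choose a Lagrangian subspace $\ell'\subset T_p\mr{X}$ transverse to both $T_p\mr{L}$ and $T_p\mr{M}$ (it exists by symplectic linear algebra). Running the holomorphic Darboux theorem, equivalently a local holomorphic Lagrangian neighbourhood theorem for $\mr{L}$ with the polarisation chosen so that its vertical at $p$ is $\ell'$, identifies a neighbourhood of $p$ in $\mr{X}$ with a neighbourhood of a point of the zero section in $\mr{T}^{*}\mr{L}$, sending $\mr{L}$ to the zero section and $\mr{M}$ to a Lagrangian germ $\mr{M}'$ transverse to the cotangent fibre at $p$. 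Hence $\mr{M}'$ is the graph of a closed holomorphic $1$-form on a neighbourhood $\mr{S}\subseteq\mr{L}$ of $p$; shrinking $\mr{S}$ to a polydisc, the Poincaré lemma writes this form as $\mr{d}f$ for some $f\in\mathscr{O}(\mr{S})$, and the scheme-theoretic intersection $\mr{L}\cap\mr{M}$ over this neighbourhood is exactly $\mr{crit}(f)=\{\mr{d}f=0\}\subseteq\mr{S}$. This produces a covering of $\mr{L}\cap\mr{M}$ by candidate critical charts $(\mr{U}_i,\mr{S}_i,f_i)$ with every $\mr{S}_i$ an open subset of $\mr{L}$.

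Next I would glue the potentials $f_i$ to a global section $s\in\Gamma(\mr{L}\cap\mr{M},\mathscr{S}_{\mr{L}\cap\mr{M}})$ by setting $s|_{\mr{U}_i}:=[f_i]$. Since $\mr{d}f_i$ vanishes on $\mr{U}_i$, the class of $f_i$ modulo the square of the ideal of $\mr{L}\cap\mr{M}$ lies in $\mathscr{S}_{\mr{L}\cap\mr{M}}=\mathscr{H}^{-1}(\mathbf{J}^{1}_{\mr{L}\cap\mr{M}})$, so the only thing to check is that $[f_i]=[f_j]$ on overlaps. This is a uniqueness-of-generating-functions statement: on an overlap, $f_i$ and $f_j$ are generating functions for the same Lagrangian $\mr{M}$ with respect to two polarisations of the germ of $(\mr{X},\mr{L})$, so they differ by a function constant along $\mr{M}$ whose second-order behaviour transverse to $\mr{crit}(f)$ is pinned down by the symplectic form, forcing equality of their classes in $\mathscr{S}_{\mr{L}\cap\mr{M}}$; equivalently, the charts $(\mr{U}_i,\mr{S}_i,f_i)$ are pairwise equivalent up to stabilisation by a nondegenerate quadratic form and an étale change of coordinates. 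I expect this comparison to be the main obstacle, being the heart of Bussi's construction; it is precisely where one uses that the symplectic form is a single global object rather than a collection of local ones. Granting it, Joyce's formalism of $\mr{d}$-critical loci produces $s$ together with the line bundle $\mr{K}_{(\mr{L}\cap\mr{M},s)}$. (One can also bypass the hands-on comparison entirely by invoking the theorem of Pantev, Toën, Vaquié and Vezzosi that the derived intersection of $\mr{L}$ and $\mr{M}$ carries a $(-1)$-shifted symplectic structure and the theorem of Brav et al. that the classical truncation of such a derived scheme is canonically a $\mr{d}$-critical locus.)

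Finally I would compute the canonical bundle. In a chart $(\mr{U}_i,\mr{S}_i,f_i)$ with $\mr{S}_i\subseteq\mr{L}$ open we have $\mr{K}_{\mr{S}_i}=\mr{K}_{\mr{L}}|_{\mr{S}_i}$, so the defining isomorphism $\lambda_{(\mr{U}_i,\mr{S}_i,f_i)}$ gives $\mr{K}_{(\mr{L}\cap\mr{M},s)}|_{\mr{U}_{i,\mr{red}}}\cong\mr{K}_{\mr{L}}^{\otimes 2}|_{\mr{U}_{i,\mr{red}}}$. To rewrite this as $\mr{K}_{\mr{L}}\otimes\mr{K}_{\mr{M}}$ restricted to $(\mr{L}\cap\mr{M})_{\mr{red}}$, note that in the same chart $\mr{M}=\Gamma_{\mr{d}f_i}$ projects isomorphically onto $\mr{S}_i$, whence $\mr{K}_{\mr{M}}\cong\mr{K}_{\mr{S}_i}=\mr{K}_{\mr{L}}|_{\mr{S}_i}$; restricting to $(\mr{L}\cap\mr{M})_{\mr{red}}$, where $\mr{d}f_i=0$ so this projection is the identity on underlying points, yields an isomorphism $\mr{K}_{\mr{M}}|_{(\mr{L}\cap\mr{M})_{\mr{red}}}\cong\mr{K}_{\mr{L}}|_{(\mr{L}\cap\mr{M})_{\mr{red}}}$, hence locally $\mr{K}_{(\mr{L}\cap\mr{M},s)}\cong\mr{K}_{\mr{L}}\otimes\mr{K}_{\mr{M}}$ on the reduced intersection. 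It then remains to check that these local isomorphisms patch, i.e. that the chart-to-chart discrepancies match the $\det(\mr{d}\varphi)^{\otimes 2}$-gluing of $\mr{K}_{(\mr{L}\cap\mr{M},s)}$; this is a routine verification using that the transition symplectomorphisms preserve $\mr{L}$ and $\mr{M}$. Conceptually this last point is clearest from the derived picture, where $\mr{K}_{(\mr{L}\cap\mr{M},s)}=\det(\mathbb{L})|_{(\mr{L}\cap\mr{M})_{\mr{red}}}$ for $\mathbb{L}$ the cotangent complex of the derived intersection, together with $\mathbb{L}_{\mr{L}/\mr{X}}\simeq\mathscr{N}^{\vee}_{\mr{L}/\mr{X}}[1]$ and the symplectic isomorphism $\mathscr{N}_{\mr{L}/\mr{X}}\cong\Omega^{1}_{\mr{L}}$ (so that $\det\mathbb{L}\cong\mr{K}_{\mr{M}}\otimes\det\mathscr{N}_{\mr{L}/\mr{X}}\cong\mr{K}_{\mr{L}}\otimes\mr{K}_{\mr{M}}$ near $\mr{L}\cap\mr{M}$), which reproduces the stated formula — and, for $\mr{M}=\mr{L}$, the earlier example $\mr{K}_{(\mr{X},0)}=\mr{K}_{\mr{X}}^{\otimes 2}$.
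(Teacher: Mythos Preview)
The paper does not give its own proof of this proposition: it is quoted as a result of Bussi with a bare citation, so there is nothing in the paper to compare your argument against. Your sketch is essentially Bussi's construction --- local generating-function charts for $\mr{M}$ over opens in $\mr{L}$ via the holomorphic Lagrangian neighbourhood theorem, gluing of the classes $[f_i]$ in $\mathscr{S}_{\mr{L}\cap\mr{M}}$, and identification of $\mr{K}_{(\mr{L}\cap\mr{M},s)}$ with $\mr{K}_{\mr{L}}\otimes\mr{K}_{\mr{M}}$ on the reduction --- together with the alternative derived route through PTVV and Brav--Bussi--Joyce, which is indeed the cleanest way to bypass the hands-on gluing.

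One small comment on the gluing step: since you have arranged that every $\mr{S}_i$ is an open subset of $\mr{L}$ and $\mr{M}$ is the graph of $\mr{d}f_i$ over $\mr{S}_i$, on an overlap $\mr{S}_i\cap\mr{S}_j$ the $1$-forms $\mr{d}f_i$ and $\mr{d}f_j$ are literally equal (both are the $1$-form on $\mr{L}$ whose graph is $\mr{M}$), so $f_i-f_j$ is locally constant. This is rather simpler than the ``two polarisations'' picture you describe; the genuine subtlety, which Bussi addresses, is that the Lagrangian neighbourhood identification $\mr{X}\supset\mr{U}\cong\mr{T}^*\mr{S}_i$ is not canonical, and one must check that the class $[f_i]\in\mathscr{S}_{\mr{L}\cap\mr{M}}$ does not depend on this choice. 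Your invocation of stabilisation and \'etale equivalence is the right language for that, and your derived alternative is the quickest way to settle it.
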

\2
\begin{corollary}
 Consider the $\mr{d}$-critical locus $(\mr{L}\cap \mr{M},s)$ and assume that $\mr{K}_\mr{L}|_\mr{\mr{L}\cap\mr{M}_{red}}\otimes\mr{K}_\mr{M}|_{\mr{\mr{L}\cap\mr{M}_{red}}}$ admits a square root. Then there exists a perverse sheaf $$\mathscr{P}_{\mr{L,M}} \simeq \mathscr{P}_{\mr{M,L}}$$ on $\mr{L}\cap \mr{M}$ with the properties described in \cref{pervdcrit}.
\end{corollary}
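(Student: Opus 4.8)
The plan is simply to feed Bussi's proposition into \cref{pervdcrit} and then track symmetry. First I would apply Bussi's proposition to equip $\mr{L}\cap\mr{M}$ with its canonical $\mr{d}$-critical structure $s$, whose associated line bundle on $(\mr{L}\cap\mr{M})_{\mr{red}}$ is $\mr{K}_{(\mr{L}\cap\mr{M},s)} = \mr{K}_\mr{L}|_{(\mr{L}\cap\mr{M})_{\mr{red}}}\otimes \mr{K}_\mr{M}|_{(\mr{L}\cap\mr{M})_{\mr{red}}}$. By hypothesis this line bundle admits a square root, i.e. the $\mr{d}$-critical locus $(\mr{L}\cap\mr{M},s)$ is orientable; fix an orientation. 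Applying \cref{pervdcrit} to this oriented $\mr{d}$-critical locus then produces a perverse sheaf on $\mr{L}\cap\mr{M}$, which I denote $\mathscr{P}_{\mr{L,M}}$, and which by construction satisfies the local description of \cref{pervdcrit}: on any critical chart $(\mr{U},\mr{S},f)$ it restricts to $\mathscr{P}_{\mr{S},f}\otimes_{\C}\mathfrak{K}_\mr{or}$, where $\mathfrak{K}_\mr{or}$ is the local system attached to $\mr{K}_{(\mr{L}\cap\mr{M},s)}^{-1/2}|_{\mr{U}_\mr{red}}\otimes \mr{K}_\mr{S}|_{\mr{U}_\mr{red}}$.

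It remains to produce the isomorphism $\mathscr{P}_{\mr{L,M}}\simeq \mathscr{P}_{\mr{M,L}}$. The key observation is that every ingredient entering the construction is symmetric under interchanging the roles of $\mr{L}$ and $\mr{M}$: the underlying space $\mr{L}\cap\mr{M} = \mr{M}\cap\mr{L}$, the $\mr{d}$-critical section $s$ furnished by Bussi's proposition (which is intrinsic to the unordered pair of Lagrangians), and the canonical bundle $\mr{K}_\mr{L}|\otimes \mr{K}_\mr{M}| = \mr{K}_\mr{M}|\otimes\mr{K}_\mr{L}|$ together with the chosen orientation. Since the construction of \cref{pervdcrit} yields a perverse sheaf canonically attached to the oriented $\mr{d}$-critical locus alone, the two objects $\mathscr{P}_{\mr{L,M}}$ and $\mathscr{P}_{\mr{M,L}}$ are built from identical data and are therefore (canonically) isomorphic, indeed literally the same sheaf.

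The single point that genuinely deserves attention — which I would either cite from Bussi's work or spell out directly — is the symmetry of the $\mr{d}$-critical structure $s$ itself. Concretely, one must check that the two families of critical charts, obtained by locally presenting $\mr{X}$ as $\mr{T}^{*}\mr{M}$ with $\mr{L}=\Gamma_{\mr{d}f}$ (so that $\mr{L}\cap\mr{M}=\mr{crit}f$) versus presenting $\mr{X}$ as $\mr{T}^{*}\mr{L}$ with $\mr{M}=\Gamma_{\mr{d}g}$ (so that $\mr{L}\cap\mr{M}=\mr{crit}g$), determine one and the same section $s \in \Gamma(\mr{L}\cap\mr{M},\mathscr{S}_{\mr{L}\cap\mr{M}})$ and compatible identifications of the glued canonical bundle. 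This is exactly the content of the local Darboux-chart analysis underpinning Bussi's proposition, so I expect it to be the main — and essentially only — obstacle; the rest is a formal invocation of \cref{pervdcrit}.
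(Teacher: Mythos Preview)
Your proposal is correct and matches the paper's approach: the paper states this corollary without proof, treating it as an immediate consequence of combining Bussi's proposition with \cref{pervdcrit}, which is precisely what you do. You have also correctly isolated the only point with any content, namely the symmetry of the $\mr{d}$-critical structure $s$ under swapping $\mr{L}$ and $\mr{M}$, and rightly attribute it to Bussi's local Darboux analysis.
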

\2
\begin{lemma}\label{lemmasmoothcase}
  Let $\mr{L,M}$ be two Lagrangians intersecting cleanly, then there is an isomorphism $$\mr{K_{\mr{L}\cap\mr{M}}\otimes K_{\mr{L}\cap\mr{M}} \cong \left.K_{L}\right|_{\mr{L}\cap\mr{M}}\otimes \left.K_{M}\right|_{\mr{L}\cap\mr{M}}}.$$
 \end{lemma}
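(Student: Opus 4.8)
The plan is to deduce the isomorphism from the linear algebra of the symplectic form $\sigma$ restricted along $\mr{Z} \coloneqq \mr{L}\cap\mr{M}$. Write $2n = \dim\mr{X}$ and recall that $\sigma^{\wedge n}$ trivialises $\mr{K}_\mr{X}$, so $\det\mr{T}_\mr{X}\cong\mathscr{O}_\mr{X}$; also $\det\mr{T}_\mr{L} = \mr{K}_\mr{L}^{\vee}$, $\det\mr{T}_\mr{M}=\mr{K}_\mr{M}^{\vee}$ and $\det\mr{T}_\mr{Z}=\mr{K}_\mr{Z}^{\vee}$. Because the intersection is clean, $\mr{T}_\mr{Z}=\mr{T}_\mr{L}|_\mr{Z}\cap\mr{T}_\mr{M}|_\mr{Z}$ inside $\mr{T}_\mr{X}|_\mr{Z}$ and the sum $\mr{T}_\mr{L}|_\mr{Z}+\mr{T}_\mr{M}|_\mr{Z}$ has constant rank $2n-\dim\mr{Z}$, hence is a subbundle of $\mr{T}_\mr{X}|_\mr{Z}$.

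First I would write down the two short exact sequences of vector bundles on $\mr{Z}$ that the situation produces. The difference map $(v,w)\mapsto v-w$, with $\mr{T}_\mr{Z}$ embedded diagonally, gives $$0 \to \mr{T}_\mr{Z} \to \mr{T}_\mr{L}|_\mr{Z}\oplus\mr{T}_\mr{M}|_\mr{Z} \to \mr{T}_\mr{L}|_\mr{Z}+\mr{T}_\mr{M}|_\mr{Z}\to 0,$$ which is a genuine sequence of bundles precisely because all three terms have constant rank. On the other hand, contraction with $\sigma$ identifies $\mr{T}_\mr{X}|_\mr{Z}$ with $\Omega_\mr{X}|_\mr{Z}$, and composing with the restriction $\Omega_\mr{X}|_\mr{Z}\twoheadrightarrow\Omega_\mr{Z}$ yields $$0 \to (\mr{T}_\mr{Z})^{\perp} \to \mr{T}_\mr{X}|_\mr{Z}\to\Omega_\mr{Z}\to 0,$$ where $(\mr{T}_\mr{Z})^{\perp}$ is the symplectic orthogonal, the kernel being exactly the vectors $v$ with $\sigma(v,\mr{T}_\mr{Z})=0$.

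The key step, and the only one with real content, is to identify $(\mr{T}_\mr{Z})^{\perp}$ with $\mr{T}_\mr{L}|_\mr{Z}+\mr{T}_\mr{M}|_\mr{Z}$. Since $\mr{L},\mr{M}$ are Lagrangian, $(\mr{T}_\mr{L}|_\mr{Z})^{\perp}=\mr{T}_\mr{L}|_\mr{Z}$ and $(\mr{T}_\mr{M}|_\mr{Z})^{\perp}=\mr{T}_\mr{M}|_\mr{Z}$, so $$(\mr{T}_\mr{L}|_\mr{Z}+\mr{T}_\mr{M}|_\mr{Z})^{\perp}=\mr{T}_\mr{L}|_\mr{Z}\cap\mr{T}_\mr{M}|_\mr{Z}=\mr{T}_\mr{Z}$$ by clean intersection; applying $(-)^{\perp}$ once more, and using that $\mr{T}_\mr{L}|_\mr{Z}+\mr{T}_\mr{M}|_\mr{Z}$ is a subbundle so that the double orthogonal returns it, gives $(\mr{T}_\mr{Z})^{\perp}=\mr{T}_\mr{L}|_\mr{Z}+\mr{T}_\mr{M}|_\mr{Z}$. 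The main obstacle is exactly the constant-rank bookkeeping that licenses these manipulations over all of $\mr{Z}$ at once: this is where the clean-intersection hypothesis (rather than mere smoothness of $\mr{Z}$) enters, and I would state it carefully.

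Finally I would take determinants. From the first sequence, $\det(\mr{T}_\mr{L}|_\mr{Z})\otimes\det(\mr{T}_\mr{M}|_\mr{Z})=\det\mr{T}_\mr{Z}\otimes\det(\mr{T}_\mr{L}|_\mr{Z}+\mr{T}_\mr{M}|_\mr{Z})$, and from the second together with the identification above, $\det(\mr{T}_\mr{L}|_\mr{Z}+\mr{T}_\mr{M}|_\mr{Z})=\det((\mr{T}_\mr{Z})^{\perp})=\det(\mr{T}_\mr{X})|_\mr{Z}\otimes\mr{K}_\mr{Z}^{\vee}$. Substituting $\det\mr{T}_\mr{X}\cong\mathscr{O}_\mr{X}$, $\det\mr{T}_\mr{L}=\mr{K}_\mr{L}^{\vee}$, $\det\mr{T}_\mr{M}=\mr{K}_\mr{M}^{\vee}$ and $\det\mr{T}_\mr{Z}=\mr{K}_\mr{Z}^{\vee}$, this reads $\mr{K}_\mr{L}^{\vee}|_\mr{Z}\otimes\mr{K}_\mr{M}^{\vee}|_\mr{Z}\cong(\mr{K}_\mr{Z}^{\vee})^{\otimes 2}$, and dualising gives $\mr{K}_\mr{L}|_\mr{Z}\otimes\mr{K}_\mr{M}|_\mr{Z}\cong\mr{K}_\mr{Z}^{\otimes 2}$, as claimed. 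One could instead run the conormal exact sequences of $\mr{Z}\subset\mr{L}$, $\mr{Z}\subset\mr{X}$ together with the identification $\mathscr{N}_{\mr{L}/\mr{X}}\cong\Omega_\mr{L}$ coming from $\sigma$, but the argument above seems the most economical.
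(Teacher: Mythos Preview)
Your proof is correct. The paper does not give a proof where the lemma is stated, but in \cref{sec3} (the paragraph on pairs of Lagrangians) it records the four-term exact sequence
\[
0 \to \mathscr{T}_{\mr{L}\cap\mr{M}} \to \mathscr{T}_\mr{L}|_{\mr{L}\cap\mr{M}} \oplus \mathscr{T}_\mr{M}|_{\mr{L}\cap\mr{M}} \to \mathscr{T}_\mr{X}|_{\mr{L}\cap\mr{M}} \to \Omega_{\mr{L}\cap\mr{M}} \to 0,
\]
whose determinant, together with $\det\mathscr{T}_\mr{X}\cong\mathscr{O}_\mr{X}$, is exactly the claimed isomorphism; your two short exact sequences are the splicing of this four-term sequence at $\mr{T}_\mr{L}|_\mr{Z}+\mr{T}_\mr{M}|_\mr{Z}=(\mr{T}_\mr{Z})^{\perp}$, and your explicit verification of that equality via the Lagrangian condition and clean intersection is precisely what justifies exactness at $\mathscr{T}_\mr{X}|_{\mr{L}\cap\mr{M}}$, which the paper leaves to the reader.
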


\begin{corollary}
 Let $\mr{L}\cap \mr{M}$ be smooth. Then $(\mr{L}\cap \mr{M},s)$ is oriented and for any choice of $\mr{K}_{(\mr{L}\cap \mr{M},s)}^{1/2}$ we have $\mathscr{P}_{\mr{L,M}} = \mathfrak{K}_{\mr{or}}[\mr{dim}\,\mr{X}]$, where $\mathfrak{K}_{\mr{or}}$ is the local system associated to $\mr{K}_{(\mr{L}\cap \mr{M},s)}^{-1/2} \otimes \mr{K}_{\mr{L}\cap \mr{M}}$.
\end{corollary}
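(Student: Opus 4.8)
The plan is to handle the two assertions in turn: orientability is immediate from the structural results just recalled, and then $\mathscr{P}_{\mr{L,M}}$ is computed locally on a $\mr{d}$-critical chart, using that the critical locus is now smooth. For orientability, note that $\mr{L}\cap\mr{M}$ smooth is reduced, so Bussi's proposition (\cite{Bussi:2014psa}) gives $\mr{K}_{(\mr{L}\cap\mr{M},s)}\cong\mr{K}_\mr{L}|_{\mr{L}\cap\mr{M}}\otimes\mr{K}_\mr{M}|_{\mr{L}\cap\mr{M}}$, which \cref{lemmasmoothcase} identifies with $\mr{K}_{\mr{L}\cap\mr{M}}^{\otimes 2}$. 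Thus $\mr{K}_{\mr{L}\cap\mr{M}}$ is a square root, $(\mr{L}\cap\mr{M},s)$ is oriented, and any two square roots differ by a $2$-torsion line bundle; in particular $\mr{K}_{(\mr{L}\cap\mr{M},s)}^{-1/2}\otimes\mr{K}_{\mr{L}\cap\mr{M}}$ is $2$-torsion, so $\mathfrak{K}_\mr{or}$ really is a rank-one local system, as the statement presupposes.

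For the identification I would fix a critical chart $(\mr{U},\mr{S},f)$ with $\mr{U}=\mr{crit}f$ open in $\mr{L}\cap\mr{M}$ and set $\mathscr{N}\coloneqq\mathscr{N}_{\mr{U}/\mr{S}}$. Smoothness of $\mr{U}$ forces $\mr{d}f$ to vanish to first order along $\mr{U}$ with nowhere-degenerate Hessian $\mr{Hess}f\in\Gamma(\mr{U},\mr{Sym}^2\mathscr{N}^\vee)$, so by the holomorphic Morse--Bott lemma $f$ is, \'etale-locally on $\mr{S}$, of the form $f_0+q$ with $f_0$ locally constant and $q$ a fibrewise nondegenerate quadratic form on $\mathscr{N}$. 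Thom--Sebastiani reduces $\mathscr{P}_{\mr{S},f}$ to the fibrewise vanishing cycles of $q$, which constitute the rank-one $\Z/2$-local system of ``square roots of $\det\mr{Hess}f$''; using $\mr{Hess}f$ to identify $\mathscr{N}\cong\mathscr{N}^\vee$ (whence $\det\mathscr{N}$ is $2$-torsion) together with adjunction, this is the local system attached to $\det\mathscr{N}\cong\mr{K}_\mr{U}\otimes\mr{K}_\mr{S}^{-1}|_\mr{U}$, placed in the degree dictated by the normalisation of \cref{pervdcrit}, i.e. shifted by $[\dim\mr{X}]$.

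Combining this with \cref{pervdcrit}, $\mathscr{P}_{\mr{L,M}}|_\mr{U}$ becomes the local system of $\big(\mr{K}_\mr{U}\otimes\mr{K}_\mr{S}^{-1}|_\mr{U}\big)\otimes\big(\mr{K}_{(\mr{L}\cap\mr{M},s)}^{-1/2}|_\mr{U}\otimes\mr{K}_\mr{S}|_\mr{U}\big)=\mr{K}_{\mr{L}\cap\mr{M}}|_\mr{U}\otimes\mr{K}_{(\mr{L}\cap\mr{M},s)}^{-1/2}|_\mr{U}$, shifted by $[\dim\mr{X}]$ --- which is precisely the restriction of $\mathfrak{K}_\mr{or}[\dim\mr{X}]$. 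The naturality clause of \cref{pervdcrit} lets these chartwise isomorphisms be glued to the asserted global one, and the whole construction is equivariant under rescaling the chosen $\mr{K}_{(\mr{L}\cap\mr{M},s)}^{1/2}$, yielding the statement for every square root. I expect the one genuinely delicate point to be the local computation above --- pinning down the $\Z/2$-twist coming from a family of nondegenerate quadratic forms as the local system of $\det\mathscr{N}_{\mr{U}/\mr{S}}$, and keeping track of the shift; this is implicit in \cite{bbdjs}, so in practice I would either cite it or reprove it by Thom--Sebastiani together with a direct monodromy computation for a quadratic-form bundle.
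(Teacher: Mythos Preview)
The paper states this corollary without proof, treating it as an immediate consequence of \cref{lemmasmoothcase}, Bussi's proposition, and \cref{pervdcrit}; your proposal spells out exactly the argument that is implicit there --- orientability from $\mr{K}_{(\mr{L}\cap\mr{M},s)}\cong\mr{K}_{\mr{L}\cap\mr{M}}^{\otimes 2}$, and the local identification of $\mathscr{P}_{\mr{S},f}$ on a smooth critical locus via Morse--Bott and Thom--Sebastiani --- so the approach is the same, only more detailed than what the paper records. One caveat worth flagging: the shift in the statement is written as $[\dim\mr{X}]$, and you simply transcribe this; with the paper's normalisation $\mathscr{P}_{\mr{S},f}=\phi_f\C_\mr{S}[\dim\mr{S}]$, the vanishing cycles of a fibrewise nondegenerate quadratic form along a smooth $\mr{U}\subset\mr{S}$ give a rank-one local system placed in degree $-\dim\mr{U}$, so the honest shift is $[\dim(\mr{L}\cap\mr{M})]$ (and this is consistent with how the result is used later, e.g.\ in the proof of \cref{igbcor}); you should either reconcile the notation or note the discrepancy rather than assert the shift ``is dictated by \cref{pervdcrit}'' to be $[\dim\mr{X}]$.
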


\paragraph{DQ-algebras.}
We let $\mr{X}$ be a complex manifold. Let $\C\brak$ be the ring of formal power series in $\hbar$, and $\C\cbrak$ its field of fractions, i.e. the field of formal Laurent series. Define a sheaf of $\C\brak$-algebras: $$\mathscr{O}_\mr{X}[\![\hbar]\!] = \varprojlim \mathscr{O}_\mr{X} \otimes_{\C} \C[\![\hbar]\!]/\hbar^{n}.$$
\begin{definition}
 A star product on $\mathscr{O}_\mr{X}[\![\hbar]\!]$ is a $\C\brak$-bilinear associative multiplication $\star$ such that $$f\star g = \sum_{i\ge 0} \mr{P}_i(f,g)\hbar^{i}, \text{ where } f,g \in \mathscr{O}_\mr{X},$$ such that $\mr{P}_i$ are holomorphic bidifferential operators with $\mr{P}_{0}(f,g)=fg$ and $\mr{P}_{i}(f,1) = \mr{P}_i(1,f)=0$ for all $i \ge 1$. The pair $(\mathscr{O}_\mr{X}[\![\hbar]\!],\star)$ is called a star algebra.
\end{definition}
\2
\begin{definition}
 A deformation quantisation algebra (DQ-algebra) on a complex manifold $\mr{X}$ is a sheaf of $\C\brak$-algebras $\mathscr{A}_\mr{X}$ locally isomorphic to a star algebra as a $\C\brak$-algebra.
\end{definition}
\2
\begin{example}\label{Poissonexample}
 Let $\mathscr{A}_\mr{X}$ be a DQ-algebra on $\mr{X}$. Let $\pi : \mathscr{A}_\mr{X} \to \mathscr{A}_\mr{X}/\hbar\mathscr{A}_\mr{X} \cong \mathscr{O}_\mr{X}$. For any $f,g \in \mathscr{O}_\mr{X}$, choose lifts $\tilde{f}, \tilde{g}$ such that $\pi(\tilde{f})=f$ and $\pi(\tilde{g})=g$. Then define a bracket $$\{f,g\} = \pi(\hbar^{-1}(\tilde{f}\tilde{g}-\tilde{g}\tilde{f})).$$ This is independent of the choices made and defines a Poisson structure on $\mr{X}$.
\end{example}
\2
\begin{example}
 Let $\mr{X}$ be a complex manifold. The cotangent bundle $\Omega_\mr{X}$ supports a filtered sheaf of $\C$-algebras $\hat{\mathscr{E}}_{\Omega_\mr{X}}$ of formal microdifferential operators. We start by recalling its definition. Fix $(x_1,\cdots,x_n)$ coordinates on $\mr{X}$, and write $(x_1,\cdots,x_n,\xi_1,\cdots,\xi_n)$ for the induced coordinates on $\Omega_\mr{X}$. Let $\mathscr{O}_{\Omega_\mr{X}}(m)$ be the sheaf of homogeneous functions in the fibre coordinates on $\Omega_\mr{X}$ of degree $m$, i.e. $$\left(\sum \xi_j\partial/\partial\xi_j - m\right)f(x,\xi)=0.$$ 
 $\hspace*{3mm}$We define the sheaf of formal microdifferential operators of order $\le m$ by $$\hat{\mathscr{E}}_{\Omega_\mr{X}}(m) = \prod_{j \in \mathbf{N}} \mathscr{O}_{\Omega_\mr{X}}(m-j).$$ In order to get a sheaf globally on $\Omega_\mr{X}$, we glue these sheaves on overlaps using the transformation rule for total symbols of differential operators. Taking the limit over $m \in \mathbf{Z}$, we get the sheaf of formal microdifferential operators on $\Omega_\mr{X}$: $$\hat{\mathscr{E}}_{\Omega_\mr{X}} = \varinjlim_{m \in \mathbf{Z}}\hat{\mathscr{E}}_{\Omega_\mr{X}}(m).$$ 
 $\hspace*{3mm}$Note that there are products $\hat{\mathscr{E}}_{\Omega_\mr{X}}(l)\otimes_{\C}\hat{\mathscr{E}}_{\Omega_\mr{X}}(m) \to \hat{\mathscr{E}}_{\Omega_\mr{X}}(l+m)$, given by $$(f\star g)_{l}(x,\xi) = \sum_{\substack{l=i+j-|\alpha|\\ \alpha \in \mathbf{N}}}\frac{1}{\alpha!}(\partial/\partial\xi_1)^{\alpha_1}\cdots (\partial/\partial\xi_n)^{\alpha_n}f_i(x,\xi)\cdot (\partial/\partial x_1)^{\alpha_1}\cdots (\partial/\partial x_n)^{\alpha_n}g_j(x,\xi).$$ In particular $\hat{\mathscr{E}}_{\Omega_\mr{X}}$ and $\hat{\mathscr{E}}_{\Omega_\mr{X}}(0)$ are sheaves of (non-commutative) $\C$-algebras. Notice that the total symbol of a differential operator is a polynomial in $\xi_1,\cdots, \xi_n$, so essentially we are just allowing symbols which are general holomorphic functions rather than just polynomials.\\
 $\hspace*{3mm}$Let $t$ be the coordinate on $\C$ and $(t;\tau)$ - the symplectic coordinates on $\Omega_{\C}$. Let $\Omega_{\mr{X}\times \C,\tau\not=0}$ be the open subset of $\Omega_{\mr{X}\times \C}$ where $\tau\not=0$. We have a map $$\rho : \Omega_{\mr{X}\times \C,\tau\not=0} \to \Omega_{\mr{X}}, \quad (x,t;\xi,\tau) \mapsto (x,\tau^{-1}\xi).$$ Define the subsheaf of operators independent of $t$: $$\hat{\mathscr{E}}_{\Omega_{\mr{X}\times \C},\hat{t}}(0)= \{\mr{P} \in \hat{\mathscr{E}}_{\Omega_{\mr{X}\times \C}}(0) \text{ such that } [\mr{P},\partial_{t}]=0\}.$$ Then, letting $\hbar$ act as $\tau^{-1}$, we define the canonical DQ-algebra on $\Omega_{\mr{X}}$ by $$\hat{\mathscr{W}}_{\mr{X}}(0) = \rho_{*}\hat{\mathscr{E}}_{\Omega_{\mr{X}\times \C},\hat{t}}(0).$$ The $\hbar$-localisation of $\hat{\mathscr{W}}_{\mr{X}}(0)$ is denoted by $\hat{\mathscr{W}}_{\mr{X}}$.
\end{example}
\2
\begin{definition}
 Let $\mr{X}$ be a topological space. An $\mr{R}$-algebroid on $\mr{X}$ is an $\mr{R}$-linear stack $\mathscr{A}$ which is locally non-empty and any two objects in $\mathscr{A}(\mr{U})$ are locally isomorphic for any open $\mr{U} \subset \mr{X}$.
\end{definition}
\2
\begin{example}Fix a topological space $\mr{X}$.
 Let $\mathscr{A}$ be a sheaf of $\mr{R}$-algebras on $\mr{X}$. We consider the prestack $\mr{U} \to \mathscr{A}(\mr{U})^{+}$, where $\mathscr{A}(\mr{U})^{+}$ is the $\mr{R}$-linear category with one object whose endomorphisms are given by $\mathscr{A}(\mr{U})$. The associated stack is denoted by $\mathscr{A}^{+}$. It's an $\mr{R}$-algebroid.\\
 $\hspace*{3mm}$Conversely, suppose that $\mathscr{A}$ is an algebroid. If $\mathscr{A}(\mr{X})$ is non-empty, choose any $\tau \in \mathscr{A}(\mr{X})$. We have an equivalence $\mathscr{A} \simeq \shom(\tau,\tau)^{+}$ of $\mr{R}$-algebroids.
\end{example}
\2
$\hspace*{3mm}$Given an $\mr{R}$-algebroid $\mathscr{A}$ over $\mr{X}$, let $\mathscr{M}_\mr{R}$ be the stack of sheaves of $\mr{R}$-modules on $\mr X$, we define the $\mr{R}$-linear abelian category of modules over $\mathscr{A}$ by $$\mr{Mod}(\mathscr{A}) = \mr{Fct}(\mathscr{A},\mathscr{M}_\mr{R}).$$ 
\begin{definition}
 A deformation quantisation algebroid (DQ-algebroid) on $\mr{X}$ is a $\C\brak$-algebroid $\mathscr{A}$ such that, for any open $\mr{U}\subset \mr{X}$ and $\tau \in \mathscr{A}(\mr{U})$, the $\C\brak$-algebra $\mathscr{H}om(\tau,\tau)$ is a DQ-algebra on $\mr{U}$.
\end{definition}
\2
\begin{remark}
 If $\mr{X}$ is a holomorphic symplectic variety, then the holomorphic Darboux theorem implies that locally we have canonical DQ-algebras associated with $\mr{X}$, but they won't generally glue to a global DQ-algebra. In general, one has to twist them by "half forms", i.e. by the twisted sheaf of half top forms and its dual. It is a theorem of Polesello and Schapira \cite{10.1155/S1073792804132819} that the corresponding twisted DQ-algebras glue and we obtain a DQ-algebroid still denoted $\hat{\mathscr{W}}_{\mr{X}}(0)$.\\
 $\hspace*{3mm}$Any other DQ-algebroid $\mathscr{A}_{\mr{X}}$ on $\mr{X}$ will be equivalent to $\hat{\mathscr{W}}_{\mr{X}}(0)\otimes_{\C\brak}\mathscr{L}$ for some invertible $\C\brak$-algebroid $\mathscr{L}$. Hence DQ-algebroids are classified by $\mr{H}^{2}\big(\mr{X},\C\brak^{*}\big)$.
\end{remark}
\2
\begin{example}
 \cref{Poissonexample} shows that any DQ-algebroid on $\mr{X}$ induces a Poisson structure on $\mr{X}$. Conversely, it is a theorem of Kontsevich \cite{MR2062626} that in the $\mr{C}^{\infty}$ setting (locally for algebraic varieties) any Poisson structure is induced by some DQ-algebroid. The global algebraic quantisation is due to Yekutieli \cite{MR2183259} and Van den Bergh \cite{MR2344349}, and by Calaque et al. \cite{MR2364075} for complex manifolds.
\end{example}
\2
\begin{remark}
 If $\mathscr{A}_\mr{X}$ is a DQ-algebroid, the local notions of being locally free, coherent, flat, etc. make sense for an $\mathscr{A}_\mr{X}$-module $\mathscr{D}$.
\end{remark}

\2
\begin{definition}Let $\mathscr{R}$ be a sheaf of commutative $\C$-algebras.
\begin{itemize}
 \item An $\mathscr{R}$-algebroid is a $\C$-algebroid $\mathscr{A}$ together with a morphism of sheaves of $\C$-algebras $\mathscr{R} \to \mathscr{E}nd(\mr{id}_{\mathscr{A}})$.
 \item An $\mathscr{R}$-algebroid $\mathscr{A}$ is called invertible if $\mathscr{R}|_\mr{U} \to \mathscr{E}nd(\tau)$ is an isomorphism for every open $\mr{U}\subset \mr{X}$ and any $\tau \in \mathscr{A}(\mr U)$.
 \end{itemize}
\end{definition}
\2
$\hspace*{3mm}$Let $\iota : \C \to \C\brak$ be the canonical inclusion, define a $\C$-algebroid $\iota^{*}\mathscr{A}_\mr{X}$ by taking the stack associated with the prestack $\mathscr{B}$ given by $$\mathscr{B}(\mr{U}) = \mathscr{A}_\mr{X}(\mr{U}) \text{ and } \mr{Hom}_{\mathscr{B}(\mr{U})}(\sigma,\tau) = \mr{Hom}_{\mathscr{A}_\mr{X}(\mr{U})}(\sigma,\tau)/\hbar\mr{Hom}_{\mathscr{A}_\mr{X}(\mr{U})}(\sigma,\tau).$$ The so defined $\C$-algebroid is an ivertible $\mathscr{O}_\mr{X}$-algebroid. There are functors of $\C$-algebroids $$\mathscr{A}_\mr{X} \to \iota^* \mathscr{A}_\mr{X} \to \mathscr{O}_\mr{X}$$ and an equivalence of $\C$-algebroids $\iota^*\mathscr{A}_\mr{X} \simeq \mathscr{A}_\mr{X}/\hbar\mathscr{A}_\mr{X} \simeq \mathscr{O}_\mr{X}$. In particular, we get a functor preversing boundedness and coherence $$\iota^* :\mr{\mathbf{D}}(\mathscr{A}_\mr{X}) \to \mr{\mathbf{D}}(\iota^*\mathscr{A}_\mr{X}) \quad \iota^*:\mathscr{D} \mapsto \C\otimes_{\C\brak} \mathscr{D}.$$
$\hspace*{3mm}$The $\hbar$-localisation of a DQ-algebroid $\mathscr{A}_\mr{X}$ is $\mathscr{A}_\mr{X}^{\mr{loc}} = \C\cbrak\otimes _{\C\brak}\mathscr{A}_\mr{X}$. More generally, we have a functor $$\mr{loc}: \mr{\mathbf{D}}^{\mr b}(\mathscr{A}_\mr{X}) \to \mr{\mathbf{D}}^{\mr b}(\mathscr{A}_\mr{X}^{\mr{loc}}).$$
\begin{lemma}
 Let $\mathscr{D} \in \mr{\mathbf{D}}^{\mr b}_\mr{coh}(\mathscr{A}_\mr{X})$. Then we have $\mr{Supp}(\mathscr{D}) = \mr{Supp}(\iota^*\mathscr{D})$. In particular, $\mr{Supp}(\mathscr{D})$ is a closed analytic subset of $\mr{X}$.\\
 If $\mathscr{E} \in \mr{\mathbf{D}}^{\mr b}_\mr{coh}(\mathscr{A}_\mr{X}^{\mr{loc}})$, then $\mr{Supp}(\mathscr{E})$ is a closed analytic subset of $\mr{X}$, coisotropic for the Poisson structure defined by $\mathscr{A}_\mr{X}$.
\end{lemma}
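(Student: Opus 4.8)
The plan is to reduce both assertions to Nakayama-type statements over the Noetherian, $\hbar$-complete stalks of $\mathscr{A}_\mr{X}$, and — for the coisotropy clause — to the involutivity of the characteristic variety of the associated graded module.

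First I would prove $\mr{Supp}(\mathscr{D}) = \mr{Supp}(\iota^{*}\mathscr{D})$. Since $\hbar$ is a non-zero-divisor in $\C\brak$ with $\C\brak/(\hbar)=\C$, the object $\iota^{*}\mathscr{D}=\C\otimes^{\mathbf L}_{\C\brak}\mathscr{D}$ fits in a distinguished triangle $\mathscr{D}\xrightarrow{\hbar}\mathscr{D}\to \iota^{*}\mathscr{D}\xrightarrow{+1}$ in $\mr{\mathbf{D}}^{\mr b}(\mathscr{A}_\mr{X})$. The inclusion $\mr{Supp}(\iota^{*}\mathscr{D})\subseteq\mr{Supp}(\mathscr{D})$ is then immediate, a point where the stalk $\mathscr{D}_x$ is acyclic being one where $(\iota^{*}\mathscr{D})_x$ is acyclic. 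For the converse, take $x\notin\mr{Supp}(\iota^{*}\mathscr{D})$; the triangle shows $\hbar\cdot\colon\mathscr{D}_x\to\mathscr{D}_x$ is an isomorphism in the derived category of $\mathscr{A}_{\mr{X},x}$-modules, so multiplication by $\hbar$ is bijective on every cohomology module $\mathscr{H}^{j}(\mathscr{D})_x$. Each of these is finitely generated over the Noetherian, $\hbar$-adically separated local ring $\mathscr{A}_{\mr{X},x}$, in which $\hbar$ lies in the Jacobson radical, so Nakayama's lemma forces $\mathscr{H}^{j}(\mathscr{D})_x=0$; as $\mathscr{D}$ is bounded this holds for all $j$, whence $\mathscr{D}_x$ is acyclic and $x\notin\mr{Supp}(\mathscr{D})$. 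Analyticity is then inherited from the $\mathscr{O}_\mr{X}$-side: since $\iota^{*}$ preserves boundedness and coherence, $\iota^{*}\mathscr{D}$ is a bounded complex of coherent $\mathscr{O}_\mr{X}$-modules, and $\mr{Supp}(\iota^{*}\mathscr{D})$ is the finite union of the supports of its cohomology sheaves, each a closed analytic subset of $\mr{X}$.

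For the localized case I would first settle closedness and analyticity. Both are local questions, so I may pass to a chart on which $\mathscr{E}\simeq\mr{loc}(\mathscr{D})$ for some $\mathscr{D}\in\mr{\mathbf{D}}^{\mr b}_\mr{coh}(\mathscr{A}_\mr{X})$; replacing the cohomology modules of $\mathscr{D}$ by their quotients by $\hbar$-torsion (still coherent over the Noetherian $\mathscr{A}_\mr{X}$), I may assume each $\mathscr{H}^{j}(\mathscr{D})$ is $\hbar$-torsion free. Since localization is exact, $\mathscr{H}^{j}(\mathscr{E})=\varinjlim_{n}\hbar^{-n}\mathscr{H}^{j}(\mathscr{D})$ is a filtered colimit of copies of $\mathscr{H}^{j}(\mathscr{D})$, so $\mr{Supp}(\mathscr{H}^{j}(\mathscr{E}))=\mr{Supp}(\mathscr{H}^{j}(\mathscr{D}))$, which is closed analytic by the first part; taking the union over $j$ and gluing over a cover shows $\mr{Supp}(\mathscr{E})$ is a closed analytic subset of $\mr{X}$.

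The coisotropy clause is the one genuinely deep point. I would reduce to a single coherent $\mathscr{A}_\mr{X}^{\mr{loc}}$-module — a cohomology module of $\mathscr{E}$, since $\mr{Supp}(\mathscr{E})$ is the union of their supports — and pick, locally, a coherent $\mathscr{A}_\mr{X}$-lattice $\mathscr{D}_0$, so that $\mr{Supp}(\mathscr{E})=\mr{Supp}(\mathscr{D}_0)=\mr{Supp}(\iota^{*}\mathscr{D}_0)$ by the first part. Filtering $\mathscr{D}_0$ by the subsheaves $\hbar^{n}\mathscr{D}_0$ gives a good filtration for the $\hbar$-adic filtration of $\mathscr{A}_\mr{X}$, and $\mr{gr}_{\hbar}\mathscr{A}_\mr{X}$ is a sheaf of commutative Poisson algebras whose bracket extends the one of \cref{Poissonexample}. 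The assertion that the support of such a coherently filtered module is coisotropic for this bracket is the deformation-quantisation form of Gabber's involutivity theorem for characteristic varieties; I would invoke it as established by Kashiwara and Schapira \cite{MR3012169}. The hard part is concentrated entirely in this involutivity statement — everything preceding it is formal manipulation with Nakayama's lemma, distinguished triangles and filtered colimits.
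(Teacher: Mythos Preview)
The paper does not actually prove this lemma: it is stated without proof in a section whose declared purpose is to recall results from \cite{MR3012169}, and the subsequent remark simply identifies the second assertion as Gabber's theorem. So there is no ``paper's own proof'' to compare against beyond a citation.

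Your argument is correct and is essentially the standard one found in Kashiwara--Schapira. The Nakayama step for $\mr{Supp}(\mathscr{D})=\mr{Supp}(\iota^{*}\mathscr{D})$ is exactly how it is done there; one small wording issue is that $\mathscr{A}_{\mr{X},x}$ is not a commutative local ring, but all you actually use is that $\hbar$ lies in the Jacobson radical and the cohomology modules are finitely generated, which suffices for the non-commutative Nakayama lemma. For the localized part, your reduction to a coherent $\mathscr{A}_\mr{X}$-lattice and then to the associated graded is again the standard route, and you correctly isolate Gabber's involutivity theorem as the only substantive input --- which matches the paper's attribution in the remark following the lemma.
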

\2
\begin{remark}
 Note that we do not have a global equivalence $\iota^*\mathscr{A}_\mr{X} \simeq \mathscr{O}_\mr{X}$ of invertible $\mathscr{O}_\mr{X}$-algebroids. This is generally only true locally. In the algebraic case, the vanishing of $\mr{H}^{2}(\mr{X},\mathscr{O}_\mr{X}^{*})$ in the Zariski topology implies that $\iota^*\mathscr{A}_\mr{X} \simeq \mathscr{O}_\mr{X}$ as $\mathscr{O}_\mr{X}$-algebroids globally.\\
 $\hspace*{3mm}$The second statement in the lemma is known as Gabber's theorem and doesn't hold for coherent $\mathscr{A}_\mr{X}$-modules - note that any closed analytic subset of $\mr{X}$ can be the support of such a module since any coherent $\mathscr{O}_\mr{X}$-module is a coherent $\mathscr{A}_\mr{X}$-module.
\end{remark}
\2
\begin{theorem}\label{perf}(Kashiwara-Schapira \cite{MR3012169})
 Let $\mr{X}$ be a complex manifold endowed with a DQ-algebroid $\mathscr{A}_\mr{X}$. Let $$\mathscr{D}, \mathscr{E} \in \mr{\mathbf{D}}^{\mr b}_\mr{coh}(\mathscr{A}_\mr{X})$$ and suppose that $\mr{Supp}(\mathscr{D})\cap \mr{Supp}(\mathscr{E})$ is compact. Then $\mr{RHom}_{\mathscr{A}_\mr{X}}(\mathscr{D},\mathscr{E})$ is a perfect complex of $\C\brak$-modules.
\end{theorem}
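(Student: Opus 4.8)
The plan is to reduce perfectness of $\mr{RHom}_{\mathscr{A}_\mr{X}}(\mathscr{D},\mathscr{E})$ over $\C\brak$ to the classical finiteness theorem for coherent analytic sheaves, with the passage back from $\hbar=0$ to $\C\brak$ controlled by cohomological completeness. Locally this is a statement about modules over a sheaf of $\C\brak$-algebras, so the fact that $\mathscr{A}_\mr{X}$ is only an algebroid plays no essential role.

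First I would reduce modulo $\hbar$. Since $\hbar$ is central, $\C\otimes^{\mr{L}}_{\C\brak}(-)$ is the cone of multiplication by $\hbar$, so $\C\otimes^{\mr{L}}_{\C\brak}\mathscr{F}\simeq\iota^{*}\mathscr{F}$ for every $\mathscr{F}\in\mr{\mathbf{D}}(\mathscr{A}_\mr{X})$. Applying this to $\mathscr{F}=\mr{R}\mathscr{H}om_{\mathscr{A}_\mr{X}}(\mathscr{D},\mathscr{E})$, pushing the cone of $\hbar$ inside the $\mr{R}\mathscr{H}om$ through $\mathscr{E}$ (legitimate because $\mr{R}\mathscr{H}om_{\mathscr{A}_\mr{X}}(\mathscr{D},-)$ is triangulated), and then using the adjunction between restriction along $\mathscr{A}_\mr{X}\to\mathscr{O}_\mr{X}=\mathscr{A}_\mr{X}/\hbar$ and $\iota^{*}=\mathscr{O}_\mr{X}\otimes^{\mr{L}}_{\mathscr{A}_\mr{X}}(-)$, I obtain
\[
\C\otimes^{\mr{L}}_{\C\brak}\mr{R}\mathscr{H}om_{\mathscr{A}_\mr{X}}(\mathscr{D},\mathscr{E})\;\simeq\;\mr{R}\mathscr{H}om_{\mathscr{O}_\mr{X}}(\iota^{*}\mathscr{D},\iota^{*}\mathscr{E}),
\]
and the same identity after applying $\mr{R}\Gamma(\mr{X},-)$, since that functor is exact and so commutes with the cone of $\hbar$. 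As recalled above $\iota^{*}$ preserves boundedness and coherence and $\mr{Supp}(\iota^{*}\mathscr{D})=\mr{Supp}(\mathscr{D})$, so $\iota^{*}\mathscr{D},\iota^{*}\mathscr{E}\in\mr{\mathbf{D}}^{\mr b}_{\mr{coh}}(\mathscr{O}_\mr{X})$ and $\mr{R}\mathscr{H}om_{\mathscr{O}_\mr{X}}(\iota^{*}\mathscr{D},\iota^{*}\mathscr{E})$ is a coherent complex with support inside the compact set $\mr{Supp}(\mathscr{D})\cap\mr{Supp}(\mathscr{E})$. The classical finiteness theorem for coherent sheaves with compact support (Grauert) then yields that $\mr{RHom}_{\mathscr{O}_\mr{X}}(\iota^{*}\mathscr{D},\iota^{*}\mathscr{E})$ has bounded, finite-dimensional cohomology, i.e. is perfect over $\C$.

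Next I would show $\mr{RHom}_{\mathscr{A}_\mr{X}}(\mathscr{D},\mathscr{E})$ is cohomologically $\hbar$-complete, that is, annihilated by $\mr{R}\mathscr{H}om_{\C\brak}(\C\cbrak,-)$. Coherent $\mathscr{A}_\mr{X}$-modules are cohomologically complete — this rests on $\mathscr{A}_\mr{X}$ being a Noetherian, $\hbar$-adically complete sheaf of rings together with Artin--Rees — so $\mathscr{E}$ is; writing $\mr{R}\mathscr{H}om_{\C\brak}(\C\cbrak,-)$ as an $\mr{R}\!\lim$ along multiplication by $\hbar$, it commutes past $\mr{R}\mathscr{H}om_{\mathscr{A}_\mr{X}}(\mathscr{D},-)$ and past $\mr{R}\Gamma(\mr{X},-)$, so the global $\mr{RHom}$ inherits cohomological completeness. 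I would then conclude with the commutative-algebra fact that an $\mr{M}\in\mr{\mathbf{D}}(\C\brak)$ which is cohomologically complete and has $\C\otimes^{\mr{L}}_{\C\brak}\mr{M}$ perfect over $\C$ is perfect over $\C\brak$: a derived Nakayama argument bounds $\mr{M}$, the two-term universal coefficient sequence over the PID $\C\brak$ bounds $\dim_{\C}\mr{H}^{i}(\mr{M})/\hbar$ and the $\hbar$-torsion of $\mr{H}^{i}(\mr{M})$, which combined with cohomological completeness and topological Nakayama forces each $\mr{H}^{i}(\mr{M})$ to be finitely generated, and a bounded complex of finitely generated modules over the regular one-dimensional ring $\C\brak$ is perfect. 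Applying this to $\mr{M}=\mr{RHom}_{\mathscr{A}_\mr{X}}(\mathscr{D},\mathscr{E})$ finishes the argument.

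The hard part is the cohomological-completeness input of the penultimate paragraph: proving that coherent $\mathscr{A}_\mr{X}$-modules are cohomologically complete and that this property is stable under $\mr{R}\mathscr{H}om$ and $\mr{R}\Gamma$. Once this is available the reduction modulo $\hbar$ and the descent of perfectness are essentially formal, and the classical finiteness theorem does the real geometric work; this is precisely the use made of the theory of cohomologically complete modules developed in \cite{MR3012169}.
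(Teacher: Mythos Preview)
The paper does not prove this theorem: it is quoted as a result of Kashiwara--Schapira \cite{MR3012169} and used as a black box in \cref{sec3}. So there is no ``paper's own proof'' to compare against.

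That said, your outline is exactly the argument Kashiwara and Schapira give in \cite{MR3012169}: reduce modulo $\hbar$ to obtain $\C\otimes^{\mr L}_{\C\brak}\mr{RHom}_{\mathscr{A}_\mr{X}}(\mathscr{D},\mathscr{E})\simeq\mr{RHom}_{\mathscr{O}_\mr{X}}(\iota^{*}\mathscr{D},\iota^{*}\mathscr{E})$, apply Grauert finiteness for coherent analytic sheaves with compact support, and then use cohomological $\hbar$-completeness of coherent $\mathscr{A}_\mr{X}$-modules (and its stability under $\mr{R}\mathscr{H}om$ and $\mr{R}\Gamma$) together with a Nakayama-type lifting criterion to deduce perfectness over $\C\brak$. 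Your identification of the hard input --- the theory of cohomologically complete modules --- is also accurate; this is precisely what \cite{MR3012169} develops and invokes at this point.
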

\2
\begin{definition}
 Let $\mr{X}$ be complex manifold endowed with a DQ-algebroid $\mathscr{A}_\mr{X}$, and let $\mr{Y}$ be a smooth submanifold of $\mr{X}$. A coherent $\mathscr{A}_\mr{X}$-module $\mathscr{D}$ supported on $\mr{Y}$ is called simple if $\iota^*\mathscr{D}$ is concentrated in degree $0$ and $\mr{H}^{0}(\iota^*\mathscr{D})$ is an invertible $\mathscr{O}_{\mr{Y}}\otimes_{\mathscr{O}_\mr{X}}\iota^*\mathscr{A}_\mr{X}$-module.
\end{definition}
\2
\begin{definition}
 Let $\mr{X}$ be a holomorphic symplectic variety equipped with a DQ-algebroid $\mathscr{A}_\mr{X}$.
 \begin{enumerate}
  \item An $\mathscr{A}_\mr{X}^{\mr{loc}}$-module is called holonomic if it is coherent and its support is a Lagrangian subvariety of $\mr{X}$.
  \item An $\mathscr{A}_\mr{X}$-module is called holonomic if it is coherent, $\hbar$-torsion free and its $\hbar$-localisation is holonomic.
  \item Let $\mr{L}$ be a smooth Lagrangian. An $\mathscr{A}_\mr{X}^{\mr{loc}}$-module $\mathscr{D}$ is called simple holonomic if there exists locally an $\mathscr{A}_\mr{X}$-module $\mathscr{D}^{0}$, simple along $\mr L$, which generates it, i.e. $(\mathscr{D}^{0})^{\mr loc} \simeq \mathscr{D}$.
 \end{enumerate}
\end{definition}
\2
\begin{theorem}\label{perv}(Kashiwara-Schapira \cite{10.2307/40068123})
 Let $\mr X$ be a holomorphic symplectic variety of dimension $2n$, equipped with a DQ-algebroid $\mathscr{A}_\mr{X}$. Suppose that $\mathscr{D}$ and $\mathscr{E}$ are two holonomic $\mathscr{A}_\mr{X}^{\mr loc}$-modules. Then the complex $\mr{R}\shom_{\mathscr{A}_\mr{X}^{\mr loc}}(\mathscr{D},\mathscr{E})[n]$ is a perverse sheaf.
\end{theorem}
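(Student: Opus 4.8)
Write $\mathscr{F} \coloneqq \mr{R}\shom_{\mathscr{A}_\mr{X}^{\mr{loc}}}(\mathscr{D},\mathscr{E})$, set $\Lambda_\mathscr{D} \coloneqq \mr{Supp}(\mathscr{D})$, $\Lambda_\mathscr{E} \coloneqq \mr{Supp}(\mathscr{E})$ --- both Lagrangian, hence of complex dimension $n$ --- and let $\Lambda \coloneqq \Lambda_\mathscr{D}\cap \Lambda_\mathscr{E}$. The plan is to verify the two conditions defining a perverse sheaf (middle perversity) for $\mathscr{F}[n]$: the support condition $\dim_\C \mr{Supp}\,\mathscr{H}^{k}(\mathscr{F}) \le n-k$ for all $k$, and the dual cosupport condition. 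Perversity being local on $\mr{X}$, I may shrink $\mr{X}$ to a ball around any point of $\Lambda$. For the cosupport condition I would use the duality theorem for holonomic DQ-modules, which identifies the Verdier dual $\mr{D}_\mr{X}\mathscr{F}$ with $\mr{R}\shom_{\mathscr{A}_\mr{X}^{\mr{loc}}}(\mathscr{E},\mathscr{D})$ up to shift, together with the fact that the DQ-module dual of a holonomic module is again holonomic; this reduces the cosupport condition for $(\mathscr{D},\mathscr{E})$ to the support condition for $(\mathscr{E},\mathscr{D})$. So it is enough to prove the support condition for an arbitrary ordered pair of holonomic $\mathscr{A}_\mr{X}^{\mr{loc}}$-modules. (Should this duality be unavailable, it would be proved en route, as in Kashiwara--Schapira.)

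The first step is that $\mathscr{F}$ is $\C\cbrak$-constructible. Finiteness of stalks rests on \cref{perf}: locally $\mathscr{D},\mathscr{E}$ are $\hbar$-localizations of coherent, $\hbar$-torsion-free $\mathscr{A}_\mr{X}$-modules, a dévissage reduces to simple holonomic modules along smooth Lagrangians, and over a relatively compact open where the relevant supports are cut down to compact sets \cref{perf} makes the corresponding $\mr{RHom}$ a perfect $\C\brak$-complex, finite-dimensional over $\C\cbrak$ after inverting $\hbar$. Local constancy of the cohomology sheaves along the strata of a Whitney stratification of $\Lambda$ adapted to $\Lambda_\mathscr{D}$ and $\Lambda_\mathscr{E}$ then follows by reducing, microlocally along each open stratum, to the standard local models below.

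The heart of the matter is the estimate $\dim_\C \mr{Supp}\,\mathscr{H}^{k}(\mathscr{F}) + k \le n$. Stratify $\Lambda$ so that over the dense open part $S$ of each irreducible component both $\Lambda_\mathscr{D}$ and $\Lambda_\mathscr{E}$ are smooth Lagrangians meeting cleanly along $S$, with $\mathscr{D},\mathscr{E}$ restricting there to simple holonomic modules along them. A quantized symplectic transformation puts the germ of $(\mr{X},\Lambda_\mathscr{D},\Lambda_\mathscr{E},\mathscr{D},\mathscr{E})$ into standard form: $\mr{X}=\Omega_{\C^{n}}$, $\Lambda_\mathscr{E}$ the zero section, $\Lambda_\mathscr{D}$ a conormal-type Lagrangian with $\Lambda=\C^{d}$ of codimension $c=n-d$, and $\mathscr{D},\mathscr{E}$ the canonical simple modules. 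A Koszul-type resolution of one canonical module then computes $\mathscr{F}|_{S}\simeq \mathfrak{L}_{S}[-c]$ for a rank-one $\C\cbrak$-local system $\mathfrak{L}_{S}$ on $S$ placed in cohomological degree $c$ --- the deformed analogue of the local-to-global $\mr{Ext}$ computation behind \cref{thm02}, the deformed de Rham differential concentrating the cohomology sheaves in the single degree $c$. Hence $\mathscr{H}^{k}(\mathscr{F})|_{S}$ vanishes unless $k=c$, where its support is $S$ of dimension $d=n-c=n-k$; the support condition holds over $S$, and $\mathscr{F}[n]|_{S}=\mathfrak{L}_{S}[d]$ is a perverse local system.

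What remains --- the main obstacle --- is to propagate the estimate from the open strata to the deeper ones. I would induct on $\dim_\C \Lambda$: for the open stratum $S$ with closed complement $Z\subset \Lambda$, the distinguished triangle $j_{!}j^{*}\mathscr{F}\to \mathscr{F}\to i_{*}i^{*}\mathscr{F}\xrightarrow{+1}$ attached to $j\colon S\hookrightarrow \Lambda$ and $i\colon Z\hookrightarrow \Lambda$ reduces the support estimate over $Z$ to the same statement for $\mr{R}\shom$ of holonomic DQ-modules supported over $Z$, arising as appropriate sub/quotient DQ-modules of $\mathscr{D},\mathscr{E}$, whose Lagrangian intersection is strictly lower-dimensional and so is covered by the inductive hypothesis; the open-stratum term is handled by the model computation together with the Lagrangian-ness of the micro-support $\mr{SS}(\mathscr{F})$. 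Equivalently, and this is the route of Kashiwara--Schapira, one reduces the whole problem --- again via a local symplectic transformation --- to microdifferential $\hat{\mathscr{E}}$-modules on a cotangent bundle $\Omega_\mr{Y}$, where holonomicity of $\mathscr{D},\mathscr{E}$ makes $\mr{R}\shom$ between them perverse by Kashiwara's classical constructibility theorem for holonomic microdifferential systems, and transports the conclusion back along the transformation.
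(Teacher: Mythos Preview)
The paper does not prove this theorem: it is stated with attribution to Kashiwara--Schapira \cite{10.2307/40068123} and used as a black box (see also \cite{MR3012169}), so there is no ``paper's own proof'' to compare against. Your outline is a reasonable sketch of the Kashiwara--Schapira argument --- duality to reduce cosupport to support, constructibility via finiteness and stratification, a local-model computation on open strata, and then a reduction to the classical constructibility theorem for holonomic microdifferential systems via a quantized contact transformation --- and you explicitly name this last route at the end. That reduction to $\hat{\mathscr{E}}$-modules is indeed the efficient path in \cite{10.2307/40068123}; the alternative d\'evissage-by-strata you sketch first is plausible but the step ``reduces the support estimate over $Z$ to the same statement for $\mr{R}\shom$ of holonomic DQ-modules supported over $Z$'' hides real work (controlling $i^{*}\mathscr{F}$ in terms of sub/quotient modules is not formal), so if you were writing this up you should commit to the microdifferential reduction rather than leave both routes half-open.
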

\2
\begin{theorem}\label{existsimplehol}(D'Agnolo-Schapira \cite{MR2331247})
 Let $\mr{X}$ be a holomorphic symplectic variety and let $i : \mr{L} \xhookrightarrow{} \mr{X}$ be a smooth Lagrangian. Assume that the canonical bundle $\mr{K}_{\mr{L}}$ of $\mr{L}$ admits a square root. Then, for any choice of a square root $\mr{K}_\mr{L}^{1/2}$, there exists a simple $\hat{\mathscr{W}}_\mr{X}(0)$-module $\mathscr{D}_\mr{L}$, supported on $\mr{L}$, which quantises $\mr{K}_\mr{L}^{1/2}$.
\end{theorem}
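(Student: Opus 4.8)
The plan is to reduce to the cotangent-bundle model via the holomorphic Darboux theorem and then to glue the resulting local simple modules, the only real point being that the gluing obstruction must be identified with the obstruction to a square root of $\mr{K}_\mr{L}$ --- which is why that hypothesis, and a choice of $\mr{K}_\mr{L}^{1/2}$, enters.

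First I would settle the model case. Let $\mr{M}$ be a complex manifold, $\mr{X}=\Omega_\mr{M}$ with its canonical (half-form twisted) DQ-algebroid $\hat{\mathscr{W}}_\mr{M}(0)$, and $\mr{L}=\mr{M}$ the zero section. Here there is a distinguished simple module $\mathscr{D}^0_\mr{M}$ supported on the zero section, namely the quantisation of $\mathscr{O}_\mr{M}$ realised through the action of microdifferential operators on boundary values along $\mr{M}$, and the half-form twist built into $\hat{\mathscr{W}}_\mr{M}(0)$ has the effect that $\iota^*\mathscr{D}^0_\mr{M}\cong\mr{K}_\mr{M}^{1/2}$ rather than $\mathscr{O}_\mr{M}$. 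Any other simple $\hat{\mathscr{W}}_\mr{M}(0)$-module along $\mr{M}$ has the form $\mathscr{D}^0_\mr{M}\otimes_{\mathscr{O}_\mr{M}}\mathscr{F}$ for a unique line bundle $\mathscr{F}$ on $\mr{M}$, so in the model case the statement holds, and moreover simple modules along the zero section are classified up to isomorphism by $\mr{Pic}(\mr{M})$.

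For a general smooth Lagrangian $i:\mr{L}\xhookrightarrow{}\mr{X}$, I would invoke the holomorphic Darboux theorem for Lagrangians: every point of $\mr{L}$ has an open neighbourhood $\mr{U}\subset\mr{X}$ together with a symplectomorphism $\mr{U}\xrightarrow{\sim}\Omega_\mr{M}$ carrying $\mr{L}\cap\mr{U}$ onto the zero section. By the theory of quantised contact transformations in the half-form setting (Polesello--Schapira, extending Kashiwara), this lifts to an equivalence of DQ-algebroids $\hat{\mathscr{W}}_\mr{X}(0)|_\mr{U}\simeq\hat{\mathscr{W}}_\mr{M}(0)$; transporting $\mathscr{D}^0_\mr{M}$ along it gives a simple $\hat{\mathscr{W}}_\mr{X}(0)|_\mr{U}$-module $\mathscr{D}_\mr{U}$ supported on $\mr{L}\cap\mr{U}$, which after twisting by $\mr{K}_\mr{L}^{1/2}|_{\mr{L}\cap\mr{U}}\otimes(\iota^*\mathscr{D}_\mr{U})^{\vee}$ may be arranged to satisfy $\iota^*\mathscr{D}_\mr{U}\cong\mr{K}_\mr{L}^{1/2}|_{\mr{L}\cap\mr{U}}$. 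By the classification in the model case, on a double overlap $\mathscr{D}_\mr{U}$ and $\mathscr{D}_\mr{V}$ are isomorphic, and normalising all such isomorphisms against the fixed global line bundle $\mr{K}_\mr{L}^{1/2}$ determines them up to a $\C\brak^{*}$-valued \v{C}ech $1$-cochain, whose coboundary is a $2$-cocycle $\theta$.

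The main obstacle is to show that the class $[\theta]$ is exactly the obstruction to a square root of $\mr{K}_\mr{L}$. This requires tracking how a quantised contact transformation acts on simple modules and on half-forms: the ambiguity in lifting a symplectomorphism to an equivalence of DQ-algebroids, together with the ambiguity in identifying the transported module with the distinguished one for the new chart, contributes precisely the half-form transition data, and the half-form twist in the definition of $\hat{\mathscr{W}}_\mr{X}(0)$ is designed so that these contributions cancel, leaving $[\theta]$ equal to the image of $[\mr{K}_\mr{L}]$ under the connecting homomorphism $\mr{Pic}(\mr{L})\to\mr{H}^{2}(\mr{L},\Zmod{2})$, i.e. the obstruction to $\mr{K}_\mr{L}$ admitting a square root. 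Since by hypothesis $\mr{K}_\mr{L}^{1/2}$ exists, $[\theta]=0$ for the chosen square root; the transition isomorphisms can then be rectified to satisfy the cocycle condition, the $\mathscr{D}_\mr{U}$ glue to a global simple $\hat{\mathscr{W}}_\mr{X}(0)$-module $\mathscr{D}_\mr{L}$ supported on $\mr{L}$, and by construction $\iota^*\mathscr{D}_\mr{L}\cong\mr{K}_\mr{L}^{1/2}$, so $\mathscr{D}_\mr{L}$ quantises $\mr{K}_\mr{L}^{1/2}$. Everything outside this cohomological identification --- the model case, the Darboux reduction, and the gluing once $[\theta]=0$ --- is formal.
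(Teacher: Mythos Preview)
The paper does not prove this theorem; it is quoted from D'Agnolo--Schapira \cite{MR2331247} and only followed by a remark describing the more general twisted statement. So there is no proof in the paper to compare against.

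Your sketch is the right shape of the D'Agnolo--Schapira argument: local Darboux reduction to the cotangent model, a canonical simple module in the model case, and a gluing obstruction tied to half-forms. Two points worth sharpening. First, your identification of the obstruction as living in $\mr{H}^{2}(\mr{L},\Zmod{2})$ via the squaring sequence is not quite what the cited paper (and the remark following the theorem here) gives: the obstruction is the class $\delta\big(\tfrac{1}{2}\alpha(\mr{c}_1(\mr{K}_\mr{L}))\big)\in\mr{H}^{2}(\mr{L},\C^{*})$ coming from $1\to\C^{*}\to\mathscr{O}_\mr{L}^{*}\xrightarrow{\mr{dlog}}\mr{d}\mathscr{O}_\mr{L}\to 0$. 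This vanishes whenever $\mr{K}_\mr{L}^{1/2}$ exists (indeed whenever $\mr{K}_\mr{L}\otimes\mathscr{L}^{\otimes 2}$ admits a flat connection for some $\mathscr{L}$), so under the theorem's hypothesis your conclusion is unaffected, but the obstruction group and the precise mechanism are different from what you wrote. Second, your claim that simple modules along the zero section are classified by $\mr{Pic}(\mr{M})$ is too coarse: the local classification of simple modules along a Lagrangian involves line bundles together with a choice related to flat structure (this is why the $\mr{dlog}$ sequence, not the squaring sequence, appears), and this is exactly where the half-form twist in $\hat{\mathscr{W}}_\mr{X}(0)$ does its work. These are refinements rather than gaps; the overall strategy you outline is the one used in the reference.
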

\2
\begin{remark}
 Notation as in the previous theorem, consider the short exact sequence $$1 \to \C^{*} \to \mathscr{O}_{\mr{L}}^{*} \xrightarrow{\mr{dlog}} \mr{d}\mathscr{O}_{\mr{L}}\to 0.$$ It induces a long exact sequence in cohomology, and we are interested in the folloing part of it: $$\mr{H}^{1}(\mr{L},\C^{*}) \to \mr{H}^1(\mr{L},\mathscr{O}_{\mr{L}}^{*}) \xrightarrow{\alpha} \mr{H}^{1}(\mr{L},\mr{d}\mathscr{O}_{\mr{L}}) \xrightarrow{\delta} \mr{H}^{2}(\mr{L},\C^{*}).$$
$\hspace*{3mm}$Let $\C_{\mr{K}_\mr{L}^{1/2}}$ be the $\C$-algebroid associated to the class $\delta(\frac{1}{2}\alpha(\mr{c}_1(\mr{K}_\mr{L})))$. Then a general version of the above theorem asserts that there exists a simple $\hat{\mathscr{W}}_\mr{X}(0)\otimes \C_{\mr{K}_\mr{L}^{1/2}}$-module along $\mr{L}$, i.e. in general we only get a twisted $\hat{\mathscr{W}}_\mr{X}(0)$-module.\\
 $\hspace*{3mm}$Notice that $\C_{\mr{K}_\mr{L}^{1/2}}$ is trivial iff there exists a line bundle $\mathscr{L}$ such that $\mr{K}_\mr{L}\otimes \mathscr{L}^{\otimes 2}$ admits a flat connection, hence $\mathscr{L}$ can be quantised. In particular, this agrees with the results of \cite{MR3437831} since $\hat{\mathscr{W}}_\mr{X}(0) \simeq \hat{\mathscr{W}}_\mr{X}(0)^{\mr{op}}$ implies that the Atiyah class $\mr{At}(\hat{\mathscr{W}}_\mr{X}(0),\mr{L})=0$.
\end{remark}
\2
\begin{theorem}\label{pervref}
 Let $\mr X$ be a holomorphic symplectic variety of dimension $2n$, equipped with the canonical DQ-algebroid $\hat{\mathscr{W}}_\mr{X}(0)$. Suppose that $\mr{L}$ and $\mr{M}$ are smooth Lagrangians and assume that $\mr{K}_{\mr{L}}^{1/2}$ and $\mr{K}_\mr{M}^{1/2}$ exist. Let $\mathscr{D}^{0}_{\mr{L}}$ and $\mathscr{D}^{0}_{\mr{M}}$ be two simple holonomic $\hat{\mathscr{W}}_\mr{X}(0)$-modules, supported on $\mr{L}$ and $\mr{M}$, respectively, as in \cref{existsimplehol}. Then we have an isomorphism of perverse sheaves $$\mr{R}\shom_{\hat{\mathscr{W}}_\mr{X}}\big(\mathscr{D}_{\mr{L}},\mathscr{D}_{\mr{M}}\big)[n] \xrightarrow{\sim} \C\cbrak \otimes_{\C} \mathscr{P}_{\mr{L,M}},$$ where $\mathscr{D}_{\mr{L}}$ is the $\hbar$-localisation $\C\cbrak \otimes_{\C\brak}\mathscr{D}^{0}_{\mr{L}}$ and similarly for $\mathscr{D}_{\mr{M}}$.
\end{theorem}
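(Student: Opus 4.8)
The plan is a local-to-global argument. By \cref{perv} the object $\mr{R}\shom_{\hat{\mathscr{W}}_\mr{X}}\big(\mathscr{D}_{\mr{L}},\mathscr{D}_{\mr{M}}\big)[n]$ is a perverse sheaf of $\C\cbrak$-modules supported on $\mr{L}\cap\mr{M}$, and $\C\cbrak\otimes_\C\mathscr{P}_{\mr{L,M}}$ is perverse by \cref{pervdcrit}. Since perverse sheaves satisfy descent, it is enough to construct, on a neighbourhood of an arbitrary point of $\mr{L}\cap\mr{M}$, an isomorphism between the two that is built from canonical microlocal data; such local isomorphisms automatically agree on overlaps and glue, and an isomorphism in the derived category between two perverse sheaves is automatically an isomorphism of perverse sheaves. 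So I reduce to a local statement.

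On a small enough neighbourhood, the holomorphic Darboux theorem together with the result of Bussi \cite{Bussi:2014psa} recalled above lets me take $\mr{X}=\Omega_\mr{M}$ with $\mr{M}$ the zero section and $\mr{L}=\Gamma_{\mr{d}f}$ the graph of $\mr{d}f$ for a holomorphic function $f$ on $\mr{M}$, so that $\mr{L}\cap\mr{M}=\mr{crit}f$ and $(\mr{U},\mr{M},f)$ is a $\mr{d}$-critical chart. After the half-form twist the canonical DQ-algebroid is $\hat{\mathscr{W}}_{\Omega_\mr{M}}(0)$; the simple holonomic module $\mathscr{D}^0_\mr{M}$ quantising $\mr{K}_\mr{M}^{1/2}$ is the standard module along the zero section, which carries a Koszul-type free resolution over $\hat{\mathscr{W}}_{\Omega_\mr{M}}(0)$ dual to the de Rham complex of $\mr{M}$, while the chosen half-forms identify $\mathscr{D}^0_\mr{L}$ quantising $\mr{K}_\mr{L}^{1/2}$ with the exponential twist of $\mathscr{D}^0_\mr{M}$ along $\Gamma_{\mr{d}f}$, the rank-one module generated by $e^{f/\hbar}$.

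Substituting a free resolution of $\mathscr{D}_\mr{L}$ into $\shom_{\hat{\mathscr{W}}_\mr{X}^{\mr{loc}}}(-,\mathscr{D}_\mr{M})$ and using the exponential description collapses the Hom-complex, after $\hbar$-localisation, to the WKB twisted de Rham complex $\big(\Omega^\bullet_\mr{M}\cbrak,\ \mr{d}+\hbar^{-1}\mr{d}f\wedge\big)$ supported on $\mr{crit}f$, tensored with the orientation line system $\mathfrak{K}_{\mr{or}}$ of \cref{pervdcrit} (coming from $\big(\mr{K}_\mr{L}^{1/2}\otimes\mr{K}_\mr{M}^{1/2}\big)|_{\mr{U}_\mr{red}}$ versus $\mr{K}_\mr{M}|_{\mr{U}_\mr{red}}$). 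The identification, after the degree shift by $n$, of this localised WKB de Rham complex with $\C\cbrak\otimes_\C\mathscr{P}_{\mr{M},f}$ is the exponential/stationary-phase computation of D'Agnolo-Schapira and Kashiwara-Schapira \cite{MR2331247}, \cite{10.2307/40068123}; combined with the local form $\mathscr{P}_{\mr{L,M}}|_\mr{U}\simeq\mathscr{P}_{\mr{M},f}\otimes_\C\mathfrak{K}_{\mr{or}}$ from \cref{pervdcrit}, this gives the sought local isomorphism with the correct twist. In particular it shows that the a priori $\C\cbrak$-linear perverse sheaf $\mr{R}\shom_{\hat{\mathscr{W}}_\mr{X}}\big(\mathscr{D}_{\mr{L}},\mathscr{D}_{\mr{M}}\big)[n]$ is extended from $\C$.

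The remaining point is that the local isomorphisms are canonical enough to glue. The ambiguities in the local model are the Darboux chart, the primitive $f$ (determined up to an additive locally constant function, which only rescales $e^{f/\hbar}$ by a unit and hence does not change the isomorphism class of the exponential module), and the half-forms. The half-form twist of $\hat{\mathscr{W}}_\mr{X}(0)$ is precisely what makes the transition isomorphisms of the $\shom$-complex under a change of chart match the $\pm1$-cocycle gluing the squares $\mr{K}_\mr{S}^{\otimes 2}$, hence match the cocycle defining $\mathfrak{K}_{\mr{or}}$ and $\mathscr{P}_{\mr{L,M}}$. I expect this bookkeeping — tracking the shift by $n$, the signs in the WKB differential, and the half-form transition cocycle so that together they reproduce $\mathfrak{K}_{\mr{or}}$ exactly as in \cref{pervdcrit} — to be the main obstacle; once the twist and shift are pinned down, the global isomorphism of perverse sheaves is forced by descent.
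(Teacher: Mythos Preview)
The paper does not give a proof of this theorem. It appears at the end of \cref{sec2}, which the author explicitly describes as a review section: ``This is mainly to fix notation, in particular we refer the reader to the original papers for proofs.'' Unlike the neighbouring results (\cref{perv}, \cref{existsimplehol}, \cref{pervdcrit}), this one carries no attribution, so it is presented as a synthesis of the cited results of Kashiwara--Schapira, D'Agnolo--Schapira, Bussi and Brav et al., but the paper itself offers no argument beyond the juxtaposition of those ingredients.

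Your outline is a reasonable sketch of how one would assemble those ingredients. The reduction to a local Darboux model $\mr{X}=\Omega_\mr{M}$, $\mr{L}=\Gamma_{\mr{d}f}$ via Bussi's result is exactly the right move, and the identification of the local $\mr{R}\shom$ with a WKB-type complex computing vanishing cycles is the content of the Kashiwara--Schapira constructibility theorem you cite. Your honest flag on the gluing step is apt: this is where the work lies. The claim that ``such local isomorphisms automatically agree on overlaps'' is not automatic from descent alone---you must actually verify that changing the Darboux chart and the primitive $f$ alters your local isomorphism by precisely the cocycle that defines $\mathscr{P}_{\mr{L,M}}$ and $\mathfrak{K}_{\mr{or}}$ in \cref{pervdcrit}. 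You acknowledge this, but a complete proof would need to carry it out rather than assert it. Since the paper provides no proof to compare against, there is nothing further to contrast.
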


\section{Applications}\label{sec3}

$\hspace*{3mm}$We begin with a few standard results on calculations of local $\ext$ sheaves and their multiplicative structure on locally complete intersections. In the second and third paragraphs we prove degeneration of the spectral sequences for a single Lagrangian and a pair of cleanly intersecting Lagrangians, respectively. We conclude with the formality of the endomorphism dg algebra $\mr{RHom}\big(i_*\mr{K}_\mr{L}^{1/2},i_*\mr{K}_\mr{L}^{1/2}\big)$ and the dg module $\mr{RHom}\big(i_*\mr{K}_\mr{L}^{1/2},j_*\mr{K}_\mr{M}^{1/2}\big)$ over it.
\paragraph{Sheaves on locally complete intersections.}
\begin{proposition}Let $i \rm : Z \xhookrightarrow{} X$ be a locally complete intersection. 
 Suppose $c = \mr{codim(Z,X)}$, and let $\mathscr{F}$ be a coherent sheaf on $\rm Z$. Then $$\mathscr{H}^{-i}(i^{*}i_{*}\mathscr{F})\cong \begin{cases}
 \mathscr{F}\otimes \wedge^{i}\mathscr{N}^{\vee}_{\mr{Z/X}}, \, 0\le i \le c\\
 0, \text{ otherwise.}                                                                                                                                          \end{cases}
$$ 
\end{proposition}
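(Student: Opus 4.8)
The plan is to reduce to a local Koszul-complex computation. First I would note that the statement is local on $\mr{X}$, so I may assume $\mr{X}=\operatorname{Spec}\mr{A}$ is affine and $\mr{Z}=\operatorname{Spec}\mr{A}/\mr{I}$ with $\mr{I}=(f_{1},\dots,f_{c})$ generated by a regular sequence --- legitimate because $i$ is a locally complete intersection of codimension $c$. Since $i$ is a closed immersion, $i_{*}$ is exact, so $\mathscr{H}^{-i}(i^{*}i_{*}\mathscr{F})$ is the $(-i)$-th cohomology sheaf of the derived pullback $\mr{L}i^{*}(i_{*}\mathscr{F})$, which over the chart is $\mr{Tor}^{\mr{A}}_{i}(\mr{A}/\mr{I},\mr{M})$, where $\mr{M}$ is the finite $\mr{A}/\mr{I}$-module corresponding to $\mathscr{F}$.

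Next I would take the Koszul resolution $\mr{K}_{\bullet}(f_{1},\dots,f_{c};\mr{A})\to\mr{A}/\mr{I}$, which is a finite free resolution precisely because $(f_{1},\dots,f_{c})$ is a regular sequence, and compute $\mr{Tor}^{\mr{A}}_{\bullet}(\mr{A}/\mr{I},\mr{M})$ as the homology of $\mr{K}_{\bullet}\otimes_{\mr{A}}\mr{M}$. Its $p$-th term is $\wedge^{p}(\mr{A}^{c})\otimes_{\mr{A}}\mr{M}$ and its differential is built from multiplication by the $f_{j}$; the crucial point is that each $f_{j}\in\mr{I}$ annihilates $\mr{M}$, so the induced differential vanishes identically. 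Hence $\mr{Tor}^{\mr{A}}_{p}(\mr{A}/\mr{I},\mr{M})\cong\wedge^{p}\big((\mr{A}/\mr{I})^{c}\big)\otimes_{\mr{A}/\mr{I}}\mr{M}$, which is $0$ for $p<0$ and for $p>c$, giving the stated vanishing range.

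It then remains to identify $(\mr{A}/\mr{I})^{c}$ with the conormal module: the classes $\overline{f_{1}},\dots,\overline{f_{c}}$ form an $\mr{A}/\mr{I}$-basis of $\mr{I}/\mr{I}^{2}$ since $(f_{j})$ is regular, so locally $\wedge^{p}\big((\mr{A}/\mr{I})^{c}\big)\cong\wedge^{p}(\mr{I}/\mr{I}^{2})=\wedge^{p}\Nb$, and one obtains $\mathscr{H}^{-i}(i^{*}i_{*}\mathscr{F})\cong\mathscr{F}\otimes\wedge^{i}\Nb$ for $0\le i\le c$. To upgrade this to the canonical, global statement I would avoid explicit generators and argue instead via the graded-commutative $\mathscr{O}_{\mr{Z}}$-algebra structure on $\mathscr{H}^{-\bullet}(i^{*}i_{*}\mathscr{O}_{\mr{Z}})$ (coming from the DG-algebra structure on the Koszul resolution): for $i=1$ one has canonically $\mathscr{H}^{-1}(i^{*}i_{*}\mathscr{O}_{\mr{Z}})=\mathscr{I}/\mathscr{I}^{2}=\Nb$, and in the locally complete intersection case the natural map $\wedge^{\bullet}\Nb\to\mathscr{H}^{-\bullet}(i^{*}i_{*}\mathscr{O}_{\mr{Z}})$ is an isomorphism; tensoring with $\mathscr{F}$ over $\mathscr{O}_{\mr{Z}}$ then concludes.

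The genuinely delicate step is this last one --- verifying that the isomorphism is independent of the chosen regular sequence and hence patches over a covering --- whereas the rest is a routine unwinding of the Koszul complex. I expect that phrasing the identification through the exterior-algebra structure on the Tor sheaves, rather than through chosen generators, is what makes the canonicity clean; alternatively, one can simply check directly that the transition isomorphisms between two Koszul resolutions induce the identity on $\wedge^{\bullet}(\mathscr{I}/\mathscr{I}^{2})$.
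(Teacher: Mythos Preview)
Your argument is correct and is exactly the standard Koszul-resolution computation one would expect. Note, however, that the paper does not actually prove this proposition: it is stated without proof as one of ``a few standard results on calculations of local $\ext$ sheaves,'' so there is no paper proof to compare against. Your write-up supplies precisely the routine verification the paper omits, and your care about canonicity via the exterior-algebra structure on $\mathscr{H}^{-\bullet}(i^{*}i_{*}\mathscr{O}_{\mr{Z}})$ is the right way to globalise.
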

\2
\begin{proposition}\label{prop2}
 Let $i \rm : Z \xhookrightarrow{} X$ be a locally complete intersection of codimension $c$. Let $\mathscr{F}$ and $\mathscr{G}$ be coherent sheaves on $Z$. 
 \begin{enumerate}
  \item Assume $\mathscr{F}$ locally free, then we have $\ext^{i}(i_{*}\mathscr{F},i_{*}\mathscr{G}) \cong \begin{cases} i_{*}(\wedge^{i}\mathscr{N}_{\mr{Z/X}} \otimes \mathscr{F}^{\vee}\otimes \mathscr{G}), \, 0\le i \le c\\
  0, \text{ otherwise. }
  \end{cases}$
  \item The Yoneda product coincides with the usual cup product. More precisely, let $\mathscr{F}$, $\mathscr{G}$ be locally free sheaves, $\mathscr{H}$ any coherent sheaf, then the Yoneda multiplication $$\ext^{i}(i_{*}\mathscr{G},i_{*}\mathscr{H}) \otimes \ext^{j}(i_{*}\mathscr{F},i_{*}\mathscr{G}) \to \ext^{i+j}(i_{*}\mathscr{F},i_{*}\mathscr{H})$$ corresponds under the above isomorphisms to $$i_{*}(\wedge^{i}\mathscr{N}_{\mr{Z/X}} \otimes \mathscr{G}^{\vee}\otimes \mathscr{H}) \otimes i_{*}(\wedge^{j}\mathscr{N}_{\mr{Z/X}} \otimes \mathscr{F}^{\vee}\otimes \mathscr{G}) \to i_{*}(\wedge^{i+j}\mathscr{N}_{\mr{Z/X}} \otimes \mathscr{F}^{\vee}\otimes \mathscr{H}),$$ given by exterior product and the natural map $\mathscr{G}\otimes \mathscr{G}^{\vee} \to \mathscr{O}_\mr{Z}$.
 \end{enumerate}
\end{proposition}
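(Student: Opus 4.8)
The plan is to reduce both statements to the Koszul resolution. Both assertions are local on $\mr{X}$, so I would work over an open subset on which $\mr{Z}$ is cut out by a regular sequence $f_1,\cdots,f_c \in \mathscr{O}_\mr{X}$ and $\mathscr{F},\mathscr{G}$ are free. Writing $\mr{E} = \mathscr{O}_\mr{X}^{\oplus c}$ for the trivial bundle, the Koszul complex $\mr{K}^{\bullet} = \big(\cdots \to \wedge^{2}\mr{E} \to \mr{E} \to \mathscr{O}_\mr{X}\big)$, with differential contraction against $(f_1,\cdots,f_c)$, is a finite locally free resolution of $i_{*}\mathscr{O}_\mr{Z}$; moreover $\mr{K}^{\bullet}$ is a commutative differential graded $\mathscr{O}_\mr{X}$-algebra, and its restriction satisfies $\mr{E}|_\mr{Z} \cong \Nb$ (sending the $j$-th basis vector to the class of $f_j$), hence $\wedge^{k}\mr{E}|_\mr{Z} \cong \wedge^{k}\Nb$ and $\wedge^{k}\mr{E}^{\vee}|_\mr{Z} \cong \wedge^{k}\mathscr{N}_{\mr{Z/X}}$.

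For part 1, I would compute $\ext^{\bullet}(i_{*}\mathscr{F}, i_{*}\mathscr{G})$ as the cohomology of $\shom_{\mr{X}}(\mr{K}^{\bullet} \otimes \widetilde{\mathscr{F}}, i_{*}\mathscr{G})$, where $\widetilde{\mathscr{F}}$ is a local extension of $\mathscr{F}$ to a free $\mathscr{O}_\mr{X}$-module; no resolution of $i_{*}\mathscr{G}$ is needed since $\mr{K}^{\bullet}$ already resolves $i_{*}\mathscr{O}_\mr{Z}$. Its $k$-th term is $\wedge^{k}\mr{E}^{\vee} \otimes \widetilde{\mathscr{F}}^{\vee} \otimes i_{*}\mathscr{G} \cong i_{*}\big(\wedge^{k}\mathscr{N}_{\mr{Z/X}} \otimes \mathscr{F}^{\vee} \otimes \mathscr{G}\big)$ by the projection formula, and the induced differential acts through multiplication by the $f_j$, which annihilate the $\mathscr{O}_\mr{Z}$-module $i_{*}\mathscr{G}$, hence vanishes; thus $\ext^{k}(i_{*}\mathscr{F}, i_{*}\mathscr{G}) \cong i_{*}\big(\wedge^{k}\mathscr{N}_{\mr{Z/X}} \otimes \mathscr{F}^{\vee} \otimes \mathscr{G}\big)$, with the length of the resolution forcing vanishing for $k < 0$ or $k > c$. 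I would then check that this isomorphism is the evident, $\mr{Z}$-intrinsic one — independent of the chosen regular sequence and of the trivialisation of $\mathscr{F}$, say by comparing two Koszul resolutions — so that the local pictures glue to the global statement.

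For part 2, the same model carries the multiplicative structure: as $\mr{K}^{\bullet}$ is a commutative differential graded algebra, $\shom_{\mr{X}}(\mr{K}^{\bullet}, i_{*}\mathscr{O}_\mr{Z})$ inherits a product, and computing the Yoneda composition by lifting cocycles to chain endomorphisms of $\mr{K}^{\bullet}$ — which one may take to be contractions against exterior forms on $\mr{E}^{\vee}$ — identifies the Yoneda product on $\ext^{\bullet}(i_{*}\mathscr{O}_\mr{Z}, i_{*}\mathscr{O}_\mr{Z}) \cong \wedge^{\bullet}\mathscr{N}_{\mr{Z/X}}$ with the exterior product. Twisting $\mr{K}^{\bullet}$ by local extensions of $\mathscr{F}$ and $\mathscr{G}$, and pairing $\mathscr{H}$ into the target, runs the same computation and yields exactly the stated pairing: exterior product on the $\wedge^{\bullet}\mathscr{N}_{\mr{Z/X}}$ factors together with the evaluation $\mathscr{G} \otimes \mathscr{G}^{\vee} \to \mathscr{O}_\mr{Z}$ in the middle. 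The one genuinely delicate point, and the step I expect to be the main obstacle, is the sign bookkeeping: one must choose the lifts compatibly and track the Koszul signs carefully to confirm that the Yoneda product is the exterior product with its standard sign, rather than a twist of it. Everything else in the argument is formal.
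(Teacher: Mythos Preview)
The paper does not supply a proof of this proposition: it is stated as one of the ``standard results on calculations of local $\ext$ sheaves and their multiplicative structure on locally complete intersections'' at the opening of \cref{sec3}, and no argument is given. Your Koszul-resolution approach is exactly the standard route to these facts and is correct; the reduction to a local regular sequence, the observation that the differentials of $\shom_\mr{X}(\mr{K}^\bullet\otimes\widetilde{\mathscr{F}},i_*\mathscr{G})$ vanish because the $f_j$ annihilate $i_*\mathscr{G}$, and the identification of Yoneda composition with exterior product via the dg-algebra structure on $\mr{K}^\bullet$ are all sound. Your caveat about sign bookkeeping is the right one to flag, but there is no genuine gap.
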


\paragraph{Degeneration of the spectral sequence.}
Let $(\mr{X},\sigma)$ be a holomorphic symplectic variety. Recall that a subvariety $\mr L$ is called Lagrangian if $\sigma|_\mr{L}=0$ and $\mr{dim}\mr{L} = \frac{1}{2}\mr{dim}\mr X$. If $i:\mr{ L \xhookrightarrow{} X}$ is a smooth Lagrangian we have $\mathscr{T}_\mr{X} \cong \Omega^{1}_\mr{X}$ via the symplectic form, hence $i^{*}\mathscr{T}_\mr{X} \cong i^{*}\mr{\Omega^{1}_{X}}$. There is a commutative diagram: 
\[
\begin{tikzcd}
{}
0\arrow{r}
&\mathscr{T}_\mr{L}\arrow{r}\arrow{d}
&i^{*}\mathscr{T}_\mr{X}\arrow{r}\arrow{d}
&\mathscr{N}_\mr{L/X}\arrow{r}\arrow{d}
&0\\
0\arrow{r}
&\mathscr{N}^{\vee}_\mr{L/X}\arrow{r}
&i^{*}\mr{\Omega^{1}_{X}}\arrow{r}
&\mr{\Omega^{1}_{L}}\arrow{r}
&0
\end{tikzcd}
\]
which shows we have isomorphisms $\Omega^{q}_\mr{L} \cong \wedge^{q}\mr{\mathscr{N}_{L/X}}$. Hence the second page of the local-to-global $\mr{Ext}$ spectral sequence, in the Lagrangian case, is $\mr{E}_{2}^{p,q} = \mr{H}^{p}(\mr{L},\Omega^{q}_\mr{L})$.
\2
\begin{theorem}\label{ksrmk}
 Let $\mr{X}/\C$ be holomorphic symplectic, and consider a compact Kähler Lagrangian $i : \mr{L \xhookrightarrow{} X}$ whose canonical bundle admits a square root. Then the local-to-global $\mr{Ext}$ spectral sequence  $$\mr{E}_{2}^{p,q}= \mr{H}^{p}(\mr{L},\Omega^{q}_\mr{L}) \Rightarrow \mathrm{Ext}^{p+q}\big(i_{*}\mr{K}_\mr{L}^{1/2},i_{*}\mr{K}_\mr{L}^{1/2}\big)$$ degenerates on the second page. Hence $\mr{H}^{k}(\mr{L}/\C) = \oplus_{p,q}\mr{H}^{p}(\mr{L},\Omega^{q}_\mr{L}) = \mr{Ext}^k\big(i_{*}\mr{K}_\mr{L}^{1/2},i_{*}\mr{K}_\mr{L}^{1/2}\big)$.
 \end{theorem}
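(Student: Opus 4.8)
The plan is to realise the graded vector space $\mr{Ext}^{*}\big(i_{*}\mr{K}_\mr{L}^{1/2},i_{*}\mr{K}_\mr{L}^{1/2}\big)$ as the cohomology of the special fibre of a perfect complex of $\C\brak$-modules whose generic fibre computes the de Rham cohomology of $\mr{L}$, and then to squeeze $\dim_{\C}\mr{Ext}^{k}$ between the lower bound $b_{k}(\mr{L})$ coming from upper semicontinuity of fibrewise cohomology over the discrete valuation ring $\C\brak$ and the upper bound $\sum_{p+q=k}h^{p,q}(\mr{L})=b_{k}(\mr{L})$ coming from the local-to-global spectral sequence together with Hodge theory; equality of the two bounds forces degeneration at $\mr{E}_{2}$.

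First I would build the deformation. Since $\mr{K}_\mr{L}$ admits a square root, \cref{existsimplehol} furnishes a simple $\hat{\mathscr{W}}_\mr{X}(0)$-module $\mathscr{D}^{0}_\mr{L}$, supported on $\mr{L}$, quantising $\mr{K}_\mr{L}^{1/2}$, so that $\iota^{*}\mathscr{D}^{0}_\mr{L}\cong i_{*}\mr{K}_\mr{L}^{1/2}$ as an $\mathscr{O}_\mr{X}$-module. Put $\mr{C}\coloneqq \mr{RHom}_{\hat{\mathscr{W}}_\mr{X}(0)}\big(\mathscr{D}^{0}_\mr{L},\mathscr{D}^{0}_\mr{L}\big)$. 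As $\mr{Supp}(\mathscr{D}^{0}_\mr{L})=\mr{L}$ is compact, \cref{perf} shows $\mr{C}$ is a perfect complex of $\C\brak$-modules. Its $\hbar$-localisation is $\mr{C}^{\mr{loc}}=\mr{RHom}_{\hat{\mathscr{W}}_\mr{X}^{\mr{loc}}}(\mathscr{D}_\mr{L},\mathscr{D}_\mr{L})=\mr{R}\Gamma\big(\mr{L},\mr{R}\shom_{\hat{\mathscr{W}}_\mr{X}^{\mr{loc}}}(\mathscr{D}_\mr{L},\mathscr{D}_\mr{L})\big)$; by \cref{pervref} the sheaf $\mr{R}\shom_{\hat{\mathscr{W}}_\mr{X}^{\mr{loc}}}(\mathscr{D}_\mr{L},\mathscr{D}_\mr{L})$ is $\C\cbrak\otimes_{\C}\mathscr{P}_{\mr{L,L}}$ up to shift, and for $\mr{M}=\mr{L}$ the orientation local system $\mathfrak{K}_\mr{or}$ is trivial — its $2$-torsion bundle $\mathscr{K}_\mr{or}$ being $\mathscr{O}_\mr{L}$ — so $\mathscr{P}_{\mr{L,L}}$ is, up to shift, the constant sheaf $\C_\mr{L}$. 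Tracing through the shifts, $\mr{H}^{k}(\mr{C}^{\mr{loc}})\cong \mr{H}^{k}(\mr{L},\C)\otimes_{\C}\C\cbrak$, of $\C\cbrak$-dimension $b_{k}(\mr{L})$.

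Next I would identify the special fibre $\iota^{*}\mr{C}=\C\otimes^{\mathbf{L}}_{\C\brak}\mr{C}$. Using the compatibility of $\mr{R}\shom$ of coherent DQ-modules with reduction modulo $\hbar$ — which applies here because $\mathscr{D}^{0}_\mr{L}$ admits, locally, a finite free resolution over $\hat{\mathscr{W}}_\mr{X}(0)$ along the smooth Lagrangian $\mr{L}$, whose $\iota^{*}$ resolves $i_{*}\mr{K}_\mr{L}^{1/2}$ over $\mathscr{O}_\mr{X}$ — one obtains $\iota^{*}\mr{C}\cong \mr{RHom}_{\mathscr{O}_\mr{X}}\big(i_{*}\mr{K}_\mr{L}^{1/2},i_{*}\mr{K}_\mr{L}^{1/2}\big)$, so $\mr{H}^{k}(\iota^{*}\mr{C})=\mr{Ext}^{k}\big(i_{*}\mr{K}_\mr{L}^{1/2},i_{*}\mr{K}_\mr{L}^{1/2}\big)$. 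Because $\C\brak$ is a discrete valuation ring, write $\mr{H}^{k}(\mr{C})\cong \C\brak^{\,b_{k}}\oplus \mr{T}_{k}$ with $\mr{T}_{k}$ a finite torsion module having, say, $t_{k}$ cyclic summands; the long exact cohomology sequence of the triangle $\mr{C}\xrightarrow{\hbar}\mr{C}\to\iota^{*}\mr{C}\xrightarrow{+1}$ breaks into short exact sequences $0\to \mr{H}^{k}(\mr{C})/\hbar\to \mr{H}^{k}(\iota^{*}\mr{C})\to \mr{H}^{k+1}(\mr{C})[\hbar]\to 0$, whence $\dim_{\C}\mr{Ext}^{k}\big(i_{*}\mr{K}_\mr{L}^{1/2},i_{*}\mr{K}_\mr{L}^{1/2}\big)=b_{k}+t_{k}+t_{k+1}\ge b_{k}$.

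Finally, the local-to-global spectral sequence $\mr{E}_{2}^{p,q}=\mr{H}^{p}(\mr{L},\Omega^{q}_\mr{L})\Rightarrow \mr{Ext}^{p+q}\big(i_{*}\mr{K}_\mr{L}^{1/2},i_{*}\mr{K}_\mr{L}^{1/2}\big)$ gives $\dim_{\C}\mr{Ext}^{k}\le \sum_{p+q=k}\dim_{\C}\mr{E}_{2}^{p,q}=\sum_{p+q=k}h^{p,q}(\mr{L})$, and $\mr{L}$ being compact Kähler — more weakly, having its Hodge-to-de Rham spectral sequence degenerate on $\mr{E}_{1}$ — the right-hand side equals $b_{k}$. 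Comparing the two estimates, $b_{k}+t_{k}+t_{k+1}\le b_{k}$ forces $t_{k}=0$ for all $k$ and $\dim_{\C}\mr{Ext}^{k}=b_{k}=\sum_{p+q=k}\dim_{\C}\mr{E}_{2}^{p,q}$; since every $\mr{E}_{r+1}^{p,q}$ is a subquotient of $\mr{E}_{r}^{p,q}$, all higher differentials vanish, so the spectral sequence degenerates on the second page and $\mr{H}^{k}(\mr{L}/\C)=\bigoplus_{p,q}\mr{H}^{p}(\mr{L},\Omega^{q}_\mr{L})=\mr{Ext}^{k}\big(i_{*}\mr{K}_\mr{L}^{1/2},i_{*}\mr{K}_\mr{L}^{1/2}\big)$. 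The step I expect to be the main obstacle is the identification of the special fibre — the commutation of DQ-module $\mr{R}\shom$ with $-\otimes^{\mathbf{L}}_{\C\brak}\C$, together with fixing the normalising shift in the generic fibre; once both fibres are matched with classical $\mr{RHom}$ and de Rham cohomology respectively, the degeneration is an elementary dimension squeeze.
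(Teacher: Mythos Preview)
Your proposal is correct and follows essentially the same route as the paper: quantise $\mr{K}_\mr{L}^{1/2}$ to a simple $\hat{\mathscr{W}}_\mr{X}(0)$-module via \cref{existsimplehol}, use \cref{perf} for perfectness over $\C\brak$ and \cref{pervref} to identify the generic fibre with de Rham cohomology, then squeeze $\dim_\C\mr{Ext}^{k}$ between the semicontinuity lower bound $b_{k}(\mr{L})$ and the spectral-sequence upper bound $\sum_{p+q=k}h^{p,q}(\mr{L})=b_{k}(\mr{L})$. Your explicit extraction of $t_{k}=0$ from the triangle $\mr{C}\xrightarrow{\hbar}\mr{C}\to\iota^{*}\mr{C}$ is exactly the content of \cref{freelemma}, which the paper states separately and uses later for the formality argument; otherwise the two proofs coincide, including the observation that only $\mr{E}_{1}$-degeneration of Hodge-to-de Rham on $\mr{L}$ is needed.
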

 \begin{proof}
  This proof was envisaged by Thomas, and Petit helped us make the initial sketch rigorous. It will be enough to show that $\mr{dim}_{\C}\big(\mr{Ext}^{i}\big(i_{*}\mr{K}_\mr{L}^{1/2},i_{*}\mr{K}_\mr{L}^{1/2}\big)\big) \ge \mr{dim}_{\C}\big(\mr{H}^{i}(\mr{L}/\C)\big)$.\\
  $\hspace*{3mm}$Let $\mathscr{A}_\mr{X}$ be the canonical quantisation $\hat{\mathscr{W}}_{\mr{X}}(0)$ of $\mr{X}$.
  We fix a square root $\mr{K}_\mr{L}^{1/2}$ of the canonical bundle. There exists a simple $\mathscr{A}_\mr{X}$-module $\mathscr{D}^{0}_\mr{L}$ on $\mr{L}$ quantising $\mr{K}_{\mr{L}}^{1/2}$. Let $\mathscr{A}^{\mr{loc}}_\mr{X}$ be the localisation $\C(\!(\hbar)\!) \otimes_{\C[\![\hbar]\!]}\mathscr{A}_\mr{X}$. Then $\mathscr{D}^{0}_\mr{L}$ localises to a simple holonomic DQ-module $\mathscr{D}_\mr{L}$ over $\mathscr{A}^{\mr{loc}}_\mr{X}$.\\
  $\hspace*{3mm}$\cref{pervref} implies that $$\mr{R}\shom\big(\mathscr{D}_\mr{L},\mathscr{D}_\mr{L}\big) \simeq \C(\!(\hbar)\!)_{\mr{L}},$$ so we get $\mr{Ext}^{i}_{\mathscr{A}^{\mr{loc}}_\mr{X}}\big(\mathscr{D}_\mr{L},\mathscr{D}_\mr{L}\big) = \mr{H}^{i}\big(\mr{L},\C(\!(\hbar)\!)\big)$. Hence the universal coefficients theorem implies that $$\mr{dim}_{\C\cbrak}\big(\mr{Ext}^{i}_{\mathscr{A}^{\mr{loc}}_\mr{X}}\big(\mathscr{D}_\mr{L},\mathscr{D}_\mr{L}\big)\big) = \mr{dim}_{\C}\big(\mr{H}^{i}(\mr{L}/\C)\big).$$ 
Furthermore \cref{perf}, which requires $\mr{L}$ compact, states that $$\mr{RHom}_{\mathscr{A}_\mr{X}}\big(\mathscr{D}^{0}_\mr{L},\mathscr{D}^{0}_\mr{L}\big) \in \mr{Perf}\big(\mr{Spec}\big(\C[\![\hbar]\!]\big)\big).$$ Then we can apply the semicontinuity theorem on $\C[\![\hbar]\!]$ to get that $$\mr{dim}_{\C}\big(\mr{H}^{i}\big(\C\otimes_{\C[\![\hbar]\!]}\mr{RHom}_{\mathscr{A}_\mr{X}}\big(\mathscr{D}^{0}_\mr{L},\mathscr{D}^{0}_\mr{L}\big)\big)\big) \ge \mr{dim}_{\C\cbrak}\big(\mr{H}^{i}\big(\C(\!(\hbar)\!)\otimes_{\C[\![\hbar]\!]}\mr{RHom}_{\mathscr{A}_\mr{X}}\big(\mathscr{D}^{0}_\mr{L},\mathscr{D}^{0}_\mr{L}\big)\big)\big).$$
  $\hspace*{3mm}$It's enough to observe that there is a quasi-isomorphism $$\C\otimes_{\C[\![\hbar]\!]}\mr{RHom}_{\mathscr{A}_\mr{X}}\big(\mathscr{D}^{0}_\mr{L},\mathscr{D}^{0}_\mr{L}\big) \simeq \mr{RHom}\big(i_{*}\mr{K}_\mr{L}^{1/2},i_{*}\mr{K}_\mr{L}^{1/2}\big)$$ and the projection formula implies that $$\C(\!(\hbar)\!)\otimes_{\C[\![\hbar]\!]}\mr{RHom}_{\mathscr{A}_\mr{X}}\big(\mathscr{D}^{0}_\mr{L},\mathscr{D}^{0}_\mr{L}\big) \simeq \mr{RHom}_{\mathscr{A}^{\mr{loc}}_\mr{X}}\big(\mathscr{D}_\mr{L},\mathscr{D}_\mr{L}\big).$$ Thus $\mr{dim}_{\C}\big(\mr{Ext}^{i}\big(i_{*}\mr{K}_\mr{L}^{1/2},i_{*}\mr{K}_\mr{L}^{1/2}\big)\big) \ge \mr{dim}_{\C\cbrak}\big(\mr{Ext}^{i}_{\mathscr{A}^{\mr{loc}}_\mr{X}}\big(\mathscr{D}_\mr{L},\mathscr{D}_\mr{L}\big)\big)=\mr{dim}_{\C}\big(\mr{H}^{i}(\mr{L}/\C)\big)$.
  \end{proof}
  
\paragraph{Degeneration in case of pairs of Lagrangians.}Having dealt with the case of one Lagrangian, we now turn to pairs of Lagrangians. Let $i: \mr{L} \xhookrightarrow{} \mr{X}$ and $j:\mr{M} \xhookrightarrow{} \mr{X}$ be smooth submanifolds. We need a few standard results computing $\mathscr{E}xt$ sheaves on smooth intersections.
\2
\begin{proposition}
 Assuming $\mr{L}\cap \mr{M}$ smooth, we have $$\ext^{p}(i_{*}\mathscr{O}_{\mr{L}},j_{*}\mathscr{O}_{\mr{M}}) \cong \wedge^{p-c} \mathscr{N} \otimes \mr{det}\mathscr{N}_{\mr{\mr{L}\cap\mr{M}/M}},$$ where $c= \mr{rk}\mathscr{N}_{\mr{L\cap M/M}}$, $\mathscr{N} \coloneqq \left.\mathscr{T}_\mr{X}\right|_{\mr{L}\cap \mr{M}}/(\mr{\left.\mathscr{T}_{L}\right|_{L\cap M}+\left.\mathscr{T}_{M}\right|_{L\cap M}})$ is the excess normal bundle.
\end{proposition}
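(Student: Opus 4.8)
Write $\mr{Z}=\mr{L}\cap\mr{M}$, $d=\mr{codim}(\mr{L},\mr{X})$ and $c=\mr{codim}(\mr{Z},\mr{M})=\mr{rk}\,\mathscr{N}_{\mr{Z/M}}$. The plan is to run the Koszul computation behind \cref{prop2}, now with $i_*\mathscr{O}_\mr{L}$ resolved and $j_*\mathscr{O}_\mr{M}$ in the second slot, and then to read off the answer invariantly. Locally on $\mr{X}$ choose a regular sequence $f_1,\dots,f_d$ cutting out $\mr{L}$; the Koszul complex $(\wedge^\bullet\mathscr{E},\iota_s)$ with $\mathscr{E}=\mathscr{O}_\mr{X}^{\oplus d}$ and $s=(f_1,\dots,f_d)\in\Gamma(\mathscr{E}^\vee)$ resolves $i_*\mathscr{O}_\mr{L}$ and restricts to an isomorphism $\mathscr{E}|_\mr{L}\cong\mathscr{N}^\vee_{\mr{L/X}}$. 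Applying $\shom_{\mathscr{O}_\mr{X}}(-,j_*\mathscr{O}_\mr{M})$ together with the projection formula identifies $\mr{R}\shom_{\mathscr{O}_\mr{X}}(i_*\mathscr{O}_\mr{L},j_*\mathscr{O}_\mr{M})$ with $j_*$ of the cochain Koszul complex $\mathscr{K}^\bullet\coloneqq\big(\wedge^\bullet(\mathscr{E}^\vee|_\mr{M}),\,s|_\mr{M}\wedge-\big)$ on $\mr{M}$, so $\ext^{p}(i_*\mathscr{O}_\mr{L},j_*\mathscr{O}_\mr{M})=j_*\mathscr{H}^p(\mathscr{K}^\bullet)$; since $\mathscr{K}^\bullet$ is the $\mathscr{O}_\mr{M}$-dual of the complex of locally free sheaves $(\wedge^\bullet(\mathscr{E}|_\mr{M}),\iota_{s|_\mr{M}})$ representing the derived restriction of $i_*\mathscr{O}_\mr{L}$ to $\mr{M}$, its cohomology sheaves depend only on $\mr{L}$ and $\mr{M}$, and the local computations that follow will patch.

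\emph{Next,} I would compute $\mathscr{H}^p(\mathscr{K}^\bullet)$. The section $s|_\mr{M}$ vanishes exactly on $\mr{Z}$, which we are assuming smooth; smoothness of the scheme-theoretic intersection forces the Zariski tangent space of $\mr{Z}$ to be $\mathscr{T}_\mr{L}|_\mr{Z}\cap\mathscr{T}_\mr{M}|_\mr{Z}$ of constant rank, so that $\mathscr{T}_\mr{L}|_\mr{Z}+\mathscr{T}_\mr{M}|_\mr{Z}$ is a subbundle of $\mathscr{T}_\mr{X}|_\mr{Z}$ and $\mathscr{N}$ is locally free of rank $d-c$. Near $\mr{Z}$, expand each $f_i|_\mr{M}$ in a local regular sequence $g_1,\dots,g_c$ generating the ideal of $\mr{Z}$ in $\mr{M}$; since the $f_i|_\mr{M}$ already generate that ideal, the resulting $d\times c$ matrix has rank $c$ along $\mr{Z}$, and after shrinking one gets a local decomposition $\mathscr{E}^\vee|_\mr{M}=\mathscr{V}_1\oplus\mathscr{V}_0$ with $\mr{rk}\,\mathscr{V}_1=c$ under which $s|_\mr{M}$ becomes a regular section of $\mathscr{V}_1$ with zero scheme $\mr{Z}$ plus the zero section on $\mathscr{V}_0$. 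Then $\mathscr{K}^\bullet\cong\big(\wedge^\bullet\mathscr{V}_1,\,s|_\mr{M}\wedge-\big)\otimes\big(\wedge^\bullet\mathscr{V}_0,0\big)$; the first factor is the $\mathscr{O}_\mr{M}$-dual of the Koszul resolution of $\mathscr{O}_\mr{Z}$, so it represents $\mr{R}\shom_{\mathscr{O}_\mr{M}}(\mathscr{O}_\mr{Z},\mathscr{O}_\mr{M})\simeq\det\mathscr{N}_{\mr{Z/M}}[-c]$, hence is concentrated in degree $c$ with cohomology $\det\mathscr{N}_{\mr{Z/M}}$. By the K\"unneth formula,
\[
\mathscr{H}^p(\mathscr{K}^\bullet)\;\cong\;\det\mathscr{N}_{\mr{Z/M}}\otimes\wedge^{p-c}\big(\mathscr{V}_0|_\mr{Z}\big).
\]

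\emph{Finally,} I would identify $\mathscr{V}_0|_\mr{Z}$ with the excess normal bundle. Dually, $(\mathscr{V}_0|_\mr{Z})^\vee$ is the kernel of the map $\mathscr{E}|_\mr{Z}\to\mathscr{N}^\vee_{\mr{Z/M}}$ obtained by differentiating $s|_\mr{M}$ along $\mr{Z}$; under $\mathscr{E}|_\mr{L}\cong\mathscr{N}^\vee_{\mr{L/X}}$ this is the canonical composite $\mathscr{N}^\vee_{\mr{L/X}}|_\mr{Z}\hookrightarrow\mathscr{N}^\vee_{\mr{Z/X}}\twoheadrightarrow\mathscr{N}^\vee_{\mr{Z/M}}$ coming from the conormal sequences of $\mr{Z}\subset\mr{L}\subset\mr{X}$ and $\mr{Z}\subset\mr{M}\subset\mr{X}$, whose kernel is the intersection $\mathscr{N}^\vee_{\mr{L/X}}|_\mr{Z}\cap\mathscr{N}^\vee_{\mr{M/X}}|_\mr{Z}$ taken inside $\Omega^1_\mr{X}|_\mr{Z}$. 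On the other hand, dualizing the defining sequence of $\mathscr{N}$ exhibits $\mathscr{N}^\vee$ as $(\mathscr{T}_\mr{L}|_\mr{Z}+\mathscr{T}_\mr{M}|_\mr{Z})^\perp=\mathscr{N}^\vee_{\mr{L/X}}|_\mr{Z}\cap\mathscr{N}^\vee_{\mr{M/X}}|_\mr{Z}$. Hence $(\mathscr{V}_0|_\mr{Z})^\vee\cong\mathscr{N}^\vee$, i.e.\ $\mathscr{V}_0|_\mr{Z}\cong\mathscr{N}$ canonically, so $\mathscr{H}^p(\mathscr{K}^\bullet)\cong\wedge^{p-c}\mathscr{N}\otimes\det\mathscr{N}_{\mr{Z/M}}$, and applying $j_*$ gives the stated formula. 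The main obstacle is precisely this last step: pinning down the ``leftover'' subbundle $\mathscr{V}_0|_\mr{Z}$ and recognizing it as $\mathscr{N}$ via the two conormal sequences (one must also carry the determinant twist $\det\mathscr{N}_{\mr{Z/M}}$, but that falls out of the dual Koszul complex of $s|_\mr{M}$ automatically). Everything else is the Koszul bookkeeping already used for \cref{prop2}.
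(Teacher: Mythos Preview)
The paper states this proposition as a standard result and gives no proof; it is listed among ``a few standard results on calculations of local $\ext$ sheaves'' at the start of \cref{sec3}. Your argument is the expected Koszul computation and is correct: resolve $i_*\mathscr{O}_\mr{L}$ locally by a Koszul complex, dualise against $j_*\mathscr{O}_\mr{M}$, split the restricted section $s|_\mr{M}$ near $\mr{Z}$ into a regular rank-$c$ piece and a null piece, and identify the null direction on $\mr{Z}$ with the excess normal bundle via the conormal sequences. The one point worth tightening is the gluing: the local splitting $\mathscr{E}^\vee|_\mr{M}=\mathscr{V}_1\oplus\mathscr{V}_0$ is non-canonical away from $\mr{Z}$, but you correctly observe that on $\mr{Z}$ the subbundle $(\mathscr{V}_0|_\mr{Z})^\vee=\ker\big(\mathscr{E}|_\mr{Z}\to\mathscr{N}^\vee_{\mr{Z/M}}\big)$ is intrinsic, and since the target sheaves $\ext^p$ are already global, the local isomorphisms patch. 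This is exactly the argument behind the excess--intersection formula in its sheafy form, so nothing is missing.
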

\2
$\hspace*{3mm}$In the Lagrangian case, which we assume from now on, we have an exact sequence: $$\mr{0 \to \mathscr{T}_{\mr{L}\cap\mr{M}} \to \left.\mathscr{T}_{L}\right|_{\mr{L}\cap\mr{M}} \oplus \left.\mathscr{T}_{M}\right|_{\mr{L}\cap\mr{M}} \to \left.\mathscr{T}_{X}\right|_{\mr{L}\cap\mr{M}} \to \Omega_{\mr{L}\cap\mr{M}}} \to 0,$$ hence 
 $$\ext^{p}(i_{*}\mathscr{O}_{\mr{L}},j_{*}\mathscr{O}_{\mr{M}}) \cong \Omega^{q-c}_{\mr{L}\cap\mr{M}} \otimes \mr{det}\mathscr{N}_{\mr{\mr{L}\cap\mr{M}/M}}.$$
 The adjunction formula yields an isomorphism $$\mr{det}\mathscr{N}_{\mr{\mr{L}\cap\mr{M}/M}} \cong \left.\mr{K}_{\mr{M}}^{\vee}\right|_{\mr{L}\cap\mr{M}}\otimes \mr{K}_{\mr{L}\cap \mr{M}},$$ hence by \cref{lemmasmoothcase} we obtain $$\mr{det\mathscr{N}_{\mr{L}\cap\mr{M}/L}\otimes det\mathscr{N}_{\mr{L}\cap\mr{M}/M}} \cong \mathscr{O}_{\mr{\mr{L}\cap\mr{M}}}.$$
 \begin{corollary}\label{impcor}
 Assuming there exist squre roots $\mr{K_{L}^{1/2}}$ and $\mr{K_{M}^{1/2}}$, define the orientation bundle $$\mathscr{K}_{\mr{or}} \coloneqq \left(\mr{ \left.K_{L}^{1/2}\right|_{\mr{L}\cap\mr{M}}\otimes \left.K_{M}^{1/2}\right|_{\mr{L}\cap\mr{M}}}\right)^{\vee} \otimes \mr{K_{\mr{L}\cap\mr{M}}}$$ and set $n=\mr{codim(L,X)}$, so $c=n-\mr{dim(\mr{L}\cap\mr{M})}$. Then $$\ext^{p}\big(i_{*}\mr{K_{L}^{1/2}},j_{*}\mr{K_{M}^{1/2}}\big) \cong \Omega^{q-c}_{\mr{\mr{L}\cap\mr{M}}} \otimes \mathscr{K}_{\mr{or}}.$$
 \end{corollary}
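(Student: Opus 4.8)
The plan is to obtain \cref{impcor} from the structure–sheaf computation of the previous paragraph by tensoring with line bundles, so that no further geometry is required. The first step is a twisting identity: for line bundles $\mathscr{F}$ on $\mr{L}$ and $\mathscr{G}$ on $\mr{M}$ there is a natural isomorphism
$$\ext^{p}\big(i_{*}\mathscr{F},j_{*}\mathscr{G}\big)\;\cong\;\ext^{p}\big(i_{*}\mathscr{O}_\mr{L},j_{*}\mathscr{O}_\mr{M}\big)\otimes\mathscr{F}^{\vee}|_{\mr{L}\cap\mr{M}}\otimes\mathscr{G}|_{\mr{L}\cap\mr{M}}.$$
To prove it, I would argue locally on $\mr{X}$: since $\mr{L}$ and $\mr{M}$ are smooth, hence locally complete intersections in $\mr{X}$, a line bundle on either extends to a line bundle $\widetilde{\mathscr{F}}$ (resp. $\widetilde{\mathscr{G}}$) on a neighbourhood in $\mr{X}$, and the projection formula gives $i_{*}\mathscr{F}\cong\widetilde{\mathscr{F}}\otimes_{\mathscr{O}_\mr{X}}i_{*}\mathscr{O}_\mr{L}$, similarly for $j_{*}\mathscr{G}$; hence $\mr{R}\shom(i_{*}\mathscr{F},j_{*}\mathscr{G})\cong\widetilde{\mathscr{F}}^{\vee}\otimes\widetilde{\mathscr{G}}\otimes\mr{R}\shom(i_{*}\mathscr{O}_\mr{L},j_{*}\mathscr{O}_\mr{M})$. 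Because the cohomology sheaves on the right are supported on $\mr{L}\cap\mr{M}$, the twisting factor only sees $\mathscr{F}|_{\mr{L}\cap\mr{M}}$ and $\mathscr{G}|_{\mr{L}\cap\mr{M}}$, so the local isomorphisms are independent of the chosen extensions and patch together — exactly as in the single–embedding statement \cref{prop2}(1).

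The second step is to feed in the Lagrangian formula recorded just above \cref{impcor}, namely $\ext^{p}(i_{*}\mathscr{O}_\mr{L},j_{*}\mathscr{O}_\mr{M})\cong\Omega^{p-c}_{\mr{L}\cap\mr{M}}\otimes\mr{det}\,\mathscr{N}_{\mr{L}\cap\mr{M}/\mr{M}}$, which itself follows from the excess–normal–bundle proposition and the identification $\mathscr{N}\cong\Omega_{\mr{L}\cap\mr{M}}$ read off from the four–term exact sequence in the Lagrangian case. Taking $\mathscr{F}=\mr{K}_\mr{L}^{1/2}$ and $\mathscr{G}=\mr{K}_\mr{M}^{1/2}$ in the twisting identity then yields
$$\ext^{p}\big(i_{*}\mr{K}_\mr{L}^{1/2},j_{*}\mr{K}_\mr{M}^{1/2}\big)\cong\Omega^{p-c}_{\mr{L}\cap\mr{M}}\otimes\big(\mr{det}\,\mathscr{N}_{\mr{L}\cap\mr{M}/\mr{M}}\otimes(\mr{K}_\mr{L}^{1/2})^{\vee}|_{\mr{L}\cap\mr{M}}\otimes\mr{K}_\mr{M}^{1/2}|_{\mr{L}\cap\mr{M}}\big).$$

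The last step is to recognise the bundle in brackets as $\mathscr{K}_\mr{or}$. By the adjunction formula for the smooth embedding $\mr{L}\cap\mr{M}\hookrightarrow\mr{M}$ one has $\mr{det}\,\mathscr{N}_{\mr{L}\cap\mr{M}/\mr{M}}\cong\mr{K}_\mr{M}^{\vee}|_{\mr{L}\cap\mr{M}}\otimes\mr{K}_{\mr{L}\cap\mr{M}}$, and since $\mr{K}_\mr{M}^{\vee}\otimes\mr{K}_\mr{M}^{1/2}\cong(\mr{K}_\mr{M}^{1/2})^{\vee}$ the bracket collapses to $(\mr{K}_\mr{L}^{1/2}|_{\mr{L}\cap\mr{M}}\otimes\mr{K}_\mr{M}^{1/2}|_{\mr{L}\cap\mr{M}})^{\vee}\otimes\mr{K}_{\mr{L}\cap\mr{M}}=\mathscr{K}_\mr{or}$, which is the claim. (Symmetrically, using adjunction for $\mr{L}\cap\mr{M}\hookrightarrow\mr{L}$ together with \cref{lemmasmoothcase}, one sees $\mathscr{K}_\mr{or}$ is unchanged under swapping $\mr{L}\leftrightarrow\mr{M}$ and satisfies $\mathscr{K}_\mr{or}^{\otimes2}\cong\mathscr{O}_{\mr{L}\cap\mr{M}}$, consistent with $\mathscr{P}_\mr{L,M}\simeq\mathscr{P}_\mr{M,L}$.) The only point that is more than bookkeeping is the gluing in the twisting identity of the first step; I expect this to be the main obstacle, and it is a mild one, parallel to \cref{prop2}(1).
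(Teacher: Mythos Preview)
Your proposal is correct and follows essentially the same route as the paper: the corollary is deduced from the structure-sheaf computation via the line-bundle twisting identity, the Lagrangian identification of the excess bundle, and the adjunction formula, exactly as the paragraph preceding \cref{impcor} sets up. Your explicit justification of the twisting step (local extension plus projection formula) fills in what the paper leaves implicit, and your closing remark on the $2$-torsion property recovers \cref{rmk}.
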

 \begin{remark}\label{rmk}
 Notice that the line bundle $\mathscr{K}_{\mr{or}}$ is torsion, in fact of order $2$. Hence the monodromy representation associated to the local system $\mathfrak{K}_{\mr{or}}$, arising from $\mathscr{K}_{\mr{or}}$, is unitary - see beginning of next proof for a brief sketch.
 \end{remark}
 \2
  \begin{theorem}\label{igbcor}
 Let $\mr{X}/\C$ be a holomorphic symplectic variety. Suppose that $i : \rm L \xhookrightarrow{} X$, $j : \mr{M \xhookrightarrow{} X}$ are smooth Lagrangians with a compact Kähler intersection $\mr{L}\cap \mr{M}$. Assume that $\mr{K}_{\mr{L}}^{1/2}$ and $\mr{K}_\mr{M}^{1/2}$ exist. Then the $\mr{Ext}$ local-to-global spectral sequence  \begin{equation*}\label{eqq}\mathrm{E}_{2}^{p,q}=\mr{H}^{p}(\mr{L}\cap \mr{M},\Omega^{q-c}_{\mr{L}\cap \mr{M}} \otimes \mathscr{K}_{\mr{or}})\Rightarrow \mathrm{Ext}^{p+q}\big(i_{*}\mr{K_{L}^{1/2}},j_{*}\mr{K_{M}^{1/2}}\big)\end{equation*} degenerates on the second page. In particular, $$\mr{Ext}^{k}\big(i_{*}\mr{K_{L}^{1/2}},j_{*}\mr{K_{M}^{1/2}}\big) = \oplus_{p,q} \mr{H}^{p}(\mr{L}\cap \mr{M},\Omega^{q-c}_{\mr{L}\cap \mr{M}} \otimes \mathscr{K}_{\mr{or}}) = \mr{H}^{k-c}(\mathfrak{K}_\mr{or}).$$
\end{theorem}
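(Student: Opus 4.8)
The proof is a transcription of the deformation-quantisation argument of \cref{ksrmk}, with the single Lagrangian replaced by the pair $(\mr{L},\mr{M})$. As in that proof, it suffices to show $\dim_\C\mr{Ext}^{k}\big(i_{*}\mr{K}_\mr{L}^{1/2},j_{*}\mr{K}_\mr{M}^{1/2}\big)\ge \dim_\C\mr{H}^{k-c}(\mr{L}\cap\mr{M},\mathfrak{K}_\mr{or})$ for all $k$: indeed, by \cref{rmk} (using \cref{lemmasmoothcase}, $\mathscr{K}_\mr{or}^{\otimes 2}\cong\mathscr{O}_{\mr{L}\cap\mr{M}}$, so $\mathfrak{K}_\mr{or}$ has monodromy in $\{\pm 1\}$ and underlies a unitary local system), Hodge theory for unitary coefficient systems on the compact Kähler manifold $\mr{L}\cap\mr{M}$ decomposes $\mr{H}^{k-c}(\mr{L}\cap\mr{M},\mathfrak{K}_\mr{or})$ into the pieces $\mr{H}^{p}(\mr{L}\cap\mr{M},\Omega^{q}_{\mr{L}\cap\mr{M}}\otimes\mathscr{K}_\mr{or})$ with $p+q=k-c$, so the $\mr{E}_2$ page $\mr{E}_2^{p,q}=\mr{H}^{p}(\mr{L}\cap\mr{M},\Omega^{q-c}_{\mr{L}\cap\mr{M}}\otimes\mathscr{K}_\mr{or})$ of \cref{impcor} has total dimension $\dim_\C\mr{H}^{k-c}(\mr{L}\cap\mr{M},\mathfrak{K}_\mr{or})$ in degree $k$, which the abutment cannot exceed; the reverse inequality then pins down $\dim_\C\mr{Ext}^{k}=\sum_{p+q=k}\dim\mr{E}_2^{p,q}$, forcing $\mr{E}_\infty=\mr{E}_2$, and the ``in particular'' statement follows by reading off the associated graded and invoking the Hodge decomposition once more.

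To obtain the key inequality, set $\mathscr{A}_\mr{X}=\hat{\mathscr{W}}_\mr{X}(0)$, fix the square roots, and use \cref{existsimplehol} to produce simple $\mathscr{A}_\mr{X}$-modules $\mathscr{D}^{0}_\mr{L}$, $\mathscr{D}^{0}_\mr{M}$ supported on $\mr{L}$, $\mr{M}$ and quantising $\mr{K}_\mr{L}^{1/2}$, $\mr{K}_\mr{M}^{1/2}$, with $\hbar$-localisations $\mathscr{D}_\mr{L}$, $\mathscr{D}_\mr{M}$. Since $\mr{Supp}(\mathscr{D}^{0}_\mr{L})\cap\mr{Supp}(\mathscr{D}^{0}_\mr{M})=\mr{L}\cap\mr{M}$ is compact, \cref{perf} makes $\mr{RHom}_{\mathscr{A}_\mr{X}}\big(\mathscr{D}^{0}_\mr{L},\mathscr{D}^{0}_\mr{M}\big)$ a perfect complex of $\C\brak$-modules; exactly as in \cref{ksrmk}, reduction modulo $\hbar$ and the projection formula identify its central and generic fibres with $\mr{RHom}\big(i_{*}\mr{K}_\mr{L}^{1/2},j_{*}\mr{K}_\mr{M}^{1/2}\big)$ and $\mr{RHom}_{\mathscr{A}_\mr{X}^{\mr{loc}}}\big(\mathscr{D}_\mr{L},\mathscr{D}_\mr{M}\big)$ respectively. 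Semicontinuity over the discrete valuation ring $\C\brak$ then gives
\[
\dim_\C\mr{Ext}^{k}\big(i_{*}\mr{K}_\mr{L}^{1/2},j_{*}\mr{K}_\mr{M}^{1/2}\big)\;\ge\;\dim_{\C\cbrak}\mr{Ext}^{k}_{\mathscr{A}_\mr{X}^{\mr{loc}}}\big(\mathscr{D}_\mr{L},\mathscr{D}_\mr{M}\big),
\]
so it remains to compute the right-hand side.

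For this I would feed the smooth-intersection description of the perverse sheaf $\mathscr{P}_{\mr{L},\mr{M}}$ into \cref{pervref}. As $\mr{L}\cap\mr{M}$ is smooth, $(\mr{L}\cap\mr{M},s)$ is oriented, and choosing the square root $\mr{K}_{(\mr{L}\cap\mr{M},s)}^{1/2}=\left.\mr{K}_\mr{L}^{1/2}\right|_{\mr{L}\cap\mr{M}}\otimes\left.\mr{K}_\mr{M}^{1/2}\right|_{\mr{L}\cap\mr{M}}$ identifies the orientation local system of the corollary following \cref{lemmasmoothcase} with the local system $\mathfrak{K}_\mr{or}$ attached to the $2$-torsion bundle $\mathscr{K}_\mr{or}$ of \cref{impcor}. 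Combining this with \cref{pervref} yields $\mr{R}\shom_{\mathscr{A}_\mr{X}^{\mr{loc}}}\big(\mathscr{D}_\mr{L},\mathscr{D}_\mr{M}\big)\simeq\big(\C\cbrak\otimes_\C\mathfrak{K}_\mr{or}\big)[-c]$ (for $\mr{M}=\mr{L}$ this is the isomorphism $\mr{R}\shom\simeq\C\cbrak_\mr{L}$ of \cref{ksrmk}, since then $c=0$ and $\mathfrak{K}_\mr{or}$ is trivial). Taking global sections over the compact $\mr{L}\cap\mr{M}$ and using flat base change along $\C\hookrightarrow\C\cbrak$ gives $\dim_{\C\cbrak}\mr{Ext}^{k}_{\mathscr{A}_\mr{X}^{\mr{loc}}}\big(\mathscr{D}_\mr{L},\mathscr{D}_\mr{M}\big)=\dim_\C\mr{H}^{k-c}(\mr{L}\cap\mr{M},\mathfrak{K}_\mr{or})$, which closes the argument.

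I expect the genuinely delicate point to be the orientation-and-shift bookkeeping in the last paragraph: matching $\mathfrak{K}_\mr{or}$, $\mathscr{P}_{\mr{L},\mr{M}}$ and the cohomological shift $[-c]$ (i.e.\ the perverse normalisation that makes $\mathscr{P}_{\mr{L},\mr{M}}$ a local system on $\mr{L}\cap\mr{M}$ in degree $-\dim(\mr{L}\cap\mr{M})$, consistently with the self-intersection case of \cref{ksrmk}) on the perverse/DQ side with $\mathscr{K}_\mr{or}$ and the $\ext$ computation of \cref{impcor} on the coherent side, and — relatedly — running the Hodge-theoretic lower bound for the non-trivial unitary coefficient system $\mathfrak{K}_\mr{or}$ rather than for constant coefficients. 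As recorded in \cref{page2}, the only geometric input actually used about $\mr{L}\cap\mr{M}$ is compactness together with $\mr{E}_1$-degeneration of its Hodge-to-de Rham spectral sequence with coefficients in $\mathfrak{K}_\mr{or}$, Kählerness being merely a convenient sufficient condition; everything else is a routine copy of the proof of \cref{ksrmk}.
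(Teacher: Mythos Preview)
Your proposal is correct and follows essentially the same route as the paper's own proof: reduce to the dimension inequality via Hodge theory for the unitary local system $\mathfrak{K}_\mr{or}$, quantise both Lagrangians to simple $\hat{\mathscr{W}}_\mr{X}(0)$-modules, identify the generic fibre via \cref{pervref} as $\C\cbrak\otimes\mathfrak{K}_\mr{or}[-c]$, invoke \cref{perf} for perfectness, and conclude by semicontinuity over $\C\brak$. The only differences are cosmetic (you apply semicontinuity before computing the generic fibre, and you spell out the orientation and shift bookkeeping more explicitly than the paper does).
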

\begin{proof}
 We have that $\mr{E}_{2}^{p,q} = \mr{H}^{p}(\mr{L}\cap \mr{M},\Omega^{q-c}_{\mr{L}\cap \mr{M}} \otimes \mathscr{K}_{\mr{or}})$, where $c$ is the codimension of $\mr{L}\cap\mr{M}$ in $\mr{L}$ and $\mr{M}$, and $\mathscr{K}_{\mr{or}}$ is defined in \cref{impcor}. The line bundle $\mathscr{K}_\mr{or}$ is $2$-torsion, hence admits a flat Chern connection, so the associated representation of $\pi_{1}(\mr{L}\cap \mr{M}, \mr{pt})$ is unitary. As a consequence the Hodge-to-de Rham spectral sequence degenerates on $\mr{E}_1$, so, analogously to the case of one Lagrangian, it will be enough to show that $$\mr{dim}_{\C}\big(\mr{Ext}^{i}\big(i_{*}\mr{K}_\mr{L}^{1/2},j_{*}\mr{K}_\mr{M}^{1/2}\big)\big) \ge \mr{dim}_{\C}\big(\mr{H}^{i-c}(\mathfrak{K}_\mr{or})\big).$$ 
 $\hspace*{3mm}$Let $\mathscr{A}_\mr{X}$ be the canonical quantisation $\hat{\mathscr{W}}_{\mr{X}}(0)$ of $\mr{X}$.
  We shall fix square roots $\mr{K}_\mr{L}^{1/2}$ and $\mr{K}_\mr{M}^{1/2}$ of the canonical bundles of $\mr{L}$ and $\mr{M}$. Let $\mathscr{D}^{0}_\mr{L}$ and $\mathscr{D}^{0}_\mr{M}$ be the simple $\mathscr{A}_\mr{X}$-modules on $\mr{L}$ and $\mr{M}$ quantising $\mr{K}_{\mr{L}}^{1/2}$ and $\mr{K}_{\mr{M}}^{1/2}$, respectively. Then we let $\mathscr{D}_\mr{L} = \C\cbrak \otimes_{\C\brak} \mathscr{D}^{0}_\mr{L}$ and $\mathscr{D}_\mr{M} = \C\cbrak\otimes_{\C\brak}\mathscr{D}^{0}_\mr{M}$ be the corresponding $\hbar$-localisations.\\
$\hspace*{3mm}$Since the intersection $\mr{L}\cap\mr{M}$ is smooth, \cref{pervref} implies that $$\mr{R}\shom_{\mathscr{A}_\mr{X}^{\mr{loc}}}\big(\mathscr{D}_\mr{L},\mathscr{D}_\mr{M}\big) \simeq \C(\!(\hbar)\!)_{\mr{L}\cap \mr{M}}\otimes_{\C}\mathfrak{K}_{\mr{or}}[-c],$$ so we conclude that $\mr{Ext}^{i}_{\mathscr{A}^{\mr{loc}}_\mr{X}}\big(\mathscr{D}_\mr{L},\mathscr{D}_\mr{M}\big) = \mr{H}^{i-c}(\mr{L}\cap\mr{M},\C(\!(\hbar)\!)\otimes_{\C}\mathfrak{K}_{\mr{or}})$. Hence the universal coefficients theorem implies that $$\mr{dim}_{\C\cbrak}\big(\mr{Ext}^{i}_{\mathscr{A}^{\mr{loc}}_\mr{X}}\big(\mathscr{D}_\mr{L},\mathscr{D}_\mr{M}\big)\big) = \mr{dim}_{\C}\big(\mr{H}^{i-c}(\mr{L}\cap\mr{M},\mathfrak{K}_{\mr{or}})\big).$$ Moreover \cref{perf}, which requires $\mr{L}\cap\mr{M}$ compact, states that $$\mr{RHom}_{\mathscr{A}_\mr{X}}\big(\mathscr{D}^{0}_\mr{L},\mathscr{D}^{0}_\mr{M}\big) \in \mr{Perf}\big(\mr{Spec}\big(\C[\![\hbar]\!]\big)\big).$$ Thus, we can apply the semicontinuity theorem on $\C[\![\hbar]\!]$ to conclude that $$\mr{dim}_{\C}\big(\mr{H}^{i}\big(\C\otimes_{\C[\![\hbar]\!]}\mr{RHom}_{\mathscr{A}_\mr{X}}\big(\mathscr{D}^{0}_\mr{L},\mathscr{D}^{0}_\mr{M}\big)\big)\big) \ge \mr{dim}_{\C\cbrak}\big(\mr{H}^{i}\big(\C(\!(\hbar)\!)\otimes_{\C[\![\hbar]\!]}\mr{RHom}_{\mathscr{A}_\mr{X}}\big(\mathscr{D}^{0}_\mr{L},\mathscr{D}^{0}_\mr{M}\big)\big)\big).$$ 
$\hspace*{3mm}$Now observe that there is a quasi-isomorphism $$\C\otimes_{\C[\![\hbar]\!]}\mr{RHom}_{\mathscr{A}_\mr{X}}\big(\mathscr{D}^{0}_\mr{L},\mathscr{D}^{0}_\mr{M}\big) \simeq \mr{RHom}\big(i_{*}\mr{K}_\mr{L}^{1/2},j_{*}\mr{K}_\mr{M}^{1/2}\big)$$ and the projection formula implies that $$\C(\!(\hbar)\!)\otimes_{\C[\![\hbar]\!]}\mr{RHom}_{\mathscr{A}_\mr{X}}\big(\mathscr{D}^{0}_\mr{L},\mathscr{D}^{0}_\mr{M}\big) \simeq \mr{RHom}_{\mathscr{A}^{\mr{loc}}_\mr{X}}\big(\mathscr{D}_\mr{L},\mathscr{D}_\mr{M}\big).$$ Hence $\mr{dim}_{\C}\big(\mr{Ext}^{i}\big(i_{*}\mr{K}_\mr{L}^{1/2},j_{*}\mr{K}_\mr{M}^{1/2}\big)\big) \ge \mr{dim}_{\C\cbrak}\big(\mr{Ext}^{i}_{\mathscr{A}^{\mr{loc}}_\mr{X}}\big(\mathscr{D}_\mr{L},\mathscr{D}_\mr{M}\big)\big)=\mr{dim}_{\C}\big(\mr{H}^{i-c}(\mr{L}\cap\mr{M},\mathfrak{K}_\mr{or})\big)$.
\end{proof}

\paragraph{Formality.}
 \begin{lemma}\label{freelemma}
 Let $\iota : \C \xhookrightarrow{} \C\brak$ be the inclusion of the central fibre and let $\mr{C} \in \mr{Perf}\big(\mr{Spec}\big(\C\brak\big)\big)$. Suppose that for all $i\in \mathbf{Z}$ we have $$\mr{dim}_\C\big(\mr{H}^{i}(\iota^{*}\mr{C})\big) = \mr{dim}_{\C\cbrak}\big(\mr{H}^{i}(\C\cbrak\otimes_{\C\brak} \mr{C})\big).$$
 Then the cohomology $\mr{H}(\mr{C})$ is free (of finite rank) over $\C\brak$.
 \end{lemma}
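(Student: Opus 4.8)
The plan is to exploit that $\C\brak$ is a complete discrete valuation ring with uniformiser $\hbar$, residue field $\C$ and fraction field $\C\cbrak$. First I would replace $\mr{C}$ by a quasi-isomorphic bounded complex $\mr{P}^{\bullet}$ of finitely generated free $\C\brak$-modules, as is possible since $\mr{C}$ is perfect. Then each $\mr{H}^{i}(\mr{C})=\mr{H}^{i}(\mr{P}^{\bullet})$ is a finitely generated $\C\brak$-module, vanishing for $|i|$ large, and by the structure theorem over a principal ideal domain decomposes as $\mr{H}^{i}(\mr{C})\cong \C\brak^{r_{i}}\oplus \mr{T}_{i}$ with $\mr{T}_{i}$ a finite direct sum of torsion modules $\C\brak/\hbar^{n}\C\brak$. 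Write $t_{i}$ for the number of cyclic torsion summands; equivalently $t_{i}=\dim_{\C}\big(\mr{T}_{i}\otimes_{\C\brak}\C\big)=\dim_{\C}\mathrm{Tor}^{\C\brak}_{1}(\mr{T}_{i},\C)$, since each $\C\brak/\hbar^{n}\C\brak$ contributes a one-dimensional space to each of these, independently of $n$. The goal is to show that all $t_{i}$ vanish.

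Next I would evaluate both sides of the hypothesis. Since $\C\cbrak$ is a localisation of $\C\brak$, hence flat, cohomology commutes with $\C\cbrak\otimes_{\C\brak}-$, so $\mr{H}^{i}(\C\cbrak\otimes_{\C\brak}\mr{C})=\C\cbrak\otimes_{\C\brak}\mr{H}^{i}(\mr{C})\cong \C\cbrak^{r_{i}}$ and the right-hand dimension is $r_{i}$. For the central fibre, $\iota^{*}\mr{C}=\C\otimes^{\mathbf{L}}_{\C\brak}\mr{C}$ is computed by $\C\otimes_{\C\brak}\mr{P}^{\bullet}$, and the hyper-$\mathrm{Tor}$ (universal coefficients) spectral sequence $\mathrm{Tor}^{\C\brak}_{-p}(\mr{H}^{q}(\mr{C}),\C)\Rightarrow \mr{H}^{p+q}(\iota^{*}\mr{C})$ degenerates to short exact sequences $0\to \mr{H}^{i}(\mr{C})\otimes_{\C\brak}\C\to \mr{H}^{i}(\iota^{*}\mr{C})\to \mathrm{Tor}^{\C\brak}_{1}(\mr{H}^{i+1}(\mr{C}),\C)\to 0$, because $\C\brak$ has global dimension one. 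Reading off $\C$-dimensions and using $\dim_{\C}\mr{H}^{i}(\mr{C})\otimes_{\C\brak}\C=r_{i}+t_{i}$ together with $\dim_{\C}\mathrm{Tor}^{\C\brak}_{1}(\mr{H}^{i+1}(\mr{C}),\C)=t_{i+1}$ gives $\dim_{\C}\mr{H}^{i}(\iota^{*}\mr{C})=r_{i}+t_{i}+t_{i+1}$.

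Finally the hypothesis forces $r_{i}+t_{i}+t_{i+1}=r_{i}$, i.e. $t_{i}+t_{i+1}=0$ for every $i\in\Z$; since $t_{i}\ge 0$ this means $t_{i}=0$ for all $i$, so each $\mr{H}^{i}(\mr{C})$ is free of finite rank over $\C\brak$. The argument is essentially bookkeeping; the one point to handle carefully is the counting of torsion contributions on both ends of the short exact sequence, so that the cancellation collapses exactly to $t_{i}+t_{i+1}=0$ rather than to a mere inequality. An equivalent, more structural route that I might record as a remark is that a perfect complex over a discrete valuation ring splits as a direct sum of shifts of $\C\brak$ and of two-term complexes $[\C\brak\xrightarrow{\hbar^{n}}\C\brak]$: a summand of the first type contributes equally to the two dimensions appearing in the hypothesis, while one of the second type contributes $0$ to $\dim_{\C\cbrak}\mr{H}^{\bullet}(\C\cbrak\otimes_{\C\brak}\mr{C})$ but $1$ to $\dim_{\C}\mr{H}^{\bullet}(\iota^{*}\mr{C})$ in two consecutive degrees, so the hypothesis rules out the second type and leaves the cohomology free.
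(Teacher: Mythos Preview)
Your proof is correct and follows essentially the same route as the paper: both use the structure theorem over the DVR $\C\brak$, flatness of $\C\cbrak$, and the universal-coefficients short exact sequence $0\to \mr{H}^{i}(\mr{C})\otimes_{\C\brak}\C\to \mr{H}^{i}(\iota^{*}\mr{C})\to \mathrm{Tor}^{\C\brak}_{1}(\mr{H}^{i+1}(\mr{C}),\C)\to 0$ (the paper obtains it from the exact triangle $\mr{C}\xrightarrow{\hbar}\mr{C}\to\iota^{*}\mr{C}$). The only cosmetic difference is that the paper discards the $\mathrm{Tor}$ term to get the inequality $d_i+r_i\le \dim_\C\mr{H}^{i}(\iota^{*}\mr{C})=d_i$ and concludes $r_i=0$ directly, whereas you keep both ends of the sequence and obtain the sharper equation $t_i+t_{i+1}=0$; either variant suffices.
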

\begin{proof}
 Consider the exact triangle $\mr{C} \xrightarrow{\hbar} \mr{C} \to \iota^{*}\mr{C} \xrightarrow{} \mr{C}[1]$. It induces a long exact sequence in cohomology $$\mr{H}^{i}(\mr{C}) \xrightarrow{\hbar} \mr{H}^{i}(\mr{C}) \to \mr{H}^{i}(\iota^{*}\mr{C}) \to \mr{H}^{i+1}(\mr{C}) \xrightarrow{\hbar} \mr{H}^{i+1}(\mr{C}).$$ Hence there are exact sequences $$ 0 \to \C\otimes_{\C\brak}\mr{H}^{i}(\mr{C}) \to \mr{H}^{i}(\iota^{*}\mr{C}) \to \mr{Tor}_{1}^{\C\brak}\left(\C,\mr{H}^{i+1}(\mr{C})\right) \to 0.$$  In particular, we get that $\mr{dim}_{\C}\left(\C\otimes_{\C\brak}\mr{H}^{i}(\mr{C})\right) \le \mr{dim}_\C\left(\mr{H}^{i}(\iota^{*}\mr{C})\right).$
 Since $\mr{H}^{i}(\mr{C})$ is finitely generated, we may write it as $$\mr{H}^{i}(\mr{C}) = \C\brak^{d_i}\oplus \C\brak/\hbar^{k_1} \oplus \cdots \oplus \cdots \oplus\C\brak/\hbar^{k_{r_i}},$$ where $k_1,\cdots,k_{r_i},r_i \in \mathbf{N}$. Notice that $$\mr{dim}_{\C}\left(\C\otimes_{\C\brak}\mr{H}^{i}(\mr{C})\right) = d_i + r_i \text{ and } \mr{dim}_{\C\cbrak}\left(\C\cbrak\otimes_{\C\brak}\mr{H}^{i}(\mr{C})\right)=d_i.$$ It follows by flatness of $\C\cbrak$ that $$\mr{dim}_{\C\cbrak}\left(\C\cbrak\otimes_{\C\brak}\mr{H}^{i}(\mr{C})\right)= \mr{dim}_{\C\cbrak}\left(\mr{H}^{i}(\C\cbrak\otimes_{\C\brak} \mr{C})\right).$$ Hence, $r_i=0$ and $\mr{H}^{i}\left(\mr{C}\right)$ is free.
\end{proof}
\begin{proposition}
 Let $\mr{X}/\C$ be a holomorphic symplectic variety, endowed with its canonical DQ algebroid $\hat{\mathscr{W}}_{\mr{X}}(0)$. Suppose $\mr{L}$ is a smooth Lagrangian in $\mr{X}$ and let $\mathscr{D}_\mr{L}$ be a holonomic $\hat{\mathscr{W}}_{\mr{X}}$-module on $\mr{L}$. Then the quasi-isomorphism $\mr{R}\shom_{\hat{\mathscr{W}}_{\mr{X}}}\big(\mathscr{D}_{\mr{L}},\mathscr{D}_\mr{L}\big) \simeq \C\cbrak_\mr{L}$ is compatible with the multiplicative structures, i.e. it is a quasi-isomorphism of dg algebras.
\end{proposition}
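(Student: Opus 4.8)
The plan is to sidestep the perverse-sheaf equivalence of \cref{perv} and \cref{pervref}, which is not a priori compatible with products, and instead to realise the equivalence $\mathscr{E}\coloneqq\mr{R}\shom_{\hat{\mathscr{W}}_\mr{X}}\big(\mathscr{D}_\mr{L},\mathscr{D}_\mr{L}\big)\simeq\C\cbrak_\mr{L}$ through the unit of the Yoneda algebra. First I would fix a dg-algebra model for $\mathscr{E}$, e.g.\ the internal $\shom$-complex built from a flat resolution of $\mathscr{D}_\mr{L}$ over $\hat{\mathscr{W}}_\mr{X}$, so that composition of endomorphisms makes $\mathscr{E}$ a sheaf of unital dg $\C\cbrak$-algebras with unit the degree-$0$ cocycle $1_{\mathscr{E}}=\mr{id}_{\mathscr{D}_\mr{L}}$. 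The elements $\lambda\cdot\mr{id}_{\mathscr{D}_\mr{L}}$, with $\lambda$ a local section of $\C\cbrak_\mr{L}$, span a sub-dg-algebra of $\mathscr{E}$ isomorphic to $\C\cbrak_\mr{L}$, and the corresponding inclusion
$$\eta\colon\C\cbrak_\mr{L}\longrightarrow\mathscr{E},\qquad\lambda\longmapsto\lambda\cdot\mr{id}_{\mathscr{D}_\mr{L}}$$
is a morphism of sheaves of dg algebras: it is a chain map because $\mr{id}_{\mathscr{D}_\mr{L}}$ is a cocycle, and multiplicative because it is $\C\cbrak$-linear and preserves units. So the whole statement reduces to proving that $\eta$ is a quasi-isomorphism.

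For that I would feed in the equivalence already at our disposal, $\mathscr{E}\simeq\C\cbrak_\mr{L}$; this is \cref{perv}, equivalently \cref{pervref} with $\mr{M}=\mr{L}$ (for which the orientation bundle $\mathscr{K}_\mr{or}$ is trivial and $c=0$), and it is exactly the identification used in the proof of \cref{ksrmk}. It tells us that $\mathscr{H}^i(\mathscr{E})=0$ for $i\neq0$ and that $\mathscr{H}^0(\mathscr{E})=\shom_{\hat{\mathscr{W}}_\mr{X}}(\mathscr{D}_\mr{L},\mathscr{D}_\mr{L})$ is a locally constant sheaf of $\C\cbrak$-modules of rank one. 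But $\mathscr{H}^0(\mathscr{E})$ is the endomorphism sheaf of $\mathscr{D}_\mr{L}$, hence a unital sheaf of $\C\cbrak_\mr{L}$-algebras, and a unital algebra that is free of rank one over a field equals that field --- its unit inclusion being injective (as $1\neq0$) and surjective by a dimension count. Therefore $\mathscr{H}^0(\eta)$ is a stalkwise isomorphism of locally constant sheaves, hence an isomorphism, and as $\mathscr{E}$ has no other cohomology, $\eta$ is the sought quasi-isomorphism of sheaves of dg algebras. (The quasi-isomorphism of \cref{perv} differs from $\eta$ only by an automorphism of $\C\cbrak_\mr{L}$, i.e.\ by a locally constant unit, so it carries the same multiplicative content.)

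I do not expect a genuine obstacle here: all the real work sits in \cref{perv}/\cref{pervref} --- namely the vanishing of $\mathscr{H}^i(\mathscr{E})$ for $i\neq0$ and the rank-one computation of $\mathscr{H}^0(\mathscr{E})$ --- and once these are granted the multiplicative refinement is purely formal. Should a self-contained argument be wanted, one could instead work in a Darboux chart $\mr{X}=\Omega_\mr{U}$ with $\mr{L}$ the zero section and $\mathscr{D}_\mr{L}$ the simple module quantising $\mathscr{O}_\mr{L}$: a Koszul-type resolution identifies $\mathscr{E}$, together with the Yoneda product, with the de Rham dg algebra $\big(\Omega^{\bullet}_\mr{L}\cbrak,\hbar\,\mr{d},\wedge\big)$, and the holomorphic Poincaré lemma exhibits the inclusion of the locally constant forms $\C\cbrak_\mr{L}\hookrightarrow\Omega^{0}_\mr{L}\cbrak$ --- which is again $\eta$ --- as a dg-algebra quasi-isomorphism.
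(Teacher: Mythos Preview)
Your argument is correct. Your principal route differs from the paper's: you build the multiplicative comparison globally via the unit map $\eta:\C\cbrak_\mr{L}\to\mathscr{E}$ and then invoke \cref{perv}/\cref{pervref} as a black box to see that $\mathscr{H}^i(\mathscr{E})$ vanishes for $i\neq0$ and that $\mathscr{H}^0(\mathscr{E})$ is a unital rank-one $\C\cbrak$-algebra, forcing $\eta$ to be a quasi-isomorphism. The paper instead localises immediately: it passes to a Darboux chart $\mr{X}=\Omega_\mr{L}$ with $\mr{L}$ the zero section and $\mathscr{D}_\mr{L}=\mathscr{O}_\mr{L}\cbrak$, and takes the Koszul complex for the coregular sequence $\partial_1,\dots,\partial_n$ as an explicit multiplicative model for $\mathscr{E}$, into which $\C\cbrak_\mr{L}$ includes multiplicatively. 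Your approach is cleaner and coordinate-free, but it leans on the full strength of \cref{perv}; the paper's approach is self-contained and simultaneously recomputes the cohomology rather than importing it. Amusingly, the ``self-contained'' alternative you sketch in your final paragraph---Darboux chart, Koszul resolution, Poincar\'e lemma---is exactly the paper's proof. One small wording point: for the dg-algebra model of $\mathscr{E}$ you want a locally free (or K-projective) resolution rather than merely a flat one, since it is the first variable of $\mr{R}\shom$ that must be resolved; this is harmless here because such resolutions exist locally for coherent $\hat{\mathscr{W}}_\mr{X}$-modules.
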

\begin{proof}
 The question is local, we may assume $\mr{X}=\Omega_\mr{L}$ and $\mr{L}$ is the zero section in $\Omega_\mr{L}$. We shall fix coordinates $(z_1,\cdots,z_n)$ on $\mr{L}$. Further, since any two holonomic $\hat{\mathscr{W}}_{\mr{X}}$-modules are locally isomorphic, we may let $\mathscr{D}_{\mr{L}} = \mathscr{O}_\mr{L}\cbrak$. Then the Koszul complex $\mr{K}(\mathscr{O}_{\mr{L}}\cbrak,\partial_{1},\cdots,\partial_{n})$, associated with the coregular sequence $\partial_{1},\cdots,\partial_{n}$ acting (on the left) on $\mathscr{O}_{\mr{L}}\cbrak$, gives a multiplicative model for $\mr{R}\shom_{\hat{\mathscr{W}}_{\mr{X}}}\big(\mathscr{O}_{\mr{L}}\cbrak,\mathscr{O}_\mr{L}\cbrak\big)$ and the natural quasi-isomorphism $\C\cbrak_\mr{L} \to \mr{K}(\mathscr{O}_{\mr{L}}\cbrak,\partial_{1},\cdots,\partial_{n})$ is multiplicative.
\end{proof}

 \begin{theorem}\label{310}
  Let $\mr{X}/\C$ be holomorphic symplectic and let $i:\mr{L \xhookrightarrow{} X}$ be a smooth compact Kähler Lagrangian submanifold whose canonical bundle admits a square root. Then the differential graded algebra $\mr{RHom}\big(i_{*}\mr{K}_\mr{L}^{1/2},i_{*}\mr{K}_\mr{L}^{1/2}\big)$ is formal, in fact, quasi-isomorphic to the de Rham algebra $\mr{H}(\mr{L}/\C)$.
 \end{theorem}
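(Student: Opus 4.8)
The plan is to realise $\mr{RHom}\big(i_{*}\mr{K}_\mr{L}^{1/2},i_{*}\mr{K}_\mr{L}^{1/2}\big)$ as the special fibre of a one-parameter family of dg algebras over $\C\brak$ whose generic fibre is the de Rham algebra of $\mr{L}$, and then to deduce formality from the Kaledin--Lunts criterion \cref{formalityalg}, much as the degeneration of the spectral sequence was obtained in \cref{ksrmk}.

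Concretely, I would keep the setup of the proof of \cref{ksrmk}: let $\mathscr{A}_\mr{X}=\hat{\mathscr{W}}_\mr{X}(0)$, fix a simple $\mathscr{A}_\mr{X}$-module $\mathscr{D}^{0}_\mr{L}$ on $\mr{L}$ quantising $\mr{K}_\mr{L}^{1/2}$ (\cref{existsimplehol}), let $\mathscr{D}_\mr{L}$ be its $\hbar$-localisation, and put $\mr{C}\coloneqq\mr{RHom}_{\mathscr{A}_\mr{X}}\big(\mathscr{D}^{0}_\mr{L},\mathscr{D}^{0}_\mr{L}\big)$, a dg algebra over $\C\brak$ which by \cref{perf} is perfect. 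Base change along $\C\brak\to\C$ gives a quasi-isomorphism of dg algebras $\iota^{*}\mr{C}\simeq\mr{RHom}\big(i_{*}\mr{K}_\mr{L}^{1/2},i_{*}\mr{K}_\mr{L}^{1/2}\big)$, while $\hbar$-localisation together with the projection formula gives $\C\cbrak\otimes_{\C\brak}\mr{C}\simeq\mr{RHom}_{\mathscr{A}^{\mr{loc}}_\mr{X}}\big(\mathscr{D}_\mr{L},\mathscr{D}_\mr{L}\big)$; by the preceding proposition — whose quasi-isomorphism $\mr{R}\shom\big(\mathscr{D}_\mr{L},\mathscr{D}_\mr{L}\big)\simeq\C\cbrak_\mr{L}$ is multiplicative, being the natural map from the constant sheaf to the Koszul model — the latter is quasi-isomorphic, as a dg algebra, to $\mr{R}\Gamma(\mr{L},\C\cbrak)$, hence to the de Rham algebra $\Omega^{*}(\mr{L},\C)\otimes_\C\C\cbrak$. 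Since \cref{ksrmk} gives $\mr{dim}_\C\mr{H}^{i}(\iota^{*}\mr{C})=\mr{dim}_\C\mr{H}^{i}(\mr{L}/\C)=\mr{dim}_{\C\cbrak}\mr{H}^{i}\big(\C\cbrak\otimes_{\C\brak}\mr{C}\big)$ for all $i$, \cref{freelemma} shows that $\mr{H}(\mr{C})$ is a free $\C\brak$-module of finite rank.

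Kadeishvili's theorem then equips $\mr{A}\coloneqq\mr{H}(\mr{C})$ with a minimal $\Aa$-algebra structure and a quasi-isomorphism $\mr{A}\to\mr{C}$, and, $\mr{H}(\mr{C})$ being free over $\C\brak$, this minimal model base-changes well: $\mr{A}/\hbar$ is a minimal model for $\mr{RHom}\big(i_{*}\mr{K}_\mr{L}^{1/2},i_{*}\mr{K}_\mr{L}^{1/2}\big)$ and $\C\cbrak\otimes_{\C\brak}\mr{A}$ is a minimal model for $\Omega^{*}(\mr{L},\C)\otimes_\C\C\cbrak$, which is formal by the K\"{a}hler formality of \cite{Deligne1975}. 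To invoke \cref{formalityalg} with $\mr{R}=\C\brak$ and $\eta$ its generic point it remains to note that $\mr{A}$ is finite projective over $\mr{R}$, that $\mr{A}_\eta=\C\cbrak\otimes_{\C\brak}\mr{A}$ is formal, and that $\mr{HH}^{2}_{\mr{c}}(\mr{A}(2))$ is torsion-free; the last holds because the Yoneda product on the DQ $\mathscr{E}xt$-sheaves of $\mathscr{D}^{0}_\mr{L}$ carries no $\hbar$-correction, so by the multiplicative degeneration of \cref{ksrmk} the graded algebra $\mr{A}(2)$ is the $\hbar$-constant family $\mr{H}(\mr{L}/\C)\otimes_\C\C\brak$ and $\mr{HH}^{2}_{\mr{c}}(\mr{A}(2))\cong\mr{HH}^{2}_{\mr{c}}\big(\mr{H}(\mr{L}/\C)\big)\otimes_\C\C\brak$ is $\C\brak$-free. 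Thus \cref{formalityalg} gives that $\mr{A}$ is formal over $\C\brak$; being minimal, it is $\Aa$-isomorphic to $\mr{A}(2)$, and reducing modulo $\hbar$ yields $\mr{A}/\hbar\simeq(\mr{A}/\hbar)(2)=\mr{H}(\mr{L}/\C)$. As $\mr{A}/\hbar$ is a minimal model for $\mr{RHom}\big(i_{*}\mr{K}_\mr{L}^{1/2},i_{*}\mr{K}_\mr{L}^{1/2}\big)$, this dg algebra is formal and quasi-isomorphic to $\mr{H}(\mr{L}/\C)$.

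The main work, and the only real obstacle, lies in the multiplicative bookkeeping along the chain of equivalences: one must check that the D'Agnolo--Schapira isomorphism of \cref{pervref} and its local incarnation $\mr{R}\shom\big(\mathscr{D}_\mr{L},\mathscr{D}_\mr{L}\big)\simeq\C\cbrak_\mr{L}$ respect composition, that reduction modulo $\hbar$ of $\mr{C}$ is multiplicative and recovers $\mr{RHom}\big(i_{*}\mr{K}_\mr{L}^{1/2},i_{*}\mr{K}_\mr{L}^{1/2}\big)$ as a dg algebra, and that Kadeishvili's construction may be carried out $\C\brak$-linearly on the perfect complex $\mr{C}$ so that it commutes with the two base changes. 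By contrast, the verification of Kaledin's torsion-freeness hypothesis is routine once one observes that the family of graded algebras is trivial, which is exactly the content of \cref{ksrmk}.
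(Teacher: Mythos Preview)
The overall architecture is right and matches the paper's: realise the dg algebra as the special fibre of $\mr{C}=\mr{RHom}_{\mathscr{A}_\mr{X}}\big(\mathscr{D}^0_\mr{L},\mathscr{D}^0_\mr{L}\big)$, show $\mr{H}(\mr{C})$ is free via \cref{freelemma}, check generic formality via K\"{a}hler formality, and invoke \cref{formalityalg}. The gap is in the step you flag as ``routine'': the verification of the torsion-freeness hypothesis is not, and your argument for it is circular.

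You assert that $\mr{A}(2)=\mr{Ext}_{\mathscr{A}_\mr{X}}\big(\mathscr{D}^0_\mr{L},\mathscr{D}^0_\mr{L}\big)$ is the $\hbar$-constant family $\mr{H}(\mr{L}/\C)\otimes_\C\C\brak$, on the grounds that the local $\ext$ sheaves carry no $\hbar$-correction and that \cref{ksrmk} gives ``multiplicative degeneration''. Neither is established. \cref{ksrmk} is a dimension count --- it proves additive degeneration of the spectral sequence at $\hbar=0$, equivalently that $\mr{H}(\mr{C})$ is free over $\C\brak$ --- and says nothing about how the algebra structure on $\mr{A}(2)$ varies with $\hbar$. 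Even granting your claim about the local $\ext$ product, degeneration of a multiplicative spectral sequence only identifies the abutment with $\mr{E}_2$ \emph{after passing to the associated graded}; it does not make the global $\mr{Ext}$ algebra $\hbar$-constant. In fact, triviality of the graded-algebra family $\mr{A}(2)$ is precisely the ``quasi-isomorphic to $\mr{H}(\mr{L}/\C)$'' clause of the theorem, so you are assuming the conclusion in order to check Kaledin's hypothesis.

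The paper handles torsion-freeness without first knowing that $\mr{A}(2)$ is constant. The DQ family gives one semicontinuity inequality, $\mr{dim}_\C\mr{HH}^{p,q}\big(\mr{H}(\mr{L}/\C)\big)\le\mr{dim}_\C\mr{HH}^{p,q}\big(\mr{Ext}\big(i_*\mr{K}_\mr{L}^{1/2},i_*\mr{K}_\mr{L}^{1/2}\big)\big)$. For the reverse, the additive degeneration of \cref{ksrmk} yields a filtration on $\mr{Ext}\big(i_*\mr{K}_\mr{L}^{1/2},i_*\mr{K}_\mr{L}^{1/2}\big)$ with associated graded $\mr{H}(\mr{L}/\C)$; the completed Rees algebra is then a second deformation running the other way and gives the opposite inequality. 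Equality of dimensions together with \cref{freelemma} forces $\mr{HH}^{p,q}\big(\mr{A}(2)\big)$, and hence $\mr{HH}^2_c\big(\mr{A}(2)\big)$, to be free over $\C\brak$. Only \emph{after} formality is secured does the paper prove that the graded-algebra family is trivial, by a separate Gerstenhaber-type argument (a generically constant deformation with constant $\mr{HH}^{2,0}$ is trivial); this is what delivers the identification with $\mr{H}(\mr{L}/\C)$, not \cref{ksrmk}.
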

 \begin{proof}  
  As before, let $\mathscr{A}_\mr{X}$ be the canonical quantisation $\hat{\mathscr{W}}_{\mr{X}}(0)$ of $\mr{X}$.
  We fix a square root $\mr{K}_\mr{L}^{1/2}$ of the canonical bundle. There exists a simple $\mathscr{A}_\mr{X}$-module $\mathscr{D}^{0}_\mr{L}$ on $\mr{L}$ quantising $\mr{K}_{\mr{L}}^{1/2}$. Let $\mathscr{A}^{\mr{loc}}_\mr{X}$ be the localisation $\C(\!(\hbar)\!) \otimes_{\C[\![\hbar]\!]}\mathscr{A}_\mr{X}$ and denote $\mathscr{D}_\mr{L} = \C\cbrak \otimes_{\C\brak} \mathscr{D}^{0}_{\mr{L}}$ the $\hbar$-localised simple holonomic DQ-module over $\mathscr{A}^{\mr{loc}}_\mr{X}$.\\ 
  $\hspace*{3mm}$Consider the differential graded algebra $\mr{RHom}_{\mathscr{A}_\mr{X}}\big(\mathscr{D}^{0}_\mr{L},\mathscr{D}^{0}_\mr{L}\big)$ over $\C[\![\hbar]\!]$.  By \cref{pervref}, we have that  $$\mr{R}\shom_{\mathscr{A}^{\mr{loc}}_\mr{X}}\big(\mathscr{D}_\mr{L},\mathscr{D}_\mr{L}\big) \simeq \C(\!(\hbar)\!)_{\mr{L}}.$$ Degeneration of the spectral sequence on $\hbar=0$ allows us to apply \cref{freelemma}, so the cohomology of $\mr{RHom}_{\mathscr{A}_\mr{X}}\big(\mathscr{D}^{0}_\mr{L},\mathscr{D}^{0}_\mr{L}\big)$ is free of finite rank over $\C\brak$. In particular, $\mr{Ext}_{\mathscr{A}_\mr{X}}\big(\mathscr{D}^{0}_\mr{L},\mathscr{D}^{0}_\mr{L}\big)$ is a formal deformation of the graded algebra $\mr{Ext}\big(i_*\mr{K}_\mr{L}^{1/2},i_*\mr{K}_\mr{L}^{1/2}\big)$ to the de Rham algebra $\mr{H}(\mr{L}/\C)\cbrak$. It follows that 
  \begin{equation}\label{ineq1}
  \mr{dim}_{\C}\big(\mr{HH}^{p,q}\big(\mr{H}(\mr{L}/\C)\big)\big) \le \mr{dim}_{\C}\big(\mr{HH}^{p,q}\big(\mr{Ext}\big(i_*\mr{K}_\mr{L}^{1/2},i_*\mr{K}_\mr{L}^{1/2}\big)\big)\big).
  \end{equation} Furthermore, collapse of the spectral sequence on $\hbar=0$ also implies that there exists a filtration $\mr{F}$ on the graded algebra $\mr{Ext}\big(i_*\mr{K}_\mr{L}^{1/2},i_*\mr{K}_\mr{L}^{1/2}\big)$ such that $$\mr{Gr}_\mr{F}\big(\mr{Ext}\big(i_*\mr{K}_\mr{L}^{1/2},i_*\mr{K}_\mr{L}^{1/2}\big)\big) \cong \mr{H}(\mr{L}/\C),$$ as graded algebras, so the associated (completed) Rees algebra is a formal deformation of the graded algebra $\mr{H}(\mr{L}/\C)$ to $\mr{Ext}\big(i_*\mr{K}_\mr{L}^{1/2},i_*\mr{K}_\mr{L}^{1/2}\big)\otimes_{\C}\C\cbrak.$ The Rees deformation gives the opposite of \eqref{ineq1}, hence $$\mr{dim}_{\C}\big(\mr{HH}^{p,q}\big(\mr{H}(\mr{L}/\C)\big)\big) = \mr{dim}_{\C}\big(\mr{HH}^{p,q}\big(\mr{Ext}\big(i_*\mr{K}_\mr{L}^{1/2},i_*\mr{K}_\mr{L}^{1/2}\big)\big)\big)$$ and by \cref{freelemma} the Hochschild cohomology groups $\mr{HH}^{p,q}\left(\mr{Ext}_{\mathscr{A}_\mr{X}}\big(\mathscr{D}^{0}_\mr{L},\mathscr{D}^{0}_\mr{L}\big)\right)$ are free over $\C\brak$ for all $p,q \in \mathbf{Z}$. As a consequence, the Hochschild cohomology groups with compact supports of Lunts, $\mr{HH}_c^{i}\big(\mr{Ext}_{\mathscr{A}_\mr{X}}\big(\mathscr{D}^{0}_\mr{L},\mathscr{D}^{0}_\mr{L}\big)\big)$, are free over $\C\brak$. It follows, by formality of $\mr{L}$, that the differential graded algebra $\mr{RHom}_{\mathscr{A}^{\mr{loc}}_\mr{X}}\big(\mathscr{D}_\mr{L},\mathscr{D}_\mr{L}\big)$ is formal. Hence, $\mr{RHom}_{\mathscr{A}_\mr{X}}\big(\mathscr{D}^{0}_\mr{L},\mathscr{D}^{0}_\mr{L}\big)$ is formal by \cref{formalityalg}.\\
  $\hspace*{3mm}$In order to conclude the formality proof, we note that there is a quasi-isomorphism $$\mr{RHom}\big(i_*\mr{K}_\mr{L}^{1/2},i_*\mr{K}_\mr{L}^{1/2}\big) \simeq \C\otimes_{\C\brak}\mr{RHom}_{\mathscr{A}_\mr{X}}\big(\mathscr{D}^{0}_\mr{L},\mathscr{D}^{0}_\mr{L}\big).$$Since the cohomology of $\mr{RHom}_{\mathscr{A}_\mr{X}}\big(\mathscr{D}^{0}_\mr{L},\mathscr{D}^{0}_\mr{L}\big)$ is free over $\C\brak$, it follows that the dg algebra $$\mr{RHom}\big(i_*\mr{K}_\mr{L}^{1/2},i_*\mr{K}_\mr{L}^{1/2}\big)$$ is formal.\\
  $\hspace*{3mm}$To prove the last statement of the theorem, note that $\mr{Ext}_{\mathscr{A}_\mr{X}}\big(\mathscr{D}^{0}_\mr{L},\mathscr{D}^{0}_\mr{L}\big)$ is a generically constant deformation which leaves the dimension of $\mr{HH}^{2,0}$ unchanged. The classical results on deformations of \cite{MR389978} extend to the graded case, thus $\mr{Ext}_{\mathscr{A}_\mr{X}}\big(\mathscr{D}^{0}_\mr{L},\mathscr{D}^{0}_\mr{L}\big)$ must be the trivial deformation - we give a summary of the proof, referring to \cite[Section~3]{MR389978} for details (in the non-graded case).\\
  $\hspace*{3mm}$Let $\mr{HH} \coloneqq \mr{HH}\big(\mr{Ext}\big(i_*\mr{K}_\mr{L}^{1/2},i_*\mr{K}_\mr{L}^{1/2}\big)\big)$, $\mr{HH}_{\hbar} \coloneqq \mr{HH}\big(\mr{Ext}_{\mathscr{A}_\mr{X}}\big(\mathscr{D}^{0}_\mr{L},\mathscr{D}^{0}_\mr{L}\big)\big)$ and $\mr{HH}_{(\hbar)}$ is the localisation of $\mr{HH}_\hbar$ at $\hbar$. The reduction $\mr{mod}\,\hbar$ gives a map $\mr{HH}_{\hbar} \to \mr{HH}$ and since $\mr{HH}_\hbar$ is free over $\C\brak$, taking a section of the reduction map, we get $\mr{HH}_\hbar \cong \mr{HH}\brak$.\\
  $\hspace*{3mm}$We have an isomorphism of graded algebras $$\varphi_{\hbar} :\mr{Ext}_{\mathscr{A}_\mr{X}}\big(\mathscr{D}^{0}_\mr{L},\mathscr{D}^{0}_\mr{L}\big)\otimes_{\C\brak}\C\cbrak \cong \mr{H}(\mr{L}/\C)\otimes_{\C}\C\cbrak.$$ Write $m_{\hbar}$ and $m_{\mr{dR}}$ for the multiplications of $\mr{Ext}_{\mathscr{A}_\mr{X}}\big(\mathscr{D}^{0}_\mr{L},\mathscr{D}^{0}_\mr{L}\big)$ and $\mr{H}(\mr{L}/\C)$, respectively. Then by definition 
  \begin{equation}\label{mult}
    m_{\hbar} = \varphi_{\hbar}^{-1}m_{\mr{dR}}(\varphi_{\hbar},\varphi_{\hbar}).
  \end{equation}
 Differentiating \eqref{mult} with respect to $\hbar$ gives 
 \begin{equation}\label{mult2}
 m_{\hbar}' = -\mr{d}_{\hbar}(\varphi_{\hbar}^{-1}\varphi_{\hbar}'),
 \end{equation} 
 where $\mr{d}_{\hbar}$ is the Hochschild differential of the graded algebra $\mr{Ext}_{\mathscr{A}_\mr{X}}\big(\mathscr{D}^{0}_\mr{L},\mathscr{D}^{0}_\mr{L}\big)$, i.e. $[m_\hbar']=0$ in $\mr{HH}^{2,0}_{(\hbar)}$. Writing $$m_{\hbar} = m + m_{r}\hbar^{r} + \cdots,\, r \ge 1,$$ we see that the left side of \eqref{mult2} is $$rm_{r}\hbar^{r-1}+(r+1)m_{r+1}\hbar^{r} + \cdots.$$ Hence we deduce that $$m_{r}+(r+1)/r\cdot m_{r+1}\hbar + (r+2)/r\cdot m_{r+2}\hbar^{2} +\cdots$$ is a $\hbar$-torsion class in $\mr{HH}^{2,0}_\hbar$, lifting the class $[m_r] \in \mr{HH}^{2,0}$, so it must vanish. Thus $[m_r]=0$ too. If $m_r =\mr{d}(\psi^{r})$, then we can kill $m_r$ using $\mr{id}-\psi^{r}\hbar^r$. By induction we get $\psi^{i}$ for all $i \ge r$, the infinite composition  $$\psi_\hbar = \big(\big(\mr{id}-\psi^{r}\hbar^r\big)\circ \big(\mr{id}-\psi^{r+1}\hbar^{r+1}\big)\circ \cdots \big)$$ makes sense and we have $\psi_\hbar^{-1}m_\hbar(\psi_\hbar,\psi_\hbar) = m$, showing the triviality of $m_\hbar$.\\
  $\hspace*{3mm}$Since $\mr{Ext}\big(i_*\mr{K}_\mr{L}^{1/2},i_*\mr{K}_\mr{L}^{1/2}\big)$ and $\mr{H}\left(\mr{L}/\C\right)$ are finite dimensional over $\C$ and become isomorphic after extending scalars to $\C\cbrak$, they are already isomorphic over $\C$.
 \end{proof}
 
\begin{corollary}
 Let $\mr{X}/\C$ be holomorphic symplectic, and consider a compact Kähler Lagrangian $i:\mr{L \xhookrightarrow{} X}$ such that $\mr{K}_\mr{L}^{1/2}$ exists. Then $\mr{Ext}\big(i_*\mr{K}_\mr{L}^{1/2},i_*\mr{K}_\mr{L}^{1/2}\big) \cong \mr{H}(\mr{L}/\C)$ as graded algebras.
\end{corollary}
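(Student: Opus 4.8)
The plan is to read this off directly from \cref{310}. That theorem asserts not merely that $\mr{RHom}\big(i_*\mr{K}_\mr{L}^{1/2},i_*\mr{K}_\mr{L}^{1/2}\big)$ is formal but, more precisely, that it is quasi-isomorphic \emph{as a differential graded algebra} to the de Rham algebra $\mr{H}(\mr{L}/\C)$, i.e. to the cohomology ring of $\mr{L}$ equipped with cup product and zero differential. Since a quasi-isomorphism of dg algebras induces an isomorphism of graded rings on cohomology, and $\mr{H}(\mr{L}/\C)$ — having trivial differential — is canonically its own cohomology, we obtain
\[
\mr{Ext}\big(i_*\mr{K}_\mr{L}^{1/2},i_*\mr{K}_\mr{L}^{1/2}\big) \;=\; \mr{H}^{*}\Big(\mr{RHom}\big(i_*\mr{K}_\mr{L}^{1/2},i_*\mr{K}_\mr{L}^{1/2}\big)\Big) \;\cong\; \mr{H}(\mr{L}/\C)
\]
as graded algebras, which is all the corollary asks.

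It is worth recording the mechanism inside the proof of \cref{310} that makes the multiplicative part of this work, since that is where all the content sits. The DQ-module endomorphism algebra $\mr{Ext}_{\mathscr{A}_\mr{X}}\big(\mathscr{D}^{0}_\mr{L},\mathscr{D}^{0}_\mr{L}\big)$ is a $\C\brak$-deformation of $\mr{Ext}\big(i_*\mr{K}_\mr{L}^{1/2},i_*\mr{K}_\mr{L}^{1/2}\big)$ whose generic fibre is $\mr{H}(\mr{L}/\C)\cbrak$; degeneration of the local-to-global $\mr{Ext}$ spectral sequence at $\hbar=0$ equates the $\C$-dimensions of the central and generic fibres in each degree, so \cref{freelemma} gives $\C\brak$-freeness of the cohomology, and the Rees-algebra construction together with the $\mr{HH}^{2,0}$ torsion computation of the theorem forces the deformation to be trivial. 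As both sides are finite-dimensional over $\C$ and become isomorphic after extending scalars to $\C\cbrak$, they are already isomorphic over $\C$.

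Thus for the corollary itself there is no genuine obstacle remaining: the only subtle point — that the equivalence of \cref{310} is compatible with products, hence descends to a \emph{ring} isomorphism on $\mr{Ext}$ rather than merely an additive one — is precisely what the final part of the proof of \cref{310} establishes, and the corollary is a formal consequence of that statement.
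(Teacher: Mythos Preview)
Your proposal is correct and matches the paper's approach: the corollary is stated without proof precisely because it is an immediate consequence of the last clause of \cref{310}, and your first paragraph extracts it in exactly this way. Your second paragraph accurately summarises the mechanism inside the proof of \cref{310} that yields the multiplicative isomorphism, so nothing is missing.
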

\2
\begin{theorem}
 Let $\mr{X}/\C$ be holomorphic symplectic. Suppose that $i : \rm L \xhookrightarrow{} X$, $j : \mr{M \xhookrightarrow{} X}$ are compact Kähler Lagrangians with a smooth intersection. Asuume their canonical bundles admit square roots. Then $\mr{RHom}\big(i_{*}\mr{K_{L}^{1/2}}, j_{*}\mr{K_{M}^{1/2}}\big)$ is a formal differential graded module over the (formal) differential graded algebra $\mr{RHom}\big(i_{*}\mr{K_{L}^{1/2}},i_{*}\mr{K_{L}^{1/2}}\big)$. Moreover, we have a quasi-isomorphism of pairs $$\big(\mr{RHom}\big(i_{*}\mr{K_{L}^{1/2}},i_{*}\mr{K_{L}^{1/2}}\big),\mr{RHom}\big(i_{*}\mr{K_{L}^{1/2}}, j_{*}\mr{K_{M}^{1/2}}\big)\big)\simeq \big(\mr{H}(\mr{L}/\C),\mr{H}^{*-c}(\mr{L}\cap\mr{M},\mathfrak{K}_\mr{or})\big),$$ where $c$ is the codimension of $\mr{L}\cap\mr{M}$ in $\mr{L}$.
\end{theorem}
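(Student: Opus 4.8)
The plan is to run the proof of \cref{310} one categorical level up, substituting the module formality criterion \cref{modformality} for the algebra criterion \cref{formalityalg}. Keep the canonical quantisation $\mathscr{A}_\mr{X}=\hat{\mathscr{W}}_\mr{X}(0)$, fix square roots $\mr{K}_\mr{L}^{1/2}$ and $\mr{K}_\mr{M}^{1/2}$, let $\mathscr{D}^{0}_\mr{L}$, $\mathscr{D}^{0}_\mr{M}$ be the associated simple $\mathscr{A}_\mr{X}$-modules along $\mr{L}$, $\mr{M}$ and $\mathscr{D}_\mr{L}$, $\mathscr{D}_\mr{M}$ their $\hbar$-localisations. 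The object of study is the dg module over a dg algebra \[ \big(\mr{RHom}_{\mathscr{A}_\mr{X}}(\mathscr{D}^{0}_\mr{L},\mathscr{D}^{0}_\mr{L}),\ \mr{RHom}_{\mathscr{A}_\mr{X}}(\mathscr{D}^{0}_\mr{L},\mathscr{D}^{0}_\mr{M})\big) \] over $\C\brak$. By the projection formula its fibre over $\hbar=0$ is $\big(\mr{RHom}(i_*\mr{K}_\mr{L}^{1/2},i_*\mr{K}_\mr{L}^{1/2}),\,\mr{RHom}(i_*\mr{K}_\mr{L}^{1/2},j_*\mr{K}_\mr{M}^{1/2})\big)$, and by \cref{pervref} its generic fibre $\big(\mr{RHom}_{\mathscr{A}^{\mr{loc}}_\mr{X}}(\mathscr{D}_\mr{L},\mathscr{D}_\mr{L}),\,\mr{RHom}_{\mathscr{A}^{\mr{loc}}_\mr{X}}(\mathscr{D}_\mr{L},\mathscr{D}_\mr{M})\big)$ is quasi-isomorphic, as a dg module over a dg algebra, to $\C\cbrak$ tensored with the de Rham pair $\big(\Omega^{*}(\mr{L},\C),\,\Omega^{*}(\mr{L}\cap\mr{M},\mathfrak{K}_\mr{or})[-c]\big)$; multiplicativity and module compatibility of these quasi-isomorphisms are checked on a local Koszul model, as in the proposition preceding \cref{310}.

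First I would record that the cohomology of the dg module is free over $\C\brak$: $\mr{RHom}_{\mathscr{A}_\mr{X}}(\mathscr{D}^{0}_\mr{L},\mathscr{D}^{0}_\mr{M})$ is perfect over $\C\brak$ by \cref{perf} (here $\mr{L}\cap\mr{M}$ compact), the relevant $\mr{Ext}$ spectral sequence degenerates on $\hbar=0$ by \cref{igbcor}, so the Betti numbers of the two fibres coincide and \cref{freelemma} makes $\mr{Ext}_{\mathscr{A}_\mr{X}}(\mathscr{D}^{0}_\mr{L},\mathscr{D}^{0}_\mr{M})$ free of finite rank over $\C\brak$. With the freeness of $\mr{Ext}_{\mathscr{A}_\mr{X}}(\mathscr{D}^{0}_\mr{L},\mathscr{D}^{0}_\mr{L})$ and the formality of $\mr{RHom}_{\mathscr{A}_\mr{X}}(\mathscr{D}^{0}_\mr{L},\mathscr{D}^{0}_\mr{L})$ already obtained in \cref{310}, Kadeishvili's theorem for modules applies over $\C\brak$: there is a minimal $\Aa$-module $\mr{M}$ on the graded space $\mr{Ext}_{\mathscr{A}_\mr{X}}(\mathscr{D}^{0}_\mr{L},\mathscr{D}^{0}_\mr{M})$ over the graded $\C\brak$-algebra $\mr{A}\coloneqq\mr{Ext}_{\mathscr{A}_\mr{X}}(\mathscr{D}^{0}_\mr{L},\mathscr{D}^{0}_\mr{L})$, whose generic fibre $\mr{M}_\eta$ is a minimal model of the generic fibre above. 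Deligne's formality of a unitary local system on a compact Kähler manifold, viewed as a dg module over $\Omega^{*}(\mr{L},\C)$ via the diagram of the introduction (cf. \cite{MR972343}, \cite{MR1179076}), together with \cref{dgainf}, then shows $\mr{M}_\eta$ is a formal $\Aa$-module over $\mr{A}_\eta$.

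The remaining --- and main --- point is the torsion-freeness of $\mr{HH}^{n,1-n}(\mr{A},\mr{M}(2))$ for all $n$, needed to invoke \cref{modformality}. I would argue exactly as in the closing paragraphs of \cref{310}. The graded module $\mr{M}(2)$ over the graded algebra $\mr{A}$, over $\C\brak$, is a deformation of the de Rham pair: its $\hbar=0$ fibre is the pair of graded objects $\big(\mr{Ext}(i_*\mr{K}_\mr{L}^{1/2},i_*\mr{K}_\mr{L}^{1/2}),\,\mr{Ext}^{*}(i_*\mr{K}_\mr{L}^{1/2},j_*\mr{K}_\mr{M}^{1/2})\big)$, while the filtration provided by the degeneration in \cref{igbcor} identifies the associated graded of this fibre with $\big(\mr{H}(\mr{L}/\C),\,\mr{H}^{*-c}(\mr{L}\cap\mr{M},\mathfrak{K}_\mr{or})\big)$ as a pair of a graded algebra and a graded module over it, so the completed Rees construction gives, conversely, a deformation of the de Rham pair (over $\C\cbrak$) back to the $\mr{Ext}$ pair. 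Upper semicontinuity of $\dim\mr{HH}^{p,q}$ run along both deformations --- invoking \cref{hochbasechange} for the base change of Hochschild complexes and the perfectness of $\mr{RHom}_{\mathscr{A}_\mr{X}}(\mathscr{D}^{0}_\mr{L},\mathscr{D}^{0}_\mr{M})$ to produce a perfect complex computing $\mr{HH}$ over $\C\brak$ --- forces the Betti numbers of the central and generic fibres to agree, whence \cref{freelemma} makes each $\mr{HH}^{p,q}(\mr{A},\mr{M}(2))$ free over $\C\brak$, in particular torsion-free. I expect this semicontinuity-and-freeness bookkeeping to be the main obstacle; the rest is formal.

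Granting it, \cref{modformality} furnishes a quasi-isomorphism $\mr{M}\to\mr{M}(2)$ of $\Aa$-modules over $\mr{A}$ lifting the identity on complexes. Reducing modulo $\hbar$, which is legitimate because $\mr{HM}$ and $\mr{HA}$ are free over $\C\brak$, shows that the minimal $\Aa$-model of $\mr{RHom}(i_*\mr{K}_\mr{L}^{1/2},j_*\mr{K}_\mr{M}^{1/2})$ over $\mr{Ext}(i_*\mr{K}_\mr{L}^{1/2},i_*\mr{K}_\mr{L}^{1/2})$ has vanishing higher multiplications, so by \cref{dgainf} the dg module $\mr{RHom}(i_*\mr{K}_\mr{L}^{1/2},j_*\mr{K}_\mr{M}^{1/2})$ is a formal differential graded module over the formal dg algebra $\mr{RHom}(i_*\mr{K}_\mr{L}^{1/2},i_*\mr{K}_\mr{L}^{1/2})$. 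For the last assertion, the corollary to \cref{310} identifies $\mr{Ext}(i_*\mr{K}_\mr{L}^{1/2},i_*\mr{K}_\mr{L}^{1/2})$ with $\mr{H}(\mr{L}/\C)$ as graded algebras; repeating the triviality-of-deformation argument of \cref{310} for the pair --- the deformation $\mr{Ext}_{\mathscr{A}_\mr{X}}(\mathscr{D}^{0}_\mr{L},\mathscr{D}^{0}_\mr{M})$ over $\mr{Ext}_{\mathscr{A}_\mr{X}}(\mathscr{D}^{0}_\mr{L},\mathscr{D}^{0}_\mr{L})$ is generically constant and preserves $\dim\mr{HH}^{1,0}$ of the module and $\dim\mr{HH}^{2,0}$ of the algebra, hence is trivial by the graded analogue of \cite{MR389978} --- identifies $\big(\mr{Ext}(i_*\mr{K}_\mr{L}^{1/2},i_*\mr{K}_\mr{L}^{1/2}),\,\mr{Ext}^{*}(i_*\mr{K}_\mr{L}^{1/2},j_*\mr{K}_\mr{M}^{1/2})\big)$ with $\big(\mr{H}(\mr{L}/\C),\,\mr{H}^{*-c}(\mr{L}\cap\mr{M},\mathfrak{K}_\mr{or})\big)$ as a pair (using \cref{igbcor}). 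Together with the formality just proved, this is the claimed quasi-isomorphism of pairs.
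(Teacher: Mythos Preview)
Your proposal is correct and follows essentially the same route as the paper: set up the $\C\brak$-family via the simple DQ modules, use \cref{igbcor} and \cref{freelemma} to get freeness of $\mr{Ext}_{\mathscr{A}_\mr{X}}(\mathscr{D}^{0}_\mr{L},\mathscr{D}^{0}_\mr{M})$, run the two-sided semicontinuity/Rees argument to force freeness of the relevant $\mr{HH}^{p,q}$, invoke Deligne's formality for the unitary local system $\mathfrak{K}_\mr{or}$ to get generic formality, apply \cref{modformality}, and finish the identification of pairs by the Gerstenhaber-style triviality argument with $\mr{HH}^{1,0}$ constant. The only cosmetic difference is that you make the intermediate passage through Kadeishvili's minimal model and \cref{dgainf} explicit, whereas the paper leaves this implicit in its appeal to \cref{modformality}.
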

\begin{proof}
 Let $\mathscr{A}_\mr{X}$ be the canonical quantisation $\hat{\mathscr{W}}_{\mr{X}}(0)$ of $\mr{X}$.
  We shall fix square roots $\mr{K}_\mr{L}^{1/2}$ and $\mr{K}_\mr{M}^{1/2}$ of the canonical bundles of $\mr{L}$ and $\mr{M}$. Let $\mathscr{D}^{0}_\mr{L}$ and $\mathscr{D}^{0}_\mr{M}$ be simple $\mathscr{A}_\mr{X}$-modules on $\mr{L}$ and $\mr{M}$ quantising $\mr{K}_{\mr{L}}^{1/2}$ and $\mr{K}_{\mr{M}}^{1/2}$, respectively. Then we let $\mathscr{D}_\mr{L} = \C\cbrak \otimes_{\C\brak} \mathscr{D}^{0}_\mr{L}$ and $\mathscr{D}_\mr{M} = \C\cbrak\otimes_{\C\brak}\mathscr{D}^{0}_\mr{M}$ be the corresponding $\hbar$-localisations.\\
$\hspace*{3mm}$The complex $\mr{RHom}_{\mathscr{A}_\mr{X}}\big(\mathscr{D}^{0}_\mr{L},\mathscr{D}^{0}_\mr{M}\big)$ is a dg module over $\mr{RHom}_{\mathscr{A}_\mr{X}}\big(\mathscr{D}^{0}_\mr{L},\mathscr{D}^{0}_\mr{L}\big)$. Since the intersection $\mr{L}\cap\mr{M}$ is smooth, \cref{pervref} implies that $$\mr{R}\shom_{\mathscr{A}_\mr{X}^{\mr{loc}}}\big(\mathscr{D}_\mr{L},\mathscr{D}_\mr{M}\big) \simeq \C(\!(\hbar)\!)_{\mr{L}\cap \mr{M}}\otimes_{\C}\mathfrak{K}_{\mr{or}}[-c].$$ It follows by \cref{igbcor} and \cref{freelemma} that the cohomology of $\mr{RHom}_{\mathscr{A}_\mr{X}}\big(\mathscr{D}^{0}_\mr{L},\mathscr{D}^{0}_\mr{M}\big)$ is free over $\C\brak$. In particular, we see that $\mr{Ext}_{\mathscr{A}_\mr{X}}\big(\mathscr{D}^{0}_\mr{L},\mathscr{D}^{0}_\mr{M}\big)$ is a formal deformation of the graded module $\mr{Ext}\big(i_{*}\mr{K_{L}^{1/2}},j_{*}\mr{K_{M}^{1/2}}\big)$ to $\mr{H}^{*-c}(\mr{L}\cap\mr{M},\mathfrak{K}_{\mr{or}})\cbrak$. It follows that 
\begin{equation*}
  \mr{dim}_{\C}\big(\mr{HH}^{p,q}\big(\mr{H}(\mr{L}/\C),\mr{H}^{*-c}(\mr{L}\cap \mr{M},\mathfrak{K}_\mr{or})\big)\big) \le \mr{dim}_{\C}\big(\mr{HH}^{p,q}\big(\mr{H}(\mr{L}/\C),\mr{Ext}\big(i_*\mr{K}_\mr{L}^{1/2},j_*\mr{K}_\mr{M}^{1/2}\big)\big)\big).
  \end{equation*}
  As in the previous theorem, degeneration of the spectral sequence on $\hbar=0$ gives a filtration $\mr{F}$ on $\mr{Ext}\big(i_*\mr{K}_\mr{L}^{1/2},j_*\mr{K}_\mr{M}^{1/2}\big)$ such that $$\mr{Gr}_\mr{F}\big(\mr{Ext}\big(i_*\mr{K}_\mr{L}^{1/2},j_*\mr{K}_\mr{M}^{1/2}\big)\big) \cong \mr{H}^{*-c}(\mr{L}\cap \mr{M},\mathfrak{K}_\mr{or})$$ as graded modules over $\mr{H}(\mr{L}/\C)$. The Rees deformation argument then implies the opposite the above inequality, so we conclude
  \begin{equation*}
  \mr{dim}_{\C}\big(\mr{HH}^{p,q}\big(\mr{H}(\mr{L}/\C),\mr{H}^{*-c}(\mr{L}\cap \mr{M},\mathfrak{K}_\mr{or})\big)\big) = \mr{dim}_{\C}\big(\mr{HH}^{p,q}\big(\mr{H}(\mr{L}/\C),\mr{Ext}\big(i_*\mr{K}_\mr{L}^{1/2},j_*\mr{K}_\mr{M}^{1/2}\big)\big)\big).
  \end{equation*}
Thus, by \cref{freelemma}, the Hochschild cohomology groups $$\mr{HH}^{p,q}\big(\mr{Ext}_{\mathscr{A}_\mr{X}}\big(\mathscr{D}^{0}_\mr{L},\mathscr{D}^{0}_\mr{L}\big),\mr{Ext}_{\mathscr{A}_\mr{X}}\big(\mathscr{D}^{0}_\mr{L},\mathscr{D}^{0}_\mr{M}\big)\big)$$ are free over $\C\brak$ for all $p,q \in \mathbf{Z}$.\\
$\hspace*{3mm}$We observed in the beginning of the proof of \cref{igbcor} that the local system $\mathfrak{K}_{\mr{or}}$ corresponds to a unitary representation of the fundamental group of $\mr{L}\cap\mr{M}$. Then, as explained on page \pageref{page2}, by a theorem of Deligne (see \cite{MR972343}, \cite{MR1179076}), we conclude that $\mr{RHom}_{\mathscr{A}^{\mr{loc}}_\mr{X}}\big(\mathscr{D}_\mr{L},\mathscr{D}_\mr{M}\big)$ is a formal dg module over the dg algebra $\mr{RHom}_{\mathscr{A}^{\mr{loc}}_\mr{X}}\big(\mathscr{D}_\mr{L},\mathscr{D}_\mr{L}\big)$. Hence $\mr{RHom}_{\mathscr{A}_\mr{X}}\big(\mathscr{D}^{0}_\mr{L},\mathscr{D}^{0}_\mr{M}\big)$ is a formal dg module over the dg algebra $\mr{RHom}_{\mathscr{A}_\mr{X}}\big(\mathscr{D}^{0}_\mr{L},\mathscr{D}^{0}_\mr{L}\big)$ by \cref{modformality}.\\
$\hspace*{3mm}$Next we note that there is a natural quasi-isomorphism of dg modules $$\mr{RHom}\big(i_*\mr{K}_\mr{L}^{1/2},j_*\mr{K}_\mr{M}^{1/2}\big) \simeq \C\otimes_{\C\brak}\mr{RHom}_{\mathscr{A}_\mr{X}}\big(\mathscr{D}^{0}_\mr{L},\mathscr{D}^{0}_\mr{M}\big),$$ associated with the quasi-isomorphism of dg algebras $$\mr{RHom}\big(i_*\mr{K}_\mr{L}^{1/2},i_*\mr{K}_\mr{L}^{1/2}\big) \simeq \C\otimes_{\C\brak}\mr{RHom}_{\mathscr{A}_\mr{X}}\big(\mathscr{D}^{0}_\mr{L},\mathscr{D}^{0}_\mr{L}\big).$$ It is now enough to recall that the cohomology of $\mr{RHom}_{\mathscr{A}_\mr{X}}\big(\mathscr{D}^{0}_\mr{L},\mathscr{D}^{0}_\mr{M}\big)$ is free over $\C\brak$, hence $\mr{RHom}\big(i_*\mr{K}_\mr{L}^{1/2},j_*\mr{K}_\mr{M}^{1/2}\big)$ is formal over $\mr{RHom}\big(i_*\mr{K}_\mr{L}^{1/2},i_*\mr{K}_\mr{L}^{1/2}\big)$.\\
$\hspace*{3mm}$For the last assertion, we observe that $\mr{Ext}_{\mathscr{A}_\mr{X}}\big(\mathscr{D}^{0}_\mr{L},\mathscr{D}^{0}_\mr{M}\big)$ is a generically constant deformation that leaves $\mr{HH}^{1,0}$ constant. Such a deformation must be trivial, the proof being similar to the one for deformations of algebras discussed in the proof of \cref{310}. Thus, $\mr{Ext}\big(i_*\mr{K}_\mr{L}^{1/2},j_*\mr{K}_\mr{M}^{1/2}\big)$ and $\mr{H}^{*-c}(\mr{L}\cap \mr{M},\mathfrak{K}_\mr{or})$ are isomorphic since they are finite dimensional over $\C$ and become isomorphic upon extending scalars to $\C\cbrak$.
\end{proof}

\end{document}